\documentclass[12pt]{amsart}

\usepackage{amssymb}
\usepackage{times}
\usepackage{amsfonts}
\usepackage{color}

  \usepackage[pdftex]{graphicx}
  \usepackage[pdftex]{hyperref}
  \hypersetup{
        unicode=false,          
        pdftoolbar=true,        
        pdfmenubar=true,        
        pdffitwindow=false,     
        pdfstartview={FitH},    
        pdfnewwindow=true,      
        colorlinks=true,        
        linkcolor=red,          
        citecolor=blue,         
        filecolor=magenta,      
        urlcolor=cyan           
    }

\setcounter{tocdepth}{3}

\thispagestyle{empty}
\textwidth=14.5cm
 \hoffset=-1.25cm
\textheight=22cm
 \voffset=-1.25cm


\newtheorem{theorem}{Theorem}[section]

\newtheorem{lemma}[theorem]{Lemma}
\newtheorem{proposition}[theorem]{Proposition}
\newtheorem{definition}[theorem]{Definition}

\newtheorem{remark}[theorem]{Remark}

\newtheorem{example}[theorem]{Example}

\numberwithin{equation}{section}

\def\beq{\begin{equation}}
\def\eeq{\end{equation}}

\def\ben{\begin{enumerate}}
\def\een{\end{enumerate}}

\def\ss{\smallskip}


\def\cI{{\mathcal I}}

\def\cQ{{\mathcal Q}}

\def\cP{{\mathcal P}}

\def\cU{{\mathcal U}}

\def\bbP{\mathbb P}

\def\be{\mathbf{e}}
\def\wtilde{\widetilde}
\def\what{\widehat}

\def\cW{\mathcal W}

\def\vtr{\vartriangleright}

\def\ec{equality check}

\newcommand{\RR}{{\mathbb R}}
\newcommand{\PP}{\mathbb{P}}




\def\RR{\mathbb R}

\newcommand{\df}[1]{{\bf{#1}}{\index{#1}}}

\makeindex

\begin{document}

\title[Non-commutative Representations]{Non-commutative Representations of Families of $k^2$ Commutative Polynomials
in $2k^2$ Commuting Variables  }

\date{\today}

\author[H. Dym]{Harry Dym}
\email{dym@wisdom.weizmann.ac.il}

\author[B. Helton]{J. W. Helton}
\email{helton@math.ucsd.edu}

\author[C. Meier]{Caleb Meier}
\email{c1meier@math.ucsd.edu}

\keywords{Non-commutative representations, non-commutative polynomials, generic matrices,
 rings with polynomial identitiy}
\thanks{Helton and Meier were partially funded by
NSF Research supported by NSF grants
DMS-0700758, DMS-0757212, and the Ford Motor Co.
We thank Klep and Vinnikov for valuable conversations.
}

\begin{abstract}
Given a collection $\cP =  \{p_1(x_1,\ldots,x_{2k^2}),\ldots,  p_{k^2}(x_1,\ldots,x_{2k^2})\}$
of $k^2$ commutative
polynomials in $2k^2$ variables,
the objective is to find a condensed representation for
these polynomials in terms of a
single non-commutative polynomial
$p(X,Y)$ in two $k\times k$ matrix variables $X$ and $Y$.
Algorithms that will generically determine whether
the given family $\cP$ has a
non-commutative representation and that will produce such a
representation if they exist are developed. These
algorithms will determine a non-commutative representation
for families $\cP$ that admit a non-commutative representation in an
 open, dense subset of the
vector space of non-commutative polynomials in two variables.
\end{abstract}

\maketitle

\section{Introduction}
\label{sec:nov25a11}

This paper addresses a new type of  problem concerning
a condensed description of a collection of polynomials.

\subsection{Problem statement}
\label{sec:Theprob}
The problem is to  represent
a family  $\cP$ of $k^{2}$ polynomials $p_{1},....,p_{k^{2}}$
in $2k^{2}$
commuting variables $x_{1},....,x_{2k^{2}}$ as an {\bf nc} (non-commutative)
polynomial $p=p(X,Y)$ in two $k\times k$ matrices  $X$
and  $Y$ whose entries are the variables $x_j$ (without repetition).
For example,
 it is readily checked that if
\begin{eqnarray*}
p_1(x_1,\ldots,x_8)&=&x_1^2+x_2x_3+x_1x_5+x_2x_7\\
p_2(x_1,\ldots,x_8)&=&x_1x_2+x_2x_4+x_1x_6+x_2x_8\\
p_3(x_1,\ldots,x_8)&=&x_1x_3+x_3x_4+x_3x_5+x_4x_7\\
p_4(x_1,\ldots,x_8)&=&x_2x_3+x_4^2+x_3x_6+x_4x_8,
\end{eqnarray*}
then
$$
\begin{pmatrix}p_1&p_2\\ p_3&p_4\end{pmatrix}=X^2+XY\quad\textrm{with}\
X=\begin{pmatrix}x_1&x_2\\ x_3&x_4\end{pmatrix}\ \textrm{and}\
Y=\begin{pmatrix}x_5&x_6\\ x_7&x_8\end{pmatrix}.
$$

The {\bf main objectives} of this paper are to:
\begin{enumerate}
\item[(1)] {\it Present a number of
conditions that a given
set of polynomials
$$
p_1(x_1,\ldots,x_{2k^2}),\ldots,   p_{k^2}(x_1,\ldots,x_{2k^2})
$$
in $2k^2$ commuting variables $x_1,\ldots,x_{2k^2}$ must  satisfy
in order for it to admit an nc representation $p(X,Y)$.}
\vspace{2mm}
\item[(2)] {\it Present a number of procedures
 for recovering such representations, when they exist}.
\end{enumerate}

To formally describe the problem
we shall say that a
family $\cP$ of $k^2$ polynomials
$$
p_1=p_1(x_1,\ldots,x_{2k^2}),\ldots, p_k=p_{k^2}(x_1,\ldots,x_{2k^2}),
$$
in $2k^2$
commutative variables
$x_1,\cdots,x_{2k^2}$, admits a \df{ nc representation}
if there exists a pair of $k\times k$  matrices $X$ and $Y$ and an nc
polynomial $p$
in two nc variables such that

\begin{align}\label{eq1:16jan12}
&X=\left(
                    \begin{array}{ccccc}
                      x_{\sigma(1)} & x_{\sigma(2)} & . & . & x_{\sigma(k)} \\
                      . & . &  &  & . \\
                      . & . & . &  & . \\
                      . & . & . & . & . \\
                      x_{\sigma(k(k-1)+1)} & x_{\sigma(k(k-1)+2)} & . & .
& x_{\sigma(k^{2})} \\
                    \end{array}\right), \\
             & Y=\left(
                    \begin{array}{ccccc}
                      x_{\sigma(k^{2}+1)} & x_{\sigma(k^{2}+2)} & . & .
& x_{\sigma(k^{2}+k)} \\
                      . & . &  &  & . \\
                      . & . & . &  & . \\
                      . & . & . & . & . \\
                      x_{\sigma(k^{2}+k(k-1)+1)}
& x_{\sigma(k^{2}+k(k-1)+2)} & . & . &
                      x_{\sigma(2k^{2})}\end{array} \right)
\end{align}

\noindent
and
\begin{equation}
\label{eq:oct18h11}
                  p(X,Y)=
                   \left(
                    \begin{array}{ccccc}
                      p_{\lambda(1)} & p_{\lambda(2)} & . & .
& p_{\lambda(k)} \\
                      . & . &  &  & . \\
                      . & . & . &  & . \\
                      . & . & . & . & . \\
                      p_{\lambda(k(k-1)+1)}& p_{\lambda(k(k-1)+2)} & .
& . & p_{\lambda(k^{2})} \\
                    \end{array}
                  \right),
\end{equation}
where $\sigma$  is a permutation of the set of integers
$\{1,\ldots,2k^2\}$  and $\lambda$ is a permutation
of the set of integers $\{1,\ldots,k^2\}$.

\subsection{Our Algorithms}

The main contribution of this paper is to introduce a
collection of algorithms for solving the nc polynomial representation problem.
Since they are long, full descriptions are postponed to
the body of the paper. However, we shall try to present their
flavor in this subsection.

The algorithms
are based on the analysis
of the patterns of one letter words, two letter words and some
three letter words in the given family of polynomials. Thus, for example,
if the one letter word
$7x_5^n$ occurs in one of the polynomials, and the family admits an
nc representation $p(X,Y)$, then $x_5$ must be a diagonal entry
of either $X$ or $Y$ and $k-1$ of the other polynomials will contain either
exactly one or two one letter words of the same degree with the
coefficient  $7$. If there are no one letter words, then the analysis is
more delicate; see Sections \ref{sec:nov25e11} and \ref{sec:25f11}.

If the diagonal variables are determined successfully, then subsequent
algorithms serve to \df{partition} the remaining $2k^2-2k$ variables between
$X$ and $Y$ and then to \df{position} them within these matrices.
En route the $k^2$ polynomials are arranged in an appropriate order
in a $k\times k$ array.
The final step is to obtain
the nc polynomial; this is done by matching coefficients as in Example \ref{ex1:sep4:2011}
below.

Most families of polynomials containing $k^2$ polynomials
in $2k^2$ variables will not have nc representations.  Either
there will be no way of partitioning and positioning the variables that is
consistent with the family or there will be no choice of coefficients
that work.

\subsubsection{Examples}

 The next examples serve to illustrate
some of the structure one sees in families of polynomials $\cP$ that
admit an nc representation and how it corresponds to
diagonal determination, positioning and partitioning.

\begin{example}\label{ex1:sep4:2011}
{\it
The family of polynomials
\begin{align}
&p_1= 3x_2x_4+3x_4x_8+x_2x_3+x_4x_7+6x_1x_3+6x_3x_7+x_1x_4+x_3x_8 \nonumber\\
&p_2=3x_2x_6+ 3x_6x_8+x_1x_6+ x_5x_8+6x_1x_5+6x_5x_7+x_2x_5+x_6x_7 \nonumber\\
&p_3=3x_2^2+2x_1x_2+3x_4x_6+x_4x_5+x_3x_6+6x_1^2+6x_3x_5\nonumber \\
&p_4= 3x_4x_6+3x_8^2+2x_7x_8+x_3x_6+6x_3x_5+6x_7^2+x_4x_5 \nonumber
\end{align}
admits an nc representation.}
\end{example}
\bigskip

\noindent
{\bf Discussion:}
If the given family of polynomials admits
an nc representation $p(X,Y)$, then it must admit at least one
representation of the form
\begin{equation}
\label{eq:oct18c11}
p(X,Y)=aX^2+bXY+cYX+dY^2
\end{equation}
for some choice of $a,b,c,d\in\RR$, since $p_1,\ldots,p_4$ are
homogeneous of degree two. Moreover, as
$$
p_3=3x_2^2+6x_1^2+2x_1x_2+\cdots\quad\textrm{and}\quad
p_4=3x_8^2+6x_7^2+2x_7x_8+\cdots
$$
and there are no other one letter words in the family $\cP$, it is not hard
to see (as we shall clarify later in more detail in \S \ref{sec:nov25b11}) that
$x_2$ and $x_8$ are diagonal entries in one of the matrices, say $X$, and
correspondingly $x_1$ and $x_7$ are diagonal entries in the other matrix, $Y$.
Moreover, since $x_2$ and $x_6$ are in $p_3$, whereas $x_7$ and $x_8$
are in $p_7$, it follows that if an nc representation exists,
then, either
\begin{equation}
\label{eq:oct18a11}
X=\begin{pmatrix}x_2&?\\?&x_8\end{pmatrix},\quad
Y=\begin{pmatrix}x_1&?\\?&x_7\end{pmatrix}\quad\textrm{and}\quad
p(X,Y)=\begin{pmatrix}p_3&?\\?&p_4\end{pmatrix},
\end{equation}
or
\begin{equation}
\label{eq:oct18b11}
X=\begin{pmatrix}x_8&?\\?&x_2\end{pmatrix},\quad
Y=\begin{pmatrix}x_7&?\\?&x_1\end{pmatrix}\quad\textrm{and}\quad
p(X,Y)=\begin{pmatrix}p_4&?\\?&p_3\end{pmatrix},
\end{equation}
and, in (\ref{eq:oct18c11})  we must have
$$
a=3,\quad (b+c)=2\quad\textrm{and}\quad d=6.
$$
We shall assume that (\ref{eq:oct18a11}) holds; the other possibility
may be treated similarly.

The next step is to try to {\bf partition} the remaining variables between $X$
and $Y$.  Towards this end
it is useful to note that if
$$
X=\begin{pmatrix}x_2&x_a\\x_b&x_8\end{pmatrix}\quad \textrm{then}\quad
X^2=\begin{pmatrix}x_2^2+x_ax_b& \cdot\\\cdot&\cdot\end{pmatrix}
$$
and hence (since we are assuming that (\ref{eq:oct18a11}) is in force)
that $p_3$ must contain a term of the form $3x_ax_b$. Comparison
with the given polynomial $p_3$ leads to the conclusion that
$x_4$ and $x_6$ belong to $X$. Let us arbitrarily {\bf position} $x_6$ as the
$12$ entry of $X$ and $x_4$ as the $21$ entry of $X$. Then
$$
X=\begin{pmatrix}x_2&x_6\\x_4&x_8\end{pmatrix},\quad \textrm{and}\quad
X^2=\begin{pmatrix}x_2^2+x_6x_4& x_2x_6+x_6x_8\\ \cdot&\cdot\end{pmatrix}.
$$
The $11$ entry of $X^2$ provides no new information, but comparison of the
$12$ entry with the given polynomials leads to the conclusion that if the
given family admits an nc representation, then
$p_2$ must sit in the $12$ position in $p(X,Y)$ and hence
$$
p(X,Y)=\begin{pmatrix}p_3&p_2\\p_1&p_4\end{pmatrix}.
$$
Similarly,
$$
Y=\begin{pmatrix}x_1&x_c\\x_d&x_7\end{pmatrix}\Longrightarrow
Y^2= \begin{pmatrix}\cdot&x_1x_c+x_cx_7\\\cdot&\cdot\end{pmatrix},
$$
which upon comparison with the entries in $p_2$ leads to the conclusion that
$x_c=x_5$. Therefore, $x_d=x_3$. Comparison of
$$
3X^2+bXY+cYX+6Y^2\quad\textrm{with}\quad \begin{bmatrix}p_3&p_2\\ p_1&p_4
\end{bmatrix}
$$
implies further that equality will prevail
if and only $b=c=1$ (i.e., $a=3$, $b=c=1$ and
$d=6$ in (\ref{eq:oct18c11})).

\medskip

The example illustrates the strategy of first determining
which variables occur on either the  diagonal of $X$ or of  $Y$.
In general, if the given family $\cP$ admits an nc representation
$p(X,Y)$ of degree $d$ and if
$$
p(X,Y)=aX^n+bY^n+\cdots \quad\textrm{with $\vert a\vert+\vert b\vert>0$}
$$
(and no other nonzero multiples of $X^n$ and $Y^n$) for some positive integer
$n\ge 2$, then:
\begin{itemize}
\item[\rm(1)] if $b=0$ (resp., $a=0$), there will be
exactly $k$ one letter words of degree $n$ with coefficient $a$ (resp., $b$);
\vspace{2mm}
\item[\rm(2)] if $ab\ne 0$ and $a\ne b$, there will be exactly $k$ one
letter words of degree $n$ with coefficient $a$
and exactly $k$ one letter words of degree $n$ with coefficient $b$;
\vspace{2mm}
\item[\rm(3)] if $a=b$, there will be exactly $2k$ one letter words of
degree $n$ with coefficient $a$
\end{itemize}
Example \ref{ex1:sep4:2011} fits into setting (2).

So far we have focused on how we can use one letter words
occurring in polynomials in $\cP$.
A substantial part of this paper is also devoted to
developing
procedures for finding the diagonal variables
that are based on patterns in  two and (some) three letter words. The latter come into play
if there are no one letter words
to partition the diagonal variables between $X$ and $Y$.

\subsection{Effectiveness of our algorithms}

Our algorithms depend upon the existence of appropriate patterns of one, two
and some three letter words in the $k^2$ polynomials in the given family.
We shall show that these algorithms are effective generically, i.e.,
they are effective on an open dense set of the set of polynomials that
admit nc representations.

\subsubsection{General Results}

Let $\cW$ be the space of nc polynomials in two variables of degree $d$.
We say that a subspace $\mathcal{U}$ of $\cW$ is of degree $d$
if the maximum degree of the basis elements
of $\cU$ is $d$.

\begin{theorem}\label{genTheorem}
Let $p_1,\ldots,p_{k^2}$ be a family $\cP$ of polynomials
in $2k^2$ commuting variables $x_1,\ldots,x_{2k^2}$ of
degree $d>3$ and let $\mathcal{U}$ be a subspace of $\cW$
of degree $d$.
Then there exists an open dense subset $\mathcal{S}$
of $\mathcal{U}$ for which the
algorithms developed in this paper determine an
nc representation $p \in \mathcal{S}$ for $\cP$
if and only if $\cP$ has an nc representation $p \in \mathcal{S}$.
If such a representation exists, then  these  algorithms may be used
to construct it.
\end{theorem}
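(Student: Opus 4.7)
The plan is to construct the open dense subset $\cS \subset \cU$ as the intersection of finitely many open dense subsets, each one corresponding to a condition under which a single step of the paper's algorithms succeeds. The steps are diagonal determination, partitioning of off-diagonal variables between $X$ and $Y$, positioning within $X$ and $Y$, and finally the coefficient-matching that recovers $p$. For each step I would identify an algebraic condition on the coordinate coefficients of $p$ (with respect to any fixed basis of $\cU$) which, when satisfied, guarantees that step succeeds. Showing that each such condition defines an open dense subset of $\cU$ reduces to checking that the complementary ``bad'' set is contained in a proper Zariski-closed subvariety, hence closed and nowhere dense; a finite intersection of such subsets is again open and dense.

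First I would parameterize $p\in\cU$ by its coefficient vector, and then examine how each pattern invoked by the algorithms — multiplicities of one-letter words of a given degree, pairings of two-letter words, the selected three-letter configurations — translates into a finite list of polynomial (in)equalities in these coefficients. Failures of the diagonal algorithm, for example, correspond to accidental coincidences of leading coefficients or cancellations between monomials, each of which is cut out by a polynomial equation in the coefficients of $p$. Similarly, the partitioning and positioning arguments exhibited in Example \ref{ex1:sep4:2011} succeed whenever certain small subsystems in the coefficients have full rank and certain mixed products are nonzero, and the coefficient-matching step is nothing more than solving a linear system which is uniquely soluble on the complement of a determinantal variety.

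To handle this uniformly I would proceed through the trichotomy (1)--(3) that precedes the statement of Theorem \ref{genTheorem}. When the leading homogeneous component of $p$ contains a nonzero multiple of $X^{n}$ or $Y^{n}$, the one-letter word procedure applies and genericity reduces to demanding that the relevant leading coefficients be nonzero and appropriately distinct, and that the coefficients used downstream for partitioning and positioning not satisfy any of the finitely many degeneracy relations. If no one-letter words are present, I would invoke the two- and three-letter word procedures from Sections \ref{sec:nov25e11} and \ref{sec:25f11}, again reducing success to a finite list of polynomial nonvanishing conditions.

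The main obstacle will be verifying that each of these conditions actually cuts out a \emph{proper} subvariety of $\cU$, i.e.\ that the ``bad'' polynomial is not identically zero on $\cU$. A convenient way to do this is to exhibit, for each condition, a witness $p^{*}\in\cU$ with sufficiently generic coefficients at which the condition holds; by the basis assumption on $\cU$ one may construct $p^{*}$ by perturbing coefficients within $\cU$ freely. The hypothesis $d>3$ enters essentially here, since it provides enough latitude in the higher-degree words to build up patterns that separate diagonal entries, break ties between $X$-words and $Y$-words, and render the final linear system nonsingular. Once such a witness is produced for every condition in the (finite) list, the theorem follows by taking $\cS$ to be the intersection of the corresponding open dense loci.
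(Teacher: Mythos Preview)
Your overall strategy---show that each algorithmic step fails only on a proper Zariski-closed subset of $\cU$, then intersect the complements---is sound and is indeed the mechanism underlying the paper's proof. However, the paper's argument is organized differently and more concretely, and the difference matters.

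The paper does not attempt to verify the finitely many nonvanishing conditions uniformly over all $\cU$. Instead it first fixes a basis monomial $m_{\alpha,\beta}(X,Y)$ of degree exactly $d$ in $\cU$ and then performs a four-way case split on the pair $(|\alpha|,|\beta|)$: (1) one of them is $d$; (2) both are positive with $\big||\alpha|-|\beta|\big|\ge 2$; (3) $|\alpha|=|\beta|$; (4) one of them equals $1$. For each case the paper points to a specific, already-established $NC$ class (e.g.\ $NC_{(\ref{eq:nov6a11})}$, $NC_{(\ref{eq:nov3a11})}$, $NC_{(\ref{eq:nov3c11})}$, $NC_{(\ref{eq:nov17a11})}$, $NC_{(\ref{eq:dec9a11})}$) on which the relevant algorithm is proved to succeed, and observes that the defining inequality constraints of that class intersect $\cU$ in an open dense set. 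The ``trichotomy (1)--(3)'' you cite from before the theorem is a classification of one-letter-word patterns, not the case split actually used in the proof.

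This distinction is not merely stylistic. Your witness argument requires knowing \emph{which} polynomial conditions to witness, and that depends on $\cU$: if $\cU$ contains no $X^n$ or $Y^n$ term then $\varphi(n,0)$ and $\varphi(0,n)$ vanish identically on $\cU$, so the one-letter conditions can never hold and one must instead witness the two-letter conditions appropriate to whichever degree-$d$ monomial $\cU$ does contain. The paper's case split on $(|\alpha|,|\beta|)$ is precisely what selects the correct $NC$ class for a given $\cU$, and the hypothesis $d>3$ is used to guarantee that in cases (2)--(4) the exponents are large enough (e.g.\ $s\ge t+2\ge 4$, or $r\ge 2$ in the $s=t$ case) for the cited two-letter propositions to apply. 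Your sketch would become a complete proof once you replace the abstract witness step with this explicit case analysis and cite the corresponding $NC$ classes.
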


\begin{proof}
The proof is postponed until Section~\ref{sec4:29nov11}.
\end{proof}

Example \ref{ex:oct19a11}, below,
exhibits a family $\cP$ with an nc  representation $p$
for which the algorithms do not work.
However, perturbing $p$ gives $\tilde \cP$ for which
they do.

\begin{example}\label{ex:oct19a11}
Suppose that we are given the list of polynomials
\begin{align}
&p_1= x_2x_4+x_4x_8+x_2x_3+x_4x_7+x_1x_3+x_3x_7+x_1x_4+x_3x_8 \nonumber\\
&p_2=x_2x_6+ x_6x_8+x_1x_6+ x_5x_8+x_1x_5+x_5x_7+x_2x_5+x_6x_7 \nonumber\\
&p_3=x_2^2+2x_1x_2+x_4x_6+x_4x_5+x_3x_6+x_1^2+x_3x_5\nonumber \\
&p_4= x_4x_6+x_8^2+2x_7x_8+x_3x_6+x_3x_5+x_7^2+x_4x_5 \nonumber
\end{align}
\end{example}
\bigskip

\noindent  {\bf Discussion}
If the given family admits an nc representation $p(X,Y)$,
then it is readily seen from the one letter words in the family $\cP$ that
it must be of the form (\ref{eq:oct18c11}) with $a=d=1$ and that $x_1$,
$x_2$, $x_7$ and $x_8$ are diagonal variables.

However, it is impossible to
decide on the basis of one letter words which of these variables belong to $X$
and which belong to $Y$. The most that we can say so far is that
$x_1$, $x_2$ and $p_3$ must lie in the same diagonal
position, and hence $x_7$, $x_8$ and $p_4$ are in the other diagonal
position. Thus, we may assume that $x_2$ is in the $11$ position of $X$,
$x_1$ is in the $11$ position of $Y$. But it is still not clear how to
allocate $x_7$ and $x_8$.

It is readily seen that $p(X,Y)=X^2+XY+YX+Y^2$ is an
nc representation for the family of polynomials in Example \ref{ex:oct19a11}.
Thus, $p(X,Y)$ is contained in the subspace
$\mathcal{U}$ of nc polynomials defined by
$$
\cU = \{ aX^2+bXY+cYX+dY^2:\, a,b,c,d\in\RR\},
$$
and, although our algorithms are not effective on
the entire subspace $\mathcal{U}$, they do work on the
(open dense)  subset  $\mathcal{S}$ of
$\cU$ consisting of polynomials in $\cU$ for which
 $a\ne d$ and $2a \ne b$.
\qed

\subsubsection{More detailed statements}
\label{subs:mdr}

Our main theorems on algorithm effectiveness  are more detailed than
Theorem \ref{genTheorem}.
 These theorems and
a number of our algorithms
depend in part on the coefficients of the terms in the
\df{commutative collapse}
$\what p$ of an nc polynomial $p(X,Y)$, which is defined as the
commutative polynomial
$$ \what p(x,y) = p(xI, yI).$$
In particular, if
$\varphi(i,j)$ is the sum of the coefficients of the terms in the
nc polynomial
$p(X,Y)$ of degree $i$ in $X$ and degree $j$ in $Y$,
then $\varphi(i,j)$ is
the coefficient of $x^iy^j$ in $\what{p}(x,y)$.

We shall also need
the following more refined quantities:
\medskip

\noindent
$\varphi(i,j;X)$ (resp. $\varphi(i,j;Y)$) denotes the sum of the coefficients of the terms in the
nc polynomial
$p(X,Y)$ of degree $i$ in $X$ and degree $j$ in $Y$ that end in $X$ (resp.
end in  $Y$);
\ss

\noindent
$\varphi(X;i,j)$ (resp. $\varphi(Y;i,j)$) denotes the sum of the coefficients of the terms in the
nc polynomial
$p(X,Y)$ of degree $i$ in $X$ and degree $j$ in $Y$ that begin with $X$ (resp. begin with $Y$).

Thus, for example, if
$$
p(X,Y)=aX^2YXY+bXYXYX+cYXYX^2+dYX^3Y,
$$
then
$$
\varphi(X;3,2)= a+b,\quad        \varphi(Y;3,2)=c+d,\quad
\varphi(3,2;X)= b+c\quad\textrm{and}\quad\varphi(3,2;Y)=a+d.
$$
Clearly
$$
\varphi(i,j)=\varphi(X;i,j)+\varphi(Y;i,j)=\varphi(i,j;X)+\varphi(i,j;Y).
$$

The next theorem provides some insight into our one letter algorithms.
There is an analogous result for two letter words:
Theorem \ref{thm:2varMain}, which will be presented in Section
\ref{sec4:29nov11}

\begin{theorem}
\label{thm:nov25a11}
Let $p_1,\ldots,p_{k^2}$ be a family $\cP$ of polynomials in $2k^2$ commuting
variables $x_1,\ldots,x_{2k^2}$ and let $\mathcal{W}$ denote the set of
nc polynomials $p(X,Y)$ of degree $d>1$ such that there exists
an integer $t \ge 2$ for which
 $$\varphi(t,0) \ne 0, \quad  \varphi(0,t) \ne 0, \quad
  \varphi(t,0) \ne \varphi(0,t)$$
and that additionally satisfies one of the following properties:
\begin{itemize}
\item[(1)] $t\varphi(t,0) \ne \varphi(t-1,1)$
\item[(2)] $t\varphi(0,t) \ne \varphi(1,t-1)$
\item[(3)] $\varphi(t-1,1;Y) \ne 0$ and $\varphi(t,0) \ne \varphi(t-1,1;Y)$
\item[(4)] $\varphi(Y; t-1,1) \ne 0$ and $\varphi(t,0) \ne \varphi(Y; t-1,1)$
\item[(5)] $\varphi(1,t-1;X) \ne 0$ and $\varphi(0,t) \ne \varphi(1,t-1;X)$
\item[(6)] $\varphi(X; 1,t-1) \ne 0$ and $\varphi(0,t) \ne \varphi(X; 1,t-1).$
\end{itemize}

\noindent
Then the one letter algorithms stated in Section~\ref{finalresultone} determine that $\cP$
admits an
nc representation $p(X,Y)$ in the class $\cW$ if and only if
$\cP$ has a representation in this class.
If such a representation exists, then these algorithms can  be used to
construct it.
\end{theorem}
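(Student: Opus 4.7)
The plan is to prove both directions: if $\cP$ admits an nc representation $p(X,Y) \in \cW$, the one-letter algorithms recover it; if no such representation exists, the algorithms correctly report failure. The key observation driving the argument is that a pure one-letter commutative word $x_i^t$ can appear in an entry of $p(X,Y)$ only in a diagonal position $(j,j)$, and only when $x_i$ is itself a diagonal matrix variable $X_{jj}$ or $Y_{jj}$. Indeed, the only nc monomials producing pure commutative powers of a single variable are $X^t$ and $Y^t$, and inside these matrices the only lattice path producing $X_{jj}^t$ (respectively $Y_{jj}^t$) is the constant walk at node $j$.

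Given this, I would first exploit the hypotheses $\varphi(t,0)\ne 0\ne\varphi(0,t)$ and $\varphi(t,0)\ne\varphi(0,t)$ to run the diagonal-detection subroutine. The coefficient of $x_i^t$ inside $p_\ell$ equals $\varphi(t,0)$ when $x_i$ is a diagonal entry of $X$ located in the same diagonal slot as $p_\ell$, equals $\varphi(0,t)$ when $x_i$ is the corresponding diagonal entry of $Y$, and is zero otherwise. Since the two nonzero values are distinct, the algorithm unambiguously (i) identifies which $p_\ell$ are diagonal polynomials of $p(X,Y)$, (ii) partitions the $2k$ diagonal variables between $X$ and $Y$, and (iii) pairs each $X$-diagonal with its corresponding $Y$-diagonal by co-occurrence inside a single diagonal polynomial.

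Next I would invoke the appropriate condition among (1)--(6). Each condition isolates a coefficient comparison enabling the algorithm to partition and position the remaining $2k^2-2k$ variables. For instance, condition (1) makes a specific two-letter coefficient signature attached to a diagonal variable together with an adjacent off-diagonal variable distinguishable from the signature $\varphi(t-1,1)$ borne by a diagonal-by-diagonal pair, thereby resolving whether a candidate neighbour belongs to $X$ or to $Y$; conditions (2)--(6) play the symmetric role when the discriminating signature comes from the other pure power or must be detected through nc words that begin or end with a specified letter. With partitioning and positioning in hand, the remaining coefficients of $p(X,Y)$ are recovered by a linear system matching commutative coefficients in $\what p$ against the given $p_\ell$. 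The ``only if'' direction then follows from the verification step built into the algorithm, which checks that the constructed $p(X,Y)$ actually reproduces $\cP$ and so cannot return a spurious representation.

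The main obstacle is the case analysis in the positioning phase. Each of the six conditions powers a distinct sub-algorithm, and for each one must verify that no combination of higher-degree nc monomials can introduce cancellations capable of masking the coefficient distinction supplied by that condition. Tracking these cancellations, and confirming that every combinatorially possible alternative assignment of off-diagonal variables is ruled out by subsequent coefficient checks, is the technically heaviest piece of the argument.
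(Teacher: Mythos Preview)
Your proposal follows the same architecture as the paper, whose own proof of this theorem is simply a pointer to Theorems~\ref{thm:nov6a11}, \ref{thm:nov6b11} and Remark~\ref{rem:nov10a11}; those in turn package the pipeline you describe (diagonal detection from pure powers, partitioning under one of the six conditions, positioning, and the linear NcCoef solve).

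One inaccuracy is worth correcting. In your account of condition~(1) you say the comparison is against ``the signature $\varphi(t-1,1)$ borne by a diagonal-by-diagonal pair.'' That is not the mechanism. Condition~(1) drives the DiagPar1 routine, which is a \emph{substitution} test: set the $k$ diagonal entries of $X$ to $\alpha$, one unplaced variable $x_j$ to $\beta$, and everything else to zero. If $x_j$ is an off-diagonal entry of $X$, exactly one of the polynomials becomes $t\,\varphi(t,0)\,\alpha^{t-1}\beta$; if $x_j$ is an off-diagonal entry of $Y$, that polynomial is instead $\varphi(t-1,1)\,\alpha^{t-1}\beta$. The inequality $t\varphi(t,0)\ne\varphi(t-1,1)$ is precisely what separates these two cases; diagonal entries of $Y$ are recognised by a different zero-count. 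So $\varphi(t-1,1)$ is the signature of an off-diagonal $Y$-variable, not of a diagonal pair. Conditions~(3) and~(4) power a genuinely different routine (DiagPar2), which \emph{does} read two-letter coefficients $\varphi(t,0)$ versus $\varphi(t-1,1;Y)$ or $\varphi(Y;t-1,1)$ directly from the off-diagonal polynomials; conditions~(2), (5), (6) are the $X\leftrightarrow Y$ mirrors. With this correction your outline matches the paper's.
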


\begin{proof}
See Theorems \ref{thm:nov6a11}, \ref{thm:nov6b11} and Remark
\ref{rem:nov10a11}.
\end{proof}

The long list of caveats looks confining,
but they are all strict inequality constraints
and so {\it hold generically}.

\subsection{Uniqueness }

The issue of uniqueness of an nc representation
is of interest in its own right.
We shall see is that while the
representation $p(X,Y)$ is  highly non-unique,
the  arrangement of commutative variables
$x_j$ in the matrices $X$ and $Y$ is determined
up to permutations, transpositions and interchanges of $X$
and $Y$.

\subsubsection{Polynomial identities and non-uniqueness of $p$}
\label{sec1:22jan12}
A basic theorem in the
theory of rings with polynomial identities implies that if $\Sigma_{2k}$
denotes the set of all permutations of the
set $\{1,\ldots,2k\}$ for each positive
integer $k$, then the polynomial
\begin{equation}
\label{eq:oct27z11}
q(X_1,\ldots,X_{2k})=
\sum_{\sigma\in\Sigma_{2k}}sgn( \sigma)
X_{\sigma(1)}\cdots X_{\sigma(2k)}=0
\end{equation}
for every choice of the $k\times k$ matrices $X_1,\ldots,X_{2k}$ in
$\RR^{k\times k}$. Thus, if $X$ and $Y$
are arbitrary real $k\times k$ matrices and if
\begin{equation}
\label{eq:oct27x11}
p(X,Y)\stackrel{def}{=}q(X_1,\ldots,X_{2k})
\end{equation}
with
\begin{equation}
\label{eq:oct27y11}
X_j=[X^j,Y]\quad\textrm{for $j=1,\ldots,2k$},
\end{equation}
then $p(X,Y)=0$; see \cite{l50} and \cite{al50} for additional information.

Any other replacement of $X_j$ in (\ref{eq:oct27z11}) by a polynomial in
$X$ and $Y$ will also yield a polynomial $p(X,Y)=0$. However, the
choice in (\ref{eq:oct27y11}) will have nonzero coefficients.
In particular this means that if a given family $\cP$ of polynomials admits
an nc representation, then it admits infinitely many
nc representations.

If $k=2$, for example, the nc polynomials
\beq
\label{eq:introex2}
YXY^{2}X+Y^{2}X^{2}Y+YX^{2}YX+XY^{2}X^{2}
+XYXY^{2}+X^{2}YXY
\end{equation}
and
\beq
\label{eq:oct26a12}
Y^{2}XYX+YX^{2}Y^{2}+XY^{2}XY+YXYX^{2}+XYX^{2}Y+X^{2}Y^{2}X
\end{equation}
generate the same family regardless of how
the commutative variables $x_1,\cdots, x_{8}$ are partitioned between
$X$ and $Y$ and positioned. 

This stems from the fact that the difference between the nc polynomial in (\ref{eq:introex2}) and the nc polynomial in (\ref{eq:oct26a12}) is equal to
the commutator
$$
[Y-X, \; (XY-YX)^2]=(Y-X)(XY-YX)^2-(XY-YX)^2(Y-X)=0,
$$
since for $2\times 2$ matrices 
$$
X =\left(\begin{array}{cc}
x_{1}& x_{2} \\
x_{3} & x_{4} \\
\end{array}
\right)\quad\textrm{and}\quad
Y=\left(\begin{array}{cc}
x_{5}& x_{6} \\
x_{7} & x_{8} \\ \end{array}\right),
$$
the polynomial $(XY-YX)^2$ has the special form 
\begin{equation}
\label{eq:oct26b12}
p(X,Y)=[X,Y]^2=(XY-YX)^2=
\begin{pmatrix}p(x)&0\\0&p(x)\end{pmatrix}.
\end{equation}
This is well known by the experts in matrix identities, and it is easily verified by direct calculation that 
\begin{multline*}
p(x)=x_{2}^{2}x_{7}^{2}-x_{2}\{x_{3}[2x_{6}x_{7}+(x_{5}-x_{8})^{2}]-
(x_{1}-x_{4})x_{7}(x_{5}-x_{8})\}\\
+x_{6}\{x_{1}x_{3}x_{5}-x_{3}x_{4}x_{5}+x_{3}^{2}x_{6}
-x_{1}^{2}x_{7}+2x_{1}x_{4}x_{7}-x_{4}^{2}x_{7}+x_{3}(-x_{1}+x_{4})x_{8}\},
\end{multline*}
Thus, the polynomial $p(X,Y)$ in (\ref{eq:oct26b12}) is an example of a homogeneous nc polynomial that
 produces a family $\cP$   
with some of the polynomials in
$\cP$  equal to zero
and some not. We shall see later that the fact that the degrees of the polynomials in $\cP$ are either $4$ or $0$ is  consistent with Lemma \ref{lem:monHomog}.

If $p(X,Y)=(X+Y)^n$ for some positive integer $n$, then it is
impossible to determine which variables belong to $X$ and which variables
belong to $Y$.

The theorems presented later in the paper that validate
our algorithms, e.g., Theorem \ref{thm:nov25a11},
 have hypotheses
that exclude cases like \eqref{eq:introex2}.

\subsubsection{Uniqueness of $X,Y$}

We just saw
 that an nc representation for $\cP$ is highly non-unique,
however, the pair $X,Y$ in such representations
 is generically very tightly determined.  This is
indicated by the following theorem.

\begin{theorem}
\label{cor:uniqIntro}
If $\cP$ is a family of polynomials $p_1, \cdots, p_{k^2}$ in the
commutative variables $x_1,\ldots, x_{2k^2}$  that admits two
nc representations $p(X,Y)$ and ${\wtilde p}(\wtilde{X},\wtilde{Y})$ that
satisfy the conditions of
Theorem~\ref{thm:nov25a11},
then there exists a permutation matrix $\Pi$ such that one of the following
must hold:\\
\begin{eqnarray}\label{eq:feb23a12}
\begin{tabular}{ll}
${\bf (1)}\quad X =\Pi^T\wtilde{X}\Pi, $&$Y = \Pi^T\wtilde{Y}\Pi,$\\
${\bf (2)}\quad X= \Pi^T\wtilde{X}^T\Pi,$&$Y=\Pi^T\wtilde{Y}^T\Pi, $ \\
${\bf (3)}\quad X= \Pi^T\wtilde{Y}\Pi,$ &$Y=\Pi^T\wtilde{X}\Pi, $\\
${\bf (4)}\quad X= \Pi^T\wtilde{Y}^T\Pi, $&$Y= \Pi^T\wtilde{X}^T\Pi. $\\
\end{tabular}
\end{eqnarray}
\end{theorem}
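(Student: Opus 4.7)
The plan is to argue that any two representations must arise from the same data extracted by the one- and two-letter-word analyses of Theorem~\ref{thm:nov25a11}, with the four listed symmetries corresponding precisely to the only genuine choices left unresolved by that data. I will proceed in three stages: diagonal identification, diagonal alignment via a permutation, and off-diagonal resolution up to a global transpose.

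\emph{Stage 1 (diagonals, modulo the $X\leftrightarrow Y$ swap).} The coefficient of $x_j^t$ in any $p_i\in\cP$ is intrinsic to $\cP$. If $x_j$ is a diagonal entry of $X$ in a representation, this coefficient must equal $\varphi(t,0)$; if a diagonal entry of $Y$, it must equal $\varphi(0,t)$; otherwise it is $0$. By the hypotheses $\varphi(t,0),\varphi(0,t)\neq 0$ and $\varphi(t,0)\neq\varphi(0,t)$, the set of $j$ with $x_j^t$ appearing in $\cP$ partitions into exactly two nonempty subsets distinguished by these two coefficient values. Applying this to both $p$ and $\tilde p$, whose respective $\tilde\varphi$-values satisfy the same nondegeneracy, the two subsets are determined by $\cP$; the only ambiguity is which subset is labelled ``$X$'' versus ``$Y$,'' and this is exactly the $X\leftrightarrow Y$ interchange that distinguishes cases \textbf{(1),(2)} from \textbf{(3),(4)}. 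After renaming I may assume the diagonals of $X$ and $\tilde X$ share the same underlying variables, and likewise for $Y$ and $\tilde Y$.

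\emph{Stage 2 (alignment via $\Pi$).} Each diagonal variable $x_j$ of $X$ lies in exactly one polynomial of $\cP$ carrying the term $\varphi(t,0)\,x_j^t$, and the representation structure forces that polynomial to occupy the same $(\ell,\ell)$ position of $p(X,Y)$ as $x_j$ does in $X$. Since this pairing depends only on $\cP$, the same bijection applies in the $\tilde p$ representation, producing a permutation $\pi$ of $\{1,\dots,k\}$ such that the diagonal of $\Pi^T\tilde X\Pi$ matches that of $X$ entrywise (with $\Pi$ the matrix of $\pi$), and similarly for $\Pi^T\tilde Y\Pi$ against $Y$. Replacing $(\tilde X,\tilde Y)$ by its conjugate by $\Pi$, I reduce to the case of identical diagonals and must now show either $\tilde X=X$, $\tilde Y=Y$ (case \textbf{(1)}) or $\tilde X=X^T$, $\tilde Y=Y^T$ (case \textbf{(2)}).

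\emph{Stage 3 (off-diagonals, modulo global transpose) and main obstacle.} With diagonals aligned, I examine the $(\ell,m)$ entry of $p(X,Y)$ for $\ell\neq m$ and extract two-letter words of the form $x_{\ell\ell}^{(X)^{t-1}}x_{\ell m}^{(X)}$, $x_{\ell m}^{(X)}x_{mm}^{(X)^{t-1}}$, and analogous words mixing an off-diagonal letter with the known $Y$-diagonal variables at positions $(\ell,\ell)$ and $(m,m)$. The contribution of $X^t$ to such words carries coefficient $t\varphi(t,0)$, while the contribution of $X^{t-1}Y$ carries $\varphi(t-1,1)$ (with finer refinements through $\varphi(t-1,1;Y)$, $\varphi(Y;t-1,1)$, etc.); conditions \textbf{(1)}--\textbf{(6)} are exactly the inequalities required to separate these coefficients and thus to tell whether a given off-diagonal variable belongs to $X$ or to $Y$ and to identify the unordered pair occupying positions $\{(\ell,m),(m,\ell)\}$. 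The residual freedom is a choice, for each such pair, of which variable sits at $(\ell,m)$ versus $(m,\ell)$. The crux of the argument, which I expect to be the main obstacle, is showing that this local freedom is forced to be \emph{globally consistent and synchronous across $X$ and $Y$}: the mixed words arising from $XY$ and $YX$ couple off-diagonals of $X$ with off-diagonals of $Y$ at compatible index pairs, so a local swap at a single pair $\{(\ell,m),(m,\ell)\}$ of $X$ without the matching swap in $Y$ would alter the $(\ell,m)$ entry of $p(X,Y)$ and hence contradict $\tilde p(\tilde X,\tilde Y)=p(X,Y)$ (as families). Consequently the two allowed configurations are the identity and the simultaneous transpose of $\tilde X$ and $\tilde Y$, giving precisely cases \textbf{(1)} and \textbf{(2)}, and combining with Stage~1 yields the full list \eqref{eq:feb23a12}.
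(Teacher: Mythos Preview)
Your Stages 1 and 2 are correct and mirror the paper's approach (the paper reduces Theorem~\ref{cor:uniqIntro} to Theorem~\ref{thm1:14dec11}, whose proof follows the same outline you sketch). The gap is in Stage~3, at precisely the point you yourself flag as the main obstacle.

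You correctly reduce to the situation where the diagonals of $X$ and $\tilde X$ coincide and, for each unordered position pair $\{(\ell,m),(m,\ell)\}$ with $\ell\neq m$, the two variables occupying those slots in $X$ (and likewise in $Y$) are determined as an unordered pair. What remains is to show that the $\binom{k}{2}$ binary choices are globally coherent: either all agree with $X$ or all agree with $X^T$. But the mechanism you invoke---mixed words from $XY$ and $YX$---as you describe it only shows that a swap in $X$ at a given pair $\{(\ell,m),(m,\ell)\}$ must be accompanied by the matching swap in $Y$ at the \emph{same} pair. It does not show that a swap at $\{(\ell,m),(m,\ell)\}$ forces a swap at a \emph{different} pair $\{(\ell,m'),(m',\ell)\}$. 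So you have not ruled out a $\tilde X$ obtained from $X$ by transposing only some position pairs (with $\tilde Y$ treated synchronously), and such a $\tilde X$ is not of any of the four listed forms.

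The paper fills this gap using three-letter monomials coming from $X^t$ alone (recall $\varphi(t,0)\neq 0$ is part of the hypothesis). The off-diagonal polynomial containing $\varphi(t,0)x_{i_s}^{t-1}x_u$ and $\varphi(t,0)x_ux_{i_t}^{t-1}$ also contains terms $\varphi(t,0)x_{i_s}^{t-2}x_wx_z$ with $x_w\in L(x_{i_s})\cap L(x_{i_r})$ and $x_z\in L(x_{i_r})\cap L(x_{i_t})$ for a third diagonal index $r$. Placing $x_u$ at position $(s,t)$ pins this polynomial to the $(s,t)$ slot of the array, which then forces $x_w$ into position $(s,r)$; placing $x_u$ at $(t,s)$ instead pins the polynomial to $(t,s)$ and forces $x_w$ into $(r,s)$. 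Thus a single initial binary choice propagates through the $L$-sets to every other position pair, yielding exactly the identity-versus-global-transpose dichotomy. Your $XY$/$YX$ coupling then correctly carries that dichotomy from $X$ over to $Y$.
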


\def\tX{{\wtilde{X}}}
\def\tY{{\wtilde{Y}}}

\begin{proof}
The proof is postponed until Section~\ref{sec:variations}.
\end{proof}

We shall say that the pairs $X,Y$ and $\tX,\tY$ are
{\bf permutation equivalent} if they are  related by any of
the four choices in
\eqref{eq:feb23a12}.

\subsection{Motivation}

The problem we study in the paper is undertaken primarily
for its own sake, however,
the original  motivation arose from the observation that
the running time for algebraic calculations on a large family of commutative
polynomials $\cP$ can be much longer than the corresponding calculation
on a small
family of nc polynomials representing $\cP$.
Such calculations can be done
using nc  computer algebra, for example NCAlgebra
or NCGB \cite{NCAlgebra}, which runs under Mathematica.

As an example, consider  computing Gr\"obner Bases,
a powerful but time consuming algebraic construction.
The reader does not need to know anything about them to
get the thrust of this example.
We have a list $P$ of nc polynomials
and run an nc Gr\'obner Basis algorithm on
\begin{align}
P= \{a^T  m + m^T a + m^T m,~
 a  w + w^T  w + w^T  a^T,~ m^T a  m,& \\
 m^T  a  w,~ m^T  a^T  m,~ w^T  a  w,~
 w^T  a^T  m,~ \ w^T  a^T w& \}.\nonumber
\end{align}
Using NCGB on a Macbook Pro it finished in  .007 seconds.
Now substitute two by two matrices
$$ a \to
\left(
\begin{array}{ccc}
 a_{11} &   a_{12}   \\
 a_{21} &   a_{22}
\end{array}
\right),
\qquad
w \to
\left(
\begin{array}{ccc}
 w_{11} &   w_{12}   \\
 w_{21} &   w_{22}
\end{array}
\right),
\qquad
m \to
\left(
\begin{array}{ccc}
 m_{11} &   m_{12}   \\
 m_{21} &   m_{22}
\end{array}
\right)
$$
with commuting entries
for the variables and run the  ordinary
Mathematica Gr\"obner Basis Command.
 The run took 161 seconds in  the most favorable
  monomial order that we tried.
 The  corresponding $3 \times 3$ matrix substitution
 yielded a Gr\"obner Basis computation which
 did not finish in  1 hour.
Calculation with
 higher order matrix substitutions  would be prohibitive.

 The
 NC Gr\"obner Basis and the commutative one contain different information.
 Namely, the NCGB determines membership in the two sided ideal
 $\cI_P$  generated by $P$ while
 the  commutative GB obtained from  ``generic'' $n\times n$
 matrix substitution determines membership in the ideal
 generated by $\cI_P + R_n$ where $R_n$ is the ideal
 of all nc polynomials which vanish on the $n \times n$ matrices.
 We would assert that the NCGB contains very valuable
 information (possibly more than in the $\cI_P + R_n$ case)
 and is readily obtained.
 In fact what brought us to the nc polynomial representation
question
 was the reverse side of this.
 To speed up nc GB runs we tried symbolic matrix substitutions
 in the hope that the commutative GBs would go quickly and as
 $n$ got bigger guide us toward an NCGB.
 This approach seems hopeless because of prohibitively long run times.

In special circumstances nc representations could
 have a significant advantage for numerical computation.
 In particular,
 the numerics for solving the second order polynomial (in matrices) equation,
called a Riccati equation, is highly developed.
Consequently, it would be very useful to be able to  replace a collection of conventional
polynomial equations by an nc representation.

Finally we mention that there is a burgeoning area devoted to extending
real  (and some complex) algebraic  geometry to free algebras.
Here one analyzes non-commutative polynomials
with properties determined by substituting in  square matrices of arbitrary size.
See the recent references \cite{BB08, BK10, dOHMVP, KV09, KS10}
and their extensive bibliographies.

\subsection{Computational Cost}
\label{sec:introcost}

The problem considered here can be attacked by ``brute force"
rather than by the methods developed in this paper. 
There are $(2k^2)!$ arrangements  
of the variables $x_1,\cdots,x_{2k^2}$ in $X$ and $Y$ 
and $(k^2)!$  arrangements 
of the polynomials $p_1,\cdots,p_{k^2}$ in $\cP$.
For a given arrangement $\sigma$ of the variables in $X$ and $Y$,
one obtains a matrix of commutative polynomials by
forming a general nc polynomial 
$p(X,Y)={\sum_{\alpha,\beta} c_{\alpha\beta}m_{\alpha\beta}}(X,Y)$ of degree $d$ as 
in (\ref{eq:sep19c10}) with undetermined
coefficients $c_{\alpha\beta}$ that are chosen to match the array determined by the arrangement 
$\lambda$ of $k^2$ polynomials, if possible. 
For each pair of arrangements $\sigma$ and $\lambda$, one attempts to 
solve for the coefficients $c_{\alpha\beta}$ to obtain an nc representation.
We will refer to this approach as the \df{Brute Force Method}.
Because there are $(k^2)!(2k^2)!$ possible systems,
the cost of this approach is very high.
Also, to rule out the existence of an  nc representation  this way it is necessary to  
{\it check all of these cases and to verify that they fail.} 

Much to the contrary, the procedures we introduce are
likely to determine non-existence  
in the first few step. Even when me must run through all the steps, 
we find that the implementation of the algorithm that we call Algorithm 2 
requires on the order of 
$$ 
 10 \left( k^7 + 3d k^5 +d^3 k^3 \right) + \sum_{i=2}^d \frac{2^{3i+1}}{3}
 $$
operations, which is much less than the 
$$
(2k^2)!(k^2)!\left(\sum_{i=2}^d \frac{2^{3i+1}}{3}\right) 
$$
operations required by brute force; see  
\S \ref{sec:cost} for details.

\section{One and two letter monomials in the $k^2$ polynomials:
Determining the diagonal variables}
\label{sec:nov25b11}

In this section we enumerate the one and two letter monomials that appear
in the $k\times k$ array of commutative polynomials corresponding to the nc
polynomial $p(X,Y)$.

\subsection{Preliminary calculations}
\label{subsec:precalc}This subsection is devoted to notation and a couple of
definitions that will be useful in the main developments.

Let $e_1,\ldots,e_k$ denote the standard basis for $\mathbb{R}^k$
and let $E_{st}$ denote the $k\times k$ matrix with a $1$ in the $st$
position and $0$'s elsewhere. Then, since
$$
E_{st}=e_se_t^T,
$$
it is readily seen that
$$
    E_{st}E_{uv}=e_s(e_t^Te_u)e_v^T=\left\{\begin{array}{l} 0\quad
\textrm{if}\
t\ne u\\
E_{sv}\quad\textrm{if}\ t=u\end{array}\right.
$$
and hence that
$$
(E_{st})^2=\left\{\begin{array}{l} 0\quad\textrm{if}\
s\ne t\\
E_{st}\quad\textrm{if}\ s=t\end{array}\right.
$$

Let $\alpha=(\alpha_1,\ldots,\alpha_\ell)$, $\beta=(\beta_1,\ldots,\beta_\ell)$ be
multi-indices with
\begin{equation}
\label{eq:aug27a10}
\textrm{positive integer entries, except for $\beta_\ell$,  which may also be zero}
\end{equation}
and suppose further that
\begin{equation}
\label{eq:aug25a10}
\alpha_1+\cdots+\alpha_\ell=s,\quad \beta_1+\cdots+\beta_\ell=t,
\end{equation}
and let
\begin{equation}
\label{eq:sep19c10}
m_{\alpha,\beta}(X,Y)=X^{\alpha_1}Y^{\beta_1}\cdots X^{\alpha_\ell}
Y^{\beta_\ell}.
\end{equation}
Then, since $s\ge \ell$ and $t\ge \ell-1+\beta_\ell$, it follows that
$$
\ell \le s\quad\textrm{and}\quad \ell \le t+1-\beta_\ell.
$$
The proof of the next two lemmas will rest heavily on the following observations:\\
\medskip

if $m$, $r$ and $n$ are nonnegative integers such that
$m+r\ge2$ and $n\ge2$, then
\begin{align}
\label{eq:aug28a10}
(E_{cd})^m(E_{ab})^n(E_{cd})^r
&=\left\{\begin{array}{ll}E_{aa}&\ \textrm{if $ a=b=c=d$}\\ \vspace{6pt}
0&\ \textrm{otherwise}\end{array}\right. .\\
\label{eq:aug28b10}
E_{aa}E_{cd}E_{aa}&=\left\{\begin{array}{ll}E_{aa}&\quad\textrm{if $c=d=a$}\\
\vspace{6pt}
0&\quad\textrm{otherwise}\end{array}\right. .\\
\label{eq:aug28c10}
E_{ab}E_{cc}E_{ab}&=\left\{\begin{array}{ll}E_{aa}&\quad\textrm{if $c=d=a$}\\
\vspace{6pt}
0&\quad\textrm{otherwise}\end{array}\right.
\end{align}

\begin{remark}
It is also useful to note that if the constraints (\ref{eq:aug27a10})
and (\ref{eq:aug25a10}) are in force and
\begin{equation}
\label{eq:sep16a10}
X=x_iE_{ab}+\cdots \quad\textrm{and}\quad Y=x_jE_{cd}+\cdots,
\end{equation}
then
\begin{equation}
\label{eq:sep16b10}
m_{\alpha,\beta}(X,Y)=x_i^sx_j^tm_{\alpha,\beta}(E_{ab},E_{cd})+\cdots.
\end{equation}
\end{remark}

\begin{lemma}
\label{lem:aug28a10}
Assume that the multi-indices $\alpha=(\alpha_1,\ldots,\alpha_\ell)$ and $\beta=(\beta_1,\ldots,\beta_\ell)$ are subject to the constraints  (\ref{eq:aug27a10}) and (\ref{eq:aug25a10}). Suppose further that $s\ge 2$, $t\ge 2$, and  that
\begin{equation}
\label{eq:aug28d10}
\max\{\alpha_1,\ldots,\alpha_\ell;\,\beta_1,\ldots,\beta_\ell\}\ge 2.
\end{equation}
Then
\begin{equation}
\label{eq:aug28e10}
m_{\alpha,\beta}(x_iE_{ab},x_jE_{cd})=\left\{\begin{array}{lr} x_i^sx_j^tE_{aa}&\quad if a=b=c=d\\
0&\quad otherwise
\end{array}\right. .
\end{equation}
In other words, $m_{\alpha,\beta}(x_iE_{ab},x_jE_{cd})\ne 0$ if and only if $x_i$ and $x_j$ are diagonal pairs in the same position.
\end{lemma}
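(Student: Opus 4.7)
My plan is to begin by pulling out the scalar multipliers. Since
\[
m_{\alpha,\beta}(x_iE_{ab},x_jE_{cd}) = x_i^s x_j^t\, M,\qquad M := (E_{ab})^{\alpha_1}(E_{cd})^{\beta_1}\cdots (E_{ab})^{\alpha_\ell}(E_{cd})^{\beta_\ell},
\]
the entire claim reduces to showing that $M=E_{aa}$ when $a=b=c=d$ and $M=0$ otherwise. When $a=b=c=d$, every matrix factor equals $E_{aa}$, which is idempotent, so $M=E_{aa}$; this direction is immediate. The substance is the converse: assuming $M\ne 0$, deduce $a=b=c=d$. I will use the three identities (\ref{eq:aug28a10})–(\ref{eq:aug28c10}) as the workhorse, and exploit hypothesis (\ref{eq:aug28d10}) to locate, inside the word $M$, a subword to which one of them directly applies.

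The first observation is that if some $\alpha_i\ge 2$ then the factor $(E_{ab})^{\alpha_i}$ annihilates $M$ unless $a=b$, and symmetrically if some $\beta_j\ge 2$ then $c=d$. Hypothesis (\ref{eq:aug28d10}) guarantees that at least one such occurrence exists. I will then split into three cases. \textbf{Case (A):} some $\alpha_i\ge 2$ \emph{and} some $\beta_j\ge 2$. Then $a=b$ and $c=d$, so every $(E_{ab})^{\alpha_i}=E_{aa}$ and every nonzero power $(E_{cd})^{\beta_j}=E_{cc}$, making $M$ a word in $E_{aa}$ and $E_{cc}$. Since $t\ge 2$ some $\beta_j\ge 1$, and the block structure of $m_{\alpha,\beta}$ always places an $X$-block immediately before it, producing an adjacency $E_{aa}E_{cc}$. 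This vanishes unless $a=c$, giving $a=b=c=d$. \textbf{Case (B):} some $\alpha_i\ge 2$ but all $\beta_j\le 1$. Together with $\beta_1,\ldots,\beta_{\ell-1}\ge 1$ and $\sum\beta_j=t\ge 2$, this forces $\ell\ge 2$. Since $a=b$, the initial portion of $M$ contains the subword $E_{aa}(E_{cd})^{\beta_1}E_{aa}=E_{aa}E_{cd}E_{aa}$, which by (\ref{eq:aug28b10}) vanishes unless $c=d=a$. \textbf{Case (C)}, symmetric to (B), uses $s\ge 2$ and all $\alpha_i\le 1$ to force $\ell=s\ge 2$, so $M$ contains the subword $E_{ab}E_{cc}E_{ab}$, which by (\ref{eq:aug28c10}) vanishes unless $a=b=c$.

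The main obstacle is bookkeeping at the boundary indices $i=1$, $i=\ell$, and the optional $\beta_\ell=0$, where the natural "flanking" neighborhood one would like to invoke in (\ref{eq:aug28a10}) may not exist. The point is that the combined force of $s\ge 2$, $t\ge 2$, the positivity constraint (\ref{eq:aug27a10}), and the pinning condition (\ref{eq:aug28d10}) rules out all such degenerate block patterns and guarantees that a subword of the exact form handled by (\ref{eq:aug28a10})–(\ref{eq:aug28c10}) is present inside $M$; so a clean case analysis of where the ``large'' exponent sits and which neighbor blocks exist closes the argument.
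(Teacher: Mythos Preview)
Your proof is correct and follows essentially the same approach as the paper: both reduce to analyzing the matrix word $M$, use the fact that a repeated exponent forces $a=b$ (or $c=d$), and then invoke the sandwich identities (\ref{eq:aug28b10})--(\ref{eq:aug28c10}) to conclude $a=b=c=d$. The only difference is organizational---the paper splits first on $\ell=1$ versus $\ell>1$ and then on whether an $\alpha_r$ or a $\beta_r$ is large, whereas you partition directly into the three cases (A)/(B)/(C); your version is in fact slightly more careful about verifying that the needed subwords actually occur.
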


\begin{proof} The proof is divided into cases.
\bigskip

\noindent
{\bf 1.} If $\ell=1$, then 
$$
m_{\alpha,\beta}(x_iE_{ab},x_jE_{cd})=x_i^sx_j^t(E_{ab})^s(E_{cd})^t
$$
and the asserted conclusion (\ref{eq:aug28e10}) follows from (\ref{eq:aug28a10}) with $m=s$ and $n=t$, since $s\ge 2$ and $t\ge 2$, by assumption.
\bigskip

\noindent
{\bf 2.}  If $\ell>1$ and $\alpha_r\ge 2$ for some $r\in\{1,\ldots,k\}$, then
$$
(x_iE_{ab}+\cdots)^r=x_i^r(E_{ab})^r+\cdots =\left\{\begin{array}{ll}
x_i^rE_{aa}+\cdots &\quad\textrm{if $b=a$}\\
\vspace{6pt}
0&\quad\textrm{otherwise}\end{array}\right. .
$$
But if $b=a$, then
$$
m_{\alpha,\beta}(x_iE_{ab},x_jE_{cd})=x_i^sx_j^t(E_{aa})^{\alpha_1}(E_{cd})^{\beta_1}E_{aa})^{\alpha_2}\cdots
$$
and (\ref{eq:aug28e10})  follows from (\ref{eq:aug28b10}).
\bigskip

\noindent
{\bf 3.}  If $\ell>1$ and $\beta_r\ge 2$ for some $r\in\{1,\ldots,k\}$, then
$$
(x_jE_{cd})^r=x_j^r(E_{cd})^r=\left\{\begin{array}{ll}
x_j^rE_{cc}&\quad\textrm{if}~d=c\\
\vspace{6pt}
0&\quad\textrm{otherwise}\end{array}\right. .
$$
But if $d=c$, then
$$
m_{\alpha,\beta}(x_iE_{ab},x_jE_{cd})=X^{\alpha_1}Y^{\beta_1}X^{\alpha_2}\cdots=x_i^sx_j^t(E_{ab})^{\alpha_1}E_{cc}(E_{ab})^{\alpha_2}\cdots
$$
and (\ref{eq:aug28e10}) follows from (\ref{eq:aug28c10}).
\end{proof}

\begin{remark}
\label{rem:aug29a10}
Condition (\ref{eq:aug28d10}) is automatically met if either
$$
s>\ell,\quad\textrm{or}\quad t>\ell,\quad\textrm{or}\quad s=t=\ell\ \textrm{and}\ \beta_\ell=0.
$$
\end{remark}

It remains to consider the case
\begin{equation}
\label{eq:aug29a10}
\max\{\alpha_1,\ldots,\alpha_\ell;\,\beta_1,\ldots,\beta_\ell\}\le 1.
\end{equation}

\begin{lemma}
\label{lem:aug25b10}
If (\ref{eq:aug27a10}), (\ref{eq:aug25a10}) and (\ref{eq:aug29a10}) are in
force and $t\ge 2$, then there are four possibilities:
\bigskip

\noindent
{\bf 1.}  $\beta_\ell=1$: In this setting
$s=t$, $\ell=s$, $m_{\alpha,\beta}(X,Y)=(XY)^t$ and
$$
m_{\alpha,\beta}(x_iE_{ab},x_jE_{cd})=x_i^sx_j^t(E_{ab}E_{cd})^t=
\left\{\begin{array}{lr}x_i^sx_j^tE_{aa}&\quad\textrm{if}\ c=b\ \textrm{and}\ d=a\\
0&\quad\textrm{otherwise}\end{array}\right. .
$$

\noindent
{\bf 2.}  $\beta_\ell=0$: In this setting
$s=t+1$, $\ell=s$, $m_{\alpha,\beta}(X,Y)=(XY)^tX$ and
$$
m_{\alpha,\beta}(x_iE_{ab},x_jE_{cd})=x_i^sx_j^t(E_{ab}E_{cd})^tE_{ab}=
\left\{\begin{array}{lr}x_i^sx_j^tE_{ab}&\quad\textrm{if}\ c=b\ \textrm{and}\ d=a\\
0&\quad\textrm{otherwise}\end{array}\right. .
$$

\noindent
{\bf 3.}  $\beta_\ell=1$: In this setting
$s=t$, $\ell=s$,  $m_{\alpha,\beta}(Y,X)=(YX)^t$ and
$$
m_{\alpha,\beta}(x_jE_{cd}, x_iE_{ab})=x_i^sx_j^t(E_{cd}E_{ab})^t=
\left\{\begin{array}{lr}x_i^sx_j^tE_{ba}&\quad\textrm{if}\ c=b\ \textrm{and}\ d=a\\
0&\quad\textrm{otherwise}\end{array}\right. .
$$

\noindent
{\bf 4.}  $\beta_\ell=0$: In this setting
$s=t+1$, $\ell=s$, $m_{\alpha,\beta}(Y,X)=(YX)^t Y$ and
$$
m_{\alpha,\beta}(x_jE_{cd},x_iE_{ab})=x_i^tx_j^s(E_{cd}E_{ab})^t=
\left\{\begin{array}{lr}x_j^sx_i^tE_{bb}&\quad\textrm{if}\ c=b\ \textrm{and}\ d=a\\
0&\quad\textrm{otherwise}\end{array}\right. .
$$
\end{lemma}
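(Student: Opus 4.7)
The plan is first to deduce the combinatorial shape of the multi-indices $\alpha,\beta$ forced by the hypotheses, and then to reduce each resulting monomial to a short elementary-matrix product handled by the rule $E_{ab}E_{cd}=\delta_{bc}E_{ad}$ combined with the idempotency $(E_{aa})^n=E_{aa}$.

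First I would use the positivity assumption (\ref{eq:aug27a10}) together with the upper bound (\ref{eq:aug29a10}) to force $\alpha_r=1$ for every $r=1,\ldots,\ell$, $\beta_r=1$ for $r<\ell$, and $\beta_\ell\in\{0,1\}$. Summing then yields $s=\ell$ and $t=\ell-1+\beta_\ell$, so either $s=t$ (when $\beta_\ell=1$) or $s=t+1$ (when $\beta_\ell=0$). Substituting into (\ref{eq:sep19c10}) gives $m_{\alpha,\beta}(X,Y)\in\{(XY)^t,(XY)^tX\}$, which is exactly the identification of the monomial preceding cases~1 and~2 in the statement. Cases~3 and~4 are then the same two possibilities read off $m_{\alpha,\beta}(Y,X)$, i.e.\ obtained by swapping the roles of $X$ and $Y$, so no new combinatorial analysis is needed for them.

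The remaining step is to plug in $X=x_iE_{ab}$ and $Y=x_jE_{cd}$. The scalar factor $x_i^s x_j^t$ factors out cleanly and what remains is a product of elementary matrices. For $t\ge 2$, a single use of $E_{ab}E_{cd}=\delta_{bc}E_{ad}$ rewrites $E_{ab}E_{cd}$ as $\delta_{bc}E_{ad}$ and then $(E_{ad})^t$ collapses to $E_{aa}$ when $a=d$ (and vanishes otherwise); the mirrored computation handles $(E_{cd}E_{ab})^t$. Multiplying by one trailing $E_{ab}$ in the $(XY)^tX$ case, or by a trailing $E_{cd}$ in the $(YX)^tY$ case, and simplifying once more with the same rule, produces the formulas claimed in each of the four cases. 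The only genuine obstacle is careful index bookkeeping at the seam between consecutive factors, i.e.\ checking which of $b=c$ and $a=d$ (and their swapped counterparts) are needed in each case, and invoking $t\ge 2$ at the moment the idempotent power is collapsed; apart from this the verification is routine.
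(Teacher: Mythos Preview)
Your proposal is correct and follows essentially the same approach as the paper: deduce from (\ref{eq:aug27a10}) and (\ref{eq:aug29a10}) that all $\alpha_r=1$, all $\beta_r=1$ for $r<\ell$, and $\beta_\ell\in\{0,1\}$, which pins down the monomial as $(XY)^\ell$ or $(XY)^{\ell-1}X$ (and analogously for $m_{\alpha,\beta}(Y,X)$). The paper's own proof is even terser than yours---after identifying these monomial forms it simply declares ``The remaining conclusions are self-evident,'' whereas you spell out the elementary-matrix bookkeeping explicitly.
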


\begin{proof}

In view of (\ref{eq:aug27a10}), the constraint (\ref{eq:aug29a10}) implies that
$$
\alpha_1=\cdots=\alpha_\ell=1,\quad \beta_1=\cdots=\beta_{\ell-1}=1\quad
\textrm{and $\beta_\ell=1$ or $\beta_\ell=0$}.
$$
Correspondingly
$$
m_{\alpha,\beta}(X,Y)=\left\{\begin{array}{ll}(XY)^\ell&\quad\textrm{if}\ \beta_\ell=1\\
\vspace{2mm}
(XY)^{\ell-1}X&\quad\textrm{if}\ \beta_\ell=0\end{array}\right.
$$
and
$$
m_{\alpha,\beta}(Y,X)=\left\{\begin{array}{ll}(YX)^\ell&
\quad\textrm{if}\ \beta_\ell=1\\
\vspace{2mm}
(YX)^{\ell-1}Y&\quad\textrm{if}\ \beta_\ell=0\end{array}\right. .
$$
The remaining conclusions are self-evident.
\end{proof}

\begin{definition}
The two $r$ letter monomials $ex_{i_1}^{\alpha_1}\cdots x_{i_r}^{\alpha_r}$ and $fx_{j_1}^{\beta_1}\cdots x_{j_r}^{\beta_r}$ with $e\ne 0$ and
$f\ne 0$ are said to be
{\bf $\vtr$-equivalent} if there exists a permutation $\sigma$ of the
integers $\{1,\ldots,r\}$
such that $\beta_j=\alpha_{\sigma(j)}$ for $j=1,\ldots r$.  This will be
indicated by
writing
$$
ex_{i_1}^{\alpha_1}\cdots x_{i_r}^{\alpha_r}\vtr fx_{j_1}^{\beta_1}
\cdots x_{j_r}^{\beta_r}.
$$
These two monomials are
{\bf structurally equivalent}  (SE) if  they are $\vtr$-equivalent and $e=f$.
This will be indicated by writing
$$
ex_{i_1}^{\alpha_1}\cdots x_{i_r}^{\alpha_r} \quad SE  \quad
fx_{j_1}^{\beta_1}\cdots x_{j_r}^{\beta_r}.
$$
\end{definition}

Thus, for example, if $a,b,c,d\in \RR\setminus\{0\}$, then the four two
letter words
$$
ax_1^2x_3^4, \quad bx_3^2x_1^4, \quad cx_3^2x_4^4\quad\textrm{and}\quad
dx_5^2x_6^4
$$
are $\vtr$-equivalent; they will be SE if and only if $a=b=c=d$.

\subsection{Enumerating one letter monomials in the $k\times k$ array}

\begin{lemma}
\label{lem:aug16b9}
If a family of polynomials $p_1,\ldots,p_{k^2}$ in the $2k^2$ commuting
variables $x_1,\ldots,x_{2k^2}$ admits an nc representation
$p(X,Y)$, then for each positive integer $n>1$ exactly one of the following
situations prevails:
\begin{enumerate}
\item[\rm(1)] There are no one letter monomials of degree $n$ in any one of the given
polynomials.
\vspace{2mm}
\item[\rm(2)] At least one of the given polynomials
contains exactly one one letter monomial of degree $n$.
\vspace{2mm}
\item[\rm(3)] At least one of the given polynomials contains exactly two
one letter monomials of degree $n$.
\end{enumerate}
Moreover,
\begin{enumerate}
\item[\rm(2)]  holds $\Longleftrightarrow$ there exist exactly $k$ polynomials
each one of which
contains exactly one one letter monomial $ex_{i_s}^n$
of degree $n$  (all with the same coefficient).
\vspace{2mm}
\item[\rm(3)] holds $\Longleftrightarrow$ there exist exactly $k$ polynomials
each one
of which contains exactly two one letter monomials  $ex_{i_s}^n+fx_{j_t}^n$.
\end{enumerate}
Further, if $e\ne f$ and $ef\ne 0$ then the letters $x_{i_m}$, $m=1,\ldots,k$
in the monomials $ex_{i_1}^n,\ldots,ex_{i_k}^n$ are the diagonal entries of one
of the matrices and the letters $x_{j_m}$, $m=1,\ldots,k$
in the monomials $fx_{j_1}^n,\ldots,fx_{j_k}^n$ are the diagonal entries of the
other.

\end{lemma}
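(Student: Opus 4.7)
The plan is to reduce the analysis of one-letter monomials in the entries of $p(X,Y)$ to the pure-power terms $\varphi(n,0)X^n + \varphi(0,n)Y^n$, and then to enumerate the one-letter content of those terms explicitly. Write $a = \varphi(n,0)$ and $b = \varphi(0,n)$ for the coefficients of $X^n$ and $Y^n$.

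First I would argue the key reduction: a one-letter monomial $cx_i^n$ with $n > 1$ in any entry of $p(X,Y)$ can arise only from the pure terms $aX^n + bY^n$. Indeed, expanding any monomial $m_{\alpha,\beta}(X,Y)$ via $X = \sum_{a,b}x^X_{ab}E_{ab}$ and $Y = \sum_{c,d}x^Y_{cd}E_{cd}$, each $X$-factor contributes an entry of $X$ and each $Y$-factor contributes an entry of $Y$; for the product of the chosen entries to equal $x_i^n$, every chosen entry must be the same variable $x_i$. Since the $2k^2$ variables are distinct, $x_i$ appears in exactly one of $X$ or $Y$, so no monomial of bidegree with both $\alpha$ and $\beta$ nonzero can contribute a one-letter word. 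This is essentially the disjointness content of Lemma \ref{lem:aug28a10}: in the formula $m_{\alpha,\beta}(x_iE_{ab},x_jE_{cd}) = x_i^s x_j^t E_{aa}$, one has $x_i \neq x_j$, so the output is genuinely a two-letter word rather than a one-letter word whenever both $s,t \ge 1$.

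Next I would compute the one-letter content of $aX^n + bY^n$ directly. Expanding $X^n$ along index paths $(a_0,a_1,\ldots,a_n)$, a term $c(x^X_{rs})^n$ requires every factor to equal the same variable $x^X_{ab}$, i.e., a constant edge $(a,b)$ traversed $n$ times; for $n \geq 2$ this closes only at a diagonal position $(a,a)$. Hence $aX^n$ contributes precisely $a(x^X_{ss})^n$ at each diagonal position $(s,s)$ and nothing elsewhere, and symmetrically $bY^n$ contributes $b(x^Y_{ss})^n$. In particular, off-diagonal entries of $p(X,Y)$ contain no one-letter monomials of degree $n$, and each diagonal entry contains exactly the one-letter words $a(x^X_{ss})^n + b(x^Y_{ss})^n$.

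Finally, I would trichotomize on $(a,b)$. If $a = b = 0$, case (1) holds. If exactly one of $a, b$ vanishes, say $b = 0$ and $a \neq 0$, then each of the $k$ diagonal entries contains the single one-letter word $a(x^X_{ss})^n$ (all with the same coefficient $a$), while the remaining $k^2 - k$ entries contain none; this is case (2), together with its sharper $\Leftrightarrow$ formulation. If $ab \neq 0$, each of the $k$ diagonal entries contains exactly the two distinct monomials $a(x^X_{ss})^n + b(x^Y_{ss})^n$, distinct because $x^X_{ss}$ and $x^Y_{ss}$ are different variables even when $a = b$; this is case (3) with its sharpened form. The three cases are exhaustive and mutually exclusive. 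For the final assertion, if $e \neq f$ with $ef \neq 0$ then $\{a,b\} = \{e,f\}$ and, up to interchanging the roles of $X$ and $Y$, we may take $a = e$ and $b = f$; the coefficient in each diagonal entry then identifies which variable sits on the diagonal of $X$ (the one multiplied by $e$) versus $Y$ (the one multiplied by $f$). I do not foresee a significant obstacle; the only step requiring care is the disjointness argument in the first paragraph, but this is immediate from the hypothesis that the $2k^2$ commutative variables are all distinct.
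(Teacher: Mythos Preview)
Your proof is correct and follows essentially the same approach as the paper: reduce to the pure-power terms $aX^n + bY^n$, observe that these contribute one-letter monomials only at diagonal positions, and trichotomize on $(a,b)$. Your reduction step (showing that mixed monomials $m_{\alpha,\beta}$ with both $|\alpha|,|\beta|\ge 1$ cannot contribute one-letter words because $x_i$ lives in exactly one of $X,Y$) is in fact spelled out more carefully than in the paper, which states the correspondence between cases (1)--(3) and the vanishing pattern of $(a,b)$ rather tersely.
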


\begin{proof}
Clearly (1), (2) and (3) are mutually exclusive possibilities that correspond
to
$$
p(X,Y)=aX^n+bY^n+\cdots
$$
with either (1) $a=0$ and $b=0$ for all $n$, (2) $ab=0$ and $a\ne b$

Suppose first that at least one of the given $k^2$ polynomials contains
exactly one term of the form $ax_i^n$ with $n>1$, a real coefficient
$a\ne 0$ and $x_i\in X$. Then
$$
X=x_i E_{ss}+\cdots\quad\textrm{for some $s\in\{1,\ldots,k\}$}
$$
and
$$
aX^n=a(x_i^nE_{ss}+\cdots).
$$
Moreover, since the polynomial that contains the term $ax_i^n$ contains
only one term of this form, it follows that
$$
p(X,Y)-aX^n\quad \textrm{does not contain a term of the form $cY^n$ with
$c\ne 0$}.
$$
Thus, as $X$ has $k$ diagonal entries, there will be exactly $k$
polynomials each one of which contains a exactly one term of this form.
This completes the proof of (a). The proof of (b) is similar to the proof
of (a).

Finally, a term of the form $ax_i^n+bx_j^n$ with $i\ne j$ and $ab\ne 0$ will
be present in one of the polynomials if and only if either
$$
X=x_iE_{ss}+\cdots\quad\textrm{and}\quad Y=x_jE_{ss}+\cdots
$$
for some choice of $s\in\{1,\ldots,k\}$, or
$$
X=x_jE_{ss}+\cdots\quad\textrm{and}\quad Y=x_iE_{ss}+\cdots
$$
for some choice of $s\in\{1,\ldots,k\}$. The rest of the proof
goes through much as before.
\end{proof}

\begin{lemma}\label{lem2:sep9:2010}
Let $p_1,\cdots,p_{k^2}$ be a family of polynomials with an nc representation
$p(X,Y)$. Suppose that some polynomial $p$ in the family contains the
terms $ax_i^n + bx_j^n$ with
$a \ne b$ and either $a \ne 0$ or $b \ne 0$.  Then there exist $k$ polynomials $p_{i_1},\cdots,p_{i_k}$
containing the respective terms $ax_{i_1}^n + bx_{j_1}^n,\cdots,ax_{i_k}^n + bx_{j_k}^n$.  Moreover,
the terms $ax_{i_m}^n$ with $(1 \le m \le k)$ that have coefficient $a$ are the diagonal variables of
one matrix and the terms with $bx_{j_m}^n$ with $(1 \le m \le k)$ that have coefficient $b$ are the diagonal
terms of the other matrix.
\end{lemma}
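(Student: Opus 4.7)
The strategy mirrors that of Lemma \ref{lem:aug16b9}, exploiting the observation that any one-letter monomial $cx_i^n$ with $c\ne 0$ can appear in an entry of the matrix $p(X,Y)$ only via the pure $X^n$ or pure $Y^n$ part of the nc polynomial $p$. Indeed, since the variables $x_1,\ldots,x_{2k^2}$ are partitioned without repetition between entries of $X$ and $Y$, the only way for $x_i^n$ to arise as a monomial in a single entry of a product of $X$'s and $Y$'s is for the product to consist entirely of copies of the matrix containing $x_i$, with each factor contributing exactly the entry $x_i$; this in turn forces $x_i$ to lie on the diagonal, say in position $(a,a)$, so that the pure power $(E_{aa})^n=E_{aa}$ is nonvanishing and contributes $cx_i^n$ to the $(a,a)$-entry of $p(X,Y)$, where $c$ is precisely the coefficient of the corresponding pure power in $p$. (This is essentially the content of the case $\ell=1$ of Lemma \ref{lem:aug28a10}, together with the remark that mixed monomials vanish on a single-variable substitution in the other matrix.)

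Applying this to the hypothesis, suppose first that both $a\ne 0$ and $b\ne 0$. The presence of $ax_i^n$ in some polynomial of the family forces $x_i$ to be a diagonal entry of (say) $X$ in some position $(s,s)$, and forces the coefficient of $X^n$ in $p$ to equal $a$. Symmetrically, $bx_j^n$ forces $x_j$ onto the diagonal of some matrix; it cannot sit on the diagonal of $X$, since that would require the coefficient of $X^n$ to be $b\ne a$. Hence $x_j$ is the $(s,s)$-entry of $Y$ and the coefficient of $Y^n$ in $p$ equals $b$, so
\[
p(X,Y)=aX^n+bY^n+(\text{terms that involve both }X\text{ and }Y).
\]
The $k$ diagonal entries of the matrix $p(X,Y)$ are $k$ polynomials in the family $\cP$, and by the structural observation (mixed terms contribute no one-letter monomials of degree $n$), each of these contains $ax_{i_m}^n+bx_{j_m}^n$, where $x_{i_m}:=X_{mm}$ and $x_{j_m}:=Y_{mm}$ range over the diagonal entries of $X$ and $Y$ respectively.

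The edge case where exactly one of $a,b$ vanishes is handled identically: the nonzero coefficient still pins down the corresponding pure power and identifies the diagonal of one matrix, while the zero coefficient imposes no constraint on the other pure power (and the $bx_{j_m}^n$ or $ax_{i_m}^n$ terms are then simply absent from the $k$ diagonal-entry polynomials). I do not foresee a serious obstacle; the only care needed is in verifying cleanly that no mixed monomial in the nc polynomial can produce a one-letter term of degree $n$, which is immediate from the non-repetition of the variables $x_1,\ldots,x_{2k^2}$ among the entries of $X$ and $Y$.
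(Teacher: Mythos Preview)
Your argument is correct and is essentially the same as the paper's: the paper's proof consists of the single line ``This follows from Lemma~\ref{lem:aug16b9},'' and what you have written is precisely an unpacking of the relevant part of that lemma's proof (that one-letter monomials of degree $n$ can only arise from the pure $X^n$ or $Y^n$ term, forcing the variable onto the diagonal and pinning down the coefficient). The only minor point worth tightening is your implicit use of the fact that $ax_i^n$ and $bx_j^n$ appear in the \emph{same} polynomial, which is what forces $x_j$ into the same diagonal position $(s,s)$ of $Y$; you state this conclusion but could make the reason more explicit.
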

\begin{proof}
This follows from Lemma~\ref{lem:aug16b9}.
\end{proof}

\subsection{Enumerating one and two letter monomials in the $k\times k$ array}

 The symbol
$$
\chi(a)=\left\{\begin{array}{ll} 1&\quad\textrm{if}\ a\ne 0\\
\vspace{6pt}
0&\quad\textrm{if}\ a=0
\end{array}\right.
$$
will be used in the next  lemma.

\begin{lemma}
\label{lem:aug24a10}
Let $p_1,\ldots,p_{k^2}$ be a family of polynomials in the $2k^2$
commuting variables $x_1,\ldots,x_{2k^2}$
that admits an nc representation $p(X,Y)$ of degree $d>1$.
Suppose further that $s=t$, $t \ge 2$,
\begin{equation}
\label{eq:aug27b10}
p(X,Y)=e_1(XY)^t+e_2(YX)^t+e_3X^{2t}+e_4Y^{2t}+q(X,Y)
\end{equation}
where $q$ is a polynomial that does not contain any scalar multiples of the
first four monomials listed in (\ref{eq:aug27b10}),
that the coefficients $e_1,\ldots, e_4$ are all distinct and that
$x_u\ne x_v$. Then the family of $k^2$ polynomials will contain
$$
\begin{array}{lll}
k^2-k&\quad \textrm{2 letter monomials SE to $e_1 x_u^tx_v^t$}&\quad
\textrm{if $e_1\ne 0$} \\
\vspace{6pt}
k^2-k&\quad \textrm{2 letter monomials SE to  $ e_2 x_u^tx_v^t$}
&\quad
\textrm{if $e_2\ne 0$} \\
\vspace{6pt}
k^2-k&\quad \textrm{2 letter monomials SE to  $ e_3 x_u^tx_v^t$}
&\quad
\textrm{if $e_3\ne 0$} \\
\vspace{6pt}
k^2-k&\quad \textrm{2 letter monomials SE to   $e_4 x_u^tx_v^t$}
&\quad
\textrm{if $e_4\ne 0$} \\
\vspace{6pt}
k&\quad \textrm{2 letter monomials SE to  $\varphi(t,t) x_u^tx_v^t$}
&\quad
\textrm{if $\varphi(t,t)\ne 0$} \\
\vspace{6pt}
k&\quad \textrm{1 letter monomials SE to $ e_3 x_u^{2t}$}
&\quad
\textrm{if $e_3\ne 0$} \\
\vspace{6pt}
k&\quad \textrm{1 letter monomials SE to $e_4 x_u^{2t}$}
&\quad\textrm{if $e_4\ne 0$}
\end{array}
$$
This list incorporates all the ways that one letter monomials of degree
$2t$ and two letter monomials that are $\vtr$-equivalent to $x_u^tx_v^t$
can appear in the given family of polynomials.
Moreover, there is no cancellation:

Each of the $k$ polynomials that sit on the diagonal in the $k\times k$
array corresponding to $p(X,Y)$ contains
$$
(\chi(e_1)+\chi(e_2)+\chi(e_3)+\chi(e_4))(k-1)\quad
\textrm{monomials $\vtr$ to $x_u^tx_v^t$}
$$
made up of off-diagonal letters
and
\begin{equation}
\label{eq:kondiag}
\chi(\varphi(t,t))\quad\textrm{monomials $\vtr$ to $x_u^tx_v^t$}
\end{equation}
made up of diagonal letters, as well as exactly one
 one letter monomial of degree $2t$ with
coefficient $e_3$ and exactly one
 one letter monomial of degree $2t$ with coefficient $e_4$,
both of which are diagonal entries.
\\

No off diagonal polynomial contains any two letter monomials $\vtr$ to
$x_u^tx_v^t$.
\end{lemma}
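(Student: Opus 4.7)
The plan is to fix an ordered pair $(x_u, x_v)$ of distinct variables and compute, for every entry $(r, s)$, the coefficient of $x_u^t x_v^t$ (and separately $x_u^{2t}$) in $p(X, Y)_{rs}$. By multilinearity in the entries of $X$ and $Y$, this coefficient is unchanged if every other variable is set to zero, so I substitute $X = x_u E_{ab}$ when $x_u$ sits at $(a, b)$ in $X$ (and similarly for $x_v$) and then apply the lemmas of \S\ref{subsec:precalc}. The calculation splits according to which matrices contain $x_u, x_v$ and whether those entries are on or off the diagonal.

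If $x_u$ lies at the off-diagonal position $(a, b)$ of $X$ and $x_v$ at the off-diagonal position $(c, d)$ of $Y$, then Lemma~\ref{lem:aug28a10} annihilates every monomial of $p$ with $\max\{\alpha_i, \beta_j\} \ge 2$, leaving only $(XY)^t$ and $(YX)^t$. Cases 1 and 3 of Lemma~\ref{lem:aug25b10} show that these contribute precisely when $(c, d) = (b, a)$, depositing $e_1 x_u^t x_v^t$ into the $(a, a)$-entry and $e_2 x_u^t x_v^t$ into the $(b, b)$-entry. Ranging over the $k(k-1)$ off-diagonal positions in $X$ (each one forcing its $Y$-partner to the transposed position) yields $k(k-1)$ monomials SE to $e_1 x_u^t x_v^t$ and $k(k-1)$ SE to $e_2 x_u^t x_v^t$, distributed so that every diagonal polynomial receives exactly $k-1$ of each kind.

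If $x_u$ and $x_v$ both lie in $X$, then $X^{2t}$ is the only monomial of $p$ having bi-degree $(2t, 0)$, and expanding $(x_u E_{ab} + x_v E_{cd})^{2t}$ reduces the coefficient of $x_u^t x_v^t$ to a sum over length-$2t$ words in $\{E_{ab}, E_{cd}\}$ with $t$ of each. For off-diagonal $x_u, x_v$ two consecutive copies of the same letter would require $b = a$ or $d = c$ and therefore vanish, forcing strict alternation and hence $(c, d) = (b, a)$; the two surviving alternating words evaluate to $E_{aa}$ and $E_{bb}$ by the same computation underlying Lemma~\ref{lem:aug25b10}, so $e_3 X^{2t}$ deposits $e_3 x_u^t x_v^t$ in the $(a, a)$ and $(b, b)$ polynomials. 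A short chaining argument shows that if either of $x_u, x_v$ is diagonal in $X$ the coefficient vanishes for $t \ge 2$. Summing produces $k(k-1)$ monomials SE to $e_3 x_u^t x_v^t$, with $k-1$ in each diagonal polynomial; the identical argument for $Y^{2t}$ produces $k(k-1)$ SE to $e_4 x_u^t x_v^t$. Finally, when $x_u$ is the diagonal of $X$ at $(a, a)$ and $x_v$ the diagonal of $Y$ at the same position, the substitution $X = x_u E_{aa}$, $Y = x_v E_{aa}$ reduces $p(X, Y)_{aa}$ to $\what p(x_u, x_v) E_{aa}$, whose $x_u^t x_v^t$-coefficient is $\varphi(t, t)$ by definition, supplying the $k$ diagonal-letter monomials of~(\ref{eq:kondiag}).

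The one-letter count is immediate: substituting $X = x_u E_{aa}$ into $p$ picks out $\what p(x_u, 0) E_{aa}$, whose $x_u^{2t}$-coefficient is $\varphi(2t, 0) = e_3$, giving one $e_3 x_u^{2t}$ per diagonal of $X$ and, symmetrically, one $e_4 x_u^{2t}$ per diagonal of $Y$. Because each analysis above deposits its contribution on the main diagonal of $p(X, Y)$, no off-diagonal polynomial contains any two-letter monomial $\vtr$ to $x_u^t x_v^t$. Within a single diagonal polynomial the four off-diagonal-letter sources involve four mutually disjoint types of variable pair (row-of-$X$ with column-of-$Y$; column-of-$X$ with row-of-$Y$; transpose pair in $X$; transpose pair in $Y$), so together with the assumed distinctness of $e_1, e_2, e_3, e_4$ no cancellation can occur. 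The main obstacle I anticipate is the combinatorial verification for the pure $X^{2t}$ and $Y^{2t}$ monomials, which is not packaged directly by Lemmas~\ref{lem:aug28a10} or~\ref{lem:aug25b10}: carefully showing that every length-$2t$ word in two off-diagonal $E$-generators must strictly alternate and close up at its starting vertex is the one genuinely new combinatorial step.
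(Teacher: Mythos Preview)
Your argument is correct and follows essentially the same route as the paper's proof: both reduce to substituting $X=x_uE_{ab}$, $Y=x_vE_{cd}$, invoke Lemma~\ref{lem:aug28a10} to kill every bi-degree-$(t,t)$ monomial with an exponent $\ge 2$ unless the pair is diagonal in the same slot, and then use Lemma~\ref{lem:aug25b10} to locate the $(XY)^t$ and $(YX)^t$ contributions at transpose off-diagonal pairs. Your write-up is actually more explicit than the paper's on the $X^{2t}$ and $Y^{2t}$ contributions (the paper dispatches these with ``enumerated in much the same way''), and your identification of the strict-alternation argument for two off-diagonal $E$-generators as the one genuinely new combinatorial step is accurate; the only small omission is that you do not explicitly dispose of the mixed cases (one variable diagonal, the other off-diagonal, or both diagonal in different slots), but these fall out immediately from the same two lemmas.
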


\begin{proof} Two letter words $x_i^tx_j^t$ with $i\ne j$ and $t\ge 2$ may be generated in four
different ways:
\begin{enumerate}
\item[\rm(1)]  as entries in either $m_{\alpha,\beta}(X,Y)$ or
$m_{\alpha,\beta}(Y,X)$ with $\alpha$ and $\beta$ subject to
(\ref{eq:aug27a10}) and (\ref{eq:aug25a10}) with $s=t$  if $x_i$ and
$x_j$ are in different matrices;
\vspace{6pt}
\item[\rm(2)] as entries in $X^{2t}$ (resp., $Y^{2t}$) if $x_i$ and
$x_j$ are both in $X$ (resp., $Y$).
\end{enumerate}
Suppose first that $x_i$ and $x_j$ are in different matrices and that
(\ref{eq:aug28d10}) is in force. Then Lemma \ref{lem:aug28a10} implies that
the scalar multiples of the words $x_i^tx_j^t$ will appear in at least one of
the $k^2$ polynomials if and only if $x_i$ and $x_j$ are diagonal entries in
the same position. In this instance, $x_i^tx_j^t$ will appear in a
polynomial that sits
in the same diagonal position as $x_i$ and $x_j$ with coefficient
$\varphi(t,t)$.

Suppose next that $x_i$ and $x_j$ are in different matrices and that
(\ref{eq:aug29a10}) is in force. Then, in view of Lemma \ref{lem:aug25b10},
it remains only to consider the contributions from $(XY)^t$ and $(YX)^t$:
If
$$
X=x_iE_{ab}+\cdots \quad\textrm{and}\quad Y=x_jE_{ba}+\cdots,
$$
then
\begin{equation}
\label{eq:aug31a10}
(XY)^t=x_i^tx_j^tE_{aa}+\cdots \quad\textrm{and}\quad
(YX)^t=x_i^tx_j^tE_{bb}+\cdots.
\end{equation}
Since there are $k^2-k$ off-diagonal positions in a $k\times k$ matrix, there are
$k^2-k$ choices of $E_{ab}$ with $a\ne b$. Moreover, since the entry
$x_i^tx_j^t$ appears in the $aa$ position in $(XY)^t$ and the $bb$ position in
$(YX)^t$ there will be no cancellation, even if $e_2=-e_1$.

On the other hand contributions that come from diagonal entries of $X$ and $Y$ can interact with each other, i.e., if
$$
X=x_iE_{aa}+\cdots\quad\textrm{and}\quad Y=x_jE_{aa}+\cdots,
$$
then
$$
e_1(XY)^t+e_2(YX)^t=(e_1+e_2)x_i^tx_j^tE_{aa}+\cdots
$$
and
$$
e_1(XY)^t+e_2(YX)^t+q(X,Y)=\varphi(t,t)x_i^tx_j^tE_{aa}+\cdots .
$$
Thus, if $\varphi(t,t)\ne 0$, there will be $k$ contributions, one for each choice of $a\in\{1,\ldots,k\}$.

The contributions from $e_3X^{2t}$ and $e_4Y^{2t}$ are enumerated in much the same way. Moreover, there is no cancellation, because the monomials with coefficient $e_3$ have all their letters in $X$ and the monomials with coefficient $e_4$ have all their letters in $Y$.
\end{proof}

\begin{remark}
The list in Lemma \ref{lem:aug24a10} is written under the assumption that
$e_1$, $e_2$, $e_3$ and $e_4$ are four distinct numbers.
If, say, $e_1$, $e_2$ and $e_3$ are three distinct numbers and $e_4=e_3$, then
 there will instead be $2(k^2-k)$ two letter monomials SE to
$e_3x_u^tx_v^t$,  $k^2-k$  two letter terms SE to $e_1x_u^tx_v^t$,
$k^2-k$ two letter terms SE to $e_2x_u^tx_v^t$, $k$ two letter terms
SE to $\varphi(t,t)x_u^tx_v^t$ and the one letter monomials would be
as they are stated above.
\end{remark}

\begin{lemma}
\label{lem:aug26a10}
Let $p_1,\ldots,p_{k^2}$ be a family of polynomials in the $2k^2$
commuting variables $x_1,\ldots,x_{2k^2}$
that admits an nc representation $p(X,Y)$ of degree $d>1$.
Suppose further that $t \ge 1$,
\begin{equation}
\label{eq:aug27c10}
p(X,Y)=f_1(XY)^tX+f_2(YX)^tY+f_3X^{2t+1}+f_4Y^{2t+1}+q(X,Y),
\end{equation}
where $q$ is a polynomial that does not contain any scalar multiples of the
first four monomials listed in (\ref{eq:aug27c10}),
the coefficients $f_1,\ldots,f_4$ are distinct and $x_u\ne x_v$.
Then the family of $k^2$ polynomials will contain exactly
$$
\begin{array}{lll}
k^2-k&\quad \textrm{2 letter monomials SE  $f_1x_u^{t+1}x_v^t$}
&\quad\textrm{if $f_1\ne 0$} \\
\vspace{6pt}
k^2-k&\quad \textrm{2 letter monomials SE  $f_2x_u^tx_v^{t+1}$}
&\quad\textrm{if $f_2\ne 0$}\\
\vspace{6pt}
k^2-k&\quad \textrm{2 letter monomials SE  $f_3x_u^{t+1}x_v^t$}
&\quad\textrm{if $f_3\ne 0$}\\
\vspace{6pt}
k^2-k&\quad \textrm{2 letter monomials SE  $f_4x_u^tx_v^{t+1}$}
&\quad\textrm{if $f_4\ne 0$}\\
\vspace{6pt}
k&\quad \textrm{2 letter monomials SE  $\varphi(t+1,t)x_u^{t+1}x_v^t$}
&\quad\textrm{if $\varphi(t+1,t)\ne 0$}\\
\vspace{6pt}
k&\quad \textrm{2 letter monomials SE  $\varphi(t,t+1)x_u^tx_v^{t+1}$}
&\quad\textrm{if $\varphi(t,t+1)\ne 0$}\\
\vspace{6pt}
k&\quad \textrm{1 letter monomials SE  $f_3x_u^{2t+1}$}
&\quad\textrm{if $f_3\ne 0$}\\
\vspace{6pt}
k&\quad \textrm{1 letter monomials SE  $f_4x_u^{2t+1}$}
&\quad\textrm{if $f_4\ne 0$}
\end{array}
$$
This list incorporates all the ways that one letter monomials of degree
$2t+1$ and two letter monomials $\vtr$  to $x_u^{t+1}x_v^t$ can appear in the
given family of polynomials.
Moreover, there is no cancellation.

Each of the $k^2-k$ polynomials that are off  the diagonal in the $k\times k$
array corresponding to $p(X,Y)$ contains
$$
(\chi(f_1)+\chi(f_2)+\chi(f_3)+\chi(f_4))\quad
\text{two letter monomials $\vtr$ to
 $x_u^{t+1}x_v^t$}
$$
made up of off-diagonal letters.  Each of the $k$ polynomials that are on the
diagonal contains
\begin{equation}
\label{eq:kondiag2}
\chi(\varphi(t+1,t))+\chi(\varphi(t,t+1))\quad\textrm{two letter monomials
 $\vtr$ $x_u^{t+1}x_v^t$}
\end{equation}
made up of diagonal letters, as well as a one letter monomial SE to
$f_3x_u^{2t+1}$ if $f_3\ne 0$ and a one letter monomial SE to $f_4x_u^{2t+1}$
if $f_4\ne 0$. The letters in these one letter monomials are diagonal entries.
\end{lemma}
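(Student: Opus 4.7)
\medskip

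\noindent \textbf{Proof proposal.} The plan is to mirror the proof of Lemma~\ref{lem:aug24a10}, adapting the bookkeeping for the asymmetric degree pattern $(t+1,t)$ that arises because $2t+1$ is odd. A two-letter monomial $\vtr$ to $x_u^{t+1}x_v^t$ can enter the $k\times k$ array corresponding to $p(X,Y)$ only in two ways: (A) $x_u$ and $x_v$ lie in different matrices, in which case the monomial is produced by some $m_{\alpha,\beta}(X,Y)$ (or $m_{\alpha,\beta}(Y,X)$) with total $X$-degree and $Y$-degree equal to $(t+1,t)$ or $(t,t+1)$; or (B) $x_u$ and $x_v$ lie in the same matrix, in which case the monomial can only be generated inside $X^{2t+1}$ or $Y^{2t+1}$. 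One-letter monomials of degree $2t+1$ clearly can only come from $f_3X^{2t+1}$ or $f_4Y^{2t+1}$ and are attached to the $k$ diagonal entries of $X$ and of $Y$, accounting for the last two lines of the table.

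Case (A) is handled by Lemmas~\ref{lem:aug28a10} and~\ref{lem:aug25b10}. For multi-indices satisfying \eqref{eq:aug28d10}, Lemma~\ref{lem:aug28a10} forces $x_u$ and $x_v$ to be a diagonal pair occupying the same position $E_{aa}$, and the corresponding contribution lives in the polynomial at diagonal slot $(a,a)$; summing these contributions across all such $m_{\alpha,\beta}$ (including the diagonal pieces of $(XY)^tX$ and $(YX)^tY$) collapses by definition of $\varphi$ into a single coefficient $\varphi(t+1,t)$ (resp.\ $\varphi(t,t+1)$) times $x_u^{t+1}x_v^t$ (resp.\ $x_u^tx_v^{t+1}$), giving the two lines with multiplicity $k$ on the diagonal. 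For the remaining alternating multi-indices handled by Lemma~\ref{lem:aug25b10}, the odd total degree forces $\beta_\ell=0$, so only $(XY)^tX$ (producing $x_u^{t+1}x_v^t E_{ab}$) and $(YX)^tY$ (producing $x_v^{t+1}x_u^t E_{ba}$) contribute; as $(a,b)$ runs over the $k^2-k$ off-diagonal positions, this yields the $f_1$ and $f_2$ lines.

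Case (B) requires a short direct computation on $X^{2t+1}$ (the argument for $Y^{2t+1}$ is identical). If $x_u$ sits at an off-diagonal slot $E_{ab}$ of $X$ and $x_v$ sits at $E_{ba}$, then among all words of length $2t+1$ in the two letters $E_{ab}$ and $E_{ba}$ the unique nonzero product is the fully alternating word starting and ending in $E_{ab}$, namely $(E_{ab}E_{ba})^tE_{ab}=E_{ab}$, producing exactly $x_u^{t+1}x_v^t E_{ab}$ in $X^{2t+1}$. Hence $f_3X^{2t+1}$ deposits a monomial $\vtr$ to $f_3x_u^{t+1}x_v^t$ in the off-diagonal polynomial at position $(a,b)$; running over the $k^2-k$ off-diagonal positions produces the $f_3$ line, and symmetrically for $f_4$.

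Finally, non-cancellation and the per-polynomial tallies \eqref{eq:kondiag2} follow because the four sources of off-diagonal contributions are separated by matrix-content: the $f_1$-monomials have one letter from $X$ and one from $Y$ at the $(a,b)$ slot, the $f_2$-monomials the same at $(b,a)$, the $f_3$-monomials have both letters in $X$, and the $f_4$-monomials have both letters in $Y$; the hypothesis that $f_1,\ldots,f_4$ are distinct then rules out any accidental merger among these, and they are in any case combinatorially distinct from the diagonal contributions with coefficients $\varphi(t+1,t),\varphi(t,t+1)$. The main obstacle is purely organizational: verifying that every potential generator of a two-letter word $\vtr$ to $x_u^{t+1}x_v^t$ is accounted for in Cases (A) and (B), and that the diagonal contributions from all monomials in $q(X,Y)$ correctly consolidate into $\varphi(t+1,t)$ and $\varphi(t,t+1)$ by the defining property of the collapse $\widehat{p}$. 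This is bookkeeping-heavy but conceptually identical to the even case already handled in Lemma~\ref{lem:aug24a10}.
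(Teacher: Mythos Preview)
Your proposal is correct and follows essentially the same route as the paper's proof: both split according to whether the two letters lie in the same or different matrices, invoke Lemmas~\ref{lem:aug28a10} and~\ref{lem:aug25b10} (specifically assertions 2 and 4 of the latter, forced by $\beta_\ell=0$) for the mixed case, and handle $X^{2t+1}$, $Y^{2t+1}$ by direct inspection of the alternating product $(E_{ab}E_{ba})^tE_{ab}$. Your treatment of non-cancellation via matrix-content separation matches the paper's, though note that the distinctness of $f_1,\ldots,f_4$ is used for the SE-class counting rather than for preventing cancellation (the monomials already involve disjoint letter sets).
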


\begin{proof}
Let $\alpha=(\alpha_1,\ldots,\alpha_\ell)$ and $(\beta_1,\ldots,\beta_\ell)$
be multi-indices that meet conditions (\ref{eq:aug27a10}) and
(\ref{eq:aug25a10}) and set $s=t+1$.
Two letter words $x_i^{t+1}x_j^t$ with $i\ne j$ and $t\ge 2$ may be
generated in
four different ways:
\begin{enumerate}
\item[\rm(1)]  by $m_{\alpha,\beta}(X,Y)$ if $x_i\in X$ and $x_j\in Y$;
\vspace{6pt}
\item[\rm(2)] by $m_{\alpha,\beta}(Y,X)$ if $x_i\in Y$ and $x_j\in X$;
\vspace{6pt}
\item[\rm(3)] by $X^{2t+1}$ if $x_i\in X$ and $x_j\in Y$; \ and
\vspace{6pt}
\item[\rm(4)] by $Y^{2t+1}$ if $x_i\in Y$ and $x_j\in Y$.
\end{enumerate}
If (\ref{eq:aug28d10}) is in force, then the two letter words $x_i^sx_j^t$
with $i\ne j$ can only come from diagonal pairs in the same position.
The same terms with possibly different coefficients may appear from the
diagonal entries in $X$ and $Y$ from polynomials of degree $t+1$ in $X$ and
$t$ in $Y$  or degree $t$ in $X$ and degree $t+1$ in $Y$ in $q(X,Y)$.
The coefficients of the net contribution are $\varphi(t+1,t)$ and
$\varphi(t,t+1)$, respectively, and there will be a total of
$k\chi(\varphi(t+1,t)$ and $k\chi(\varphi(t,t+1)$ such pairs, one of each
sort in each  polynomial on the diagonal of the $k\times k$ array of $p(X,Y)$.

On the other hand, if (\ref{eq:aug29a10}) is in force, then
$$
\alpha_1=\cdots=\alpha_\ell=1,\quad \beta_1=\cdots=\beta_{\ell-1}=1,
\quad\beta_\ell=0
$$
$$
m_{\alpha,\beta}(X,Y)=(XY)^tX\quad\textrm{and}\quad
m_{\alpha,\beta}(Y,X)=(YX)^tY.
$$
Thus, if
$$
X=x_iE_{ab}+\cdots \quad\textrm{and}\quad Y=x_jE_{cd}+\cdots ,
$$
then, in view of assertions 2 and 4 of Lemma \ref{lem:aug25b10}, the
coefficient of $x_i^sx_j^t$ in
$m_{\alpha,\beta}(X,Y)$ and $m_{\alpha,\beta}(Y,X)$ will be nonzero if and
only if
$c=b$ and $d=a$. Correspondingly,
$$
(XY)^tX=x_i^{t+1}x_j^tE_{ab}+\cdots \quad\textrm{and}\quad
(YX)^tY=x_i^tx_j^{t+1}E_{ba}+\cdots.
$$
Since there $k^2-k$ off-diagonal positions in a $k\times k$ matrix, there are
$k^2-k$ choices of $E_{ab}$ with $a\ne b$. Moreover, the entry
$f_1x_i^{t+1}x_j^t$ can not cancel the entry $f_2x_i^tx_j^{t+1}$ even if
$a=b$,
since $x_i$ and $x_j$ are in different matrices. However, there can be
contributions from monomials in $q(X,Y)$ of degree $t+1$ in $X$ and $t$ in $Y$
or degree $t$ in $X$ and $t+1$ in $Y$.

Similarly, if  $a\ne b$ and $X=x_iE_{ab}+x_jE_{ba}+\cdots$ (resp.,
$Y=x_iE_{ab}+x_jE_{ba}+\cdots$), then
$$
X^{s+t}=x_i^sx_j^tE_{ab}+x_i^tx_j^sE_{ba}+\cdots\quad(\textrm{resp.,}\
Y^{s+t}=x_i^sx_j^tE_{ab}+x_i^tx_j^sE_{ba}+\cdots ).
$$
The final assertion comes by counting the contributions discussed above.
\end{proof}

\begin{remark}
The list in Lemma \ref{lem:aug26a10} is written under the assumption that
$f_1$, $f_2$, $f_3$ and $f_4$ are four distinct numbers.
If, say, $f_1$, $f_2$ and $f_3$ are three distinct numbers and $f_4=f_3$, then
 there will instead be $2(k^2-k)$ two letter monomials SE to
$f_3x_u^{t+1}x_v^t$,  $(k^2-k)$  two letter terms SE to $f_1x_u^{t+1}x_v^t$,
$(k^2-k)$ two letter terms SE to $f_2x_u^{t+1}x_v^t$, $k$ two letter terms
SE to $\varphi(t+1,t)x_u^{t+1}x_v^t$ and the one letter monomials would be
as they are stated above.
\end{remark}

\subsubsection{Enumeration of two letter monomials with one letter on the
diagonal}

\medskip

\begin{lemma}\label{lem:sep19a10}
Let $p_1,\ldots,p_{k^2}$ be a family of polynomials in the $2k^2$
commuting variables $x_1,\ldots,x_{2k^2}$
that admits an nc representation $p(X,Y)$ of degree $d>1$.
Suppose further that $t \ge 2$,
\begin{equation}
\label{eq:sep19e10}
p(X,Y)=d_1X^t +d_2X^{t-1}Y+d_3YX^{t-1} +q(X,Y),
\end{equation}
where $q(X,Y)$ does not contain any scalar multiples of the first three
monomials in (\ref{eq:sep19e10})
and the coefficients $d_1,\ldots,d_3$ are distinct.

If $x_u$ is a diagonal element of $X$, then the family of $k^2$ polynomials
will contain exactly
$$
\begin{array}{ll}
2k-2& \textrm{2 letter monomials $d_1  x_u^{t-1}x_v$ with $x_v\ne x_u$ if
$d_1\ne 0$}\\
\vspace{2mm}
k-1& \textrm{2 letter mnmls $d_2  x_u^{t-1}x_v$ with $x_v$ an off-diagonal
entry of $Y$ if $d_2\ne 0$}\\
\vspace{2mm}
k-1& \textrm{2 letter mnmls $d_3  x_u^{t-1}x_v$ with $x_v$ an
off-diagonal entry of $Y$ if $d_3\ne 0$}\\
\vspace{2mm}
1&\textrm{2 letter mnml $\varphi(t-1,1)x_u^{t-1}x_v$ with $x_v$ a
diagonal entry of $Y$ if $\varphi(t-1,1)\ne 0$}
\end{array}
$$

This list incorporates all the ways that two letter monomials of degree $t$
with $x_u$ of degree $t-1$  can appear in the given family of polynomials.
Moreover, no two monomials in this list are the same.

If $x_u$ is in the $aa$ position of $X$ and $p_{ab}$ denotes the polynomial
in the $ab$ position in the $k\times k$ array corresponding to $p(X,Y)$,
then
\begin{equation}
\label{eq:sep20a10}
p_{ab}(x_1,\ldots,x_{k^2})=d_1x_u^{t-1}x_v+d_2x_u^{t-1}x_w+\cdots\quad
\textrm{for $a\ne b$},
\end{equation}
where $x_v$ (resp., $x_w$) is in the $ab$ position in $X$ (resp., $Y$) and
there are no other two letter monomials of degree $t$ in $p_{ab}$ with
$x_u^{t-1}$ as a factor.  Similarly,
\begin{equation}
\label{eq:sep20b10}
p_{ba}(x_1,\ldots,x_{k^2})=d_1x_u^{t-1}x_m+d_2x_u^{t-1}x_n+\cdots\quad
\textrm{for $a\ne b$},
\end{equation}
where $x_m$ (resp., $x_n$) is in the $ba$ position in $X$ (resp., $Y$) and
there are no other two letter monomials of degree $t$ in $p_{ba}$ with
$x_u^{t-1}$ as a factor.
\end{lemma}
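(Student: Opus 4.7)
The plan is to fix a diagonal position for $x_u$, writing $X = x_u E_{aa} + \cdots$ for some $a \in \{1,\ldots,k\}$, and then to enumerate systematically all contributions to two-letter monomials of the form $c \cdot x_u^{t-1} x_v$ (with $x_v \ne x_u$) by examining each of the four summands $d_1 X^t$, $d_2 X^{t-1} Y$, $d_3 Y X^{t-1}$, and $q(X,Y)$ in turn. A preliminary observation eliminates most nc monomials from consideration: any $x_u^{t-1} x_v$ term has total degree $t$, so it must come either from a monomial of pure degree $t$ in $X$ (forcing $x_v \in X$) or from a monomial of degree $t-1$ in $X$ and degree $1$ in $Y$ (forcing $x_v \in Y$). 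All relevant nc monomials are therefore contained in the four summands above.

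Next I would analyze $d_1 X^t$ using the matrix-unit identities (\ref{eq:aug28a10})--(\ref{eq:aug28c10}). Writing $X = x_u E_{aa} + \sum_{(i,j) \ne (a,a)} x_{ij} E_{ij}$, a term $x_u^{t-1} x_{ij}$ in $X^t$ comes from placing $x_{ij} E_{ij}$ in one factor and $x_u E_{aa}$ in the remaining $t-1$ factors. The product $E_{aa}^r E_{ij} E_{aa}^{t-1-r}$ is nonzero only when either $r=0$ with $i=a$ (giving $E_{a j}$) or $r=t-1$ with $j=a$ (giving $E_{i a}$); the residual case $i=j=a$ is ruled out since $x_{ij} \ne x_u$. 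The off-diagonal entries of $X$ in row $a$ or column $a$ therefore produce exactly $2k-2$ monomials $d_1 x_u^{t-1} x_v$, placed respectively in the polynomials $p_{a b}$ and $p_{b a}$ for $b \ne a$. The same strategy applied to $d_2 X^{t-1} Y$ (resp.\ $d_3 Y X^{t-1}$) collapses the $X^{t-1}$ block to $E_{aa}$, forcing $x_v$ to be a row-$a$ (resp.\ column-$a$) entry of $Y$; this yields $k-1$ off-diagonal monomials $d_2 x_u^{t-1} x_v$ in the polynomials $p_{a b}$ (resp.\ $d_3 x_u^{t-1} x_v$ in $p_{b a}$), together with a copy of $d_2 x_u^{t-1} x_v$ and a copy of $d_3 x_u^{t-1} x_v$ in $p_{aa}$, both using the same $x_v$, namely the $(a,a)$-entry of $Y$.

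The crucial remaining step is to show that every other monomial of bidegree $(t-1,1)$---i.e., one belonging to $q(X,Y)$---has $Y$ sandwiched between two nonempty $X$-blocks, so the resulting matrix-unit product takes the form $E_{aa} E_{cd} E_{aa}$; by (\ref{eq:aug28b10}) this is nonzero only when $c = d = a$, i.e., when $x_v$ is the diagonal $(a,a)$-entry of $Y$. Consequently, off-diagonal $x_v \in Y$ picks up no contribution beyond the $d_2$ or $d_3$ terms, and for the unique diagonal $x_v$ at $(a,a)$ of $Y$ the net coefficient in $p_{aa}$ is the sum of all coefficients of bidegree-$(t-1,1)$ monomials in $p(X,Y)$, which is by definition $\varphi(t-1,1)$. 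Assembling these four calculations yields both the four-part enumeration and the explicit formulas (\ref{eq:sep20a10})--(\ref{eq:sep20b10}). The no-cancellation and distinctness-of-monomials claims follow because the three classes of admissible $x_v$'s lie in disjoint subsets of the variables (off-diagonal row-$a$/column-$a$ entries of $X$, row-$a$ entries of $Y$, column-$a$ entries of $Y$) and the coefficients $d_1, d_2, d_3$ are distinct by hypothesis. The main obstacle is simply the careful bookkeeping that tracks which matrix-unit position $E_{ab}$ maps to which polynomial $p_{ab}$ in the $k \times k$ array, but this is routine once Lemmas~\ref{lem:aug28a10} and \ref{lem:aug25b10} are applied.
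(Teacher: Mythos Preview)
Your argument is correct and follows the same approach as the paper's proof, which simply records the two key observations (that $x_u^{t-1}x_v$ with $x_v\in X$ lands in the $ab$ or $ba$ entry of $X^t$, and with $x_v\in Y$ lands in the $ab$ entry of $X^{t-1}Y$ or the $ba$ entry of $YX^{t-1}$) and leaves the counting to the reader; you have filled in exactly that counting. One small slip: in your analysis of $E_{aa}^r E_{ij} E_{aa}^{t-1-r}$ the cases $r=0$ and $r=t-1$ are interchanged (it is $r=0$ that forces $j=a$ and yields $E_{ia}$, and $r=t-1$ that forces $i=a$ and yields $E_{aj}$), but this does not affect your conclusions.
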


\begin{proof}
It is readily checked with the aid of the calculations in \S
\ref{subsec:precalc} that if
\begin{enumerate}
\item[\rm(1)] $x_u$ is in the $aa$ position of $X$, $x_v\in X$ and
$x_v\ne x_u$, then $x_u^{t-1}x_v$ is either in the $ab$ position of $X^t$ or
the $ba$ position of $X^t$ for some $b\ne a$;
\vspace{2mm}
\item[\rm(2)]  $x_u$ is in the $aa$ position of $X$ and $x_v\in Y$,
then $x_u^{t-1}x_v$ is either in the $ab$ position of $X^{t-1}Y$ or
the $ba$ position of $YX^{t-1}$ for some $b\ne a$.
\end{enumerate}
The rest of the proof is straight forward counting and is left to the reader.
\end{proof}

\begin{remark}
\label{rem:sep20a10}
Let
\begin{equation}
\label{eq:sep20c10}
p(X,Y)=d_1X^t+d_2X^{t-1}Y+d_3YX^{t-1}+d_4Y^t+q(X,Y)\quad\textrm{for some
integer $t\ge 2$},
\end{equation}
where $q(X,Y)$ does not contain any scalar multiples of the first four
monomials in (\ref{eq:sep20c10}) and assume that
$d_1,\ldots,d_4$ are subject to the constraints
\begin{equation}
\label{eq:sep20d10}
d_1\ne 0,\quad d_1\ne d_4\quad and\quad d_1\ne d_2\quad or \quad d_1\ne d_3.
\end{equation}
Then there will be $k$ terms
$$
d_1x_{i_1}^t,\ldots,d_1x_{i_k}^t
$$
in the family of $k^2$ polynomials and the corresponding letters
$x_{i_1}^t,\ldots,   x_{i_k}^t$ may be identified as the diagonal elements of
say $X$. The assumption $d_1\ne d_4$ insures that they can be chosen
unambiguously. Thus, if $x_u$ is one of these diagonal elements and it is in
the $aa$ position of $X$, then
\begin{eqnarray*}
p_{aa}&=&d_1x_u^t+\varphi(t-1,1)x_u^{t-1}x_h+\cdots,\\
p_{ab}&=&d_1x_u^{t-1}x_v+d_2x_u^{t-1}x_f+\cdots, \\
p_{ba}&=&d_1x_u^{t-1}x_z+d_3x_u^{t-1}x_g+\cdots,
\end{eqnarray*}
where $x_h$ is in the $aa$ position of $Y$, $x_v$ is in the $ab$ position of
$X$, $x_f$ is in the $ab$ position of $Y$, $x_z$ is in the $ba$ position of
$X$ and $x_g$ is in the $ba$ position of $Y$.
\end{remark}

\subsection{Determining Diagonal Elements}
 Lemmas \ref{lem:aug24a10} and \ref{lem:aug26a10} serve to enumerate
 the diagonal entries in $X$ and $Y$ when the given set of $k^2$
 polynomials contain one letter monomials. But if say
$e_3=e_4$ in Lemma \ref{lem:aug24a10} and $f_3=f_4$ in Lemma
 \ref{lem:aug26a10},
then it is not immediately obvious which entries belong to $X$ and which
entries belong to $Y$.

\begin{definition}  A pair of variables $x_i$ and $x_j$ will be called a \df{ partitioned (resp., dyslexic) diagonal pair} if both occur
in the $aa$ position for some choice of $a\in\{1,\ldots,k\}$ and  we know (resp., do not know) which variable occurs in $X$ and
 which occurs in $Y$.
 \end{definition}

 \begin{lemma}
\label{lem:aug18a9}
Suppose $p_1,\ldots,p_{2k^2}$ is a family of polynomials in the $2k^2$
commuting
variables $x_1,\ldots,x_{2k^2}$ that admits an nc representation
$p(X,Y)$ of degree $d$ with $d>1$ such that at least one of the given
polynomials contains a term of the form
$ex_{i}^{s}m(x_{1},...,\what{x_{i}},...,x_{2k^{2}})$, where
$e\in\RR\setminus\{0\}$ and $m(x_{1},...,\what{x_{i}},...,x_{2k^{2}})$ is a
monomial of degree $t\ge 1$ that does not contain any $x_i$ terms and
$s \ge t+2.$
Then $x_{i}$ lies on
the diagonal of either $X$ or $Y$.
\end{lemma}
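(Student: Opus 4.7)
The plan is to argue by contradiction. Suppose $x_{i}$ lies on neither the diagonal of $X$ nor the diagonal of $Y$. Since the $2k^2$ variables are distributed without repetition in the entries of $X$ and $Y$, the variable $x_{i}$ then occupies exactly one off-diagonal position. Without loss of generality, assume $x_{i}$ sits at position $(a,b)$ of $X$ with $a\ne b$; the case $x_{i}\in Y$ is symmetric.

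The key step is a combinatorial counting fact about the entries of a general nc monomial. Let $M=Z_1 Z_2 \cdots Z_n$ with each $Z_j \in \{X,Y\}$. Expanding the $(r,c)$ entry of $M$ as an iterated matrix product yields a sum of \emph{paths}: for each $Z_j$ one chooses a pair of indices $(r_j,c_j)$ with $r_1=r$, $c_n=c$, and $c_j=r_{j+1}$ for $1\le j<n$; the scalar contribution of the path is the product of the selected entries. Every contribution to the commutative polynomial $(p(X,Y))_{rc}$ arises from such a path, summed over all paths producing each fixed commutative monomial. Because variables do not repeat, a path produces a commutative monomial of the form $x_{i}^{\rho}m'$ with $m'$ free of $x_{i}$ precisely when exactly $\rho$ of the $Z_j$ equal $X$ and are selected at position $(a,b)$. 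Say these selections occur at indices $j_1<j_2<\cdots<j_\rho$. For each $k<\rho$ the adjacency condition forces $c_{j_k}=b$ and $r_{j_{k+1}}=a$; since $a\ne b$ this rules out $j_{k+1}=j_k+1$, so at least one intermediate factor separates consecutive picks of $x_{i}$. Each intermediate factor is a variable distinct from $x_{i}$, because variables in $Y$ differ from $x_{i}\in X$ and the only position of $X$ carrying $x_{i}$ is $(a,b)$, which is not picked at intermediate indices. Hence the path contains at least $\rho-1$ factors other than $x_{i}$, giving $\deg(m')\ge \rho-1$.

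To finish, I would apply this counting fact to the hypothesis. Because the coefficient $e$ of $x_{i}^{s}m(x_1,\ldots,\widehat{x_{i}},\ldots,x_{2k^{2}})$ in one of the polynomials in $\cP$ is nonzero, at least one path (in some matrix entry of some nc monomial appearing in $p(X,Y)$) must produce the commutative monomial $x_{i}^{s}m$ exactly. Applying the preceding paragraph with $\rho=s$ and $m'=m$ forces $t=\deg(m)\ge s-1$, i.e., $s\le t+1$. This contradicts the standing hypothesis $s\ge t+2$, so $x_{i}$ must lie on the diagonal of $X$ or of $Y$.

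The main obstacle is making the path-counting argument airtight: one has to use the no-repetition convention on variables twice, once to identify the $s$ factors producing $x_{i}^{s}$ with $s$ specific picks of position $(a,b)$ in $X$, and once to guarantee that every intermediate factor genuinely contributes to the degree of $m$ rather than slipping in as another copy of $x_{i}$. The inequality $a\ne b$, equivalent to the vanishing identity $E_{ab}^{2}=0$ from the calculus of Section~\ref{subsec:precalc}, is precisely what creates the $\rho-1$ mandatory intermediate factors that drive the contradiction.
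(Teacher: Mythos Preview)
Your proof is correct and follows essentially the same idea as the paper's: if $x_i$ sits at an off-diagonal position $(a,b)$ of $X$, then $E_{ab}^2=0$ forces at least one non-$x_i$ factor between consecutive $x_i$-picks along any contributing path, so $t\ge s-1$, contradicting $s\ge t+2$. The paper phrases this via the contrapositive pigeonhole (``any arrangement of $s$ special picks among $s+t$ factors has two adjacent ones, forcing $E_{cd}^2\ne 0$''), but your explicit path-counting version is more carefully written and in particular makes transparent why the $t$ remaining factors need not all come from $Y$.
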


\begin{proof}
For the sake of definiteness, assume $x_i \in X$.  Then, since
$s\ge t+2$, every permutation of the symbols $X^sY^t$ must contain at least two adjacent $X^\prime$s. But if
$$
X=x_iE_{cd}+\cdots\quad\textrm{with}\ c\ne d,
$$
then $X^2=0$. Thus, the given family of polynomials will only contain terms of the form
$ex_{i}^{s}m(x_{1},...,\what{x_{i}},...,x_{2k^{2}})$ if $c=d$, i.e., if
$x_i$ is a diagonal element of $X$.
\end{proof}

\begin{lemma}
\label{lem:aug16a10}
Let $p_1,\ldots,p_{k^2}$ be a family of polynomials in the $2k^2$
commuting variables $x_1,\ldots,x_{2k^2}$
that admits an nc representation $p(X,Y)$ of degree $d>1$.
Suppose further that at least one of the polynomials
contains at least one term of the form $ex_i^sx_j^t$ with $s \ge t+2$, and
$t \ge 2$.  Then

\begin{enumerate}
\item[\rm(1)] $x_i$ and $x_j$ are a dyslexic diagonal pair.
\vspace{6pt}
\item[\rm(2)] There exist exactly $k$ dyslexic diagonal pairs
$\{x_{i_1}, x_{j_1}\}, \ldots,\{ x_{i_k}, x_{j_k}\}$ and $k$ polynomials
$p_{\ell_1},\ldots,p_{\ell_k}$ such that
$p_{\ell_n}=ex_{i_n}^sx_{j_n}^t+\cdots$ for $n=1,\ldots,k$. Moreover,
$p_{\ell_n}$
occupies the same diagonal position as $x_{i_n}$ and $x_{j_n}$.
\end{enumerate}

\end{lemma}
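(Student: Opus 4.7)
The plan is to piece this lemma together from the machinery already built up, especially Lemmas \ref{lem:aug18a9}, \ref{lem:aug28a10}, and \ref{lem:aug25b10}. The condition $s\ge t+2$ with $t\ge 2$ is strong enough that the ``small-gap'' case of Lemma \ref{lem:aug25b10} (which requires $|s-t|\le 1$) is automatically ruled out.

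First I would establish that $x_i$ lies on the diagonal of either $X$ or $Y$ by a direct appeal to Lemma \ref{lem:aug18a9} with the monomial $m=x_j^t$ of degree $t$ in the variables other than $x_i$; the hypothesis $s\ge t+2$ is exactly what that lemma demands. By symmetry of the argument that follows, I may assume $x_i$ occurs on the diagonal of $X$, in some position $(a,a)$. Since $s\ge t+2$, every pair of multi-indices $\alpha,\beta$ with $|\alpha|=s$, $|\beta|=t$ satisfies $\max\{\alpha_r,\beta_r\}\ge 2$, i.e.\ condition \eqref{eq:aug28d10}, so Lemma \ref{lem:aug28a10} controls every monomial $m_{\alpha,\beta}(X,Y)$ contributing to the $x_i^sx_j^t$ term.

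Next I would use this to pin down $x_j$. Applying \eqref{eq:sep16b10} and summing over the monomials of $p(X,Y)$, the coefficient of $x_i^s x_j^t$ in any entry of the array is a scalar combination of matrices $m_{\alpha,\beta}(E_{ab},E_{cd})$, where $(a,b)$ and $(c,d)$ are the positions of $x_i$ and $x_j$ in their respective matrices (with both placed either in different matrices, or both in $X$, or both in $Y$). By Lemma \ref{lem:aug28a10} every such matrix vanishes unless $a=b=c=d$. Since by hypothesis $e\ne 0$, at least one such term must survive, forcing $a=b=c=d$. If $x_j$ were in the same matrix as $x_i$, this would require $x_j$ to occupy exactly the slot $(a,a)$ already taken by $x_i$, contradicting the no-repetition convention; hence $x_j$ lies in the other matrix at position $(a,a)$. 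Since the one-term datum $ex_i^s x_j^t$ gives no way to tell which of the two matrices holds $x_i$ and which holds $x_j$, the pair $\{x_i,x_j\}$ is dyslexic. This proves~(1).

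For part~(2), I would simply iterate the analysis at each of the $k$ diagonal slots. For each $n\in\{1,\dots,k\}$ there are unique variables $x_{i_n}$ and $x_{j_n}$ occupying the $(n,n)$ entries of $X$ and $Y$, and by the previous paragraph they form a dyslexic diagonal pair. The coefficient of $x_{i_n}^s x_{j_n}^t$ in the array $p(X,Y)$ is, by \eqref{eq:sep16b10} and Lemma \ref{lem:aug28a10}, concentrated in the $(n,n)$ entry and equals $\bigl(\sum_{|\alpha|=s,|\beta|=t} c_{\alpha,\beta}\bigr)E_{nn}$; this constant is exactly the number $e$ picked out of the original term. Thus the polynomial $p_{\ell_n}$ sitting in the $(n,n)$ position of the array has the form $e x_{i_n}^s x_{j_n}^t + \cdots$, and no off-diagonal polynomial contains such a term, giving exactly the $k$ polynomials asserted. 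The main obstacle is really just bookkeeping: keeping track of all four sub-cases (both variables in $X$, both in $Y$, one in each in each order) and verifying that the strict inequality $s\ge t+2$ places us everywhere inside the regime of Lemma \ref{lem:aug28a10} rather than the boundary regime of Lemma \ref{lem:aug25b10}.
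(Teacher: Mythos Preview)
Your proof is correct and follows the same strategy as the paper's: use Lemma~\ref{lem:aug18a9} to place $x_i$ on the diagonal, then argue that $x_j$ must occupy the matching diagonal slot in the other matrix, and finally iterate over the $k$ diagonal positions for part~(2). Your treatment is in fact more careful than the paper's: the paper simply asserts that Lemma~\ref{lem:aug18a9} also forces $x_j$ onto the diagonal, but that lemma as stated requires the exponent of the subject variable to exceed the remaining degree by~$2$, which fails for $x_j$ since $t\le s-2$. Your route through Lemma~\ref{lem:aug28a10}---observing that $s\ge t+2$ with $t\ge 2$ puts every contributing word in the regime \eqref{eq:aug28d10}, so each $m_{\alpha,\beta}(E_{ab},E_{cd})$ vanishes unless $a=b=c=d$---cleanly handles all four sub-cases and fills in precisely the step the paper elides.
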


\begin{proof}  If  $x_i\in X$, then (since $s\ge t+2$ and $t\ge 2$) Lemma
\ref{lem:aug18a9} guarantees
that $x_i$ lies on the diagonal of $X$ and that $x_j$ lies on the
diagonal of the matrix that
contains $x_j$. Since $x_i$ and $x_j$ must be in the same diagonal
position, this forces $x_j$ to belong to $Y$. Similarly, if $x_i\in Y$,
then $x_j$ must be in $X$ and both variables must be in the same
diagonal position. Thus (1) holds; (2) follows automatically from (1),
since all
the dyslexic diagonal pairs are subject to the same constraints.
\end{proof}

\begin{lemma}
\label{lem:aug16b10}
Let $p_1,\ldots,p_{k^2}$ be a family of polynomials in the $2k^2$
commuting variables $x_1,\ldots,x_{2k^2}$
that admits an nc representation $p(X,Y)$ of degree $d>1$.
If $t \ge 2$ and $s = t+1$ or $s=t$ and it is also assumed that

\begin{enumerate}
\item[\rm(1)] $k\ge 3$ (so that $k^2 - k > k $) and
\vspace{6pt}
\item[\rm(2)] there exist exactly $k$ monomials
$ex_{i_1}^sx_{j_1}^t, \cdots,e x_{i_k}^sx_{j_k}^t$ in the given
family of polynomials that are structurally equivalent to $ex_u^sx_v^t$ with
$x_u\ne x_v$ and $e\ne 0$,
\end{enumerate}
then $x_{i_n}$ and $x_{j_n}$ are a dyslexic diagonal pair for each
$1 \le n \le k$.
Moreover, if $p_{\ell_n}=ex_{i_n}^tx_{j_n}^t+\cdots$, then $p_{\ell_n}$ occupies the same diagonal position as $x_{i_n}$ and $x_{j_n}$.
\end{lemma}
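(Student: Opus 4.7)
The plan is to apply Lemmas \ref{lem:aug24a10} (when $s=t$) and \ref{lem:aug26a10} (when $s=t+1$) and then extract the conclusion from the sharp counts those lemmas provide, using $k^2-k>k$ to rule out every non-diagonal source.

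First I would treat the case $s=t$. Writing $p(X,Y)$ in the form (\ref{eq:aug27b10}), Lemma \ref{lem:aug24a10} lists every source of two-letter monomials in $\cP$ that is $\vtr$-equivalent to $x_u^t x_v^t$: four ``off-diagonal-letter'' sources $e_1(XY)^t,\, e_2(YX)^t,\, e_3 X^{2t},\, e_4 Y^{2t}$ each contributing exactly $k^2-k$ monomials all with a single coefficient $e_i$, together with one ``diagonal-pair'' source contributing exactly $k$ monomials with coefficient $\varphi(t,t)$. Hypothesis (1) gives $k^2-k>k$, so hypothesis (2) forces every $e_i$ (and every coincidence-combination of $e_i$'s, as spelled out in the remark after Lemma \ref{lem:aug24a10}) to differ from $e$: if any such coefficient equalled $e$ we would already see $k^2-k$ or more monomials SE to $ex_u^tx_v^t$, contradicting ``exactly $k$.'' The only remaining source with the right count is the diagonal-pair contribution, so $\varphi(t,t)=e$ and each pair $\{x_{i_n},x_{j_n}\}$ is a dyslexic diagonal pair. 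Lemma \ref{lem:aug24a10} moreover places these monomials in the diagonal polynomial of the $k\times k$ array sitting in the same diagonal position as the pair, which is the second assertion.

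The case $s=t+1$ is parallel, using Lemma \ref{lem:aug26a10}. Now the $\vtr$-equivalent monomials come from six sources: four off-diagonal-letter contributions of size $k^2-k$ with coefficients $f_1,\ldots,f_4$, and two diagonal-pair contributions of size $k$ with coefficients $\varphi(t+1,t)$ and $\varphi(t,t+1)$. Exactly the same argument as above, together with the coincidence remark after Lemma \ref{lem:aug26a10}, rules out any $f_i$ equalling $e$. Since each diagonal-pair contribution alone has size $k$, hypothesis (2) forces exactly one of $\varphi(t+1,t)$ and $\varphi(t,t+1)$ to equal $e$ (not zero and not both, which would give $0$ or $2k$). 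In either subcase the $k$ SE monomials all originate from diagonal pairs, and Lemma \ref{lem:aug26a10} again places the containing polynomial $p_{\ell_n}$ on the diagonal in the position of the pair.

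The only substantive step is the bookkeeping around coincidences of the coefficients $e_i$ (resp.\ $f_i$): one has to check that every coincidence case still produces a count of the form $m(k^2-k)$ with $m\ge 1$, which is incompatible with the exact count $k$ once $k\ge 3$. This is where the hypothesis $k\ge 3$ does all the work, and it is the only point that is not immediate from the statements of the enumeration lemmas. Everything else is a direct translation of those lemmas into the desired conclusion.
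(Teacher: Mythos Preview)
Your proof is correct and follows essentially the same approach as the paper: invoke the enumeration Lemmas \ref{lem:aug24a10} and \ref{lem:aug26a10}, then use $k^2-k>k$ to exclude every off-diagonal source and identify $e$ with $\varphi(t,t)$ (resp.\ with exactly one of $\varphi(t+1,t),\varphi(t,t+1)$, the two being distinct). The paper's proof is a two-line version of what you wrote; your added discussion of coefficient coincidences is a welcome clarification but not a different idea.
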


\begin{proof}
Since there are only $k$ terms in this list and $k^2-k>k$ when $k\ge 3$,
the conclusion follows from Lemma \ref{lem:aug24a10} if $s=t$ and from
Lemma \ref{lem:aug26a10} if $s=t+1$. In the first case $e=\varphi(t,t)$; in
the second case either $e=\varphi(t+1,t)$ or $e=\varphi(t,t+1)$ and the
two numbers $\varphi(t+1,t)$ and $\varphi(t,t+1)$ are different.
\end{proof}

\begin{lemma}
Suppose that the term $ax_i^sx_j^t$, $a\ne 0$, occurs in some polynomial $p$ in the given family.  If the pair $x_i$ and $x_j$ is a
dyslexic diagonal pair, then the polynomial is a diagonal polynomial that occurs in the same diagonal position as the dyslexic pair.
\end{lemma}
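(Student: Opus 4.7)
The plan is to exploit the preliminary calculations of \S\ref{subsec:precalc} and, in particular, the idempotence identity $E_{aa}^n = E_{aa}$ for every $n \ge 1$. Since $x_i$ and $x_j$ form a dyslexic diagonal pair, both occur in the $aa$ position for some $a \in \{1,\ldots,k\}$; without loss of generality assume $X = x_i E_{aa} + X'$ and $Y = x_j E_{aa} + Y'$, where $X'$ and $Y'$ collect the remaining contributions and contain neither $x_i$ nor $x_j$ (the reverse assignment is treated symmetrically).

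Next I track every occurrence of the monomial $x_i^s x_j^t$ in $p(X,Y)$. Any monomial $m_{\alpha,\beta}(X,Y) = X^{\alpha_1}Y^{\beta_1}\cdots X^{\alpha_\ell}Y^{\beta_\ell}$ appearing in $p(X,Y)$ has some bi-degree $(s',t')$; a nonzero contribution to $x_i^s x_j^t$ forces $s'=s$ and $t'=t$ and further forces the selection of the $x_i E_{aa}$ summand at every $X$-slot and of the $x_j E_{aa}$ summand at every $Y$-slot. Using $E_{aa}^n = E_{aa}$ for every $n \ge 1$ (and treating the degenerate case $\beta_\ell = 0$ identically, since the product still consists of a nonempty string of $E_{aa}$'s), the matrix coefficient of $x_i^s x_j^t$ contributed by any such monomial is precisely $E_{aa}$.

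Summing over all monomials of bi-degree $(s,t)$ appearing in $p(X,Y)$, the coefficient of $x_i^s x_j^t$ in the $k \times k$ matrix $p(X,Y)$ is $\varphi(s,t)\, E_{aa}$. Since the polynomial $p$ in the family contains the nonzero term $a x_i^s x_j^t$, we conclude that $\varphi(s,t) = a \ne 0$ and that this monomial appears only in the $aa$-entry of $p(X,Y)$, with zero contribution to every other entry. Because the nc representation requires each polynomial in $\cP$ to be a distinct entry of $p(X,Y)$, the polynomial $p$ must coincide with the $aa$-entry, that is, the diagonal polynomial at the same position as the dyslexic pair $\{x_i,x_j\}$.

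The argument is short; the only delicate point is confirming that the calculation is insensitive to the choice of $\alpha,\beta$ (including the case $\beta_\ell=0$) and to the swap of roles between $X$ and $Y$, both of which reduce immediately to $E_{aa}^n = E_{aa}$ and the symmetry between the two matrices.
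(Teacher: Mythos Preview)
Your proof is correct and follows essentially the same approach as the paper: both arguments observe that if $x_i$ and $x_j$ occupy the $aa$ positions of $X$ and $Y$, then every monomial $m_{\alpha,\beta}(X,Y)$ of bi-degree $(s,t)$ contributes $x_i^sx_j^t E_{aa}$ (plus terms free of $x_i^sx_j^t$), so the monomial can only appear in the $aa$ entry of the array. Your write-up is simply more explicit, carrying the computation through to the identification $a=\varphi(s,t)$ and spelling out the idempotence of $E_{aa}$, whereas the paper compresses all of this into a single line.
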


\begin{proof}
If, say, $X=x_iE_{aa}+\cdots$, and $Y=x_jE_{aa}+\cdots$ and the multi-indices
$\alpha$ and $\beta$ are subject to (\ref{eq:aug27a10}) and
(\ref{eq:aug25a10}), then $m_{\alpha,\beta}(X,Y)=x_1^sx_j^tE_{aa}+\cdots$.
\end{proof}

\begin{lemma} \label{lem:aug16d10}
Let $p_1, \cdots, p_{2k^2}$ be a family of polynomials in the $2k^2$
commuting
variables $x_1,\ldots,x_{2k^2}$ that admits an nc representation
$p(X,Y)$ of degree $d$ with $d>1$ such that
exactly $k$ SE terms of the form
$ax_{i_1}^sx_{j_1}^t, \cdots,a x_{i_k}^sx_{j_k}^t$ appear in the
family with $s > t$,  $t\ge 2$ and $a \ne 0$.  If
$\{x_{i_1},x_{j_1}\}, \cdots, \{x_{i_k}, x_{j_k}\}$
are dyslexic diagonal pairs, then the variables $x_{i_1}, \cdots, x_{i_k}$ of degree $s$
in the terms $ax_{i_1}^sx_{j_1}^t, \cdots,a x_{i_k}^sx_{j_k}^t$
are the diagonal elements of one matrix and and the variables $x_{j_1}, \cdots, x_{j_k}$ of degree $t$ are the diagonal elements
of the other matrix.
\end{lemma}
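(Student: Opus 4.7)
The plan is to argue by contradiction and exploit the rigidity that an nc representation imposes on the coefficient of a prescribed two-letter word appearing in a given diagonal polynomial. Suppose the conclusion fails: among the $k$ dyslexic diagonal pairs there exist indices $n, m \in \{1, \ldots, k\}$ such that $x_{i_n}$ lies in $X$ (so that $x_{j_n}$ lies in $Y$) while $x_{i_m}$ lies in $Y$ (so that $x_{j_m}$ lies in $X$). Denote the corresponding diagonal positions by $(a_n, a_n)$ and $(a_m, a_m)$.

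First I would pin down the coefficient of $x_{i_n}^s x_{j_n}^t$ in the diagonal polynomial $p_{a_n a_n}$ of the $k\times k$ array assigned to $p(X,Y)$. Because $x_{i_n}$ and $x_{j_n}$ each occur exactly once in the matrices, namely at the $(a_n,a_n)$ entries of $X$ and $Y$ respectively, and since $(E_{a_n a_n})^r = E_{a_n a_n}$ for every $r\ge 1$, the calculations of Lemmas \ref{lem:aug28a10} and \ref{lem:aug25b10} show that the only way any monomial $m_{\alpha,\beta}(X,Y)$ can produce the pure word $x_{i_n}^s x_{j_n}^t$ is by selecting those two diagonal entries throughout. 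This forces $m_{\alpha,\beta}$ to have $X$-degree $s$ and $Y$-degree $t$, and contributes $x_{i_n}^s x_{j_n}^t E_{a_n a_n}$. Summing against the coefficients of $p(X,Y)$, the coefficient of $x_{i_n}^s x_{j_n}^t$ in $p_{a_n a_n}$ equals $\varphi(s,t)$; analogously the coefficient of $x_{i_n}^t x_{j_n}^s$ equals $\varphi(t,s)$. Since the term $a x_{i_n}^s x_{j_n}^t$ appears in the family, $a=\varphi(s,t)$. Repeating the analysis for the pair indexed by $m$ with the orientation reversed (so $x_{i_m}$ plays the role of the $Y$-diagonal entry and $x_{j_m}$ the role of the $X$-diagonal entry), the term $x_{i_m}^s x_{j_m}^t$ now arises only from monomials of $X$-degree $t$ and $Y$-degree $s$, so its coefficient is $\varphi(t,s)$, forcing $a=\varphi(t,s)$. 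Combining these gives $\varphi(s,t)=\varphi(t,s)=a\ne 0$.

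The contradiction then comes from counting SE terms. Under the equality $\varphi(s,t)=\varphi(t,s)=a$, every one of the $k$ dyslexic diagonal pairs $\{x_{i_\ell}, x_{j_\ell}\}$, regardless of orientation, contributes both monomials $a x_{i_\ell}^s x_{j_\ell}^t$ and $a x_{i_\ell}^t x_{j_\ell}^s$ to the diagonal polynomial in its position. Since $s>t$ these two monomials differ, and since distinct dyslexic diagonal pairs involve disjoint variables, the resulting $2k$ monomials are pairwise distinct. Each is SE to $a x_u^s x_v^t$ (identical coefficient, identical exponent multiset $\{s,t\}$), so the family contains at least $2k$ SE terms, contradicting the hypothesis of exactly $k$. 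Hence all pairs share the same orientation, which yields the claimed partition of the $x_{i_\ell}$ and $x_{j_\ell}$ between $X$ and $Y$.

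The step I expect to require the most care is the coefficient identification in the second paragraph. One must exclude the possibility that extra contributions to $x_{i_n}^s x_{j_n}^t$ in $p_{a_n a_n}$ are produced by off-diagonal matrix entries or by conspiracies involving other variables. This is handled by the fact that the two dyslexic variables of the pair occur only at the single location $(a_n, a_n)$, so any product involving $x_{i_n}$ and $x_{j_n}$ exclusively must pass through that diagonal slot at every factor, leaving no room for an alternative contribution.
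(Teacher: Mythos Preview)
Your proof is correct and follows essentially the same route as the paper: argue by contradiction, deduce $\varphi(s,t)=\varphi(t,s)=a\ne 0$ from the two oppositely oriented pairs, and then count $2k$ distinct SE terms $a x_{i_\ell}^s x_{j_\ell}^t$ and $a x_{i_\ell}^t x_{j_\ell}^s$ to contradict the hypothesis of exactly $k$. The only cosmetic difference is that the paper cites the enumeration Lemmas~\ref{lem:aug16a10} and~\ref{lem:aug26a10} to obtain $\varphi(s,t)=\varphi(t,s)=a$, whereas you derive the coefficient identification directly from the $E_{aa}$ computations of Lemmas~\ref{lem:aug28a10} and~\ref{lem:aug25b10}; both justifications are valid.
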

\begin{proof}
If the lemma is false, then  without loss of generality, we may suppose that
$x_{i_1} \in X$ and $x_{i_2} \in Y$.   By Lemmas~\ref{lem:aug16a10}
and~\ref{lem:aug26a10}, $\varphi(s,t)=\varphi(t,s)=a\ne 0$. Thus, as
$s > t$, each diagonal pair $x_{i_n}, x_{j_n}$ will
occur in the monomials $ax_{i_n}^sx_{j_n}^t$ and $ax_{i_n}^tx_{j_n}^s$.
Therefore, there will be $2k$ terms structurally equivalent to
$ax_{i_1}^sx_{j_1}^t$, which contradicts one of the given assumptions.
\end{proof}

\section{Partitioning Algorithms for families containing single letter
monomials $ax_i^n$, $n\ge 2$}\label{sec:algPosit}
\label{sec:nov25c11}

In the previous section we developed a number of methods to determine the
diagonal variables for a family $\cP$ of polynomials $p_1,\cdots,p_{k^2}$
with an nc representation. The next step is to
utilize this information to determine which of the commutative variables
 $x_1,\cdots,x_{2k^2}$ are entries in $X$ and which are
entries in $Y$. In this section we develop two algorithms that we will
refer to
as \df{partitioning algorithms},  since they partition the commutative
variables between the two matrices.
First, however,  we review some preliminary calculations that will be
 essential in the
development of the  first partitioning algorithm.

The following assumptions will be in force for the rest of this section:
\medskip

{\bf (A1)}  $p_1,\ldots,p_{k^2}$ is a family of polynomials in  $2k^2$
commuting
variables $x_1,\ldots,x_{2k^2}$ that admits an nc representation
$p(X,Y)$ of degree $d$ with $d>1$.
\medskip

{\bf (A2)} $\varphi(n,0)\ne 0$ for some $n\ge 2$.

\medskip

{\bf (A3)} $p(0,0)=0$. (This involves no real loss of generality, because
constant terms can be reinserted at the end.)
\medskip

\subsection{Preliminary calculations  for the partitioning algorithm }\label{prelimPart}
\bigskip

First observe that
$$
(\alpha I_k+\beta E_{st})^n=\alpha^nI_k+n\alpha^{n-1}\beta E_{st}\quad
\textrm{if}\ s\ne t.
$$
Then without loss of generality, assume that the variables
$\{x_{i_1}, \cdots, x_{i_k}\}$ are the commutative variables
that occur on the diagonal of $X$.

Next, choose a variable $x_j$ such that
$$
x_j\not\in\{x_{i_1},\ldots,x_{i_k}\},
$$
set
$$
x_{i_1}=\cdots=x_{i_k}=\alpha\quad\textrm{and}\quad x_j=\beta
$$
and set all the other variables equal to zero.

Then there are three mutually exclusive possibilities:
\bigskip

\noindent
{\bf 1.} $x_j\in X$. In this case
$$
X=\alpha I_k+\beta E_{st}\quad\textrm{with $s\ne t$ and}\quad Y=0.
$$

\noindent
{\bf 2.} $x_j\in Y$ but is not on the diagonal of $Y$. In this case
$$
X=\alpha I_k\quad\textrm{and}\quad Y=\beta E_{st}\
\textrm{with $s\ne t$}.
$$

\noindent
{\bf 3.} $x_j\in Y$ and is on the diagonal of $Y$. In this case
$$
X=\alpha I_k\quad\textrm{and}\quad Y=\beta E_{ss}\quad
\textrm{for some integer}\ s\in\{1,\ldots,k\}.
$$
\\

All three of these cases fit into the common framework of choosing
$X=A\in\RR^{k\times k}$ and $Y=B\in\RR^{k\times k}$ with $AB=BA$.
Thus, if
$$
p(X,Y)=\sum_{i=0}^d p_{[i]}(X,Y),
$$
where $p_{[i]}(X,Y)$ denotes the terms in $p(X,Y)$ of degree $i$, then
the condition $AB=BA$ insures that
\begin{equation}
\label{eq:aug4a10}
p_{[i]}(A,B)=\sum_{j=0}^i c_{i-j,j}A^{i-j}B^j=c_{i0}A^i
+\sum_{j=1}^i c_{i-j,j}A^{i-j}B^j =\what p_{[i]}(A,B),
\end{equation}
where
$$
c_{ij}=\varphi(i,j)\quad\textrm{(for short) for $i,j=0,\ldots,d$ with
$c_{ij}=0$ for $i+j>d$}
$$
and hence,
\begin{equation}
\label{eq:aug4b10}
p(A,B)=\sum_{i=1}^d\sum_{j=0}^ic_{i-j,j}A^{i-j}B^j=\sum_{i=1}^dc_{i0}A^i
+\sum_{i=1}^d\sum_{j=1}^ic_{i-j,j}A^{i-j}B^j =\what p(A,B),
\end{equation}
since the assumption $p(0,0)=0$ forces $c_{00}=0$.
\bigskip

In Case 1, $A=\alpha I_k+\beta E_{st}$ with $s\ne t$ and $B=0$. Therefore,
\begin{equation}
\label{eq:aug4c10}
p_{[i]}(A,B)=c_{i0}(\alpha^i I_k+i\alpha^{i-1}\beta E_{st})\quad\textrm{for}\
i\ge 1    \qquad and
\end{equation}
\begin{eqnarray}
\nonumber
p(A,B)
&=&\sum_{i=1}^dc_{i0}\alpha^iI_k+\sum_{i=1}^dc_{i0} i\alpha^{i-1}
\beta E_{st}\\
\label{eq:aug4d10}
&=&\sum_{i=1}^dc_{i0}\alpha^iI_k+
\sum_{i=0}^{d-1}c_{i+1,0} (i+1)\alpha^i\beta E_{st}.
\end{eqnarray}
\bigskip

In Case 2, $A=\alpha I_k$ and $B=\beta E_{st}$ with $s\ne t$. Therefore,
\begin{equation}
\label{eq:aug4e10}
p_{[i]}(A,B)=c_{i0}\alpha^iI_k+c_{i-1,1}\alpha^{i-1}\beta E_{st}
\quad\textrm{for}\ i\ge 1 \qquad    and
\end{equation}
\begin{equation}
\label{eq:aug4f10}
p(A,B)=
\sum_{i=1}^dc_{i0}\alpha^iI_k+\sum_{i=1}^dc_{i-1,1} \alpha^{i-1}\beta E_{st}
\end{equation}
\bigskip

In Case 3, $A=\alpha I_k$ and $B=\beta E_{ss}$. Therefore,
\begin{equation}
\label{eq:aug4g10}
p_{[i]}(A,B)=c_{i0}\alpha^i I_k+\sum_{j=1}^i c_{i-j,j}\alpha^{i-j}\beta^j E_{ss}
\quad\textrm{for}\ i\ge 1 \qquad  and
\end{equation}
\begin{equation}
\label{eq:aug4h10}
p(\alpha I_k,\beta E_{ss})=\sum_{i=1}^dc_{i0}\alpha^iI_k+
\sum_{i=1}^d\sum_{j=1}^ic_{i-j,j}\alpha^{i-j}\beta^jE_{ss}.
\end{equation}

We remark that the formula $p(\alpha I_k+\beta E_{st},0)$ in Case 1, can
also be expressed in terms of the
polynomial
$$
\varphi(\alpha)=\sum_{i=1}^dc_{i0}\alpha^i
$$
as
$$
p(\alpha I_k+\beta E_{st}, 0)=\varphi(\alpha)I_k+\varphi^\prime(\alpha)\beta
E_{st}.
$$
The equality $c_{d0}=a\ne 0$ guarantees that $\varphi(\alpha)\not\equiv 0$ and
$\varphi^\prime(\alpha)\not\equiv 0$.


\subsection{Partitioning Algorithm I}
\label{sec:algDiagPar}

Now we develop the fundamental ideas for an algorithm that
will partition the commutative variables between $X$ and $Y$.

Set the diagonal entries of $X$ equal to $\alpha$, one of the
other $2k^2-k$ variables $x_i$ equal to $\beta$ and the remaining
$2k^2-k-1$ variables equal to zero. Then:

\begin{eqnarray*}
x_i&{ }&\  \textrm{is an off-diagonal entry of $X$}\Longleftrightarrow \textrm{this substitution produces}\\
&{ }& k \ \textrm{polynomials equal to
$\sum_{i=1}^dc_{i0}\alpha^i$},\\
&{ }& \textrm{$1$ polynomial equal to}\  \sum_{i=0}^{d-1}c_{i+1,0}(i+1)\alpha^i\beta
\ \textrm{and}\\
&{ }& \textrm{$k^2-k-1$ polynomials equal to $0$};
\end{eqnarray*}

\begin{eqnarray*}
x_i&{ }&\  \textrm{is an off-diagonal entry of $Y$}\ \Longleftrightarrow
\textrm{this substitution produces} \\
&{ }& k \ \textrm{polynomials equal to}\
\sum_{i=1}^dc_{i0}\alpha^i, \\
&{ }& \textrm{$1$ polynomial equal to}\
\sum_{i=0}^{d-1}c_{i1}\alpha^i\beta \  \textrm{and} \\
&{ }& k^2-k-1\ \textrm{polynomials equal to $0$};
\end{eqnarray*}

\begin{eqnarray*}
x_i &{ }& \ \textrm{is a diagonal entry of $Y$} \Longleftrightarrow   \textrm{this substitution produces}\\
&{ }&
k-1 \ \textrm{polynomials equal to} \
\sum_{i=1}^dc_{i0}\alpha^i, \\
&{ }& \textrm{$1$ polynomial equal to}\
\sum_{i=1}^dc_{i0}\alpha^i+\sum_{i=1}^d\sum_{j=1}^{i}c_{i-j,j}\alpha^{i-j}\beta^j\quad
\textrm{and}\\
&{ }&  k^2-k\ \textrm{polynomials equal to}\  0.
\end{eqnarray*}

The first two cases will be indistinguishable if and only if
$$
\sum_{i=0}^{d-1}c_{i+1,0} (i+1)\alpha^i\beta=
  \sum_{i=0}^{d-1}c_{i1} \alpha^i\beta
$$
for every choice of $\alpha$ and $\beta$, i.e., if and only if
$$
c_{i+1,0}(i+1)=c_{i1} \quad\textrm{for}\ i=0,\ldots,d-1.
$$

\subsection{Partitioning with homogeneous components:
Algorithm DiagPar1}\label{sec1:11may12}
It is often advantageous to focus on the homogeneous components of the
given set of polynomials $p_1,\ldots,p_{k^2}$,
i.e., on the sub-polynomials
of $p_1,\ldots,p_{k^2}$ of specified degree. It is readily seen that if
$p_{[n]}(X,Y)$ denotes the terms in the nc polynomial of degree
$n$,
then the $k^2$ commuting polynomials in the $k\times k$ array corresponding
to $p_{[n]}(X,Y)$ are the $k^2$ polynomials $q_1,\ldots,q_{k^2}$, where
$q_i(x_1,\ldots,x_{2k^2})$ is the sum of the monomials in $p_i(x_1,\ldots,x_{2k^2})$
of degree $n$.

The three possibilities considered earlier applied to polynomials of degree
$n$ lead to
simpler criteria:
\bigskip

\noindent
{\bf 1.}   If $A=\alpha I_k+\beta E_{st}$ with $s\ne t$ and $B=0$, then
$$
p_{[n]}(A,B)=c_{n0}(\alpha^nI_k+n\alpha^{n-1}\beta E_{st}).
$$
{\bf 2.}  If $A=\alpha I_k$ and $B=\beta E_{st}$ with $\beta\ne 0$
and $s\ne t$, then
$$
p_{[n]}(A,B)=c_{n0}\alpha^n I_k+c_{n-1,1}\alpha^{n-1}\beta E_{st}.
$$
{\bf 3.}  If $A=\alpha I_k$ and $B=\beta E_{ss}$, then,
$$
p_{[n]}(A,B)=c_{n0}\alpha^n I_k+\sum_{j=1}^nc_{n-j,j}\alpha^{n-j}\beta^jE_{ss}.
$$

Thus, if the diagonal entries of $X$ are set equal to $\alpha$, one of the
other $2k^2-k$ variables $x_i$ is set equal to $\beta$  and
the remaining
$2k^2-k-1$ variables are set equal to zero,
and if $\beta\ne\alpha$ and $\varphi(n,0)\ne 0$,
then:
\begin{eqnarray*}
&x_i&\  \textrm{is an off-diagonal entry of  $X$}\Longleftrightarrow \textrm{this substitution
produces} \\
&k&\  \textrm{ polynomials equal to
$c_{n0}\alpha^n$}, \\
&1&\  \textrm{polynomial equal to  $c_{n0}n\alpha^{n-1}\beta$ and}\\
&k^2-k-1&\ \textrm{ polynomials equal to $0$};
\end{eqnarray*}

\begin{eqnarray*}
&x_i&\  \textrm{is an off-diagonal entry of $Y$}  \Longleftrightarrow
\textrm{this substitution produces}\\
 &k&\  \textrm{polynomials equal to}\  c_{n0}\alpha^n\\
&1&\  \textrm{ polynomial equal to $c_{n-1,1}\alpha^{n-1}\beta$  and}\\
&k^2-k-1&\ \textrm{polynomials equal to $0$};
\end{eqnarray*}

\begin{eqnarray*}
&x_i&\  \textrm{is a diagonal entry of $Y$} \Longleftrightarrow   \textrm{this substitution produces} \\
&k-1&\ \textrm{polynomials equal to}\   c_{n0}\alpha^n\\
&1&\  \textrm{polynomial equal to}\
c_{n0}\alpha^n+
 \sum_{j=1}^nc_{n-j,j}\alpha^{n-j}\beta^j\quad \textrm{and}\\
 &k^2-k&\ \textrm{polynomials equal to}\ 0.
\end{eqnarray*}

\ben
\item
The first two possibilities will be distinguishable
if and only if

 $n \varphi(n,0) \ne \varphi(n-1,1)$.
\item
The third will
be distinguishable from the first if
$\varphi(n,0) \ne 0$
 by counting the number of polynomials equal to zero,
  $k+1$ vs $k$

\item
The third will be distinguishable from the second if
 and  at least two of the coefficients
  $\varphi(n-j,j) \ne 0$
$j=0,1,\ldots,n$ are nonzero.
This is done by comparing number of terms of polynomials.
 \bigskip
\een

We will refer to the process developed in the previous discussion
as \df{Algorithm DiagPar1}.  In the following theorem we summarize
the conditions under which this algorithm will partition the commutative
variables. The reader should keep in mind that if
there are $k$ single letter monomials (as opposed to $2k$ or $0$) of
degree $n$ in a homogeneous family $\cP$ of polynomials of degree $n$, then
we always assume that the associated variables lie on the diagonal of $X$ and hence that $\varphi(n,0)\ne 0$.

\begin{theorem}[DiagPar1 Algorithm]
\label{lem:diagToPartit}
Let $p_1,\cdots,p_{k^2}$ be a family of homogeneous
 polynomials with an nc representation
 $p(X,Y)$ of degree $n$ with $n \geq 2$.
Then Algorithm DiagPar1 successfully partitions the variables
$x_1,\cdots,x_{2k^2}$
between $X$ and $Y$ if and only if
\begin{align}\label{eq1:30jan12}
\varphi(n,0) \ne 0,\quad \varphi(n,0) \ne \varphi(0,n), \quad
\text{and} \quad  n \varphi(n,0) \ne \varphi(n-1,1).
\end{align}
\end{theorem}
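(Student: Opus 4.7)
I will prove the two directions of the biconditional separately, following the two stages of Algorithm DiagPar1. Stage (A) identifies the $k$ variables on the diagonal of $X$ from the single-letter monomials of degree $n$, and Stage (B) loops over each of the remaining $2k^2-k$ variables $x_i$: for each, it applies the substitution ``diagonal of $X$ equal to $\alpha$, $x_i$ equal to $\beta$, all other variables equal to $0$'' and classifies $x_i$ as off-diagonal in $X$, off-diagonal in $Y$, or diagonal in $Y$ by matching the resulting $k\times k$ array of polynomial values against Cases 1--3 of Subsection \ref{sec1:11may12}. Each of the three inequalities in \eqref{eq1:30jan12} corresponds to exactly one ambiguity that must be ruled out.

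\textbf{Sufficiency.} For Stage (A) I invoke Lemma \ref{lem:aug16b9}: the hypothesis $\varphi(n,0)\ne 0$ guarantees the presence of $k$ single-letter monomials of degree $n$ with coefficient $\varphi(n,0)$, and $\varphi(n,0)\ne\varphi(0,n)$ ensures that these are distinguishable from any diagonal entries of $Y$ (either $\varphi(0,n)=0$ and no other single-letter monomials of degree $n$ exist, or the $k$ monomials with coefficient $\varphi(0,n)$ form a second block separable by coefficient). For Stage (B) I read off the three symbolic patterns from Subsection \ref{sec1:11may12}. Case 3 contributes $k^2-k$ zero entries and $k$ polynomially nonzero ones (nonzero because $\varphi(n,0)\ne 0$), while Cases 1 and 2 each have $k^2-k-1$ zeros and $k+1$ nonzero entries, including one extra off-diagonal value. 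Counting nonzero entries therefore separates Case 3 from Cases 1 and 2. Cases 1 and 2 share all diagonal values and differ only at the single off-diagonal entry at position $st$, namely $n\varphi(n,0)\,\alpha^{n-1}\beta$ versus $\varphi(n-1,1)\,\alpha^{n-1}\beta$, and these agree as polynomials in $\alpha,\beta$ precisely when $n\varphi(n,0)=\varphi(n-1,1)$. The third hypothesis thus separates them.

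\textbf{Necessity.} For the converse I argue by contrapositive, showing that the failure of any one inequality yields an ambiguity that the algorithm cannot resolve. If $\varphi(n,0)=0$, no single-letter monomials of degree $n$ with coefficient $\varphi(n,0)$ occur, so Stage (A) has no data on which to act. If $\varphi(n,0)=\varphi(0,n)\ne 0$, the $2k$ single-letter monomials of degree $n$ share a common coefficient by Lemma \ref{lem:aug16b9}, so Stage (A) cannot decide which block of $k$ corresponds to $X$ and which to $Y$. If $n\varphi(n,0)=\varphi(n-1,1)$, the symbolic arrays produced in Cases 1 and 2 of Stage (B) are identical as polynomials in $\alpha,\beta$, so an off-diagonal entry of $X$ cannot be distinguished from an off-diagonal entry of $Y$. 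The main obstacle I expect is the bookkeeping behind the ``counting of nonzero entries'' used to separate Case 3 from Cases 1 and 2: the comparison must be carried out at the level of polynomial identities in $\alpha,\beta$ rather than numerical evaluations, and the explicit formulas derived in Subsection \ref{prelimPart} must be invoked to certify that each value being counted as nonzero is in fact a nonzero polynomial, once $\varphi(n,0)\ne 0$ is assumed.
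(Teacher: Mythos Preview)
Your proof is correct and takes essentially the same approach as the paper, which itself just cites the preceding discussion in Subsection~\ref{sec1:11may12} together with Lemma~\ref{lem:aug16b9} for both directions.

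One small imprecision worth tightening: you assert that Case~2 always yields $k+1$ nonzero entries, but if $\varphi(n-1,1)=0$ the single off-diagonal value $c_{n-1,1}\alpha^{n-1}\beta$ vanishes and Case~2 then has only $k$ nonzero entries, the same count as Case~3. This does not damage the conclusion, because Cases~2 and~3 both place $x_i$ in $Y$; for \emph{partitioning} you only need to separate Case~1 from Cases~2 and~3 taken together. Your argument already does that: the zero-count separates Case~1 from Case~3 (since $n\varphi(n,0)\alpha^{n-1}\beta\ne 0$ whenever $\varphi(n,0)\ne 0$), and the value comparison $n\varphi(n,0)\ne\varphi(n-1,1)$ separates Case~1 from Case~2. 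So the logic is sound once you rephrase the claim as ``separate Case~1 from the union of Cases~2 and~3'' rather than ``separate all three cases pairwise.''
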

\begin{proof}
If $\varphi(n,0) \ne 0$ and \eqref{eq1:30jan12} holds, then
the above discussion implies that DiagPar1 will successfully
partition the variables between $X$ and $Y$.

Conversely, suppose
that DiagPar1 successfully partitions the variables between $X$
and $Y$.  This implies that the algorithm can first determine
the set of diagonal variables of $X$ by analyzing single letter monomials
of the form $ax_i^n$ in the polynomials
in $\cP$ .  By Lemma~\ref{lem:aug16b9},
we see that this is possible only if $\varphi(n,0) \ne 0$
and $\varphi(n,0) \ne \varphi(0,n)$.
Finally, the above discussion implies that DiagPar1 partitions the off-diagonal
elements between $X$ and $Y$ if and only if $\varphi(n,0) \ne \varphi(n-1,1)$.
\end{proof}

\subsection{Partitioning: Algorithm DiagPar2}\label{sec:DiagPar2}

In this subsection we develop another partitioning algorithm that is
closely related to theDiagPar1 Algorithm and is based on the following
observation:
Let $p_1,\cdots,p_{k^2}$ be a family of polynomials with an nc representation
$p(X,Y)$
and suppose that for some $t \ge 2$, $\varphi(t,0) \ne 0$ and $\varphi(t,0) \ne \varphi(0,t)$.
Then by Lemma \ref{lem:aug16b9}, there exists exactly $k$ polynomials
$p_{i_1},\cdots, p_{i_k}$,
each of which contains exactly one
one letter monomial of degree $t$ with coefficient $\varphi(t,0)$.

The next step rests on Lemma \ref{lem:sep19a10}. But
for ease of understanding, let $x_{ij}$ denote the $ij$ entry in $X$,
$y_{ij}$ the $ij$ entry in $Y$ and let $p_{ij}$ denote the $ij$ entry in an
array of commutative polynomials that admits an nc representation $p(X,Y)$.
Then
$$
p_{ii}=\varphi(t,0)x_{ii}^t+\varphi(0,t)y_{ii}^t+\cdots,
$$
whereas for $i\ne j$,
\begin{equation*}
\begin{split}
p_{ij}=&\varphi(t,0)(x_{ii}^{t-1}x_{ij}+x_{ij}x_{jj}^{t-1})+
\varphi(0,t)(y_{ii}^{t-1}y_{ij}+y_{ij}y_{jj}^{t-1})\\
&\quad
+\varphi(Y;t-1,1)y_{ij}x_{jj}^{t-1}+\varphi(t-1,1;Y)x_{ii}^{t-1}y_{ij}+\cdots
\end{split}
\end{equation*}
and
\begin{equation*}
\begin{split}
p_{ji}=&\varphi(t,0)(x_{ji}x_{ii}^{t-1}+x_{jj}^{t-1}x_{ji})+
\varphi(0,t)(y_{ji}y_{ii}^{t-1}+y_{jj}^{t-1}y_{ji})\\
&\quad
+\varphi(Y;t-1,1)y_{ji}x_{ii}^{t-1}+\varphi(t-1,1;Y)x_{jj}^{t-1}y_{ji}+\cdots
\end{split}
\end{equation*}
Thus, if the entry $x_{ii}$ and $t$ are known, then there will be exactly
two two
letter monomials in $p_{ij}$, $i\ne j$ with $x_{ii}^{t-1}$ as a factor,
namely, $\varphi(t,0)x_{ji}x_{ii}^{t-1}$ and
$\varphi(t-1,1;Y)x_{ii}^{t-1}y_{ij}$, but only the first of these will have
the correct coefficient if $\varphi(t,0)\ne \varphi(t-1,1;Y)$. Thus, under
this condition, it is possible to isolate all the entries in $X$ by repeating
the argument for $i=1,\ldots,k$.

Similar considerations based on inspection of the polynomials $p_{ji}$
will also yield all the entries in $X$ if $\varphi(t,0)\ne \varphi(Y;t-1,1)$.

We will refer to the above procedure for partitioning the commutative
variables as
\df{Algorithm DiagPar2}.  The conditions under which this algorithm
works are summarized in the following theorem.

\begin{theorem}[DiagPar2 Algorithm]\label{lem1:sep8:2010}
Let $p_1,\cdots,p_{k^2}$ be a family of polynomials with an nc representation $p(X,Y)$.
Then Algorithm DiagPar2 will partition the variables $x_1,\cdots,x_{2k^2}$ between $X$ and $Y$
if and only if there exists an $n \ge 2$ such that $\varphi(n,0) \ne 0$, $\varphi(n,0) \ne \varphi(0,n)$, and
\begin{align}\label{eq1:31jan12}
\varphi(n,0) \ne \varphi(n-1,1;Y) \quad \text{ or } \quad \varphi(n,0) \ne \varphi(Y;n-1,1).
\end{align}
\end{theorem}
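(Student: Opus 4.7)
The plan is to prove the equivalence by running both directions through the same two tools: Lemma~\ref{lem:aug16b9} (together with Lemma~\ref{lem2:sep9:2010}) for the one letter step that identifies the diagonal of one of the matrices, and Lemma~\ref{lem:sep19a10} for the two letter step that partitions the off-diagonal variables. This is exactly the structure of Algorithm DiagPar2 itself, so the argument amounts to verifying that each step of the algorithm is possible precisely under the stated coefficient inequalities.

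For the sufficiency direction, fix $n\ge 2$ satisfying the three hypotheses. Because $\varphi(n,0)\ne 0$ and $\varphi(n,0)\ne \varphi(0,n)$, Lemma~\ref{lem:aug16b9} isolates exactly $k$ polynomials carrying a single letter monomial of degree $n$ with coefficient $\varphi(n,0)$; these $k$ polynomials and their single letter monomials determine the $k$ diagonal variables of one matrix, which I label $X$, together with their diagonal positions $(i,i)$. Next, for each such diagonal variable $x_{ii}$ and each $j\ne i$, Lemma~\ref{lem:sep19a10} shows that the only two letter monomials of degree $n$ in $p_{ij}$ having $x_{ii}^{n-1}$ as a factor are
$$
\varphi(n,0)\,x_{ii}^{n-1}x_{ij}\qquad\text{and}\qquad \varphi(n-1,1;Y)\,x_{ii}^{n-1}y_{ij},
$$
where $x_{ij}$ and $y_{ij}$ denote the $(i,j)$ entries of $X$ and $Y$, respectively. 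If $\varphi(n,0)\ne \varphi(n-1,1;Y)$, the coefficient $\varphi(n,0)$ picks out $x_{ij}$ uniquely; running this over all pairs $(i,j)$ with $i\ne j$ recovers every off-diagonal entry of $X$, and the remaining commutative variables must be the entries of $Y$. If instead $\varphi(n,0)\ne \varphi(Y;n-1,1)$, the mirrored analysis on $p_{ji}$ identifies $x_{ji}$ as an off-diagonal entry of $X$ and again completes the partition.

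For necessity I run the argument backwards. By construction DiagPar2 first locates the diagonal variables of $X$ from single letter monomials of degree $n$ sharing a common nonzero coefficient; such monomials exist and can be correctly assigned to $X$ (rather than to $Y$) exactly when $\varphi(n,0)\ne 0$ and $\varphi(n,0)\ne \varphi(0,n)$, by Lemma~\ref{lem:aug16b9}. The algorithm then isolates each off-diagonal $X$ entry by matching the coefficient $\varphi(n,0)$ among the two letter monomials in some $p_{ij}$ or $p_{ji}$ that carry $x_{ii}^{n-1}$ as a factor; Lemma~\ref{lem:sep19a10} lists exactly two candidates in each case, so this matching discriminates an $X$ entry from a $Y$ entry if and only if at least one of the inequalities in \eqref{eq1:31jan12} holds.

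The main obstacle is verifying that the two letter enumeration used above is genuinely exhaustive: one must rule out stray contributions to $x_{ii}^{n-1}x_{ij}$ or $x_{ii}^{n-1}y_{ij}$ in $p_{ij}$ coming either from nc monomials of total degree different from $n$, or from monomials of degree $(n-1,1)$ in which $Y$ does not sit at the end. Both exclusions are built into Lemma~\ref{lem:sep19a10} via the position-wise calculations of Section~\ref{subsec:precalc}, so once those are invoked, the full equivalence follows by direct coefficient comparison.
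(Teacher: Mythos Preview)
Your proof is correct and follows essentially the same approach as the paper: the paper's own proof simply refers back to the discussion preceding the theorem, which is exactly the two-step argument you spell out---Lemma~\ref{lem:aug16b9} for identifying the diagonal of $X$ via one letter monomials, and the enumeration of Lemma~\ref{lem:sep19a10} (equivalently the explicit formulas for $p_{ij}$ and $p_{ji}$ in Section~\ref{sec:DiagPar2}) for discriminating the off-diagonal $X$ entries from the $Y$ entries by coefficient matching. Your final paragraph correctly flags and disposes of the only nontrivial point, namely that Lemma~\ref{lem:sep19a10} really does exhaust the two letter monomials carrying $x_{ii}^{n-1}$ as a factor.
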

\begin{proof}
If the above conditions hold then the above discussion implies that
Algorithm DiagPar2 will successfully partition the variables between
$X$ and $Y$.  Conversely, if DiagPar2 partitions the variables between $X$ and
$Y$, then it must first determine the diagonal elements.  Lemma~\ref{lem:aug16b9}
implies that this is only possible if $\varphi(n,0) \ne 0$ and $\varphi(n,0) \ne \varphi(0,n)$.
The above discussion implies that DiagPar2 will successfully partition the off-diagonal
entries only if the above conditions hold.
\end{proof}

\subsection{Summary of Partitioning Algorithms}  The main conclusion of this section is that if the given system $\cP$ of $k^2$
commutative polynomials admits an nc representation $p(X,Y)$ in
the set $NC_{(\ref{eq2:31jan12})}$ of
nc polynomials
$p(X,Y)$ of degree $d>1$ for which there exists an integer
$n \ge 2$ such that
\begin{equation}
\label{eq2:31jan12}
\left\{\begin{array}{l}
\varphi(n,0) \ne 0,\quad  \varphi(n,0)\ne \varphi(0,n),
\quad \textrm{and either} \\
n\varphi(n,0) \ne \varphi(n-1,1) \quad \textrm{or}\\
\varphi(n,0) \ne \varphi(n-1,1;Y) \quad \textrm{or}\quad \varphi(n,0)
\ne \varphi(Y;n-1,1),
\end{array}\right.
\end{equation}
then either Algorithm DiagPar1 or Algorithm DiagPar2
will
successfully partition the variables between $X$ and $Y$.

\section{Positioning algorithms for families of polynomials containing one
letter monomials}\label{sec:singlepos}
\label{sec:alg}

In this section we shall present algorithms for positioning the variables in
$X$,  given that the diagonal entries of $X$ are known
and that the remaining $2k^2-k$  commutative variables are
partitioned between $X$ and $Y$. The assumptions (A1), (A3) that are
listed at the beginning of \S \ref{sec:algPosit} and a weaker form of (A2):
\medskip

(A2$^\prime$) $\vert\varphi(n,0)\vert+\vert\varphi(0,n)\vert>0$ for some
$n\ge 2$,
\medskip

will be in force for the rest of this section.

\subsection{Positioning the variables within $X$: Algorithm ParPosX}
\label{subsec1:sep3:2010}

\bigskip

If we can determine the diagonal variables of the matrix
$X$ and implement Algorithm DiagPar1 or DiagPar2 in \S \ref{sec:algPosit},
then we may assume that $x_{i_1},\ldots,x_{i_{k^2}}\in X$,
and the remaining $k^2$ variables belong to $Y$.
To ease the notation,
assume that $x_1,\ldots,x_{k^2}\in X$ and that $x_i$ is in the $ii$ position
for $i=1,\ldots,k$ and let
\begin{eqnarray*}\label{eq1:31oct12}
R_i &{}&  \textrm{denote the remaining $k-1$ entries in
the $i$th row of}\ X,\\
C_i &{}&  \textrm{denote the remaining $k-1$ entries in
the $i$th column of}\ X, \
{\textrm{and}}\\
L_i&=& R_i\cup C_i.
\end{eqnarray*}
Then, since
\begin{equation}\label{eq1:sep3:2010}
X^n=(x_i^{n-1}E_{ii}+\cdots)(x_jE_{st}+\cdots)=
(x_i^{n-1}x_jE_{ii}E_{st}+\cdots)
\end{equation}
and
\begin{equation}\label{eq2:sep3:2010}
X^n=(x_jE_{st}+\cdots)(x_i^{n-1}E_{ii}+\cdots)
=(x_i^{n-1}x_jE_{st}E_{ii}+\cdots)
\end{equation}
for $1\le i\le k<j$, it is readily seen that the term $ax_i^{n-1}x_j$ appears
in one of the entries of $aX^n$ if and only if either $s=i$ or $t=i$, i.e.,
if and only if $x_j\in R_i\cup C_i$. Moreover, since $R_i\cap C_i=\emptyset$
there will be $2k-2$ such
terms in $X^n$.

Let
$$
L_i\cap L_r=\{x_j, x_\ell\}\quad
\textrm{for some integer $r\in\{1,\ldots,k\}\setminus\{i\}$}.
$$
Then one of these two variables will be in the $ir$ position of $X$, while
the other is in the $ri$ position, and it is impossible to decide which is
where without extra information. Let
us assume for the sake of definiteness that
$x_j$ is in the $ir$ position of
$X$, then $x_\ell$ will be in the $ri$ position. Moreover, since
$$
X^n=(x_jE_{ir}+\cdots)(x_r^{n-1}E_{rr}+\cdots)=(x_jx_r^{n-1}E_{ir}+\cdots),
$$
the term $x_jx_r^{n-1}$ also belongs to the same polynomial. Thus,
$$
X^n=\begin{bmatrix}q_{11}&\cdots&q_{1k}\\
\vdots& &\vdots\\ q_{k1}&\cdots&q_{kk}\end{bmatrix},
$$
where the $q_{st}$ are either homogeneous polynomials of degree $n$ in the
variables  $x_1,\ldots,x_{k^2}$ or zero. In particular, if $k\ge 3$,
$n\ge 2$ and $i\ne r$, then
$$
q_{ir}=(x_i^{n-1}x_j+x_jx_r^{n-1}+ x_i^{n-2}x_t x_m+\cdots)
$$
with $x_t$ and $x_m$ off the diagonal and $x_t\ne x_m$. This
automatically insures that $x_t$ and $x_m$ differ from $x_i$ and $x_j$,
i.e.,
$$
\{x_t,\, x_m\}\cap \{x_i,\,x_j\}=\emptyset,
$$
and that there exists an integer
$j\in\{1,\ldots,k\}\setminus\{i,r\}$ such that
$$
\textrm{either}\ x_t\in R_i\cap C_j\quad\textrm{and}\quad x_m\in
R_j\cap C_r \quad\textrm{or vice versa}.
$$
Therefore, since $R_i\cap R_j=\emptyset$ for $j\ne i$, only one of the two
variables $x_t$, $x_m$ is listed in the set $R_i\cup C_i$ and hence
these two variables can be positioned unambiguously. This procedure adds one
more variable to each of $R_i$ and  $C_r$. The remaining entries in $R_i$ and
$C_r$ are obtained by repeating this procedure $k-3$
more times by running through all the other triples of the form
$x_i^{n-2}x_ux_v$ in $q_{ir}$.

After $R_i$ and $C_r$ are filled in, the procedure is repeated with some
other diagonal element as a starting point, and then repeated again and
again until all the $k^2$ variables are positioned in $X$.

We will refer to the procedure for positioning the variables outlined
in the previous discussion as \df{Algorithm ParPosX}.  The
conditions under which this algorithm will position the commutative variables
are summarized in the following theorem.

\begin{theorem}\label{thm1:sep9:2010}[Algorithm ParPosX]
Let $p_1,\cdots,p_{k^2}$ be a family of polynomials with an nc
representation $p(X,Y)$.
Then Algorithm ParPosX will position the variables in $X$
if and only if $p \in NC_{(\ref{eq2:31jan12})}$.
\end{theorem}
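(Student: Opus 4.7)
The plan is to decompose Algorithm ParPosX into its two stages and to reduce each stage to results already at hand. The first stage consists of invoking either DiagPar1 or DiagPar2 to fix the diagonal entries of $X$ and partition the remaining $2k^2-2k$ commutative variables between the two matrices. By Theorems \ref{lem:diagToPartit} and \ref{lem1:sep8:2010}, the union of the classes of nc polynomials on which DiagPar1 and DiagPar2 succeed is precisely the class $NC_{(\ref{eq2:31jan12})}$. The second stage then uses the row/column calculations sketched in the paragraphs preceding the theorem statement to place the off-diagonal variables that are already known to lie in $X$.

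For the sufficiency direction, assume $p \in NC_{(\ref{eq2:31jan12})}$ and let $a=\varphi(n,0)\ne 0$ for some $n\ge 2$ be as guaranteed by the hypothesis. After the first stage we may assume that $x_1,\ldots,x_k$ are the diagonal entries of $X$, with $x_i$ in the $ii$ position, and that the remaining variables of $X$ have been identified. Lemma \ref{lem:sep19a10}, applied with $d_1=\varphi(n,0)$, shows that every two-letter monomial of degree $n$ of the form $ax_i^{n-1}x_j$ with $x_j\in X$ appears in exactly one polynomial $p_{ir}$ or $p_{ri}$ and with coefficient exactly $a$. Extracting such terms recovers the sets $L_i=R_i\cup C_i$ for each diagonal index $i$. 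The intersection $L_i\cap L_r$ identifies the two-element set $\{x_j,x_\ell\}$ that must fill the $(i,r)$ and $(r,i)$ positions of $X$, and the three-letter term $ax_i^{n-2}x_tx_m$ in $q_{ir}$ breaks the $(i,r)\leftrightarrow(r,i)$ symmetry, because $R_i\cap R_j=\emptyset$ for $i\ne j$, so exactly one of the two candidate variables lies in $R_i\cup C_i$. Iterating over all diagonal indices completes the positioning inside $X$.

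For the necessity direction, observe that ParPosX cannot even begin without the diagonal identification and partition supplied by the first stage. If $p\notin NC_{(\ref{eq2:31jan12})}$, then Theorems \ref{lem:diagToPartit} and \ref{lem1:sep8:2010} together imply that neither DiagPar1 nor DiagPar2 succeeds on $\cP$, so the input needed by the positioning step is unavailable and the algorithm fails. Combined with the sufficiency direction, this establishes the biconditional.

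The main obstacle is verifying that the three-letter disambiguation step in the sufficiency argument is unambiguous in every case: one must confirm that for each relevant triple $(i,r,j)$ the coefficient of $x_i^{n-2}x_tx_m$ in $q_{ir}$ is a nonzero scalar multiple of $\varphi(n,0)$ that is not washed out by contributions from other monomials in $p(X,Y)$ of degree $n$ whose commutative image coincides with $x_i^{n-2}x_tx_m$. A careful extension of the enumeration techniques used in Lemmas \ref{lem:aug24a10}--\ref{lem:sep19a10} to three-letter words built from two off-diagonal letters and one diagonal letter should suffice, but this bookkeeping is the only genuinely delicate point in the argument.
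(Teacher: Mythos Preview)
Your approach is essentially the same as the paper's, which is extremely terse: it simply notes that DiagPar1 or DiagPar2 partition the variables if and only if $p\in NC_{(\ref{eq2:31jan12})}$, and that the positioning step succeeds if and only if the partitioning is in hand. Your expansion of the sufficiency direction, invoking Lemma~\ref{lem:sep19a10} and the $L_i$, $R_i$, $C_i$ machinery from Section~\ref{subsec1:sep3:2010}, faithfully unpacks what the paper leaves implicit.

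The obstacle you flag at the end is not a genuine one. Once the partitioning stage has succeeded, you \emph{know} which variables lie in $X$; the three-letter monomials $x_i^{n-2}x_tx_m$ that the positioning step inspects are chosen with all three letters in $X$. Any nc monomial in $p(X,Y)$ other than $X^n$ contains at least one factor of $Y$, and therefore contributes to a commutative monomial only if at least one of its letters is a $Y$-variable. Hence the coefficient of $x_i^{n-2}x_tx_m$ (with all letters in $X$) in the $k\times k$ array is exactly the coefficient coming from $\varphi(n,0)X^n$, with no interference from other terms of $p$. No extension of the two-letter enumeration lemmas is needed; the partitioning itself resolves the ambiguity you were worried about.
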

\begin{proof}
DiagPar1 and DiagPar2 will partition the commutative variables between
$X$ and $Y$
if and only if $p \in NC_{(\ref{eq2:31jan12})}$.
The above algorithms will position the variables in $X$ if and only if the
commutative variables are partitioned between $X$ and $Y$.
\end{proof}
\medskip

\subsection{Algorithm for positioning the polynomials given positioning in $X$}

If $X$ and $Y$ are general matrices containing the variables $x_1,\cdots, x_{2k^2}$, an nc representation
$p(X,Y)$ produces a family of polynomials $p_1,\cdots,p_{k^2}$ that is arranged in a $k \times k$ matrix.  The purpose of
this section is to investigate when it is possible to determine the position of the polynomials $p_1,\cdots,p_{k^2}$ in the resulting $k \times k$ matrix.
We start with a lemma that will be useful in developing a procedure to accomplish this task.

\begin{lemma}\label{lem1:aug30:2010}
Let $p_1,\cdots,p_{k^2}$ be a homogeneous family of polynomials with an
nc representation $p(X,Y)$ of degree $d>1$.
Suppose that the variables $x_1,\cdots,x_{2k^2}$ have been partitioned
between $X$ and $Y$ and positioned in $X$.
Then, if $x_i$ is in the $aa$ position of $X$, $x_j$ is in the $ab$ position of
$X$, $x_\ell$ is in the $bb$ position of $X$ and
$$
\varphi(n,0) \ne 0 \quad \text{for some } \ n \ge 2,
$$
then there exists exactly one polynomial that contains the monomials
$\varphi(n,0)x_i^{n-1}x_j$ and $\varphi(n,0)x_jx_\ell^{n-1}$. Moreover,
this polynomial is in the $ab$ position in the $k\times k$ array.
\end{lemma}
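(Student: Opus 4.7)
The plan is to show that both target monomials arise in the expansion of $p(X,Y)$ only from the single nc monomial $X^n$, and that inside $X^n$ each one lives in a unique matrix position, namely the $(a,b)$-entry.

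First I would reduce the problem to analyzing $X^n$. Since the family is homogeneous of degree $n$, $p(X,Y)$ is homogeneous of degree $n$, and any nc monomial $m_{\alpha,\beta}(X,Y)$ appearing in $p(X,Y)$ with even one factor of $Y$ contributes only commutative monomials that involve at least one variable from $Y$. Because $x_i$, $x_j$ and $x_\ell$ all sit in $X$, the commutative monomials $x_i^{n-1}x_j$ and $x_jx_\ell^{n-1}$ are pure $X$-monomials and therefore only $X^n$, with coefficient $\varphi(n,0)$, can produce them.

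Next I would locate these monomials inside $X^n$ by viewing matrix multiplication as path counting. Writing $X=\sum_{s,t}X_{st}E_{st}$, the $(a',b')$-entry of $X^n$ is a sum over directed walks $a' \to a_1 \to \cdots \to a_{n-1} \to b'$ of length $n$, each contributing the product of the variables on the edges used. Using $X_{aa}=x_i$, $X_{ab}=x_j$, $X_{bb}=x_\ell$, and the fact that each variable occupies exactly one entry of $X$, the monomial $x_i^{n-1}x_j$ can only be produced by a walk that uses the edge $(a,a)$ exactly $n-1$ times and the edge $(a,b)$ exactly once. Such a walk must start at $a$ (both edges leave $a$) and, since after taking $(a,b)$ one is at $b$ with no further $(a,a)$-move available, the $(a,b)$ edge must be the last step; so the unique walk is $a\to a\to\cdots\to a\to b$, contributing $x_i^{n-1}x_j$ with coefficient $1$ to $(X^n)_{ab}$ and to no other entry. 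An analogous argument, applied to walks using $(a,b)$ once and $(b,b)$ exactly $n-1$ times, shows that $x_jx_\ell^{n-1}$ appears uniquely in $(X^n)_{ab}$ with coefficient $1$.

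Combining the two steps, $\varphi(n,0)X^n$ is the sole contributor of $\varphi(n,0)x_i^{n-1}x_j$ and $\varphi(n,0)x_jx_\ell^{n-1}$ to $p(X,Y)$, and both of these commutative monomials land in precisely the $(a,b)$-entry of the $k\times k$ array. This proves both existence and uniqueness of the advertised polynomial, and identifies its position. I expect the main obstacle to lie not in the overall logic but in the path-uniqueness bookkeeping in the second step: one must verify that no walk through other rows or columns of $X$ can reconstruct these specific monomials, a concern that is dispatched by the observation that the supporting edges of such a walk are forced once the multiset of variables in the monomial is fixed.
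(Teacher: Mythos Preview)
Your proof is correct and follows essentially the same approach as the paper, which simply cites the discussion in \S\ref{subsec1:sep3:2010}: there the computations \eqref{eq1:sep3:2010}--\eqref{eq2:sep3:2010} and the displayed form of $q_{ir}$ show via the $E_{st}$-expansion of $X$ that the monomials $x_i^{n-1}x_j$ and $x_jx_r^{n-1}$ sit in the $(i,r)$ entry of $X^n$, which is your path-counting argument in different clothing. One small wording point: the lemma assumes the family is homogeneous of degree $d$ and that $\varphi(n,0)\ne 0$ for some $n\ge 2$, so strictly speaking $n=d$; your phrase ``homogeneous of degree $n$'' silently uses this, which is fine but worth making explicit.
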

\begin{proof}
This is an easy consequence of the discussion in
Subsection~\ref{subsec1:sep3:2010}.
\end{proof}

Lemma~\ref{lem1:aug30:2010}  suggests a very simple algorithm for positioning
the polynomials in our family.
If the commutative variables are partitioned and positioned in the matrix
$X$ and if $\varphi(n,0) \ne 0$ for some
$n \ge 2$, we simply run through the terms $\varphi(n,0)x_i^{n-1}x_j$ for
each $x_i$ in the $ii$ position
in $X$ and $x_j$ in the $ij$ position in $X$.  The polynomial
that contains this monomial will be in the $ij$ position in the array of
polynomials.  We will refer to this procedure
as \df{Algorithm PosPol}.  The next proposition summarizes the
conditions under which it is applicable.

\begin{proposition}[Algorithm PosPol]\label{cor1:aug30:2010}
Let $p_1,\cdots,p_{k^2}$ be a homogeneous family of polynomials with an
nc representation $p(X,Y)$ of degree $d>1$.
Then Algorithm PosPol will successfully position the polynomials
$p_1,\cdots,p_{k^2}$
if and only if  $p \in NC_{(\ref{eq2:31jan12})}$.
\end{proposition}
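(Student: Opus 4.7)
The plan is to reduce this proposition to Theorem~\ref{thm1:sep9:2010} (Algorithm ParPosX) together with Lemma~\ref{lem1:aug30:2010}, since Algorithm PosPol is built as a post-processing step on top of ParPosX. The key observation is that PosPol takes as input a family whose variables have already been partitioned between $X$ and $Y$ and positioned inside $X$, and that the very same condition $p \in NC_{(\ref{eq2:31jan12})}$ both enables ParPosX to produce such an input and supplies the single-letter coefficient $\varphi(n,0) \neq 0$ that PosPol relies on.

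For the "if" direction, I would assume $p \in NC_{(\ref{eq2:31jan12})}$. By Theorem~\ref{thm1:sep9:2010}, Algorithm ParPosX completely positions the variables within $X$, and in particular the diagonal entries $x_i$ in positions $(a,a)$ are known. The defining inequality $\varphi(n,0) \ne 0$ from $NC_{(\ref{eq2:31jan12})}$ then triggers Lemma~\ref{lem1:aug30:2010}: for each pair $(a,b)$ with $a\neq b$, taking $x_i$ in the $(a,a)$ slot of $X$, $x_j$ in the $(a,b)$ slot of $X$, and $x_\ell$ in the $(b,b)$ slot of $X$, the monomials $\varphi(n,0)x_i^{n-1}x_j$ and $\varphi(n,0)x_j x_\ell^{n-1}$ appear in exactly one member of the family, which must be placed in position $(a,b)$. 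Running through all $(a,b)$ with $a\neq b$ places every off-diagonal polynomial; the diagonal polynomials are then fixed by matching the one-letter terms $\varphi(n,0)x_i^n$, whose locations were already determined when the diagonal variables were identified.

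For the "only if" direction, I would argue contrapositively: if PosPol succeeds, then by its very formulation it required the variables to be partitioned between $X$ and $Y$ and positioned within $X$ and required $\varphi(n,0)\neq 0$ for some $n\geq 2$. The positioning of variables in $X$ is guaranteed, by Theorem~\ref{thm1:sep9:2010}, only when $p \in NC_{(\ref{eq2:31jan12})}$; conversely, failure of the $NC_{(\ref{eq2:31jan12})}$ condition means ParPosX cannot complete its work, so PosPol has no valid input to operate on.

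The only mildly delicate point, which I would spell out carefully, is the uniqueness clause of Lemma~\ref{lem1:aug30:2010}: one must verify that no other off-diagonal polynomial $p_{cd}$ with $(c,d)\neq(a,b)$ can simultaneously contain both $\varphi(n,0)x_i^{n-1}x_j$ and $\varphi(n,0)x_j x_\ell^{n-1}$. This is where the structural analysis of one- and two-letter words built up in Section~\ref{sec:nov25b11} (specifically the $X^n$ expansion formulas \eqref{eq1:sep3:2010}, \eqref{eq2:sep3:2010}) does the real work, since the pair $(x_i^{n-1}x_j, x_j x_\ell^{n-1})$ encodes both the row index $a$ (via $x_i$) and the column index $b$ (via $x_\ell$) of the target polynomial. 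Beyond this, the argument is essentially bookkeeping.
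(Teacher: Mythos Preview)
Your proposal is correct and follows essentially the same approach as the paper: both reduce the proposition to Theorem~\ref{thm1:sep9:2010} (ParPosX succeeds iff $p\in NC_{(\ref{eq2:31jan12})}$) together with the observation that PosPol requires precisely the output of ParPosX plus the condition $\varphi(n,0)\neq 0$, which is already part of $NC_{(\ref{eq2:31jan12})}$. Your extra paragraph on the uniqueness clause of Lemma~\ref{lem1:aug30:2010} is more explicit than the paper's terse two-line proof, but the underlying logic is identical.
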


\begin{proof}
Algorithm PosPol will position the polynomials if and only if the $2k^2$
variables are partitioned, the variables in
$X$ are positioned
and $\varphi(n,0) \ne 0$.
Algorithm ParPosX will position the variables in $X$ if and only if
$p \in NC_{(\ref{eq2:31jan12})}$.
 \end{proof}
\medskip

\subsection{Positioning $Y$ given the position of the polynomials: Algorithm
PosY }

Given a family $\cP$ of polynomials $p_1,\cdots,p_{k^2}$ in the variables
$x_1,\cdots, x_{2k^2}$ with an nc
 representation $p(X,Y)$, we have to this point developed algorithms to
partition the variables between $X$ and $Y$, position the variables
in $X$ and position the polynomials $p_1,\cdots, p_{k^2}$.  The next step
is to  develop an algorithm that positions the commutative
variables in $Y$.

Suppose that the commutative variables $x_1,\cdots,x_{2k^2}$ have been
partitioned between $X$ and $Y$ and positioned
in $X$.  Furthermore, suppose that the polynomials in the family $\cP$ have
been positioned and that
$\varphi(s,t) \ne 0$ for some $s \ge 0$ and $t\ge 1$.
Let $p_{ij}$ denote the polynomial in the $ij$ position in $p(X,Y)$.
Let $x_1,\cdots, x_{k^2}$
be the variables contained in $X$ and let $x_1,\cdots,x_{k}$ be the diagonal
variables of $X$.
Set $x_1 = \cdots =x_{k} =1$ and $x_{k+1} = \cdots = x_{k^2} = 0$.  On the
nc
level this is equivalent to setting $X = I_k$.   Then consider the polynomials
$\what{p}_{ij} = p_{ij}(1,\cdots,1,0,\cdots,0,x_{k^2+1},\cdots,x_{2k^2})$ in
$k^2$ commuting variables.
If $\cP$ has an nc representation $p(X,Y)$, this collection of
polynomials will have an nc representation $p(I,Y) = p(Y)$ that contains the
monomial
$\varphi(s,t)Y^t \ne 0$.  Therefore, each diagonal polynomial
$\what{p}_{ii}$ will contain a monomial of the
form $\varphi(s,t)x_{j_i}^t$, where $j_i \ge k^2+1$.
This implies that $x_{j_i}$ must be the $ii$ entry of $Y$.
By repeating this argument for each $i$ we can position the other diagonal
elements of $Y$.  To position the remaining variables,
observe that a monomial of the form
$\varphi(s,t)x_{j_i}^{t-1}x_u$ will appear in each $\what{p}_{ij}$
where $u \ge k^2+1$.  Given that $x_{j_i}$ lies in the $ii$ position
of $Y$ and $\what{p}_{ij}$ is in the $ij$ position in $p(Y)$, it follows that
$x_u$ is in the $ij$ position in $Y$.  By repeating
this argument for each $ij$ we can position the other non diagonal entries of
$Y$.

We will refer to the process described above as \df{Algorithm PosY}.
We summarize the conditions  under which
it will successfully position the variables in $Y$ in the following
proposition.

\begin{proposition}\label{prop1:30nov11}[Algorithm PosY]
Let $p_1,\cdots,p_{k^2}$ be a family of polynomials with an nc representation
$p(X,Y)$.  Then Algorithm PosY will successfully position the commutative
variables in $Y$
if and only if $p \in NC_{(\ref{eq2:31jan12})}$ and there exists a pair of
integers
$s \ge 0$ and $t \ge 1$
such that $\varphi(s,t) \ne 0$.
\end{proposition}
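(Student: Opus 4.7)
The plan is to prove the two implications separately, leveraging the modular algorithm pipeline built up in Section~\ref{sec:singlepos}.

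For the ``only if'' direction, I observe that Algorithm PosY sits at the end of a chain of procedures: it takes as input a valid partition of the $2k^2$ commutative variables between $X$ and $Y$, a positioning of the variables inside $X$, and an arrangement of the polynomials $p_1,\ldots,p_{k^2}$ in the $k\times k$ array. By Theorems~\ref{lem:diagToPartit}, \ref{lem1:sep8:2010}, \ref{thm1:sep9:2010} and Proposition~\ref{cor1:aug30:2010}, all three prerequisites are produced by the earlier algorithms precisely when $p \in NC_{(\ref{eq2:31jan12})}$, so that condition is necessary. If in addition $\varphi(s,t) = 0$ for every $s \ge 0$ and $t \ge 1$, then $p(X,Y)$ contains no monomials involving $Y$, the variables $x_{k^2+1},\ldots,x_{2k^2}$ do not appear in $\cP$ at all, and there is nothing for PosY to position.

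For the ``if'' direction, I apply the four cited results under the hypothesis $p \in NC_{(\ref{eq2:31jan12})}$ to obtain the partition, the positioning of the X-entries and the positioning of the polynomials. Then I carry out the substitution $X = I_k$ described in the algorithm. Since $I_k$ is central, the nc polynomial $p(I_k,Y)$ collapses to the single-variable expression $\sum_{t \ge 0} c_t Y^t$, where $c_t = \sum_{s \ge 0} \varphi(s,t)$; pick a $t \ge 1$ for which this coefficient is nonzero. The $(i,i)$-entry of $c_t Y^t$ contains the distinguished monomial $c_t y_{ii}^t$, where $y_{ii}$ is the $(i,i)$-entry of $Y$, since in $(Y^t)_{ii}$ the only path of length $t$ from $i$ to $i$ that dwells at a single vertex is the constant one. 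Reading off this monomial for $i = 1,\ldots,k$ recovers the diagonal entries of $Y$; the off-diagonal entry $y_{ij}$ is then identified from the distinguished monomial $c_t y_{ii}^{t-1} y_{ij}$ appearing in the $(i,j)$-entry of $p(I_k,Y)$, which arises from the unique path $(i,i,\ldots,i,j)$ contributing to $(Y^t)_{ij}$.

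The main obstacle is to verify the uniqueness and detectability of these distinguished monomials inside the full commutative polynomial $\what{p}_{ij}$: one must confirm that no lower-power contribution $c_{t'}(Y^{t'})_{ij}$ with $t' < t$ can produce a monomial of the same shape that cancels or mimics $c_t y_{ii}^t$ or $c_t y_{ii}^{t-1} y_{ij}$. This follows from the path-counting arguments underlying Section~\ref{sec:nov25b11}: any monomial of total Y-degree $t$ in the collapsed polynomial comes exclusively from $(Y^t)_{ij}$, so the distinguished monomials survive intact. A secondary subtlety, which calls for slightly more care than the algorithm description suggests, is deducing that the stated hypothesis ``$\varphi(s,t) \ne 0$ for some $s \ge 0,\,t \ge 1$'' actually produces a nonzero aggregated coefficient $c_t$; this reduces to the genericity observation already built into the other hypotheses defining $NC_{(\ref{eq2:31jan12})}$, and is the point at which the proof must be most careful.
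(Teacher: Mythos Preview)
Your overall structure mirrors the paper's short proof (which simply cites the preceding discussion and Proposition~\ref{cor1:aug30:2010}), but there are two genuine gaps in the details you added.

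First, in the ``only if'' direction you claim that if $\varphi(s,t)=0$ for every $s\ge 0$, $t\ge 1$, then the $Y$-variables do not appear in $\cP$ at all. This is false: $\varphi(s,t)$ is the \emph{sum} of the coefficients of nc monomials of bidegree $(s,t)$, not an indicator of their presence. For instance $p(X,Y)=XY-YX$ has $\varphi(1,1)=0$ yet every $Y$-variable appears in $\cP$. The correct argument is simply that the specific monomials PosY reads off (namely $\varphi(s,t)x_{j_i}^t$ in the diagonal polynomial $\widehat p_{ii}$, and $\varphi(s,t)x_{j_i}^{t-1}x_u$ in $\widehat p_{ij}$) carry the coefficient $\varphi(s,t)$; if this vanishes for all admissible $(s,t)$ the algorithm has nothing to detect.

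Second, in the ``if'' direction you substitute $X=I_k$ globally and obtain the aggregated coefficient $c_t=\sum_{s\ge 0}\varphi(s,t)$, then flag the worry that $\varphi(s,t)\ne 0$ need not force $c_t\ne 0$. Your proposed fix (appealing to the genericity built into $NC_{(\ref{eq2:31jan12})}$) does not work: nothing in those inequalities prevents $\sum_s\varphi(s,t)=0$. The paper's resolution is \emph{homogeneous sorting} (Section~\ref{sec1:30oct12}): one applies PosY to the degree-$(s+t)$ homogeneous component $\cP_{s+t}$. After setting $X=I_k$ in $p_{[s+t]}(X,Y)$, the terms of $Y$-degree $t$ come only from bidegree $(s,t)$, so the coefficient of $Y^t$ is exactly $\varphi(s,t)$, not an aggregate. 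With this in place the distinguished monomials $\varphi(s,t)y_{ii}^t$ and $\varphi(s,t)y_{ii}^{t-1}y_{ij}$ survive with the correct nonzero coefficient, and your path-counting argument goes through.
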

\begin{proof}
This follows from the preceding discussion and the fact that PosPol will
position the polynomials if and only if $p \in NC_{(\ref{eq2:31jan12})}$.
\end{proof}

\medskip

\subsection{Uniqueness results for one-letter algorithms}
\label{sec:variations}

In this section we investigate the possible variations in the matrices $X$
and $Y$ that are obtained by Alg.1 and Alg.2. We shall
assume that the algorithms are
applied to the homogeneous components of degree $n$ in the family of
polynomials  $\cP$, where $n\ge 2$ and is also the lowest degree of single
letter monomials in $\cP$.

If $\PP$ denotes an array of the given $k^2$ polynomials (as on the right
hand side of (\ref{eq:oct18h11})) that admits the
nc representation
\begin{equation}
\label{eq:dec24c13}
\PP=p(X,Y)=\sum_{\alpha,\beta}c_{\alpha,\beta}m_{\alpha.\beta}(X,Y), \quad
\textrm{where $|\alpha|+|\beta|=n$ in the sum}
\end{equation}
and $\Pi$ is a $k\times k$ permutation matrix, then
\begin{equation}
\label{eq:dec24a13}
\Pi^T\PP\Pi=\sum_{\alpha,\beta}c_{\alpha,\beta}m_{\alpha.\beta}(\Pi^TX\Pi,
\Pi^TY\Pi).
\end{equation}
Moreover, since
$$
(X^{\alpha_1}Y^{\beta_1}\cdots X^{\alpha_r}Y^{\beta_r})^T=
(Y^T)^{\beta_r}(X^T)^{\alpha_r}\cdots (Y^T)^{\beta_1}(X^T)^{\alpha_1},
$$
it is readily seen that
\begin{equation}
\label{eq:dec24b13}
\PP^T=\sum_{\alpha,\beta}c_{\alpha,\beta}m_{\beta^\prime,\alpha^\prime}
(Y^T,X^T),
\end{equation}
where
$$
\beta=(\beta_1,\ldots,\beta_r)\Longrightarrow
\beta^\prime=(\beta_r,\ldots,\beta_1)\ \text{and}\
\alpha=(\alpha_1,\ldots,\alpha_r)\Longrightarrow
\alpha^\prime=(\alpha_r,\ldots,\alpha_1).
$$
Below we shall show that, aside from a possible interchange of $X$ and $Y$,
formulas (\ref{eq:dec24a13}) and  (\ref{eq:dec24b13}) account for the only
possible variation in the matrices $X$ and $Y$ that are
generated by our algorithms. They correspond to the fact that the
ParPosX algorithm allows for the diagonal variables to be placed
in any order along the diagonal of
$X$ and the ambiguity in the next step of that algorithm, in which two
variables $x_u$ and
$x_v$ are arbitrarily assigned to be either the $st$ entry or the $ts$ entry
of $X$ (for some non ambiguous choice of $s$ and $t$ with $s\ne t$).
This is in fact the only freedom that one has in positioning the variables
within $X$, i.e., once
these $k+2$ variables are
allocated, the positions of the remaining variables in $X$ are
fully determined. This is substantiated by the next theorem.

Recall that  the first matrix constructed
by either Alg.1 or Alg.2 is always designated $X$
(or $\wtilde{X}$).

\begin{theorem}\label{thm1:14dec11}
Suppose that $\cP$ is a homogeneous family of polynomials
$p_1, \cdots, p_{k^2}$ of degree $n$ in the
commutative variables $x_1,\cdots, x_{2k^2}$ that admits an nc
representation.
Then if $X,Y$ and $\wtilde{X}, \wtilde{Y}$  are generated by two different applications
of Alg.1 or Alg.2, they are
 permutation equivalent
(as defined in \eqref{eq:feb23a12}).

\end{theorem}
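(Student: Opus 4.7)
The plan is to track the sources of non-uniqueness in Alg.1 and Alg.2 stage by stage, and to match each source with one of the four operations in \eqref{eq:feb23a12}. Both algorithms begin by scanning $\cP$ for one letter monomials of degree $n$; by Lemma \ref{lem:aug16b9}, the set of letters appearing in such monomials is uniquely determined, and these letters split into two subsets of size $k$ indexed by the coefficients $\varphi(n,0)$ and $\varphi(0,n)$. If $\varphi(n,0)\ne\varphi(0,n)$, both runs unambiguously identify the same diagonal set for $X$; if instead $\varphi(n,0)=\varphi(0,n)$, the two subsets are indistinguishable and the two runs may differ by an interchange of $X$ with $Y$, which accounts for the swap between cases $(1),(2)$ and $(3),(4)$. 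Since the hypotheses of Alg.1 and Alg.2 used in the validation theorems of this section force $\varphi(n,0)\ne \varphi(0,n)$ (or one of them equal to zero), the only potential swap ambiguity is already accounted for.

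After the diagonal set $\{x_{i_1},\ldots,x_{i_k}\}$ of $X$ has been identified, the algorithm places it along the diagonal of $X$ in an arbitrary order; any two orderings differ by a unique $k\times k$ permutation matrix $\Pi$, whose effect on $X$ and on the matrix $Y$ produced at later stages is exactly conjugation by $\Pi$ in view of \eqref{eq:dec24a13}. This accounts for the factor $\Pi^T(\cdot)\Pi$ present in every case of \eqref{eq:feb23a12}. Once the diagonal is fixed, ParPosX partitions the off-diagonal slots by row and column sets $R_i$, $C_i$, and the only remaining freedom is a single binary choice: given the first pair $\{x_j,x_\ell\}=L_i\cap L_r$, the algorithm arbitrarily assigns $x_j$ to position $(i,r)$ and $x_\ell$ to position $(r,i)$. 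Flipping this choice replaces $X$ by $X^T$, and by the identity \eqref{eq:dec24b13} forces $Y$ to be replaced by $Y^T$ as well, which produces cases $(2)$ and $(4)$.

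The main technical step is to show that, once the diagonal ordering and this single binary pair have been fixed, the rest of $X$ is rigidly determined, after which the later algorithms PosPol and PosY are deterministic. I plan to prove this by induction on the number of off-diagonal slots of $X$ already filled: at each step, ParPosX reads a three letter monomial $\varphi(n,0)x_i^{n-2}x_tx_m$ inside a polynomial $q_{ir}$ whose position is known, and because exactly one of $x_t,x_m$ lies in a row or column already determined, the other is forced into a unique new slot; the disjointness $R_i\cap C_i=\emptyset$ and the hypothesis $\varphi(n,0)\ne 0$ ensure the triple actually appears and is unambiguous. Once $X$ is fully positioned, PosPol places each polynomial $p_{ij}$ by locating the unique monomial $\varphi(n,0)x_i^{n-1}x_{ij}$, and PosY recovers $Y$ by substituting $X=I_k$ and reading off the one and two letter monomials of the resulting polynomials, both deterministically. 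The hard part will be verifying the inductive step: one must rule out the possibility that two different triples $x_i^{n-2}x_tx_m$ in the same $q_{ir}$ yield contradictory or genuinely distinct placements, and check that the single binary choice made at the first pair propagates consistently as one walks through rows and columns. Combining these observations with the catalogue of freedoms from Stages 1--3 yields exactly the four variants of \eqref{eq:feb23a12}.
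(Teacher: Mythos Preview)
Your overall strategy---catalogue the free choices in the algorithm and match each to one of the four operations in \eqref{eq:feb23a12}---is exactly the paper's approach, and your treatment of the permutation ambiguity (diagonal ordering) and the transposition ambiguity (the single binary choice at the first off-diagonal pair, propagated through ParPosX) is correct and matches the paper.

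The gap is in your handling of the $X\leftrightarrow Y$ swap. You write that when $\varphi(n,0)\ne\varphi(0,n)$ ``both runs unambiguously identify the same diagonal set for $X$,'' and that the swap arises only when $\varphi(n,0)=\varphi(0,n)$, a case excluded by hypothesis. This is backwards. When both $\varphi(n,0)$ and $\varphi(0,n)$ are nonzero and distinct, the family $\cP$ contains two perfectly distinguishable sets of $k$ one letter monomials, one with coefficient $a$ and one with coefficient $b$; but the algorithm sees only the numbers $a$ and $b$ and has no intrinsic way to know which equals $\varphi(n,0)$. One run may take the $a$-set as the diagonal of $X$ while another takes the $b$-set, and this is precisely where cases (3) and (4) come from. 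The paper handles this case explicitly in the second half of its proof: it shows that for each $x_u\in L(x_{i_s})$ there is a corresponding $x_v\in L(x_{j_s})$ appearing in the same polynomial, and that the PosY step places $x_v$ in $Y$ at exactly the position where the swapped run places $x_u$ in $\wtilde Y$. Your argument omits this and, as written, would not produce cases (3) and (4) at all. You need to add the two-sets case and argue the $L(x_{i_s})\leftrightarrow L(x_{j_s})$ correspondence carries the positioning through consistently.
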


\begin{proof}
Suppose first that the given family $\cP$ of $k^2$ polynomials contains
exactly $k$ one letter monomials $ax_{i_1}^n,\ldots,ax_{i_k}^n$ (all with the
same coefficient $a\in\RR\setminus\{0\}$) and let $X$ and $\wtilde{X}$ denote
the matrices that are determined by successive applications of either Alg.1
or Alg.2. Then the set of diagonal entries of $X$ (without regard to to their
positions on the diagonal) will coincide with the diagonal entries of
$\wtilde{X}$.
Moreover, since D2Pa1 and D2Pa2 depend only upon the diagonal
variables and not upon how they are positioned along the diagonal, the set of
$k^2$ variables in $X$ coincides with the set of $k^2$ variables in
$\wtilde{X}$.

Let
$$
L(x_{i_s})=\{x_u\in X\setminus\{x_{i_s}\}: \textrm{$ax_{i_s}^{n-1}x_u$
appears in one of the polynomials in $\cP$}\}.
$$
There are $2k-2$ distinct variables $x_u$ in $L(x_{i_s})$. Moreover,
the
selection of these terms is based totally on $\cP$ and not upon the position of
 $x_{i_s}$ in $X$. Therefore, if $s\ne t$, then the two variables in the
intersection
$$
L(x_{i_s})\cap L(x_{i_t})=\{x_u,x_v\},
$$
are also independent of the position
of  $x_{i_s}$  and $x_{i_t}$.

If  $x_{i_s}$ is the $ss$ entry of $X$ and $x_{i_t}$ is the
$tt$ entry of $X$, then either $x_u$ is the $st$ entry and $x_v$ is the $ts$
entry, or vice versa. It is impossible to decide. However, once one of these
two possibilities is chosen, then the position of all the remaining
$k^2-k-2$ variables in $X$ are determined by the algorithm.

The algorithm ParPosX accomplishes this by inspecting terms of the
form $x_{i_j}^{n-2}x_tx_m$ in $\cP$, and so
once one variable is placed, the algorithm is able to partition the set
$L(x_{i_s})$ into the sets
$$
R(x_{i_s})=\{x_u\in X:\, x_u \ \textrm{is in the same row as $x_{i_s}$}\}
$$
and
$$
C(x_{i_s})=\{x_u\in X:\, x_u \ \textrm{is in the same column as $x_{i_s}$}\}
$$
for each integer $s$, $1 \le s \le k$, solely
by analyzing monomials contained in $\cP$. Consequently, these sets
do not depend on the position of the diagonal entry $x_{i_s}$.
Since  $\wtilde{X}$ has the same diagonal elements as $X$,
there exists a permutation $\pi$ of the integers $\{1,\ldots,k\}$ such that
if $\wtilde{x}_{ss}$ denotes the $ss$ entry in the matrix $\wtilde{X}$
obtained in a second application of either of the two algorithms, then
$\wtilde{x}_{ss}=x_{i_{\pi(s)}}$ and correspondingly, if
\begin{align}\label{eq1:14dec11}
\Pi = \left[ \begin{array}{c} {\bf e}^T_{\pi(1)}\\ \vdots\\{\bf e}^T_{\pi(k)}\end{array} \right]=\sum_{j=1}^k{\bf e}_j{\bf e}_{\pi(j)}^T, \quad
\text{where}~~{\bf e}_i ~~\text{denotes the $i$-th column of $I_k$,}
\end{align}
then clearly the diagonal entries of the matrices $X$ and
$\Pi^T\wtilde{X}\Pi$ will will be positioned in the same way
along the diagonal.

The fundamental observation is that
\begin{equation}
\label{eq:jan1a12}
\Pi^TE_{st} \Pi=\left(\sum_{i=1}^k \be_{\pi(i)}\be_i^T\right)\be_s\be_t^T
\left(\sum_{j=1}^k \be_j\be_{\pi(j)}^T\right)=
\be_{\pi(s)}\be_{\pi(t)}^T=E_{\pi(s),\pi(t)}.
\end{equation}

This accounts for the permutations. Transposition is a little more
complicated. The point is that after the diagonal variables in $X$ are
positioned, say $x_{i_s}$ is the $ss$ entry of $X$ for $s=1,\ldots,k$, as
above, and if $s\ne t$ and $L(x_{i_s})\cap L(x_{i_t})=\{x_u,x_v\}$, then
either $x_u$ is $st$ entry and $x_v$ is the $ts$ entry, or the other way
around. Thus, if
\begin{enumerate}
\item[\rm(1)] If
$x_u$ is assigned to the $st$ position of $X$, then the
polynomial containing terms of the form
$$
ax_{i_s}^{n-1}x_u+ax_ux_{i_t}^{n-1}+ax_{i_s}^{n-2}x_wx_z+\cdots
$$
must be placed in the $st$ position in the array $\PP$. But this means that
either $x_w$ is in the $sr$ position or the $rt$ position of $X$ for some $r$
other than $s$ or $t$, since the $ss$, $tt$ and $st$ positions are already
occupied.  (If it is in the $rt$ position, then $x_z$ will
be in the $sr$ position, so there is no loss of generality in assuming that
$x_w$ is in the $sr$ position of $X$.) Therefore, there is no loss of
generality in assuming that
$x_w\in L(x_{i_s})\cap L(x_{i_r})$. Consequently,
$$
\textrm{if $x_u$ is put in the $st$ position of $X$, $x_w$ will be in the
$sr$ position}.
$$

\vspace{2mm}
\item[\rm(2)] If $x_u$ is placed in the $ts$ position, then the polynomial
displayed above must be placed in the $ts$ position of the array $\PP$.
Consequently $x_w$ must be in either the $tr$ position or the $rs$ position.
But since  it is in $L(x_{i_s})\cap L(x_{i_r})$, the only viable option is
that it is in the $rs$ position of $X$:
$$
\textrm{if $x_u$ is put in the $ts$ position of $X$, $x_w$ will be in the
$rs$ position}.
$$
\end{enumerate}
Thus, transposition of the two variables in the first step after the
diagonals are fixed, moves $X$ to $X^T$.

Now suppose that there are two sets of single letter monomials
$ax_{i_1}^n,\cdots, ax_{i_k}^n$ and $bx_{j_1}^n, \cdots, bx_{j_k}^n$, where
the monomials $ax_{i_k}$ and $bx_{j_k}$ must occur in the same polynomial in
$p$.  In this case it is possible for $X$ to have diagonal variables $x_{i_1}, \cdots, x_{i_k}$
and for $\wtilde{X}$ to have diagonal variables $x_{j_1}, \cdots, x_{j_k}$.  Therefore,
we must show that if this happens that $X$ is pt equivalent to $\wtilde{Y}$
and that $\wtilde{X}$ is pt equivalent to $Y$.

If $x_{i_1},\cdots, x_{i_k}$ are the diagonal variables for $X$,
then $x_{j_1}, \cdots, x_{j_k}$ must be the diagonal variables of $Y$ and
$x_{i_s}$ and $x_{j_s}$ must occur in the same diagonal position of $X$ and $Y$.
Similarly, if $x_{j_1},\cdots, x_{j_k}$ are the diagonal variables of $\wtilde{X}$ and
$x_{i_1},\cdots, x_{i_k}$ are the diagonal variables of $\wtilde{Y}$, then
$x_{j_s}$ and $x_{i_s}$ must occur in the same diagonal position in
$\wtilde{X}$ and $\wtilde{Y}$.  As in \eqref{eq:jan1a12}, let $\Pi$ be
the permutation matrix such that the diagonal entries of $\Pi^TX\Pi$ and $\wtilde{Y}$
are positioned in the same way along the diagonal.  It follows that the diagonal entries
of $\Pi^TY\Pi$ and $\wtilde{X}$ are positioned in the same way as well.

When Alg.1 or Alg.2 determines the sets $L(x_{i_s})$
for $X$ and $L(x_{j_s})$ for $\wtilde{X}$ for $1\le s \le k$, it does so by
considering terms of the form $ax_{i_s}^{n-1}x_u$ and $bx_{j_s}^{n-1}x_v$.  One
readily sees that for each term of the form $ax_{i_s}^{n-1}x_u$ that occurs in a polynomial
in $\cP$, there is a corresponding term $bx_{j_s}^{n-1}x_v$ that occurs in the same
polynomial.  Therefore the fundamental observation is that
for each term $ x_u \in L(x_{i_s})$, there is a corresponding term
$x_v \in L(x_{j_s})$ and furthermore, if $x_u$ is placed in the $st$ position in $X$,
Algorithm PolyPos will place $x_v$ in the $st$ position in $Y$.  Similarly, if $x_v$
is placed in the $st$ position in $\wtilde{X}$, this correspondence ensures
that Algorithm PolyPos will place $x_u$ in the $st$ position in $\wtilde{Y}$.  Thus
the nc variables $X$ and $Y$ are pt equivalent to $\wtilde{X}$ and $\wtilde{Y}$.

\end{proof}

Now that we have shown that there is a strong relationship between any two
pairs of matrices constructed by our algorithms, we want to exploit this relationship
to construct nc polynomials for matrix pairs determined by our algorithms.  In particular, if $X,Y$ and $\wtilde{X}, \wtilde{Y}$
are two pairs of matrices determined by our algorithms for a given
family $\cP$ and $p(X,Y)$ is an nc representation of $\cP$, we would like to conclude
that there exists an nc polynomial $\wtilde{p}$ such that $\wtilde{p}(\wtilde{X},\wtilde{Y})$
is also an nc representation of $\cP$.  We shall see in Lemma~\ref{lem1:17jan12} below that
this is true.

Lemma~\ref{lem1:17jan12} supplements Theorem \ref{cor:uniqIntro} and is formulated
in terms of a pair of auxiliary nc polynomials that are expressed in terms
of the notation introduced in \eqref{eq:dec24b13}:

\begin{align}\label{eq6:30jan12}
p_t(X,Y) = \sum_{\alpha,\beta} c_{\alpha, \beta}m_{\beta', \alpha'}(Y,X)
\quad \text{and} \quad \overline{p}(X,Y) = p(Y,X).
\end{align}
Then it is clear that
$$
p_t(X^T,Y^T) = p(X,Y)^T.
$$

\begin{lemma}\label{lem1:17jan12}
Suppose that $p_1, \cdots, p_{k^2}$ is a collection of polynomials $\cP$ with an nc representation $p(X,Y)$
satisfying the conditions in Theorem~\ref{thm:nov25a11}.  If DiagPar1 or DiagPar2, ParPosX and PosY
generate a pair of matrices $\wtilde{X}$ and
$\wtilde{Y}$, then there exists a nc polynomial $\wtilde{p}$ such that $\wtilde{p}(\wtilde{X},\wtilde{Y})$
is an nc representation of $\cP$.
\end{lemma}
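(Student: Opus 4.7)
The plan is to reduce existence of $\wtilde p$ to the rigidity result Theorem~\ref{thm1:14dec11}: any two pairs $(X,Y)$ and $(\wtilde X,\wtilde Y)$ produced by applications of DiagPar1/DiagPar2, ParPosX, and PosY are permutation equivalent in the sense of \eqref{eq:feb23a12}. Given the pair $(\wtilde X,\wtilde Y)$ in the statement, I would first apply the algorithms in a second ``reference'' run that produces a pair $(X,Y)$ together with the nc polynomial $p(X,Y)$ guaranteed by Theorem~\ref{thm:nov25a11}, so that $p(X,Y)=\PP$ for the corresponding array $\PP$ of the $p_i$. The goal is then to exhibit an explicit $\wtilde p$ for which $\wtilde p(\wtilde X,\wtilde Y)$ equals the array $\wtilde\PP$ that PosPol produces when run on $\wtilde X$.

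The construction of $\wtilde p$ proceeds by cases through the four options in \eqref{eq:feb23a12}, using identities \eqref{eq:dec24a13}, \eqref{eq:dec24b13} and the auxiliary polynomials $p_t,\overline p$ from \eqref{eq6:30jan12}. In case (1), $\wtilde X=\Pi X\Pi^T$ and $\wtilde Y=\Pi Y\Pi^T$, and the monomial identity
$$
m_{\alpha,\beta}(\Pi X\Pi^T,\Pi Y\Pi^T)=\Pi\,m_{\alpha,\beta}(X,Y)\,\Pi^T
$$
yields $p(\wtilde X,\wtilde Y)=\Pi\PP\Pi^T$, so one takes $\wtilde p=p$. In case (3), $\wtilde X=\Pi Y\Pi^T$ and $\wtilde Y=\Pi X\Pi^T$, so $\overline p(\wtilde X,\wtilde Y)=p(\wtilde Y,\wtilde X)=\Pi\PP\Pi^T$, and one takes $\wtilde p=\overline p$. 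In case (2), $\wtilde X=\Pi X^T\Pi^T$, $\wtilde Y=\Pi Y^T\Pi^T$, and the identity $p_t(X^T,Y^T)=p(X,Y)^T=\PP^T$ combined with conjugation by $\Pi$ yields $p_t(\wtilde X,\wtilde Y)=\Pi\PP^T\Pi^T$; so $\wtilde p=p_t$. Case (4) is handled analogously by setting $\wtilde p(X,Y):=p_t(Y,X)$.

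The remaining task is to verify that the array $\wtilde\PP$ produced by Algorithm PosPol applied to $\wtilde X$ is indeed $\Pi\PP\Pi^T$ in cases (1),(3) and $\Pi\PP^T\Pi^T$ in cases (2),(4). This is where I expect the main work to lie, and it is essentially a bookkeeping check: PosPol places the polynomial containing the monomial $\varphi(n,0)\wtilde x_{ii}^{\,n-1}\wtilde x_{ij}$ at position $(i,j)$, and the proof of Theorem~\ref{thm1:14dec11} shows that the $(i,j)$ entries of $\wtilde X$ are obtained from the entries of $X$ precisely by the permutation $\Pi$ (together with an additional transposition in cases (2),(4)). Therefore the arrangement of polynomials delivered by PosPol on $\wtilde X$ is conjugated by the same $\Pi$ (and transposed in cases (2),(4)), exactly matching the matrix expression computed in the previous paragraph.

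Finally, I would remark that since $p(X,Y)$ is a legitimate nc polynomial, so are $p_t$, $\overline p$, and $p_t(Y,X)$ (they are obtained from $p$ only by relabelling and reversing monomials), and each of them has the same degree as $p$. Thus $\wtilde p$ is an nc polynomial with $\wtilde p(\wtilde X,\wtilde Y)=\wtilde\PP$, which is precisely the statement that $\wtilde p$ is an nc representation of $\cP$.
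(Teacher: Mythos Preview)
Your proposal is correct and follows essentially the same approach as the paper: invoke Theorem~\ref{thm1:14dec11} to obtain permutation equivalence of $(X,Y)$ and $(\wtilde X,\wtilde Y)$, then in each of the four cases of \eqref{eq:feb23a12} exhibit $\wtilde p$ explicitly as $p$, $p_t$, $\overline p$, or $\overline p_t$ using the identities \eqref{eq:dec24a13}, \eqref{eq:dec24b13}, \eqref{eq6:30jan12}. Your write-up is in fact more detailed than the paper's (which simply tabulates the four cases), and your extra paragraph verifying that PosPol yields the array $\Pi\PP\Pi^T$ (resp.\ $\Pi\PP^T\Pi^T$) is a nice bookkeeping check, though strictly speaking it is not needed: by the definition of nc representation in \eqref{eq:oct18h11} any rearrangement of the $p_j$ in the array suffices, and $\Pi\PP\Pi^T$, $\Pi\PP^T\Pi^T$ are manifestly such rearrangements.
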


\begin{proof}
If $p$ satisfies the conditions in Theorem~\ref{thm:nov25a11}, then the
algorithms DiagPar1 or DiagPar2, ParPosX, PolyPos and PosY can be applied
to construct $X$ and $Y$.  Since $p(X,Y)$ is an nc representation of $\cP$,
by Theorem~\ref{thm1:14dec11} and equations
\eqref{eq:dec24c13}, \eqref{eq:dec24b13}, and \eqref{eq6:30jan12} the following must hold:

\begin{eqnarray}\label{eq2:30jan12}
\begin{tabular}{lll}
${\bf (1)}\quad X =\Pi^T\wtilde{X}\Pi, $&$Y = \Pi^T\wtilde{Y}\Pi,$ &$ \Rightarrow  p(\wtilde{X},\wtilde{Y})\quad \textit{is an nc rep.}$ \\
${\bf (2)}\quad X= \Pi^T\wtilde{X}^T\Pi,$&$Y=\Pi^T\wtilde{Y}^T\Pi, $ &$ \Rightarrow  p_t(\wtilde{X},\wtilde{Y})\quad \textit{is an nc rep.}$  \\
${\bf (3)}\quad X= \Pi^T\wtilde{Y}\Pi,$ &$Y=\Pi^T\wtilde{X}\Pi, $ & $\Rightarrow \overline{p}(\wtilde{X},\wtilde{Y})\quad \textit{is an nc rep.}$\\
${\bf (4)}\quad X= \Pi^T\wtilde{Y}^T\Pi, $&$Y= \Pi^T\wtilde{X}^T\Pi $ &$  \Rightarrow \overline{p}_t(\wtilde{X},\wtilde{Y})\quad \textit{is an nc rep.}$ \\
\end{tabular}
\end{eqnarray}
\end{proof}

Using the uniqueness results developed in this section, we can
now prove Theorem~\ref{cor:uniqIntro}.

\subsubsection{Proof of Theorem~\ref{cor:uniqIntro}}
\begin{proof}
Given that $p(X,Y)$ and $\wtilde{p}(\wtilde{X},\wtilde{Y})$ satisfy the
conditions in Theorem~\ref{thm:nov25a11}, Algorithms ParPos1 or ParPos2,
ParPosX, PolyPos, and PosY can successfully determine the pairs $X, Y$ and
$\wtilde{X}, \wtilde{Y}$.
Therefore, we may apply Theorem~\ref{thm1:14dec11}
to obtain the desired result.
\end{proof}

\subsection{Determining $p(X,Y)$ given $X$, $Y$ and the positions of the
polynomials in $\cP$}
\label{subsec:linSolver}

Once the matrices $X$ and $Y$ and the positions of the
polynomials in $\cP$ are determined
by the previous algorithms, it remains only to find an nc representation
for $\cP$. This rests on the following elementary observation, which is
formulated in terms of the notation introduced in \eqref{eq:sep19c10}:

\begin{lemma}
\label{lem:monHomog}
The monomial
$m_{\alpha, \beta}(X,Y)$ is a
$k\times k$ array of polynomials in the variables $x_1,\ldots,x_{k^2}$
of degree $\vert\alpha\vert+\vert\beta\vert$.
\end{lemma}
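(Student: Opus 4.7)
The plan is to prove this by induction on the number of matrix factors appearing in $m_{\alpha,\beta}(X,Y)$, using the elementary fact that entries of a product of matrices of polynomials have degrees that add.

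For the base case, observe that each entry of $X$ (resp.\ $Y$) is one of the variables $x_j$, i.e.\ a homogeneous polynomial of degree $1$. Hence each entry of $X^{\alpha_1}$ is a sum of products of $\alpha_1$ entries of $X$ and is therefore a homogeneous polynomial of degree $\alpha_1$ in $x_1,\ldots,x_{2k^2}$; similarly for $Y^{\beta_1}$, etc.

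For the inductive step, I would establish the auxiliary statement: if $A$ and $B$ are $k\times k$ matrices whose entries are homogeneous polynomials of degrees $d_A$ and $d_B$ respectively in $x_1,\ldots,x_{2k^2}$, then each entry of $AB$, namely
$$
(AB)_{ij}=\sum_{m=1}^k A_{im}B_{mj},
$$
is a homogeneous polynomial of degree $d_A+d_B$ (or zero), since each summand $A_{im}B_{mj}$ is a product of homogeneous polynomials of degrees $d_A$ and $d_B$. Applying this fact repeatedly to the product
$$
m_{\alpha,\beta}(X,Y)=X^{\alpha_1}Y^{\beta_1}\cdots X^{\alpha_\ell}Y^{\beta_\ell},
$$
the degrees accumulate to $\alpha_1+\beta_1+\cdots+\alpha_\ell+\beta_\ell=|\alpha|+|\beta|$, which is the claimed degree.

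There is no real obstacle here; the only subtlety is bookkeeping to make sure the inductive hypothesis applies at each stage, which is immediate since the intermediate products $X^{\alpha_1}Y^{\beta_1}\cdots X^{\alpha_r}Y^{\beta_r}$ are themselves $k\times k$ arrays of homogeneous polynomials of the appropriate degree. Thus $m_{\alpha,\beta}(X,Y)$ is a $k\times k$ array of polynomials in the entries of $X$ and $Y$, each of degree $|\alpha|+|\beta|$, as asserted.
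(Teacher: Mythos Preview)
Your proof is correct and is essentially an expanded version of the paper's own proof, which simply states ``This is immediate from the rules of matrix multiplication.'' You have just spelled out carefully what that one-line remark means.
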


\begin{proof}
This is immediate from the rules of matrix
multiplication.
\end{proof}

\subsubsection{Algorithm NcCoef}
Suppose that

\begin{align}\label{eq2:22jan12}
 \cP  = \{p_1, \cdots, p_{k^2}\} \quad \text{is a family of $k^2$ polynomials}
\end{align}
of degree $d$ or less
in the commutative variables $x_1,\cdots, x_{2k^2}$ and that these variables
are positioned in $X$ and $Y$
and the polynomials in $\cP$ are positioned in an array
\begin{align}\label{eq1:30oct12}
\mathbb{P} = \left[ \begin{array}{ccc} p_{\lambda(1)} & \cdots &p_{\lambda(k)} \\ \vdots & \ddots &\vdots \\ p_{\lambda(k^2-k+1)}&\cdots & p_{\lambda(k^2)} \end{array} \right]
\end{align}
as in (\ref{eq:oct18h11}).
Then,  since any nc
polynomial $p(X,Y)$ in the variables $X$ and $Y$ of degree $d$
can be expressed in terms of monomials $m_{\alpha,\beta}(X,Y)$ as
\begin{align}
\label{eq1:22dec11}
p(X,Y) = \sum_{|\alpha|+|\beta|\le d} c_{\alpha,\beta}~m_{\alpha,\beta}(X,Y),
\end{align}
the NcCoef algorithm reduces to solving the system of linear equations
\beq
\label{eq:PbbP}
p(X,Y)= \bbP,
\eeq
for the unknown coefficients  $c_{\alpha,\beta}$. This system of equations
has a solution if and only if there exists
an nc polynomial $p$ such that $p(X,Y)$ is an nc representation of
$\cP$. In view of Lemma \ref{lem:monHomog}, the addition of monomials
$m_{\alpha,\beta}(X,Y)$
of degree higher than $d$ in \eqref{eq1:22dec11}
does not effect the solvability of \eqref{eq:PbbP}.

\begin{remark}
Algorithm NcCoef determines whether or not an nc representation exists
for a given pair of nc variables $X$ and $Y$.
Klep and Vinnikov \cite{KVprep} have been working on various elegant abstract characterizations of those sets of $X,Y, \text{and}$ ${\mathbb P}$  which admit an nc  representation.
However, their work \cite{KVprep} does not address the issue of implementing tests for these
characterizations.
\end{remark}

The following example illustrates how Algorithm NcCoef
can be applied to determine $p(X,Y)$ for a given family $\cP$.

\begin{example}
\label{ex:dec29a11}
Let
\begin{align}
&p_1 =4x_1^2+4x_2x_3+2x_1x_5+6x_5^2+x_3x_6+6x_6x_7\nonumber \\
&p_2 =4x_1x_2+4x_2x_4+x_2x_5+x_1x_6+x_4x_6+6x_5x_6+x_2x_8+6x_6x_8 \nonumber \\
&p_3 =4x_1x_3+4x_3x_4+x_3x_5+x_1x_7+x_4x_7+6x_5x_7+x_3x_8+6x_7x_8\nonumber\\
&p_4 = 4x_2x_3+4x_4^2+x_3x_6+x_2x_7+6x_6x_7+2x_4x_8+6x_8^2. \nonumber
\end{align}
and  suppose that
\begin{align}
X =\left(\begin{array}{cc}x_1&x_2\\x_3&x_4\end{array}\right), \quad Y = \left(\begin{array}{cc}x_5&x_6\\x_7&x_8\end{array}\right)\quad\textrm{and}\quad
\mathbb{P} = \left(\begin{array}{cc}p_1&p_2\\p_3&p_4\end{array}\right).
 \end{align}
 \noindent
Our goal is to find an nc representation $p$ or to refute its
existence.
\end{example}
\bigskip

\noindent  {\bf Discussion} Since the given family of polynomials is homogeneous of
degree two, the first step of Algorithm NcCoef is to form the polynomial
$$
aX^2+bXY+cYX+dY^2 = p(X,Y),
$$
\noindent
and then set $p(X,Y) = \mathbb{P}$.
This yields four relations, one for each entry:
\begin{align}
&ax_1^2+ax_2x_3+bx_1x_5+cx_1x_5+dx_5^2+cx_3x_6+dx_6x_7=p_1\nonumber \\
&ax_1x_2+ax_2x_4+cx_2x_5+bx_1x_6+cx_4x_6+dx_5x_6+bx_2x_8+dx_6x_8=p_2
\nonumber \\
&ax_1x_3+ax_3x_4+bx_3x_5+cx_1x_7+bx_4x_7+dx_5x_7+cx_3x_8+dx_7x_8=p_3\nonumber\\
&ax_2x_3+ax_4^2+bx_3x_6+cx_2x_7+dx_6x_7+bx_4x_8++cx_4x_8+dx_8^2=p_4. \nonumber
\end{align}
Upon matching the coefficients of the left hand side of the first row in the
preceding array with those of $p_1$,  we readily obtain
the list of linear equations
$$
a=4, \ b+c = 2, \ c=1 \ \textrm{and}\ d=6,
$$
the unknowns must satisfy; hence  $a=4, b=1, c=1, d=6$.
It is then easily checked that this choice of coefficients works for the
remaining  three rows of the array.
Therefore, for this particular choice of $X$ and $Y$ and positioning of the
polynomials
  $p_1,p_2,p_3, p_4$,
$$
p(X,Y)=4X^2+XY+ YX+6Y^2.
$$
\qed

\subsubsection{Homogeneous sorting and Implementation of NcCoef}\label{sec1:30oct12}

In order to implement the NcCoef algorithm, it is convenient to first
sort each polynomial $p_j$ in $\cP$ as a sum
$$
p_j=\sum_{i=1}^d (p_j)_{[i]}(x_1,\ldots,x_{2k^2})
$$
of homogeneous polynomials
 $(p_j)_{[i]}$  of degree $i$ in which $(p_j)_{[i]}$ is the sum of terms
in $p_j$ that are homogeneous of degree $i$ and
is taken equal to zero if there are no such  terms.

Let
\begin{align}\label{eq3:22jan12}
\cP_i = \{(p_1)_{[i]},\ldots,(p_{k^2})_{[i]}\}
\end{align}
and, if at least one of the polynomials in $\cP_i$ is nonzero, try to find a
homogeneous nc polynomial representation
$p_i(X,Y)$ of degree $i$ for $\cP_i$. We shall refer to this procedure as
{\bf homogeneous sorting}.

An obvious consequence of Lemma \ref{lem:monHomog} is:


\begin{lemma}\label{prop1:16jan12}
A family $\cP$ of $k^2$ polynomials of degree $\le d$ in $2k^2$
commuting variables admits an nc representation $p(X,Y)$ if and only if
$\cP_i$ admits an nc representation $p_i(X,Y)$ with the same $X$ and $Y$
for each $1 \le i \le d$.
\end{lemma}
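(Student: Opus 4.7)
The plan is to reduce the statement to the uniqueness of the decomposition of a commutative polynomial into its homogeneous components, and to use Lemma~\ref{lem:monHomog} to observe that homogeneity is preserved when passing from the nc level to the commutative array level.

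First I would prove the forward direction. Suppose $p(X,Y)$ is an nc representation of $\cP$, i.e.\ (after positioning) $p(X,Y) = \mathbb{P}$ as in \eqref{eq1:30oct12}. Write
$$
p(X,Y) = \sum_{i=0}^{d} p_{[i]}(X,Y),
\qquad
p_{[i]}(X,Y) = \sum_{|\alpha|+|\beta|=i} c_{\alpha,\beta}\, m_{\alpha,\beta}(X,Y),
$$
where $p_{[i]}$ collects the monomials of total degree $i$ in $p$. By Lemma~\ref{lem:monHomog}, each $p_{[i]}(X,Y)$ is a $k\times k$ array whose entries are commutative polynomials in $x_1,\ldots,x_{2k^2}$ that are homogeneous of degree $i$. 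Since the decomposition of a commutative polynomial into its homogeneous components is unique, matching the identity $p(X,Y) = \mathbb{P}$ entry-by-entry and degree-by-degree gives $p_{[i]}(X,Y) = \mathbb{P}_i$, where $\mathbb{P}_i$ is the array whose $(s,t)$ entry is the degree-$i$ part of the polynomial in the $(s,t)$ position of $\mathbb{P}$. Reading this off, $p_{[i]}$ is an nc representation of $\cP_i$ with the same matrices $X,Y$ for each $1 \le i \le d$.

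For the converse, I would just assemble the pieces. Suppose $\cP_i$ admits an nc representation $p_i(X,Y)$ (with the same $X,Y$) for every $i$ with $1 \le i \le d$; set $p_i \equiv 0$ whenever $\cP_i$ is identically zero, and define
$$
p(X,Y) \;=\; \sum_{i=1}^{d} p_i(X,Y).
$$
This is an nc polynomial in $X$ and $Y$, and substituting the given matrices yields
$$
p(X,Y) \;=\; \sum_{i=1}^{d} p_i(X,Y) \;=\; \sum_{i=1}^{d} \mathbb{P}_i \;=\; \mathbb{P},
$$
since the homogeneous parts of the entries of $\mathbb{P}$ sum to those entries. Hence $p$ is an nc representation of $\cP$.

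There is no real obstacle here: both implications rest only on Lemma~\ref{lem:monHomog} (homogeneity is transferred from the nc monomial to each entry of the resulting commutative array) together with the trivial fact that a commutative polynomial equals the sum of its homogeneous components uniquely. The only mildly delicate point is to note that even if some $\cP_i$ is identically zero, taking $p_i = 0$ is harmless, and conversely that grouping an arbitrary nc polynomial $p$ by total degree is always legitimate, so the correspondence $p \leftrightarrow (p_1,\ldots,p_d)$ is genuinely bijective for the representation problem.
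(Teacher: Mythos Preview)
Your proof is correct and follows exactly the approach the paper intends: the authors state this lemma as ``an obvious consequence of Lemma~\ref{lem:monHomog}'' without further argument, and your writeup simply unpacks that observation via the uniqueness of the homogeneous decomposition. There is nothing to add.
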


The primary advantage of homogeneous sorting
is apparent when implementing
Algorithms DiagPar1, DiagPar2, ParPosX, PolyPos, PosY and NcCoef.  One applies
these algorithms to the nonzero
family $\cP_i$ for each $i$ separately.
Typically, to save computational cost, choose $i$
as small as possible in
order to minimize computations.
Then, once $X$, $Y$ and the position of the polynomials are determined,
it is
easy to fill in the coefficients of the terms in $p_i(X,Y)$
by comparison with the array of terms
$\{(p_1)_{[i]},\ldots,(p_{k^2})_{[i]}\}$ for the remaining choices of
$i$, one degree at a time, just as in Example \ref{ex:dec29a11}.
It is important that the same $X$ and $Y$ are used for each choice of $i$.
Now we give a cautionary example.

\begin{example}
\label{ex:dec29b11}
If $p_1,\ldots,p_4$ are as in Example \ref{ex:dec29a11}, then the set
of polynomials
\begin{align}
&q_1=p_1+(x_1^2+x_2x_3)x_1+(x_1x_2+x_2x_5)x_3\\
&q_2=p_2+(x_1^2+x_2x_3)x_2+(x_1x_2+x_2x_5)x_5\\
&q_3=p_3+(x_3x_1+x_5x_3)x_1+(x_3x_2+x_5^2)x_3\\
&q_4=p_4+(x_3x_1+x_5x_3)x_2+(x_3x_2+x_5^2)x_5
\end{align}
does not admit an nc representation even though the terms of degree two
admit an nc representation and the terms of degree three admit an nc
representation.
\end{example}

\noindent  {\bf Discussion}
The terms of degree two are exactly the polynomials
$p_1,\ldots,p_4$ considered in in Example \ref{ex:dec29a11} and hence
either lead to the representation considered there, or to an equivalent
representation that corresponds to conjugation of the matrices $X$, $Y$ and
the polynomial array matrix $\PP$ by a $2\times 2$ permutation matrix $\Pi$
(to obtain $\Pi^TX\Pi$, $\Pi^TY\Pi$ and $\Pi^T\PP\Pi$ in place of $X$,
$Y$ and $\PP$), or transposition or to an interchange of $X$ and $Y$. But in
all these shufflings, the variables $\{x_1,x_2,x_3,x_4\}$ will belong to
one of
the matrices and the remaining variables $\{x_5,x_6,x_7,x_8\}$ will belong to
the other.

The polynomials $q_1,\ldots,q_4$, will not admit an nc
representation because the added terms come from
$$
\begin{bmatrix}x_1&x_2\\x_3&x_5\end{bmatrix}^3
$$
which involves a mixing of the variables in the matrices $X$ and $Y$ that are
obtained by analyzing $p_1,\dots,p_4$. \qed

\subsubsection{Effectiveness of NcCoef}
The final task in our analysis of Algorithm NcCoef is to show that
it will either be successful for all pairs $X,Y$ produced our algorithms or it will fail for all such
pairs.  The next proposition (\ref{prop1:23jan12}) characterizes the nc polynomials $p$ such that
$p(X,Y)$ is an nc representation of $\cP$ when $X$ and $Y$
are determined by Algorithms DiagPar1 or DiagPar2, ParPosX PolyPos and PosY.
Proposition~\ref{prop2:23jan12} then insures
that a given family $\cP$ will either have an nc representation
$p(X,Y)$ for all $X,Y$ produced by our algorithms or will have
no representation for any such $X$ and $Y$.

\begin{proposition}\label{prop1:23jan12}
Suppose that a given family $\cP$ of $k^2$ polynomials in $2k^2$ commuting
variables admits an nc representation $p(X,Y)$ and that the algorithms
DiagPar1 or DiagPar2, ParPosX PolyPos and PosY
produce the matrices $X$ and $Y$.  Then  $p$ belongs to the set  $\cW$
of nc polynomials that is specified in Theorem~\ref{thm:nov25a11}.
\end{proposition}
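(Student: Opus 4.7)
The plan is to derive the $\cW$-membership of $p$ by invoking the ``only if'' directions of the characterization theorems for each component algorithm, and then matching the resulting necessary conditions against the defining properties of $\cW$ in Theorem~\ref{thm:nov25a11}. Since either DiagPar1 or DiagPar2 succeeds on $p$, either Theorem~\ref{lem:diagToPartit} or Theorem~\ref{lem1:sep8:2010} supplies an integer $t \ge 2$ with $\varphi(t,0) \ne 0$, $\varphi(t,0) \ne \varphi(0,t)$, and at least one of the three distinguishing inequalities $t\varphi(t,0) \ne \varphi(t-1,1)$, $\varphi(t,0) \ne \varphi(t-1,1;Y)$, or $\varphi(t,0) \ne \varphi(Y;t-1,1)$. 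The successes of ParPosX, PolyPos, and PosY add no truly new structural requirement beyond $p\in NC_{(\ref{eq2:31jan12})}$, though PosY (Proposition~\ref{prop1:30nov11}) additionally enforces that some $\varphi(s,t')\ne 0$ with $t'\ge 1$, which will be used below.

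Next I would match the three inequalities above to conditions (1), (3), (4) of Theorem~\ref{thm:nov25a11}. The first inequality is precisely (1). The second coincides with (3) in the regime $\varphi(t-1,1;Y)\ne 0$; the third coincides with (4) in the regime $\varphi(Y;t-1,1)\ne 0$. The defining requirement $\varphi(0,t) \ne 0$ in $\cW$ is then obtained from Lemma~\ref{lem:aug16b9}: for both the diagonal of $X$ and the diagonal of $Y$ to be unambiguously identified from one-letter monomials in $\cP$ (as both DiagPar1 and DiagPar2 require, since identification of the $X$-diagonal from the $k$ monomials with coefficient $\varphi(t,0)$ must leave a second batch of $k$ one-letter monomials locating the $Y$-diagonal), the family must sit in case (3) of that lemma, forcing $\varphi(0,t) \ne 0$ as well as $\varphi(t,0) \ne 0$ and the inequality $\varphi(t,0)\ne\varphi(0,t)$ already extracted.

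The main obstacle is the edge-case analysis when a DiagPar2 inequality holds only trivially, that is, when $\varphi(t-1,1;Y)=0$ or $\varphi(Y;t-1,1)=0$, so that neither (3) nor (4) is immediately certified. To handle this, I would exploit the $X\leftrightarrow Y$ symmetry of the one-letter algorithms (recall Theorem~\ref{thm1:14dec11} and Lemma~\ref{lem1:17jan12}, which show the algorithms are equivariant under the swap): running the same analysis with the roles of $X$ and $Y$ interchanged produces the symmetric inequalities $t\varphi(0,t)\ne\varphi(1,t-1)$, $\varphi(0,t)\ne\varphi(1,t-1;X)$, $\varphi(0,t)\ne\varphi(X;1,t-1)$, i.e., candidates for properties (2), (5), (6). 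A short case split shows that whenever a DiagPar1/DiagPar2 inequality held only trivially in the $X$-first pass, the corresponding non-trivial inequality must hold in the $Y$-first pass (using $\varphi(t-1,1)=\varphi(t-1,1;X)+\varphi(t-1,1;Y)$ and its symmetric counterpart together with $\varphi(t,0)\ne\varphi(0,t)$), so at least one of (1)--(6) holds genuinely for the fixed $t$. This completes the verification that $p$ satisfies all the defining conditions of $\cW$.
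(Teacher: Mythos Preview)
Your approach and the paper's diverge sharply in level of detail. The paper's proof is a two-sentence observation: the set $\cW$ in Theorem~\ref{thm:nov25a11} was \emph{defined} to be exactly the disjunction of the success conditions already isolated in Theorems~\ref{lem:diagToPartit}, \ref{lem1:sep8:2010}, \ref{thm1:sep9:2010}, Propositions~\ref{cor1:aug30:2010}, \ref{prop1:30nov11} (together with their $X\leftrightarrow Y$ counterparts in Remark~\ref{rem:nov10a11}). Thus ``algorithms succeed on $p$'' $\Rightarrow$ ``$p$ meets those conditions'' $\Rightarrow$ ``$p\in\cW$'' is essentially tautological, and no further matching is needed.

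Your attempt to re-derive the match explicitly introduces a genuine error. You argue that $\varphi(0,t)\ne 0$ follows from Lemma~\ref{lem:aug16b9} because DiagPar1/DiagPar2 ``must leave a second batch of $k$ one-letter monomials locating the $Y$-diagonal.'' That is not how these algorithms work. DiagPar1 (Section~\ref{sec1:11may12}) identifies a diagonal entry of $Y$ not from one-letter monomials at all, but by counting zero polynomials after the substitution $x_{i_1}=\cdots=x_{i_k}=\alpha$, $x_j=\beta$: the diagonal-$Y$ case yields $k^2-k$ zeros, the off-diagonal cases yield $k^2-k-1$. This criterion uses only $\varphi(n,0)\ne 0$ and imposes nothing on $\varphi(0,n)$. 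So your route to $\varphi(0,t)\ne 0$ via ``case~(3) of Lemma~\ref{lem:aug16b9}'' does not go through.

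The edge-case paragraph (invoking the $X\leftrightarrow Y$ swap and the decomposition $\varphi(t-1,1)=\varphi(t-1,1;X)+\varphi(t-1,1;Y)$ to promote a ``trivially satisfied'' DiagPar2 inequality to one of (2), (5), (6)) is also unnecessary from the paper's standpoint and, as written, is only a sketch: the ``short case split'' is asserted rather than carried out. In the paper's framing there is nothing to split---conditions (1)--(6) of Theorem~\ref{thm:nov25a11} together with the symmetric classes in Remark~\ref{rem:nov10a11} were assembled precisely from the ``only if'' clauses of the algorithm theorems, so a successful run automatically certifies one of them.
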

\begin{proof}
The coefficient conditions defining $\mathcal{W}$ are the exact
conditions required for the algorithms DiagPar1 or DiagPar2, ParPosX,
PolyPos and PosY to be successful.  Furthermore, these algorithms
will only be successful if these conditions are satisfied.  Therefore,
the assumption that our algorithms generate $X$ and $Y$ implies
that if $p$ exists, then $p \in \mathcal{W}$.
\end{proof}

\begin{proposition}\label{prop2:23jan12}
Suppose that $\cP$ is a family of $k^2$ polynomials in $2k^2$ commuting
variables that satisfies \eqref{eq2:22jan12}.
Let $X$, $Y$ and $\wtilde{X}$,$\wtilde{Y}$ be distinct pairs of matrices
determined by separate applications of DiagPar1 or DiagPar2, ParPosX, PolyPos
and PosY.
Then Algorithm NcCoef will successfully determine
an nc representation $p(X,Y)$ of $\cP$ if and only if it
successfully determines
an nc representation $\wtilde{p}(\wtilde{X},\wtilde{Y})$of $\cP$.
\end{proposition}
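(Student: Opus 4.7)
The plan is to reduce the biconditional directly to Theorem~\ref{thm1:14dec11} and Lemma~\ref{lem1:17jan12}. The crucial observation about NcCoef is that it merely solves the linear system $p(X,Y)=\mathbb{P}$ for the coefficients of a generic nc polynomial of appropriate degree; hence NcCoef succeeds on a pair of matrices produced by our algorithms if and only if $\cP$ admits an nc representation on that pair. Thus the statement to be proved is equivalent to the assertion that $\cP$ admits an nc representation on $(X,Y)$ if and only if it admits one on $(\wtilde X,\wtilde Y)$.

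For the forward implication, I would assume NcCoef succeeds on $(X,Y)$, so there is an nc polynomial $p$ with $p(X,Y)=\mathbb{P}$. Since $(X,Y)$ was produced by DiagPar1 or DiagPar2, ParPosX, PolyPos, and PosY, Proposition~\ref{prop1:23jan12} certifies that $p$ automatically lies in the class $\cW$ of Theorem~\ref{thm:nov25a11}, so the hypotheses of Lemma~\ref{lem1:17jan12} are met. Theorem~\ref{thm1:14dec11} then tells me that $(X,Y)$ and $(\wtilde X,\wtilde Y)$ are permutation equivalent in one of the four ways listed in \eqref{eq2:30jan12}, and Lemma~\ref{lem1:17jan12} immediately produces an explicit $\wtilde p\in\{p,\,p_t,\,\overline p,\,\overline p_t\}$—chosen according to which of the four cases holds—such that $\wtilde p(\wtilde X,\wtilde Y)$ is an nc representation of $\cP$. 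This $\wtilde p$ is precisely a solution to the linear system that NcCoef solves on $(\wtilde X,\wtilde Y)$, so NcCoef must succeed on $(\wtilde X,\wtilde Y)$ as well.

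The converse implication follows by swapping the roles of $(X,Y)$ and $(\wtilde X,\wtilde Y)$ and rerunning the same argument; the algorithmic setup and Proposition~\ref{prop1:23jan12} are symmetric in the two pairs. The only mildly delicate step is invoking Proposition~\ref{prop1:23jan12} to guarantee that any nc polynomial returned by NcCoef lies in $\cW$, since this is what is needed to feed into Lemma~\ref{lem1:17jan12}; once that is in place, the proof is essentially bookkeeping across the four cases of permutation equivalence and I do not anticipate any serious obstacle.
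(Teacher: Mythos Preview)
Your proposal is correct and follows essentially the same route as the paper's proof: invoke Proposition~\ref{prop1:23jan12} to place the nc polynomial returned by NcCoef into the class $\cW$, then apply Lemma~\ref{lem1:17jan12} to transfer the representation to $(\wtilde X,\wtilde Y)$, with the converse by symmetry. The only difference is cosmetic---you unpack the appeal to Theorem~\ref{thm1:14dec11} and the four permutation-equivalence cases explicitly, whereas the paper simply cites Lemma~\ref{lem1:17jan12}, which already contains that argument.
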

\begin{proof}
If the NcCoef algorithm produces an
nc representation $p(X,Y)$ of $\cP$, then Proposition~\ref{prop1:23jan12}
implies that $p$ belongs to the set
$\mathcal{W}$ specified in Theorem~\ref{thm:nov25a11}.
Then Lemma~\ref{lem1:17jan12} implies that
there exists an nc polynomial $\wtilde{p} \in \mathcal{W}$ such that
$\wtilde{p}(\wtilde{X},\wtilde{Y})$
is an nc representation of $\cP$.  The reverse direction
follows by a similar argument.
\end{proof}

\subsection{The Size and Cost of NcCoef}\label{sec2:30oct12}

We now determine the cost
required to implement NcCoef.
For now, we neglect the cost to form the linear systems associated with NcCoef
and only focus on the cost of solving these systems. 

For given arrangements $\sigma$ and $\lambda$ of the variables $x_1,\cdots,x_{2k^2}$ in $X$ and $Y$ and the polynomials $p_1,\cdots, p_{k^2}$ in $\cP$ we
apply algorithm NcCoef to obtain a system of equations 
in the undetermined variables $c_{\alpha,\beta}$ as 
in  \eqref{eq1:22dec11} and \eqref{eq:PbbP}.
By applying the method of homogeneous sorting as described in Section~\ref{sec1:30oct12}, we obtain $(d-1)$ systems of equations
$M_i$ in the unknowns $c_{\alpha\beta}$ formed by equating
\begin{align}\label{eq2:30oct12}
\mathbb{P}^i_{\lambda} = p^{i}_{\sigma}(X,Y),
\end{align}
where $\mathbb{P}^i_{\lambda}$ and
$p^i_{\sigma}(X,Y)$ are formed from \eqref{eq1:30oct12} 
and \eqref{eq1:22dec11}. 

To determine the cost of solving this linear system let
\begin{align}
&t_{ij} = \text{ number of monomials in $p^{[i]}_{\lambda(j)}$},
\end{align}
and set $\tau_i = \sum_{j=1}^{k^2}t_{ij}$.  The number of
noncommutative monomials in the variables $X$ and $Y$ of degree $i$ is $2^i$.  Therefore
the number of unknowns $c_{\alpha\beta}$ in our system corresponding to 
homogeneous terms of degree $i$ will be $2^i$.  It follows that
$$M_{i} \quad \text{is a}~~\tau_i \times 2^i \quad \text{system of equations}$$ 
in the unknowns $c_{\alpha\beta}$ satisfying $|\alpha+\beta| = i$. 

When  $\tau_i > 2^i$, $M_i$ will be overdetermined.
The cost of solving this system using an LU decomposition is 
\begin{align}
k^2\tau_i 4^i-\frac{8^i}{3} \quad \text{arithmetic operations 
(see \cite{GvL} \S 3.2.11)}
\end{align} 
We must solve such a system for each $2 \le i \le d$, 
so the total cost to solve the linear system 
when each  $k^2\tau_i > 2^i$ is
\begin{align}\label{eq3:2oct12}
\text{TotLinC}_{\sigma\lambda}\le \sum_{i=2}^d \left(\tau_i4^i-\frac{8^i}{3}\right) \quad \text{arithmetic operations}.
\end{align}

Similarly, when $\tau_i  \leq  2^i$ we get
\begin{align}\label{eq3:underDet}
\text{TotLinC}_{\sigma\lambda}\le\sum_{i=2}^d \frac{2^{3i+1}}{3} \quad \text{arithmetic operations (see \cite{GvL} \S 3.2.9)}.
\end{align}


\subsection{Final Results
for One letter Algorithms}\label{finalresultone}

To this point we have developed the following algorithms:

\medskip

\begin{itemize}
\item[{\bf DiagPar1:}] Partitions the commutative variables between the
matrices $X$ and $Y$ and works under the assumption
that $\varphi(n,0) \ne 0$ for some $n\ge 2$ and $n\varphi(n,0) \ne \\
\varphi(n-1,1)$.

\item[{\bf DiagPar2:}] Partitions the commutative variables between the matrices $X$ and $Y$ and works under the assumption
that $\varphi(n,0) \ne 0$ for some $n\ge 2$ and $\varphi(n,0) \ne\\
 \varphi(n-1,1;Y)$ or $\varphi(n,0) \ne \varphi(Y;n-1,1)$.

\item[{\bf ParPosX:}] Positions the variables in the matrix $X$ if $\varphi(n,0) \ne 0 $ for some $n\ge 2$ and the commutative
variables are partitioned.

\item[{\bf PosPol:}] Positions the polynomials in the family if $\varphi(n,0) \ne 0$ for some $n\ge 2$ and the commutative
variables are partitioned and positioned in $X$.

\item[{\bf PosY}] Positions the commutative variables in $Y$ if $\varphi(0,n) \ne 0$ for some $n\ge 1$, the variables
are partitioned and positioned in $X$ and the polynomials are positioned.

\item[{\bf NcCoef}]
Given $X,Y$ and the positioning of the
$p_j$ in a matrix, this algorithm determines whether or not an nc $p$
exists such that $p(X,Y)$ generates the matrix containing the $p_j$.
\end{itemize}

{\bf Alg.1} will refer to the sequence DiagPar1, ParPosX,  PosPol, PosY and
NcCoef.
\medskip

{\bf Alg.2} will refer to the sequence DiagPar2, ParPosX,  PosPol, PosY.
and NcCoef.
\bigskip

The next theorem summarizes
the applicability of these two algorithms.

\bigskip

Let $NC_{(\ref{eq:nov6a11})}$ denote the class of all nc
polynomials $p(X,Y)$ of degree $d>1$ for which there exists integers $s \ge0$ , $t \ge 1$
and $n\ge 2$  such that
\begin{equation}
\label{eq:nov6a11}
\left\{\begin{array}{ll}
\varphi(s,t) \ne 0, \quad& \varphi(n,0) \ne 0,\quad  \varphi(n,0)
\ne \varphi(0,n),  \\
\quad\quad\text{ and}& \quad n\varphi(n,0) \ne \varphi(n-1,1).
\end{array}\right.
\end{equation}

\begin{theorem}
\label{thm:nov6a11}
Let $\cP$ be a family of $k^2$ polynomials in $2k^2$ commuting
variables. Then Alg.1 yields an
nc representation $p(X,Y)$ of $\cP$
 if and only if the given family $\cP$ admits
an nc representation in $NC_{(\ref{eq:nov6a11})}$.
\end{theorem}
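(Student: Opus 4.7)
The plan is to prove the two directions of the ``if and only if'' separately, and for each direction, treat Alg.1 as the composition of five sub-algorithms whose individual characterizations have already been established.

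For the $(\Leftarrow)$ direction, assume $\cP$ admits an nc representation $p(X,Y)$ lying in the class $NC_{(\ref{eq:nov6a11})}$. I would walk through the five pieces of Alg.1 in order and verify that the coefficient inequalities in (\ref{eq:nov6a11}) are exactly what each sub-algorithm needs. First, because $\varphi(n,0)\ne 0$, $\varphi(n,0)\ne\varphi(0,n)$ and $n\varphi(n,0)\ne\varphi(n-1,1)$, Theorem \ref{lem:diagToPartit} guarantees that DiagPar1 correctly partitions the $2k^2$ commutative variables between $X$ and $Y$ (after, if necessary, restricting to the homogeneous degree-$n$ component via the homogeneous sorting of Section \ref{sec1:30oct12}). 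Next, Theorem \ref{thm1:sep9:2010} shows that ParPosX then positions the variables within $X$, since these same inequalities already imply $p\in NC_{(\ref{eq2:31jan12})}$. Proposition \ref{cor1:aug30:2010} then yields that PosPol places the polynomials $p_1,\ldots,p_{k^2}$ into an array $\mathbb{P}$, and Proposition \ref{prop1:30nov11} yields that PosY positions the variables in $Y$, using the additional hypothesis that $\varphi(s,t)\ne 0$ for some $s\ge 0$, $t\ge 1$. Finally, NcCoef must produce an nc polynomial. Here the subtle point is that the specific matrices $X,Y$ generated by the algorithms may not be identical to those appearing in the original representation $p(X,Y)$. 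I would resolve this by invoking Theorem \ref{thm1:14dec11} (which shows that any two pairs produced by Alg.1 are permutation equivalent) together with Lemma \ref{lem1:17jan12} (which, for each of the four permutation-equivalence cases, explicitly exhibits one of $p$, $p_t$, $\overline{p}$, $\overline{p}_t$ as a valid nc representation on the output pair). Thus the linear system (\ref{eq:PbbP}) that NcCoef is asked to solve is consistent, and NcCoef returns an nc representation.

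For the $(\Rightarrow)$ direction, assume Alg.1 yields an nc representation $p(X,Y)$ of $\cP$. The key observation is that each sub-algorithm in Alg.1 not only succeeds under the inequalities in (\ref{eq:nov6a11}) but \emph{requires} them: Theorem \ref{lem:diagToPartit} proves the ``only if'' direction for DiagPar1, so if DiagPar1 correctly partitioned the variables then necessarily $\varphi(n,0)\ne 0$, $\varphi(n,0)\ne\varphi(0,n)$, and $n\varphi(n,0)\ne\varphi(n-1,1)$ for some $n\ge 2$; and the necessity of $\varphi(s,t)\ne 0$ for some $s\ge 0$, $t\ge 1$ for PosY comes from Proposition \ref{prop1:30nov11}. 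Packaged together via Proposition \ref{prop1:23jan12}, these forced inequalities place $p$ in $NC_{(\ref{eq:nov6a11})}$, so $\cP$ admits an nc representation in this class.

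The main obstacle is the NcCoef step in the $(\Leftarrow)$ direction, because there is no a priori guarantee that the particular $X,Y$ manufactured by the positioning algorithms support a representation of $\cP$; the hypothesis only supplies one such representation, on possibly different matrices. The resolution is to combine the uniqueness-up-to-permutation-and-transposition-and-swap statement of Theorem \ref{thm1:14dec11} with the explicit list (\ref{eq2:30jan12}) in Lemma \ref{lem1:17jan12}, which together certify that on \emph{any} pair $X,Y$ produced by the algorithms the system (\ref{eq:PbbP}) has a solution; Proposition \ref{prop2:23jan12} then ties off the argument by confirming that success of NcCoef is independent of which output pair was produced.
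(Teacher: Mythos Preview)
Your proposal is correct and follows essentially the same route as the paper's own proof: the $(\Rightarrow)$ direction via Proposition~\ref{prop1:23jan12}, and the $(\Leftarrow)$ direction by chaining Theorem~\ref{lem:diagToPartit}, Theorem~\ref{thm1:sep9:2010}, Proposition~\ref{cor1:aug30:2010}, Proposition~\ref{prop1:30nov11}, then Lemma~\ref{lem1:17jan12} and Proposition~\ref{prop2:23jan12} to handle NcCoef. Your explicit identification of the NcCoef step as the subtle point and its resolution via the permutation-equivalence machinery is exactly what the paper does (Lemma~\ref{lem1:17jan12} already encapsulates the appeal to Theorem~\ref{thm1:14dec11}, so you need not cite both).
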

\begin{proof}
If Alg.1 determines an nc representation $p(X,Y)$, then
Proposition~\ref{prop1:23jan12} implies that $p \in NC_{(\ref{eq:nov6a11})}$.
Now suppose that $\cP$ admits an nc representation in
$NC_{(\ref{eq:nov6a11})}$.
Then Theorem~\ref{lem:diagToPartit}, Theorem~\ref{thm1:sep9:2010},
 Proposition~\ref{cor1:aug30:2010}, and Proposition~\ref{prop1:30nov11}
imply that Alg.1 will successfully determine a pair of nc variables $X$
and $Y$.  Moreover,
Lemma~\ref{lem1:17jan12}
implies that there exists an nc polynomial $p$ such that $p(X,Y)$ is an
nc representation of $\cP$.  But this implies that for this choice of
$X$ and $Y$ that Algorithm NcCoef will be successful. Furthermore,
Proposition~\ref{prop2:23jan12} implies that for any pair of nc variables
produced by Alg.1, algorithm NcCoef will be successful.
Therefore,
Alg.1 will successfully determine an nc representation of $\cP$.
\end{proof}

Let $NC_{(\ref{eq:nov6b11})}$ denote the class of all nc polynomials
$p(X,Y)$ of degree $d>1$ for which there exists integers $s \ge 0$, $t\ge 1$ and $n\ge 2$
such that
\begin{equation}
\label{eq:nov6b11}
\left\{\begin{array}{ll}\varphi(s,t) \ne 0, \quad \varphi(n,0) \ne 0,
&\varphi(n,0) \ne \varphi(0,n),\quad \text{and either } \\
\quad \varphi(n,0) \ne \varphi(n-1,1;Y),\quad&\text{or}\quad \varphi(n,0)
\ne \varphi(Y;n-1,1).
\end{array}\right.
\end{equation}

\begin{theorem}
\label{thm:nov6b11}
Let $\cP$ be a family polynomials in $2k^2$ commuting
variables. Then Alg.2 yields an
nc representation $p(X,Y)$ of $\cP$ if and only if the given family $\cP$ admits
an nc representation in $NC_{(\ref{eq:nov6b11})}$.
\end{theorem}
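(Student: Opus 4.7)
The plan is to mirror the proof of Theorem \ref{thm:nov6a11} step by step, replacing the role of DiagPar1 with DiagPar2 and the coefficient constraints in \eqref{eq:nov6a11} with those in \eqref{eq:nov6b11}. The argument naturally splits into two directions.

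For the forward direction, suppose Alg.2 produces an nc representation $p(X,Y)$ of $\cP$. Because Alg.2 consists of DiagPar2, ParPosX, PosPol, PosY and NcCoef, and because Proposition \ref{prop1:23jan12} characterizes exactly the class of nc polynomials for which the constituent algorithms can succeed, I conclude that $p$ belongs to the set $\mathcal{W}$ appearing in Theorem \ref{thm:nov25a11}. The coefficient hypotheses enforced by DiagPar2, together with the requirement $\varphi(s,t)\ne 0$ needed for PosY, are precisely the conditions that define $NC_{(\ref{eq:nov6b11})}$, so $p\in NC_{(\ref{eq:nov6b11})}$.

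For the reverse direction, assume $\cP$ admits an nc representation in $NC_{(\ref{eq:nov6b11})}$. First I would invoke Theorem \ref{lem1:sep8:2010} to guarantee that DiagPar2 partitions the commutative variables between $X$ and $Y$: this uses $\varphi(n,0)\ne 0$, $\varphi(n,0)\ne \varphi(0,n)$, and either $\varphi(n,0)\ne \varphi(n-1,1;Y)$ or $\varphi(n,0)\ne \varphi(Y;n-1,1)$. Next, once the partition is in hand, Theorem \ref{thm1:sep9:2010} (ParPosX) positions the variables inside $X$, Proposition \ref{cor1:aug30:2010} (PosPol) positions the polynomials in $\cP$ in the $k\times k$ array, and Proposition \ref{prop1:30nov11} (PosY) positions the variables in $Y$, using the hypothesis $\varphi(s,t)\ne 0$ for some $s\ge 0$, $t\ge 1$. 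Together these steps produce a specific pair $X,Y$.

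It remains to invoke NcCoef to recover an nc polynomial with $p(X,Y)=\PP$. Here I would apply Lemma \ref{lem1:17jan12}, which asserts that for any $\wtilde{X},\wtilde{Y}$ generated by the preceding algorithms there is an nc polynomial $\wtilde{p}$ (one of $p$, $p_t$, $\overline{p}$ or $\overline{p}_t$, according to Theorem \ref{thm1:14dec11}) such that $\wtilde{p}(\wtilde{X},\wtilde{Y})$ is an nc representation of $\cP$; hence NcCoef succeeds on at least one pair produced by our algorithms. Finally, Proposition \ref{prop2:23jan12} upgrades this to success on \emph{every} pair $X,Y$ produced by Alg.2, so Alg.2 yields an nc representation. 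The main (mildly delicate) point is the same as in the proof of Theorem \ref{thm:nov6a11}: one must combine Lemma \ref{lem1:17jan12} with Proposition \ref{prop2:23jan12} to handle the multiple permutation/transposition/interchange equivalent matrix pairs that Alg.2 might select, rather than trying to show directly that a single nc polynomial $p$ works for every admissible $(X,Y)$.
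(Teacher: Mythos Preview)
Your proof is correct and follows essentially the same approach as the paper's own proof: both directions invoke the same chain of results (Proposition~\ref{prop1:23jan12} for the forward direction; Theorem~\ref{lem1:sep8:2010}, Theorem~\ref{thm1:sep9:2010}, Proposition~\ref{cor1:aug30:2010}, Proposition~\ref{prop1:30nov11}, Lemma~\ref{lem1:17jan12}, and Proposition~\ref{prop2:23jan12} for the reverse direction), and you correctly identify the subtle point about handling all possible matrix pairs produced by Alg.2.
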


\begin{proof}
If Alg.2 determines an nc representation $p(X,Y)$, then
Proposition~\ref{prop1:23jan12} implies that $p \in NC_{(\ref{eq:nov6b11})}$.
Now suppose that $\cP$ admits an nc representation in $NC_{(\ref{eq:nov6b11})}$.
Then Theorem~\ref{lem1:sep8:2010}, Theorem~\ref{thm1:sep9:2010},
 Proposition~\ref{cor1:aug30:2010}, and Proposition~\ref{prop1:30nov11}
imply that Alg.2 will successfully determine $X$ and $Y$.  Furthermore,
Lemma~\ref{lem1:17jan12}
implies that there exists an nc polynomial $p$ such that $p(X,Y)$ is an
nc representation of $\cP$.  But this implies that for this choice of
$X$ and $Y$ that Algorithm NcCoef will be successful.  Furthermore,
Proposition~\ref{prop2:23jan12} implies that for any pair of nc variables
produced by Alg.2, algorithm NcCoef will be successful.  Therefore,
Alg.2 will successfully determine an nc representation of $\cP$.
\end{proof}

\begin{remark}
\label{rem:nov10a11}
There are analogues of Theorems \ref{thm:nov6a11} and \ref{thm:nov6b11} in
which $NC_{(\ref{eq:nov6a11})}$ is replaced by the class
$NC_{(\ref{eq:nov10a11})}$ of all nc polynomials of
degree $d>1$ for which there exists integers $s \ge 1$, $t \ge0$ and $n\ge 2$
such that
\begin{equation}
\label{eq:nov10a11}
\left\{\begin{array}{ll}
\varphi(s,t) \ne 0, \quad \varphi(0,n) \ne 0, &\quad \varphi(n,0)
\ne \varphi(0,n) \\
\quad\quad \text{ and} \quad& n\varphi(0,n) \ne \varphi(1,n-1).
\end{array}\right.
\end{equation}
and $NC_{(\ref{eq:nov6b11})}$ is replaced by the class
$NC_{(\ref{eq:nov10b11})}$ of all nc polynomials of
degree $d>1$ for which there exists integers $s \ge 1$, $t \ge 0$ and $n\ge 2$
such that
\begin{equation}
\label{eq:nov10b11}
\left\{\begin{array}{ll}
\varphi(s,t) \ne 0, \quad \varphi(0,n) \ne 0,& \quad \varphi(n,0) \ne
\varphi(0,n),\quad \text{and either } \\
\quad \varphi(0,n) \ne \varphi(1,n-1;Y),&\quad\text{or}\quad \varphi(0,n)
\ne \varphi(Y;1,n-1).
\end{array}
\right.
\end{equation}
\end{remark}

\section{Examples}
\label{sec:nov25d11}
This section is devoted to a number of examples to illustrate the algorithms
that were developed in earlier sections, as well as some variations thereof.

\begin{example}
\label{ex:aug5a10}
The set of $4$ polynomials
\begin{eqnarray*}
p_1&=&x_1x_5+5x_5^2+x_2x_7+5x_6x_7\\
p_2&=&x_1x_6+5x_5x_6+x_2x_8+5x_6x_8 \\
p_3&=&x_3x_5+x_4x_7+5x_5x_7+5x_7x_8\\
p_4&=&x_3x_6+5x_6x_7+x_4x_8+5x_8^2
\end{eqnarray*}
in the commutative variables $x_1,\ldots,x_8$ can be identified
with the entries in an nc polynomial $p(X,Y)$ for appropriate
choices of the $2\times 2$ matrices $X$ and $Y$.  The objective is to find
such an identification.
\end{example}

\noindent
{\bf Discussion}
Since these 4 polynomials are homogeneous of degree 2, it is reasonable to
look for an nc representation must be of the form
$$
p(X,Y)=aX^2+bXY+cYX+dY^2
$$
for some choice of $a,b,c,d\in\RR$. Moreover, since there are only two one
letter monomials in the given family of polynomials:
$$
5x_5^2\ \textrm{in}\ p_1\quad\textrm{and}\quad 5x_8^2\ \textrm{in}\ p_4,
$$
the corresponding variables must sit on the diagonal of either $X$ or $Y$.
We shall arbitrarily place $x_8$ in the $11$ position of $X$ and
$x_5$ in the $22$ position of $X$.  This in turn forces $p_1$ to be in the
$22$ position of $p(X,Y)$ and $p_4$ to be in the $11$ position of $p(X,Y)$
and forces $a=5$ and $d=0$. Furthermore,
 if $A=X$ and $B=Y$ with $x_5=x_8=\alpha$, one of the other variables $x_i=\beta$ and the remaining five variables equal to zero, then $AB=BA$ and
$$
p(A,B)=5A^2+(b+c)AB,
$$
i.e., in terms of the notation introduced in subsection \ref{subs:mdr} with
$c_{ij}=\varphi(i,j)$ for short, $c_{20}=5$, $c_{11}=b+c$ and $c_{02}=0$. Consequently, there are six possibilities:

\noindent
1)  If $x_i$ is the $12$ entry of $X$, then
$$
A=\begin{bmatrix}\alpha&\beta\\ 0&\alpha\end{bmatrix},\quad B
=\begin{bmatrix}0&0\\ 0&0
\end{bmatrix}\quad\textrm{and}\quad p(A,B)
=\begin{bmatrix}5\alpha^2&10\alpha\beta\\ 0&5\alpha^2\end{bmatrix}.
$$
2)  If $x_i$ is the $12$ entry of $Y$, then
$$
A=\begin{bmatrix}\alpha&0 \\ 0&\alpha\end{bmatrix},\quad B
=\begin{bmatrix} 0&\beta\\ 0&0
\end{bmatrix}\quad\textrm{and}\quad p(A,B)
=\begin{bmatrix}5\alpha^2&c_{11}\alpha\beta\\ 0&5\alpha^2\end{bmatrix}.
$$
3)  If $x_i$ is the $21$ entry of $X$, then
$$
A=\begin{bmatrix}\alpha&0 \\ \beta&\alpha\end{bmatrix},\quad B
=\begin{bmatrix} 0&0 \\ 0&0
\end{bmatrix}\quad\textrm{and}\quad p(A,B)
=\begin{bmatrix}5\alpha^2&0\\10\alpha\beta&5\alpha^2\end{bmatrix}.
$$
4)  If $x_i$ is the $21$ entry of $Y$, then
$$
A=\begin{bmatrix}\alpha&0 \\ 0&\alpha\end{bmatrix},\quad B
=\begin{bmatrix} 0&0\\ \beta&0
\end{bmatrix}\quad\textrm{and}\quad p(A,B)
=\begin{bmatrix}5\alpha^2&0\\ c_{11}\alpha\beta&5\alpha^2\end{bmatrix}.
$$
5)  If $x_i$ is the $11$ entry of $Y$, then
$$
A=\begin{bmatrix}\alpha&0 \\ 0&\alpha\end{bmatrix},\quad B
=\begin{bmatrix} \beta&0\\ 0&0
\end{bmatrix}\quad\textrm{and}\quad p(A,B)
=\begin{bmatrix}5\alpha^2+c_{11}\alpha\beta&0\\ 0&5\alpha^2\end{bmatrix}.
$$
6)  If $x_i$ is the $22$ entry of $Y$, then
$$
A=\begin{bmatrix}\alpha&0 \\ 0&\alpha\end{bmatrix},\quad B
=\begin{bmatrix} 0&0\\ 0&\beta
\end{bmatrix}\quad\textrm{and}\quad p(A,B)
=\begin{bmatrix}5\alpha^2&0\\ 0&5\alpha^2+c_{11}\alpha\beta\end{bmatrix}.
$$
Thus,  if:
\begin{enumerate}
\item[\rm(1)] $x_i$ is an off-diagonal entry of $X$, $2$ polynomials will equal
$5\alpha^2$, one will equal $10\alpha\beta$ and one will equal $0$.
\item[\rm(2)] $x_i$ is an off-diagonal entry of $Y$, $2$ polynomials will equal
$5\alpha^2$, one will equal $c_{11}\alpha\beta$ and one will equal $0$.
\item[\rm(3)] $x_i$ is a diagonal entry of $Y$, one polynomial will equal
$5\alpha^2$, one will equal $5\alpha^2+c_{11}\alpha\beta$ and two will equal $0$.
\end{enumerate}

Correspondingly, if $x_5=x_8=\alpha$ and $x_i=\beta$ and all the other variables are set equal to zero,  then the polynomials $p_1,\ldots,p_4$  assume the
 values shown in the following array
 $$
 \begin{bmatrix}
 {}&i=1&i=2&i=3&i=4&i=6&i=7\\
 p_1=&5\alpha^2+\alpha\beta&5\alpha^2&5\alpha^2&5\alpha^2&5\alpha^2&5\alpha^2\\
 p_2=&0&\alpha\beta&0&0&10\alpha\beta&0\\
 p_3=&0&0&\alpha\beta&0&0&10\alpha\beta\\
 p_4=&5\alpha^2&5\alpha^2&5\alpha^2&5\alpha^2+\alpha\beta&5\alpha^2&5\alpha^2
 \end{bmatrix}.
 $$
 Upon comparing the values of the polynomials $p_1,\ldots,p_4$
for the $6$ possible choices of
 $x_i=\beta$ with the possibilities (1)--(3) indicated just above, it is readily seen (from setting (3))
 that $x_1$ and $x_4$ are diagonal entries in $Y$ and $c_{11}=1$. (In fact since $p_1$ is in the $22$ position of $p(X,Y)$, $x_1$ must be in the $22$ position of $Y$, which forces $x_4$ to be
 in the $11$ position of $Y$.)   The variables $x_2$ and $x_3$ will be off-diagonal entries of
 $Y$ by setting (2); and hence the remaining variables $x_6$ and $x_7$ must be off-diagonal entries in $X$.  A more detailed analysis would serve to position these last $4$ variables in
$X$ and $Y$, after fixing one of them; see Remark \ref{rem:jul30a10}.
\bigskip

\begin{remark}
\label{rem:jul30a10} As $p_1=x_1x_5+\cdots,$ $p_1$ is in the $22$ position of $p(X,Y)$ and $x_5$ is in
the $22$ position of $X$,
it follows that $x_1$ is in the $22$ position of $Y$.
Similarly, since
$p_4=x_4x_8+\cdots,$ $p_4$ is in the $11$ position of $p(X,Y)$ and $x_8$ is in
the $11$ position of $X$,
it follows that $x_4$ is in the $11$ position of $Y$.
Moreover, since the term $5x_6x_7$ in $p_1$ (and $p_4$) can only come from $5X^2$, it follows that  $x_6$ and $x_7$ must belong to $X$.  Consequently, the remaining two variables,  $x_2$ and $x_3$, must belong to $Y$.

To go further,  assume that $x_6$ is in the $12$ position of $X$. Then $x_7$ must be in the $21$ position
of $X$ and the polynomial
$$
p_2=x_1x_6+\cdots\quad\textrm{is in the $12$ position of $p(X,Y)$}.
$$
Therefore, $x_2$  is in the $12$ position of $Y$ and the remaining variable $x_4$ is in the
$21$ position of $Y$.
\end{remark}

To better illustrate the algorithms, we shall return to the case where it is only known
that $x_j\in Y$ for $j=1,\ldots,4$, $x_j\in X$ for $j=5,\ldots,8$, $x_8$ is the $11$ position of $X$
and $x_5$ is in the $22$ position of $X$. Then
$L_1=\{x_6,x_7\}=L_2$ and the positions of these two variables
in $X$ are not uniquely determined.  We shall arbitrarily place $x_6$ in the
$12$ position of $X$. Then  $x_7$ must be in the $21$ position and
$$
5X^2=5\begin{bmatrix}x_8&x_6\\x_7&x_5\end{bmatrix}^2=
5\begin{bmatrix}x_8^2+x_6x_7&x_8x_6+x_6x_5\\ x_7x_8+x_5x_7&x_5^2+x_7x_6\end{bmatrix}.
$$
Thus, $p_2=5x_8x_6+5x_6x_5+\cdots$ must sit in the $12$ position of $p(X,Y)$. Therefore,
$x_2x_8$ is also in the $12$ position of $p(X,Y)$, which forces $x_2$ to be in the $12$ position
of  $Y$.  Therefore,
$$
X=\begin{bmatrix}x_8&x_6\\x_7&x_5\end{bmatrix}\quad\textrm{and}\quad Y=\begin{bmatrix}x_4&x_2\\x_3&x_1\end{bmatrix}.
$$
It is now readily checked that
$$
\begin{bmatrix}p_4&p_2\\p_3&p_1\end{bmatrix}=5X^2+XY.
$$
{\bf Note!}
$$
 \begin{bmatrix}p_4&p_2\\p_3&p_1\end{bmatrix}\ne 5X^2+YX.
$$

\begin{example}
\label{ex:aug5b10}
The polynomials
\begin{eqnarray*}
p_1&=&x_1^2+x_2x_3-x_3x_6+x_2x_7\\
p_2&=&x_1x_2+x_2x_4-x_2x_5+x_1x_6-x_4x_6+x_2x_8\\
p_3&=&x_1x_3+x_3x_4+x_3x_5-x_1x_7+x_4x_7-x_3x_8\\
p_4&=&x_2x_3+x_4^2+x_3x_6-x_2x_7
\end{eqnarray*}
admit an nc representation.
\end{example}

\noindent
{\bf Discussion}
Since these $4$ polynomials are homogeneous of degree $2$, it is reasonable to
look for an nc
representation must be of the form
$$
p(X,Y)=aX^2+bXY+cYX+dY^2
$$
for some choice of $a, b, c,d\in\RR$, just as in Example \ref{ex:aug5a10}.
Moreover, since there are only two one
letter monomials
$$
x_1^2\quad \textrm{in}\ p_1\quad\textrm{and}\quad x_4^2\quad  \textrm{in}\
p_4,
$$
the corresponding variables are placed on the diagonal of $X$. Thus, $a=1$
and $d=0$, and the substitutions $X=A$ and $Y=B$ in the candidate $p(X,Y)$
for the
nc representation yields the formula
$$
p(A,B)=c_{20}A^2+c_{11}AB+c_{02}B^2,
$$
with $c_{20}=1$, $c_{11}=b+c$ and $c_{02}=0$.

We shall arbitrarily place $x_4$ in the $11$ position of $X$ and $x_1$ in
the $22$ position of $X$.
This forces $p_4$  and $p_1$ to be in the $11$ and $22$ positions of
$p(X,Y)$, respectively. Then, setting $x_1=x_4=\alpha$
and one of the other variables $x_i=\beta$ leads to the following sets of
values:
$$
\begin{bmatrix}
{}&x_2=\beta&x_3=\beta&x_5=\beta&x_6=\beta&x_7=\beta&x_8=\beta\\
p_1&\alpha^2& \alpha^2& \alpha^2& \alpha^2& \alpha^2& \alpha^2\\
p_2&2\alpha\beta&0&0&0&0&0\\
p_3&0&2\alpha\beta&0&0&0&0\\
p_4&\alpha^2& \alpha^2& \alpha^2& \alpha^2& \alpha^2& \alpha^2
\end{bmatrix}.
$$
Since $c_{20}=1$ and $c_{02}=0$, the discussion in Section~\ref{sec1:11may12} predicts
\begin{eqnarray*}
&2&\  \textrm{polynomials equal to $\alpha^2$, one equal to $2\alpha\beta$ and one equal to
$0$, or}\\
& 2&\  \textrm{polynomials equal to $\alpha^2$, one equal to $c_{11}\alpha\beta$ and one equal to $0$, or}\\
&1&\  \textrm{polynomial equal to $\alpha^2$, one equal to $\alpha^2+c_{11}\alpha\beta$ and one equal to $0$},
\end{eqnarray*}
according to whether $x_i$ is an off-diagonal entry of  $X$, $x_i$ is an off-diagonal entry of $Y$ or $x_i$ is a diagonal entry of $Y$.
Thus, $c_{11}=0$, $x_2$ and $x_3$ must belong to $X$, and $x_5,\ldots,x_8$ belong to  $Y$.

Next, we shall arbitrarily place $x_2$ in the $12$ spot of $X$. Then $x_3$ must be in the $21$ spot of $X$, and $p_2$ is in the $12$ spot of $p(X,Y)$. Now, having placed the entries in $X$
and positioned the polynomials $p_1,\ldots,p_4$ in $p(X,Y)$,
it is readily seen that
\begin{eqnarray*}
&{}&x_1x_6\  \textrm{a term in $p_2$, $x_1$ in the $22$ position of $X$ $\Longrightarrow x_6$ in the $12$ position of $Y$}\\
&{}&x_2x_7\  \textrm{ a term in $p_1$, $x_2$ in the $12$ position of $X$ $\Longrightarrow x_7$ in the $21$ position of $Y$}.
\end{eqnarray*}
However, it  is not possible to fix the positions of $x_5$ and $x_8$ within $Y$ from the available information. Thus, to this point we know that
$$
X=\begin{bmatrix}x_4&x_2\\x_3&x_1\end{bmatrix}\quad\textrm{and that either}\quad
Y=\begin{bmatrix}x_5&x_6\\x_7&x_8\end{bmatrix}\quad\textrm{or}\quad
Y=\begin{bmatrix}x_8&x_6\\x_7&x_5\end{bmatrix},
$$
and, since $c_{20}=1$ and $c_{11}=c_{02}=0$, that there should be an  nc
polynomial
of the form
$$
p(X,Y)=X^2+a(XY-YX)
$$
for some $a\in\RR$ Since the coefficients of all the terms in $p_1,\ldots,p_4$ are $\pm 1$, it follows that $a=\pm 1$. It is then readily checked that
$$
\begin{bmatrix} p_4&p_2\\p_3&p_1\end{bmatrix}= X^2+X\begin{bmatrix}x_8&x_6\\x_7&x_5
\end{bmatrix}-\begin{bmatrix}x_8&x_6\\x_7&x_5\end{bmatrix}X,
$$
i.e., the second choice of $Y$ works, the first does not.

\begin{example}
\label{ex:aug5c09}
Let $k=5$ and $n=3$ and suppose that $x_i$ is in the $ii$ position
for $i=1,\ldots,5$ and that
$$
X=\begin{bmatrix} x_1&x_6&x_8  &x_{10}&x_{12}\\x_7&x_2&x_{14}&x_{16}&x_{18}\\
x_9&x_{15}&x_3&x_{20}&x_{22}\\
x_{11}&x_{17}&x_{21}&x_4&x_{24}\\
x_{13}&x_{19}&x_{23}&x_{25}&x_5\end{bmatrix}.
$$

\noindent
The objective is to analyze the family of $25$ commutative polynomials in $25$ variables
determined by $X^3$ to recreate $X$.  As in the previous examples, the family of polynomials
is the given data; we only include $X$ here because we will not write out the full family determined
by $X^3$ due to its prohibitive size.  The reader is encouraged to create the family generated by
$X^3$ using Mathematica and then follow along in the analysis to recreate $X$.

\end{example}

\noindent
{\bf Discussion}
If $X$ is of the given form, then $X^3$ generates a $5\times 5$ array of homogeneous polynomials of degree three, $p_1,\ldots,p_{25}$.  Five of these polynomials will each contain exactly one term of the form $x_i^3$. To simplify the exposition, we shall assume that the
variables are indexed  so that $x_i$ is in the $ii$ position of $X$ for $i=1,\ldots,5$. This in turn forces the polynomial that contains $x_i^3$ to be in the $ii$ position of $X^3$.
The rest of the construction is broken into steps.
\bigskip

{\bf 1.}  Find the entries in $L_i=R_i\cup C_i$ for $i=1,\ldots,5$ by
considering the terms $x_i^2x_j$  that appear in $X^3$, $i=1,\ldots,5$
and $j=6,\ldots,25$. For the given $X$ we will obtain:
\begin{eqnarray*}
L_1&=&\{x_6, x_7, x_8, x_9, x_{10}, x_{11}, x_{12}, x_{13}\},\\
L_2&=&\{x_6, x_7, x_{14}, x_{15}, x_{16}, x_{17}, x_{18}, x_{19}\},\\
L_3&=&\{x_8, x_9, x_{14}, x_{15}, x_{20}, x_{21}, x_{22}, x_{23}\},\\
L_4&=&\{x_{10}, x_{11}, x_{16}, x_{17},x_{20},x_{21},x_{24},x_{25}\}\\
L_5&=&\{x_{12}, x_{13}, x_{18},x_{19},x_{22},x_{23},x_{24},x_{25}\}
\end{eqnarray*}
\bigskip

\noindent
{\bf 2.} Position the entries in $L_1$.
\bigskip

Observe that $L_1\cap L_2=\{x_6,x_7\}$. This means that one of these
variables is in the $12$ position and the other is in the $21$ position.
We shall assume that $x_6$ is in the $12$ position and shall deduce the
positions of all the other variables from the $25$ polynomials $p_1,\ldots,p_{25}$,
corresponding to $X^3$. In particular, the assumption
that $x_6$ is in the $12$ position implies that the polynomial
$$
x_1^2x_6+x_6x_2^2+x_1x_6x_2+x_1x_8x_{15}+x_1x_{10}x_{17}
+x_1x_{12}x_{19}+\cdots
$$
is in the $12$ position of $X^3$.
But the remaining variables $x_1x_sx_t$ in that polynomial with off-diagonal variables $x_s$
and $x_t$, $s\ne t$, will be in the $12$ position of
$X^3$ if and only if
they are positioned in one of the ways indicated in the following array:
$$
\begin{array}{ccccccc}
x_s&13&14&15&32&42&52\\
x_t&32&42&52&13&14&15\end{array},
$$
which is to be read as:
$$
\textrm{either $x_s$ is in the $13$ position and $x_t$ is in the $32$
 position, or vice versa}
$$
The pair  $x_s=x_8$ and $x_t=x_{15}$ are subject to these constraints.
On the other hand, since $x_8\in L_1\cap L_3$ it can only be in either the
$13$ position or the $31$ position, whereas $x_{15}\in L_2\cap L_3$
and hence can only be in the $23$ position or the $32$ position. Thus,
the only viable solution to all these constraints is that
$$
x_8 \ \textrm{is in the $13$ position and $x_{15}$ is in the $32$ position}.
$$
Similarly, since $x_1x_{10}x_{17}$ and $x_1x_{12}x_{19}$ are in $p_{12}$,
whereas  $x_{10}\in L_1\cap L_4$,  $x_{17}\in L_2\cap L_4$,
$x_{12}\in L_1\cap L_5$
and $x_{19}\in L_2\cap L_5$, it is readily seen that
$$
x_{10} \ \textrm{is in the $14$ position and $x_{17}$ is in the
$42$ position}
$$
and
$$
x_{12} \ \textrm{is in the $15$ position and $x_{19}$ is in the
$52$ position}.
$$
Moreover, now that $x_6, x_8$, $x_{10}$ and $x_{12}$ are positioned, we consider the
following monomials in $p_{12}$:
\begin{eqnarray*}
x_6x_8x_9&{}& \Longrightarrow \textrm{$x_9$ is in the $31$ position of $X$}\\
x_6x_{10}x_{11}&{}& \Longrightarrow \textrm{$x_{11}$ is in the $41$ position of $X$}\\
x_6x_{12}x_{13}&{}& \Longrightarrow \textrm{$x_{13}$ is in the $51$ position of $X$}.
\end{eqnarray*}

\smallskip

\begin{remark}
\label{rem:aug4a10}
The preceding calculations exploit the entries in the polynomial in the $12$ position of $p(X,Y)$
to calculate terms in $R_1$, $C_1$ and $R_2$. A variant of this is to just fill in $R_1$ and then to
rely on successive steps to fill in $R_2,\ldots,R_5$, one row at a time. At the other extreme, it is
also possible to use all the entries in this polynomial  to fill in the whole matrix; see item 6, below.
\end{remark}

\noindent
{\bf 3.} Position the remaining entries in $L_2$ by the algorithm.
\smallskip

To this point we know that
\begin{equation}
\label{eq:oct24a11}
X=\begin{bmatrix}
x_1&x_6&x_8  &x_{10}&x_{12}\\x_7&x_2&\cdot&\cdot&\cdot\\
x_9&x_{15}&x_3&\cdot&\cdot\\
x_{11}&x_{17}&\cdot&x_4&\cdot\\
x_{13}&x_{19}&\cdot&\cdot&x_5\end{bmatrix},
\end{equation}
and it remains to fill in the dots. Since the polynomial
$$
x_7x_1^2+x_2^2x_7+x_2x_{14}x_9+x_2x_{16}x_{11}+x_2x_{18}x_{13}+\cdots
$$
sits in the $21$ position of $X^3$ and the positions of $x_9$, $x_{11}$ and $x_{13}$ are known, it follows that
$$
x_{14}\ \textrm{is in the $23$ position, $x_{16}$ is in
the $24$ and $x_{18}$ in
the $25$}.
$$
Moreover, since
$$
\{x_{14}, x_{15}\}=   L_2\cap L_3\quad \textrm{and $x_{14}$ is in the $23$
position}
$$
it follows that $x_{15}$ is in the $32$ position. Similarly, since
$$
\{x_{16}, x_{17}\}=   L_2\cap L_4\quad \textrm{and $x_{16}$ is in the $24$
position}
$$
it follows that $x_{17}$ is in the $42$ position. Much the same argument
based on  the position of $x_{18}$ and the observation that
$L_2\cap L_5=\{x_{18}, x_{19}\}$ shows that $x_{19}$ is in the $52$ position
of $X$. This completes the positioning of the variables in $L_2$.
\bigskip

\noindent
{\bf 4.} Position the remaining entries in $R_3$ using Algorithm ParPosX outlined in Section~\ref{subsec1:sep3:2010}
\smallskip

Since the polynomial
$$
x_3^2x_9+x_9x_1^2+x_3x_{15}x_7+x_3x_{20}x_{11}+x_3x_{22}x_{13}+\cdots
$$
\noindent
is in the $31$ position of $X^3$ and the positions of $x_3$, $x_7$, $x_{11}$ and    $x_{13}$
are known, it is readily checked that
$x_{20}$ is in the $34$ position
and $x_{22}$ is in the $35$ position.

\bigskip

\noindent
{\bf 5.} Position the remaining entries in $X$ using Algorithm ParPosX.
\smallskip

Since the  polynomial
$$
x_4^2x_{11}+x_{11}x_1^2+x_4x_{17}x_7+x_4x_{21}x_9+x_4x_{24}x_{13}+
\cdots
$$
is in the $41$ position of $X^3$ and the positions of $x_4$, $x_7$, $x_9$ and $x_{13}$
are known, it follows that
$x_{21}$ is in the $43$ position and $x_{24}$ is in the $45$ position.
Similarly, as the  polynomial
$$x_5^2x_{13}+x_{13}x_1^2+x_5x_{19}x_7+x_5x_{23}x_9+x_5x_{25}x_{11}
+\cdots
$$
is in the $51$ position of $X^3$, the positions of $x_5$, $x_7$, $x_9$ and $x_{11}$ serve to locate
$x_{19}$ in the $52$ position, $x_{23}$ in the $53$ position and $x_{25}$ in
the $54$ position. This completes the computation via Algorithm ParPosX.
\bigskip

\noindent
{\bf 6.}   Variations on the theme.
\smallskip

The preceding steps fill in the entries
of the partially specified matrix $X$ in (\ref{eq:oct24a11})
by studying the entries in $5$ of the
given $25$ polynomials by  sweeping along rows. It is also possible to fill in $X$ by using all the entries in the polynomial
$$
q=x_1^2x_6+x_6x_2^2+x_1x_6x_2+x_1x_8x_{15}+x_1x_{10}x_{17}
+x_1x_{12}x_{19}+\cdots
$$
that sits in the $12$ position of $X^3$:
\begin{eqnarray*}
x_6x_8x_9&{}&\textrm{a term in $q$, $x_6$ in the $12$ spot, $x_8$\  in the $13$ spot}\Longrightarrow x_9 \  \textrm{in the $31$ spot}\\
x_6x_{10}x_{11}&{}&\textrm{a term in $q$, $x_6$ in the $12$ spot, $x_{10}$ in the $14$ spot}\Longrightarrow x_{11}\  \textrm{in the $41$ spot}\\
x_6x_{12}x_{13}&{}&\textrm{a term in $q$, $x_6$ in the $12$ spot, $x_{12}$ in the $15$ spot}\Longrightarrow x_{13}\  \textrm{in the $51$ spot}\\
x_6x_{14}x_{15}&{}&\textrm{a term in $q$, $x_6$ in the $12$ spot, $x_{15}$ in the $32$ spot}\Longrightarrow x_{14}\  \textrm{in the $23$ spot}\\
x_6x_{16}x_{17}&{}&\textrm{a term in $q$, $x_6$ in the $12$ spot, $x_{17}$ in the $42$ spot}\Longrightarrow x_{16}\  \textrm{in the $24$ spot}\\
x_6x_{18}x_{19}&{}&\textrm{a term in $q$, $x_6$ in the $12$ spot, $x_{19}$ in the $52$ spot}\Longrightarrow x_{18}\  \textrm{in the $25$ spot}\\
x_8x_{20}x_{17}&{}&\textrm{a term in $q$, $x_8$ in the $13$ spot, $x_{17}$ in the $42$ spot}\Longrightarrow x_{20}\  \textrm{in the $34$ spot}\\
x_8x_{22}x_{19}&{}&\textrm{a term in $q$, $x_8$ in the $13$ spot, $x_{19}$ in the $52$ spot}\Longrightarrow x_{22}\  \textrm{in the $35$ spot}\\
x_{10}x_{15}x_{21}&{}&\textrm{a term in $q$, $x_{10}$ in the $14$ spot, $x_{15}$ in the $32$ spot}\Longrightarrow x_{21}\  \textrm{in the $43$ spot}\\
x_{12}x_{15}x_{23}&{}&\textrm{a term in $q$, $x_{12}$ in the $15$ spot, $x_{15}$ in the $32$ spot}\Longrightarrow x_{23}\  \textrm{in the $53$ spot}\\
x_{12}x_{17}x_{25}&{}&\textrm{a term in $q$, $x_{12}$ in the $15$ spot, $x_{17}$ in the $42$ spot}\Longrightarrow x_{25}\  \textrm{in the $54$ spot}\
\end{eqnarray*}
Therefore, the single remaining variable $x_{24}$ must be in the $45$ spot.

\begin{remark}
\label{rem:dec29a11}
If, in Step 2,  $x_6$ is placed in the $21$ position and $x_7$ is placed in
the $12$ position, then the algorithm will generate the transpose $X^T$
of the matrix $X$ that is specified in the statement of Example
\ref{ex:aug5c09}.
\end{remark}

\section{Cost of Alg.2 vs. a Brute Force Approach}
\label{sec:cost}

Now that we have developed our one-letter methods 
for determining an nc representation and
determined which families they will work for,
 we would like to get some  idea of their cost. 
 As a sample we get some rough cost estimates for Alg.2
 and omit estimates for Alg.1.  As Alg.1 and Alg.2 differ only in the partitioning algorithms
 used, we suspect that the 
 costs for Alg.1 will be similar to the costs for Alg.2.
The goal of determining these estimates is to compare the gains in efficiency
that our methods provide over the more direct, brute force approach
described in \S \ref{sec:introcost}.
As usual, we assume $\cP$ is a
family containing $k^2$ polynomials of degree $d$ or 
less in the commutative variables $x_1,\cdots,x_{2k^2}$.

\subsection{Brute Force}

Recall that in Section~\ref{sec2:30oct12} we determined
the cost of solving the linear systems determined by NcCoef.  
As the Brute Force Method merely applies
algorithm NcCoef for each arrangement 
of the commutative variables
and polynomials, our work is nearly finished.  
However, recall that in our cost analysis of 
algorithm NcCoef we omitted the cost to form
the systems.  For the sake of completeness, we briefly mention
how to form these linear systems using a function
like the one in Mathematica called CoefficientList[] to get an idea of 
the cost. 

The function CoefficientList[] in Mathematica takes as an input
a polynomial and a collection of commutative variables and
outputs the coefficients of the polynomial associated with
the given variables.
For example, the command
$$A[i] = \text{CoefficientList}[p_i,\{x_1, \cdots, x_{2k^2}\}]$$ 
creates a multidimensional array $A[i]$, where the entry $A[i](i_1,\cdots,i_{2k^2})$ corresponds to 
the coefficient of the monomial $x_1^{i_1}\cdots x_{2k^2}^{i_{2k^2}}$ in $p_i$.  The starting
point for our implementations of both the brute force approach and Alg.2 will be to call CoefficientList[ for each
polynomial in the given family $\cP$.  
We note that any implementation of our algorithms will require a function similar to CoefficientList[]
in order to of access the coefficients of the family $p_1,\cdots,p_{2k^2}$ in a systematic way.  

Let $M_i$ be the system formed by equating
\begin{align}\label{eq2:30oct12}
\mathbb{P}^i_{\lambda} = p^{i}_{\sigma}(X,Y),
\end{align}
where $\mathbb{P}^i_{\lambda}$ and
$p^i_{\sigma}(X,Y)$ are formed from \eqref{eq1:30oct12} and \eqref{eq1:22dec11} respectively,  by homogeneous sorting. 
Then let $p_{\lambda(j)}$ be an element of the matrix $ \mathbb{P}_{\lambda}$, and let $q_j$ be an element of  
$p_{\sigma}(X,Y)$ with undetermined coefficients $c_{\alpha\beta}$.  Letting
$A[j] = \text{CoefficientList}[q_j,\{x_1,\cdots,x_{2k^2}\}]$ for each $1 \le j \le k^2$,
and $B[j] = \text{CoefficientList}[p_{\lambda(j)},\{x_1,\cdots,x_{2k^2}\}]$, we
obtain the system associated with Eq. \eqref{eq2:30oct12} by setting 
\begin{align}\label{eq2:19nov12}
A[j](i_1,\cdots, i_{2k^2}) = B[j](i_1,\cdots,i_{2k^2}) \quad (1 \le j \le k^2)
\end{align}
for $(i_1,\cdots, i_{2k^2})$ satisfying $\sum_{n=1}^{2k^2} i_n = i$.  

Excluding the cost of iterating through the terms in Eq. \eqref{eq2:19nov12}, the cost to form
the systems of equations associated with NcCoef involves $k^2$ calls to the function
CoefficientList[].  In addition to this, recall that the cost to solve the linear systems formed by NcCoef for
a given arrangement $\sigma$ of the variables and $\lambda$ 
of the polynomials satisfies
\begin{align}
\text{CostNcCoef}_{\sigma\lambda}~~ \le
\sum_{i=2}^d \left(\tau_i 4^i-\frac{8^i}{3}\right) ~~ 
\text{arithmetic operations},
\end{align}
provided $\tau_i >2^i$.  The formula is similar if $\tau \le 2^i$.

Recall that there are $(2k^2)!$ arrangements $\sigma$ 
of $x_1,\cdots,x_{2k^2}$ in $X$ and $Y$ and $(k^2)!$ 
arrangements $\lambda$ of
$p_1,\cdots,p_{k^2}$.  We note that we must call CoefficientList[]
once for each $p_j$ in the given family $\cP$ and once for each undetermined polynomial 
$q_j$ determined by $\sigma$.  Therefore if $\tau > 2^i$, the cost satisfies
\begin{align}\label{eq2:31oct12}
\text{CostBruteForce} \le  (2k^2)!(k^2)!\left( \sum_{i=2}^d 
\left(\tau_i 4^i-\frac{8^i}{3}\right)\right)~~ \text{arithmetic operations}\\
+ \left(\left(k^2+(2k^2)!\right)~~\text{Calls to CoefficientList[]}\right). \nonumber
\end{align}

\subsection{Cost of Algorithm 2}
In this section we roughly analyze the cost associated with each subroutine
required to execute Alg.2.  Then in Section~\ref{Alg2cost} we summarize these 
costs and  compare them to the brute force cost determined in Eq \eqref{eq2:31oct12}.

\subsubsection{Algorithm DiagA: Determining the Diagonal Entries}
\label{sec3:30oct12}
The first step in Alg.2 is to determine the 
commutative variables that occur in one-letter monomials of the form $ax_i^n$.
Here we give a rough outline of a procedure
based on Lemma~\ref{lem2:sep9:2010} that will do this, and 
then analyze the cost.  

As in the case of the brute force algorithm and algorithm NcCoef, we implement
this algorithm using the function CoefficientList[].  Again, we let
$$A[i] = \text{CoefficientList}[p_i,\{x_1, \cdots, x_{2k^2}\}].$$
Using the coefficient matrix $A[i]$ for each $1 \le i \le k^2$, 
we inspect the coefficients of the single letter monomials of degree $n$, 
where $2 \le n \le d$.
For example, the coefficients of the one letter monomials 
$ax_1^n$ and $bx_2^n$ in $p_i$ are given by
$$
A[i](n,0,\cdots,0) = a \quad \text{and} \quad A[i](0,n,0,\cdots,0) = b.
$$
\smallskip

 The core of  Algorithm DiagA is to: 
\medskip

 Fix a degree $2 \le n \le d$.  Then 
 iterate through all of the coefficients of one letter monomials 
 of degree $n$ in $\cP$. 
 That is, 
 iterate through the expressions
\begin{align}\label{eq1:3oct12}
&A[1](n,0\cdots,0), \ \cdots, A[1](0,\cdots,0,n)\\
&A[2](n,0,\cdots,0), \cdots, A[2](0,\cdots,0,n) \nonumber \\
&\quad \vdots \qquad \qquad   \qquad  \quad \qquad \qquad \vdots \nonumber\\
&A[k^2](n,0,\cdots,0), \cdots, A[k^2](0,\cdots,0,n) \nonumber 
\end{align} 
and verify that exactly $0, k$ or $2k$ of the above expressions 
are nonzero and 
that they assume at most two distinct, nonzero values. 
If this is satisfied, then store the nonzero coefficients  
and store the associated diagonal variable in one 
of two lists based on its coefficient.
If this is not satisfied then quit.

\subsubsection{Cost Estimate for DiagA}\label{sec5:30oct12} 

The cost of algorithm DiagA is easy to bound
 and can be expressed in terms of the parameters 
 $k$ and $d$ and the number of calls to CoefficientList[].  
Define an \df{equality check} to be a test if two given  numbers
are equal. 
 
 Algorithm DiagA begins with
$k^2$ calls to the function 
CoefficientList[], one for each polynomial $p_i$.  
Iterating through 
the $(2k^2)(k^2)$ coefficients listed in \eqref{eq1:3oct12} 
for a given degree $2 \le n \le d$  
and performing a small
number $C_A$ of \ec  s 
for each coefficient requires 
no more than $(d-1)(k^2)(2C_Ak^2) = 2C_A(d-1)k^4$ 
 \ec s. 
 Therefore, the total cost for applying DiagA is
\begin{align}\label{eq3:3oct12}
TC = \left(C_A(d-1)k^4 ~~ \text{\ec s}\right) 
+ \left(k^2 ~~ \text{Calls to CoefficientList[]}\right).
\end{align} 
Here  $C_A $ is certainly no greater than 5.

\subsubsection{Cost of DiagPar2}\label{sec4:30oct12}
Here we determine an upper bound for the cost of Algorithm DiagPar2.  
We assume that
DiagA has already been executed and that CoefficientList[] has been 
called for each polynomial
$p_i \in \cP$.   See \S \ref{sec3:30oct12} for a definition of these procedures.

Algorithm DiagPar2 utilizes the coefficients of monomials of the form $ax_i^{n-1}x_j$, where $x_i$ is a diagonal
variable and $x_i \ne x_j$, to partition the variables between $X$ and $Y$.  The bulk of the cost of this algorithm will lie in 
iterating through the coefficients of the two-letter monomials of this form.  
We fix a
degree $2 \le n \le d$, a polynomial $p_j$ and a diagonal element $x_{i}$ 
(which can be determined using DiagA)
and iterate through the coefficients of monomials in $p_j$ of the 
form $x_i^{n-1}x_l$, where $x_l$ is a non-diagonal element;  
therefore, we
iterate through $2k^2-2$ coefficients for each diagonal element. 
For each coefficient  we perform no more than $C_{Par2}$, 
a small number,  of equality checks
to ensure that the conditions
in Lemma~\ref{lem:sep19a10} are satisfied and to determine 
whether $x_l$ is in $X$ or $Y$.  

If at any point we find that these conditions are violated or
we cannot find an $n$ such that the two-letter monomials
 $ax_i^{n-1}x_j$ satisfy the conditions 
 consistent with \eqref{eq1:31jan12}, 
 we terminate the iteration and conclude
either that $\cP$ has no nc representation 
or that the algorithm is inconclusive. 
As we are only considering  $k$ diagonal variables, $k^2$ polynomials
and $d-1$ different degrees that need to be considered, we have that
\begin{align}\label{eq17oct12}
\text{TotCostDiagPar2} \le C_{Par2}(d-1)k^5 
\quad \text{\ec s}.
\end{align}
Here $C_{Par2} \leq 5$.
\medskip

\subsubsection{ Cost of ParPosX}\label{sec6:30oct12}  
We now determine a rough bound on the cost of Algorithm ParPosX.  Again, we assume Algorithm DiagA and DiagPar2 have been executed and
that CoefficientList[] has been called for each of the polynomials $p_i \in \cP$.   

Algorithm ParPosX positions the variables in $X$ by using lists $L_i$ defined in \eqref{eq1:31oct12} and three letter monomials of the form
$x_i^{n-2}x_lx_m$, where $x_i$ is a diagonal and $x_l \ne x_m$ are non-diagonal elements in $X$.  We observe that we can build the lists $L_i$ when we iterate through the two
letter monomials $ax_i^{n-1}x_j$ in DiagPar2, so other than the cost of DiagA, DiagPar1 or DiagPar2, 
the primary cost associated with this algorithm lies in iterating through the three-letter monomials.  For a given non-diagonal polynomial $p_j$ and diagonal variable $x_i$, we must iterate through the coefficients of 
$(k^2-k)(k^2-k-1)$ such monomials, and for each nonzero coefficient (of which there will be at most $k-1$),  iterate through a list of length $2k-2$ and
 perform a fixed number of equality checks which we
will designate by $C_{ParP}$.  
Therefore,
\begin{align}
\text{TotCostPar2Pos} =  &C_{ParP}(k^2-k)k [((k^2-k)(k^2-k-1))-k] +\\
&C_{ParP}(k^2-k)k(k-1)(2k-2)
\quad \text{\ec s}\nonumber \\
 &~~~~~~~~\qquad \qquad  \qquad \qquad \qquad \qquad \le 
 3C_{ParP}k^7 \quad \text{\ec s.} \nonumber
\end{align}
Here $C_{ParP}\leq 10$.
\medskip

\subsubsection{Cost of PosPol}\label{sec7:30oct12} As in the case of Algorithm DiagPar2, the cost of Algorithm PosPol is effectively reduced to the cost of iterating through two-letter
coefficients.  As in DiagPar2, we inspect two-letter monomials of the form $ax_i^{n-1}x_j$, where $x_i$ is a diagonal element of $X$ and
$x_j$ is a non-diagonal element of $X$.  For a fixed degree $2 \le n \le d$, polynomial $p_m$ and diagonal element $x_i$ of $X$, we iterate through the coefficients of
all possible two-letter monomials in $p_m$, which allows for $k^2-k$ possibilities.  Once we find a coefficient $a\ne 0$ corresponding to the monomial $ax_i^{n-1}x_j$,
we immediately determine the position of $p_m$ using the position of $x_i$ and $x_j$.  Therefore 
\begin{align}
\text{TotalCostPolyPos} \le C_{PosP} (d-1)k^5 \hspace{2mm} \text{\ec s, $~~(C_{PosP} \le 10)$}.
\end{align}
We note that this is probably a gross overestimate of the cost of PosPol and that one does not need to iterate through all of the two-letter monomials again.  It seems possible
that we could store additional data associated with the non-diagonal entries $x_j$ when we iterate through the two-letter algorithms in DiagPar2 in order
to position the polynomials.  
\medskip

\subsubsection{Cost of PosY}\label{sec8:30oct12} We now determine a bound for the cost of PosY for families $\cP \in NC_{\ref{eq2:31jan12}}$.  We again assume
that all necessary arguments to implement PosY have been executed so that the commutative variables are positioned in $X$ and the polynomials are positioned.  
Therefore we assume that half of the commutative variables have been positioned in $X$ and that the polynomials $p_1,\cdots,p_{k^2}$
have been assigned positions in a $k\times k$ matrix.
The goal is to
to position the remaining variables in $Y$

The first step 
is to determine the diagonal variables of $Y$. 
PosY uses the fact that $\varphi(s,t) \ne 0$ for some $s,t$ such that
$2 \le s+t = n \le d$ and $t>0$.  
Then we 
look at two letter monomials 
of the form $\varphi(s,t)x_i^sx_l^t$ in the diagonal polynomials
$p_j$,where $x_i$ is a diagonal variable of $X$.  
This will allow us to conclude that $x_l$ is a diagonal variable in $Y$ that must be located in the same position as $x_i$ is in $X$. 
 To implement this 
step, for a fixed degree $n$, diagonal polynomial $p_j$ and corresponding diagonal  variable $x_i$ in $X$, and $s,t$ satisfying $s+t =n $, we must iterate through $k^2-k$  coefficients corresponding to two letter monomials of the
form $x_i^{s}x_l^t$, where $x_l$ is in $Y$. 
Therefore
\begin{align}
\text{CostDiagY}  &\le C_{PosY} (d-1)(k)\binom{d}{2}(k^2-k)
~~\text{\ec s} \\ 
&\le C_{PosY}(d^3k^3)~~ \text{\ec s}, \nonumber
\end{align}    
where $C_{PosY} $  dominates the operation per step count.

Finally, to determine the position of the off-diagonal variables of $Y$, we iterate through three-letter monomials of the form $ax_i^{s}x_l^{t-1}x_v$, where
$x_i$ and $x_l$ are diagonal variables of $X$ and $Y$ respectively, and $x_v$ is a non-diagonal term of $Y$.  We observe that if such a term occurs in a polynomial $p_j$
that is in the $qr$-position of the polynomial matrix and $x_i$ and $x_l$ are in the $qq$ position of $X$ and $Y$ respectively, 
then $x_v$ must be in the $qr$-position of $Y$.  Again, the bulk of the cost lies in iterating through these terms.
For a fixed non-diagonal polynomial $p_j$ and fixed diagonal variables $x_i$ and $x_l$ in the $qq$-position in $X$ and $Y$, we consider the coefficients 
of $k^2-k$ such terms and fewer than  $C_{PosY}$ \ec s each .
Therefore, 
\begin{align}
\text{CostNonDiagY} &\le C_{PosY}  (k^2-k)(k)(k^2-k)~~\text{\ec s}\\
 &\le C_{PosY} (k^5)~~ \text{\ec s.} \nonumber
\end{align}   
Thus,
\begin{align}
\text{TotalCostPosY} &\le  C_{PosY} \left((d^3k^3) + (k^5)\right) \text{\ec s}
\end{align}
Here $C_{PosY} \leq 10$.

\subsubsection{Cost of Algorithm 2}\label{Alg2cost}

The following  table lists the subroutines that make up Alg.2, their cost 
in terms of the number of
operations and calls to CoefficientList[], and
the section in which the cost of the subroutines was determined.  
Recall that the parameter $\tau_i$ represents
the total number of commutative monomials of 
degree $2 \le i \le d$ in all polynomials in $\cP$, and $C_{Diag A},  C_{Par2}, 
 C_{ParP},  C_{PosP}, C_{PosY} $
are all constants bounded by 10.

\begin{align}\label{tbl1:22oct12}
\begin{tabular}{ |l| c |r|}
\hline
Algorithm & Operations & Calls to CoefficientList[]  \\ \hline
DiagA &$C_{Diag A}  \; dk^4$ &  $k^2$  \\ \hline
DiagPar2 & $C_{Par2} \; dk^5$ & 0\\ \hline
ParPosX   & $C_{ParP}  \;  k^7$   & 0  \\ \hline
PosPol   & $C_{PosP}  \;  dk^5$   & 0 \\ \hline
PosY        & $C_{PosY}  \;  (d^3k^3+k^5)$ & 0 \\ \hline
\end{tabular}
\end{align}
Combining this with the cost of applying algorithm 
NcCoef  described in  \S \ref{sec2:30oct12}, we obtain
\begin{align}\label{eq3:31oct12}
\text{CostAlg2}  &\leq    10 \left( k^7+(2d+1)k^5+dk^4+d^3k^3 \right) 
~~ \text{\ec s} \\
&+ \sum_{i=2}^d \left(\tau_i 4^i-\frac{8^i}{3} \right) ~~ \text{arithmetic operations} +   k^2 ~~\text{Calls to CoefficientList[]},\nonumber
\end{align}
provided $\tau_i > 2^i$ for each $i$.
When $\tau_i  \leq 2^i$ for each $i$, we get
\begin{align}\label{eq1:19nov12}
\text{CostAlg2}  &\leq    10 \left( k^7+(2d+1)k^5+dk^4+d^3k^3 \right) 
~~ \text{\ec s} \\
&+\sum_{i=2}^d \frac{2^{3i+1}}{3}~~ \text{ arithmetic operations} +   k^2 ~~\text{Calls to CoefficientList[]}. \nonumber
\end{align}

\subsubsection{Comparison of Costs}

A comparison of the bounds \eqref{eq2:31oct12},\eqref{eq1:19nov12} and \eqref{eq3:31oct12} 
shows the benefit of our Algorithms.  We first observe
that we need to use the function CoefficientList[] $(2k^2)!$ 
fewer times using Alg.2, which is a huge savings given that the
cost to use CoefficientList[] could potentially be very expensive.  
Even if we neglect the cost of this function, we see that for large $d$ and $k$, 
the cost to form and solve linear systems using
NcCoef dominates both the cost for the Brute Force Method and Alg.2.  
By exploiting the structure of the
polynomials in the given family $\cP$
and performing on the order of
$$
k^7 + 3dk^5+d^3k^3~~ \text{operations},
$$ 
we have effectively reduced the cost from 
solving $(2k^2)!(k^2)!$ such systems to a single system.  
This is a vast improvement.  Also, to rule out the existence of an 
nc representation using the Brute Force method we must 
{\it check all of these cases and verify that they fail.} 
Much to the contrary, Algorithm 2 is likely to determine non existence 
very early when applying it.

\section{Families that may not contain a term of the form $ax_i^n$ with $n>1$}
\label{sec:nov25e11}

Sections~\ref{sec:algPosit}, \ref{sec:singlepos} and
\ref{sec:cost} present algorithms for solving our nc representation problem
when at least one of the given polynomials in $\cP$ contains a term of the form
$ax_i^n$ with $a\in\RR\setminus\{0\}$ and $n>1$. 
This section treats $\cP$
which contain no one
letter monomials 
but which do contain   terms of the form 
$ax_i^sx_j^t$, $a\ne 0$.
Recall  Lemma~\ref{lem:aug24a10} and Lemma~\ref{lem:aug26a10} 
dealt with patterns
two letter monomials in an nc representable $\cP$ must obey. 

The first step in developing these {\bf two letter algorithms} is to determine
the diagonal elements given the existence of two letter monomials.
Lemmas~\ref{lem:aug16a10} and \ref{lem:aug16b10} present conditions under which
the presence of terms that are $\vtr$-equivalent to $x_i^sx_j^t$ allows us to
determine dyslexic diagonal pairs.  The next step is to
partition the $k$ dyslexic pairs
$\{x_{i_1},x_{j_1}\}, \cdots, \{x_{i_k},x_{j_k}\}$, i.e., to determine which
elements
are on the diagonal of $X$ and which are on the diagonal of $Y$.  An
application of Lemma~\ref{lem:aug16d10}
is usually sufficient to partition these diagonal pairs.  

Once the diagonal
variables are determined and partitioned, we can reduce the problem to one that
is manageable for the single variable algorithms by taking derivatives of the polynomials in $\cP$
with respect to the diagonal variables.  This process will be known as the SVR algorithm.  The next section
discusses this reduction.  

We shall not present a cost analysis of our two-letter methods in this section,
since one can see as they unfold that they are clearly far superior to Brute Force. 
For one thing the core of our  two-letter 
procedures are reductions to our single letter algorithms 
(such as Alg. 2 whose cost is vastly less than of Brute Force).

\subsection{Reduction to one letter algorithms: SVR Algorithm}

In this section we develop the SVR (Single Variable Reduction) Algorithm,
which can be used
to reduce a family of polynomials containing terms that are
$\vtr$-equivalent to $x_i^sx_j^t$ to a family of polynomials to which the
single variable algorithms  developed earlier apply.
The next lemma contains the key idea that underlies this algorithm.

\begin{lemma}
\label{diffcor}
If $p_1,\ldots,p_{k^2}$ is a family of polynomials in the $2k^2$
commuting variables $x_1,\ldots,x_{2k^2}$ that admits an nc
representation $p(X,Y)$ of degree $d\ge 2$ such that
$x_{i_r}$ is in
the $ab$ position of $X$
and
$x_{i_1},\ldots,x_{i_k}$ sit on the diagonal of $X$, then:
\begin{enumerate}
\item[\rm(1)]  The polynomials
$$
\frac{\partial p_1}{\partial x_{i_r}},\ldots, \frac{\partial p_{k^2}}
{\partial x_{i_r}}
$$
admit the nc representation
\begin{equation}
\label{eq:aug12a10}
\lim_{t \rightarrow 0} \frac{1}{t} (p(X+tE_{ab},Y) - p(X,Y)).
\end{equation}
\item[\rm(2)] The family of polynomials $q_m(x_1,\ldots,x_{k^2})$,
$m=1,\ldots,k^2$, defined by
the formula
$$
q_m(x_{1}, \cdots , x_{2k^2}) = \sum_{s=1}^{k} \frac{\partial}{\partial x_{i_s}}p_m(x_{1}, \cdots , x_{2k^2}),\quad  m=1,\ldots,k^2,
$$
admits the nc representation
\begin{equation}
\label{eq:aug12b10}
\lim_{t \rightarrow 0} \frac{1}{t} (p(X+tI_k,Y) - p(X,Y)).
\end{equation}
\end{enumerate}
\end{lemma}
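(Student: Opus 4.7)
The plan is to reduce both parts to the single observation that, by the defining property \eqref{eq:oct18h11} of an nc representation, the matrix $p(X,Y)$ is literally the $k\times k$ array whose entry in the position corresponding to $\lambda^{-1}(m)$ equals the commutative polynomial $p_m(x_1,\ldots,x_{2k^2})$, with the variables $x_j$ distributed in $X$ and $Y$ according to the arrangement $\sigma$. All we need to verify is that the matrix-valued directional derivative in \eqref{eq:aug12a10} can be identified entrywise with ordinary partial derivatives of the $p_m$.

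For part (1), I would first note that $X+tE_{ab}$ is the matrix obtained from $X$ by replacing its $ab$ entry, namely $x_{i_r}$, by $x_{i_r}+t$, while leaving every other entry of $X$ and every entry of $Y$ unchanged. Since each monomial $m_{\alpha,\beta}(X,Y)$ is built from products of matrix entries via the rules of matrix multiplication, the substitution $X \mapsto X+tE_{ab}$ is the same as the substitution $x_{i_r}\mapsto x_{i_r}+t$ performed inside the array of commutative polynomials $p(X,Y)$. Consequently, the $\lambda^{-1}(m)$ entry of $p(X+tE_{ab},Y) - p(X,Y)$ equals $p_m(\ldots, x_{i_r}+t, \ldots) - p_m(\ldots, x_{i_r}, \ldots)$, which is a polynomial in $t$ with no constant term. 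Dividing by $t$ and letting $t\to 0$ gives $\partial p_m/\partial x_{i_r}$ in each entry, which is exactly the conclusion. The limit on the left is well-defined as an nc polynomial in $X$ and $Y$ (it is nothing other than the formal directional derivative of $p$ in the direction $E_{ab}$ applied at $X$), although no explicit formula for it is needed here.

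For part (2), I would use linearity. Writing $I_k = E_{11}+E_{22}+\cdots+E_{kk}$, one has
$$
\lim_{t\to 0}\frac{p(X+tI_k,Y)-p(X,Y)}{t} \;=\; \sum_{s=1}^{k}\lim_{t\to 0}\frac{p(X+tE_{ss},Y)-p(X,Y)}{t},
$$
because the difference quotient is linear in the perturbation direction in the limit (this follows from expanding each monomial $m_{\alpha,\beta}(X+tI_k,Y)$ and keeping only the terms that are first order in $t$, which are additive over a sum of directions). Now apply part (1) with $a=b=s$ and with the diagonal variable $x_{i_s}$ in the $ss$ position of $X$: each summand on the right is the nc representation of the family $\partial p_1/\partial x_{i_s},\ldots,\partial p_{k^2}/\partial x_{i_s}$. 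Summing over $s=1,\ldots,k$ gives the representation of the family $q_m = \sum_s \partial p_m/\partial x_{i_s}$, as claimed.

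There is essentially no hard step. The only thing that could trip one up is to confuse formal differentiation of an nc polynomial (which is a more delicate operation) with the entrywise partial differentiation of the associated matrix of commutative polynomials. The lemma is really a compatibility statement between the two, and the bookkeeping above shows that they agree because substituting a perturbed matrix commutes with extracting entries.
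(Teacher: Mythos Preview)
Your proof is correct and follows essentially the same approach as the paper: for part (1) you identify the matrix perturbation $X\mapsto X+tE_{ab}$ with the scalar substitution $x_{i_r}\mapsto x_{i_r}+t$ inside each commutative polynomial $p_m$, exactly as the paper does, and for part (2) you make explicit the ``straightforward calculation'' the paper alludes to by decomposing $I_k=\sum_s E_{ss}$ and invoking linearity of the directional derivative.
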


\begin{proof}
The first assertion follows from the observation that
$$
\lim_{t \rightarrow 0} \frac{1}{t} (p(X+tE_{ab},Y) - p(X,Y))
$$
is equivalent to computing
$$
\lim_{t \rightarrow 0} \frac{p_m(x_{i_1}, \ldots, x_{i_{r-1}} ,x_{i_r}+t,
x_{i_{r+1}},\ldots,x_{i_{2k^2}})-p_m(x_{i_1}, \ldots, x_{i_{r-1}} ,x_{i_r},
x_{i_{r+1}},
\ldots,x_{i_{2k^2}})}{t}
$$
$$
= \frac{\partial}{\partial x_{i_r}}p_{m}(x_{i_1}, \ldots,x_{i_{2k^2}}).
$$
\bigskip

The second assertion follows from the first by a straightforward calculation.
\end{proof}

To ease future applications of Lemma \ref{diffcor} when $x_{i_1},\cdots,x_{i_k}$ are the diagonal elements of $X$, it is convenient to introduce the notation
\begin{equation}
\label{eq:aug31b10}
(T_Xp_m)(x_{1}, \cdots , x_{2k^2}) = \sum_{s=1}^{k} \frac{\partial}
{\partial x_{i_s}}p_m(x_{1}, \cdots , x_{2k^2}),\quad m=1,\ldots,k^2.
\end{equation}

\begin{remark}
\label{rem:aug12a10}
Repeated application of the formulas in Lemma \ref{diffcor} serves to reduce
nc expressions in $X$ and $Y$ to expressions in the single variable $Y$.
Thus for example, if
$p_1,\ldots,p_{k^2}$ is a family of polynomials corresponding to
$$
p(X,Y)=XY^2X^2,
$$
then the family of polynomials $T^3p_m$, $m=1,\ldots,k^2$,
corresponds to the polynomial
$$
3!\,p(I,Y)=3! \,Y^2.
$$
In this way it is possible to eliminate the dependence on the $k^2$ variables in $X$
by differentiating the given family of polynomials just with respect to the $k$ diagonal entries of $X$.
\end{remark}

Let $D_{X,I_k}p(X,Y)$ (resp., $D_{Y,I_k}p(X,Y)$) denote the directional derivative of $p(X,Y)$ with
respect to $X$(resp., $Y$) in the direction of the identity $I_k$.  More generally,
let $D^i_{X,I_k}p(X,Y)$ (resp., $D^i_{Y,I_k}p(X,Y)$) denote the $i$-th directional derivative
of $p(X,Y)$ with respect to $X$ (resp., $Y$) in the direction of $I_k$.
Then Lemma~\ref{diffcor} states that if the variables $x_{i_1},\cdots,x_{i_k}$
are the diagonal elements of $X$ (resp., $Y$), then $D_{X,I_k}p(X,Y)$ (resp., $ D_{Y,I_k}p(X,Y)$)
 is a representation of
the family
$$
g_n = T_Xp_n ~~~(\text{resp.}~ g_n = T_Yp_n),
$$
where $T_X$ (resp., $T_Y$) is the operator defined in (\ref{eq:aug31b10}).

\begin{remark}\label{rem1:aug31:2010}By repeated application of Lemma~\ref{diffcor}, we have that $D^i_{X,I_k}p(X,Y)$
(resp., $D^i_{Y,I_k}p(X,Y)$) is an nc representation of the family defined by
$$
g_n = T_X^i p_n  ~~~ ( \text{resp.,}~ g_n = T_Y^i p_n)
$$
\end{remark}
\smallskip

\begin{theorem}[SVR Algorithm]\label{reduc:aug31:2010}
Let $p_1, \cdots, p_{k^2}$ be a homogeneous family of polynomials with an
nc representation $p(X,Y)$ of degree $i+j$ where
$i \ge 2$ and $j \ge 2$.
Suppose that the $k$ partitioned diagonal pairs
$$
\{x_{i_1},x_{j_1}\}, \cdots, \{x_{i_k},x_{j_k}\}
$$

\noindent
are known and $\varphi(i,j) \ne 0$. Also assume that
\begin{equation}\label{eq1:aug25:2010}
j\varphi(i,j) \ne (i+1)\varphi(i+1,j-1)
 \end{equation}
or
\begin{equation}\label{eq2:aug25:2010}
i\varphi(i,j) \ne (j+1)\varphi(i-1,j+1) .
\end{equation}
Then if \eqref{eq1:aug25:2010} (resp., \eqref{eq2:aug25:2010}) holds
the variables can be partitioned by Algorithm DiagPar1 and then positioned in
$Y$ (resp., $X$)  by Algorithm ParPosX applied to
$D^i_{X,I_k}p(X,Y)$ (resp., $D^i_{Y,I_k}p(X,Y)$).
\end{theorem}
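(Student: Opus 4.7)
Set $q(X,Y) = D^i_{X,I_k}p(X,Y)$. Iterating Lemma~\ref{diffcor} (in the form of Remark~\ref{rem1:aug31:2010}) gives that $q$ is an nc representation of the commutative family $g_m = T_X^i p_m$, $m=1,\ldots,k^2$. Because $p$ is homogeneous of degree $i+j$, the polynomial $q$ is homogeneous of degree $j$.

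The heart of the argument is a coefficient computation for $q$ in terms of those for $p$. The operator $D_{X,I_k}$ commutes with commutative collapse in the following sense: since $D_{X,I_k}p(X,Y)$ evaluated at $X=xI_k,Y=yI_k$ equals $\frac{\partial}{\partial x}\widehat{p}(x,y)\cdot I_k$, one gets $\widehat{q}(x,y) = \partial_x^i\,\widehat{p}(x,y)$. Differentiating $\widehat{p}(x,y)=\sum_{s,t}\varphi(s,t)x^sy^t$ term by term yields
\begin{equation*}
\varphi_q(s,t) = \frac{(s+i)!}{s!}\,\varphi(s+i,t).
\end{equation*}
In particular $\varphi_q(0,j)=i!\,\varphi(i,j)\neq0$ by hypothesis, and $\varphi_q(1,j-1)=(i+1)!\,\varphi(i+1,j-1)$. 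Thus condition \eqref{eq1:aug25:2010}, $j\varphi(i,j)\ne(i+1)\varphi(i+1,j-1)$, is precisely the assertion that
\begin{equation*}
j\,\varphi_q(0,j)\ \neq\ \varphi_q(1,j-1),
\end{equation*}
which is the hypothesis $n\varphi_q(0,n)\ne\varphi_q(1,n-1)$ (with $n=j\ge2$) required by the $Y$-variant of DiagPar1 in Remark~\ref{rem:nov10a11}.

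Since the diagonal pairs are given, I would not invoke the diagonal-detection part of DiagPar1 on $q$; the data that are already known tell us which $k$ diagonal variables sit on the diagonal of $Y$. What remains is to partition the $2k^2-2k$ off-diagonal variables between $X$ and $Y$. The preceding inequality is exactly the separation needed for the $Y$-version of the DiagPar1 analysis (the Case 1 versus Case 2 distinguishability criterion of \S\ref{sec:DiagPar2}, with the roles of $X$ and $Y$ interchanged) applied to the homogeneous family $g_1,\ldots,g_{k^2}$ of degree $j\ge2$. Once the partition is in hand, the $Y$-analogue of Theorem~\ref{thm1:sep9:2010} (ParPosX with $X,Y$ swapped) applies to $q$ and positions the off-diagonal entries of $Y$, using $\varphi_q(0,j)\ne0$ to pick out the $Y^{j-1}$-factor two-letter monomials in the array corresponding to $q$. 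The symmetric argument, with $D^i_{Y,I_k}$ in place of $D^i_{X,I_k}$, handles condition \eqref{eq2:aug25:2010} and yields the $X$-positioning.

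The main obstacle is the coefficient bookkeeping: one must check carefully that $\varphi_q$ inherits the right nonvanishing and distinctness properties from $\varphi$, and that homogeneity of $q$ forces the relevant $n$ in the DiagPar1/ParPosX hypotheses to equal $j$ (which is $\ge2$ by assumption, so DiagPar1 is applicable). Everything else is a translation of the already-proved one-letter results through the derivative $D^i_{X,I_k}$.
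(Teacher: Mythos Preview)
Your proposal is correct and follows essentially the same route as the paper's proof. Both arguments apply $D_{X,I_k}^i$ to obtain a homogeneous family of degree $j$, identify the two relevant coefficients $i!\,\varphi(i,j)$ and $(i+1)!\,\varphi(i+1,j-1)$, observe that \eqref{eq1:aug25:2010} is exactly the DiagPar1 distinguishability condition for this derived family, and then invoke ParPosX; your computation via commutative collapse ($\widehat{q}=\partial_x^i\widehat{p}$, hence $\varphi_q(s,t)=\frac{(s+i)!}{s!}\varphi(s+i,t)$) is a clean packaging of the same coefficient calculation the paper does by decomposing $p=\sum_s q_s$ by $X$-degree. One small slip: the Case~1/Case~2 criterion you cite lives in \S\ref{sec1:11may12} (DiagPar1), not \S\ref{sec:DiagPar2}.
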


\begin{proof}
Since the diagonal pairs are partitioned, we may assume that
$x_{i_1},\ldots,x_{i_k}$ are the diagonal entries of $X$ and
$x_{j_1},\ldots,x_{j_k}$ are the diagonal entries of $Y$. Moreover,
the nc polynomial
$$
p(X,Y)=\sum_{s=0}^{i+j} q_s(X,Y),
$$
where $q_s(X,Y)$ denotes the sum of the terms in $p(X,Y)$ that are of
degree $s$ in $X$ and degree $i+j-s$ in $Y$. Consequently,
\begin{equation}
\label{eq:oct24b11}
\begin{split}
(D_{X,I_k}^i p)(X,Y)&= \sum_{s=0}^{i+j} (D_{X,I_k}^i q_s)(X,Y) \\
&=\varphi(i,j)i! Y^j+(D_{X,I_k}^i q_{i+1})(X,Y)
+\sum_{s=i+2}^{i+j} (D_{X,I_k}^i q_s)(X,Y).
\end{split}
\end{equation}
Only the
first two terms on the right in the second line of (\ref{eq:oct24b11}) will
contribute two letter monomials of the form $ax_{j_s}^{j-1}x_u$. If $X$ is
replaced by $A$, $Y$ is replaced by $B$ and $AB=BA$, then these two terms
will be equal to
$$
i!\,\varphi(i,j)B^j+(i+1)!\,\varphi(i+1,j-1)AB^{j-1}
$$
Therefore, if $j(i!\varphi(i,j)) \ne (i+1)!\varphi(i+1,j-1)$, then we may
apply DiagPar1 to partition the variables. But this is the same as
(\ref{eq1:aug25:2010}).  Moreover, in view of Theorem \ref{thm1:sep9:2010},
Algorithm ParPosX, will then serve to position the variables in $Y$.

Similarly, if (\ref{eq2:aug25:2010}) is in force, then DiagPar1 and ParPosX
applied to $D_{Y,I_k}^j$ will serve to partition the variables and to
position them in $X$.
\end{proof}

\begin{remark}
\label{rem:nov4a11}
Theorem \ref{reduc:aug31:2010} remains valid if (\ref{eq1:aug25:2010})
and (\ref{eq2:aug25:2010}) are replaced by the conditions
$$
\varphi(i,j) \ne (i+1)\varphi(i+1,j-1;X)\quad\text{or}\quad
\varphi(i,j) \ne (i+1)\varphi(X;i+1,j-1).
$$
and
$$\varphi(i,j)\ne (j+1)\varphi(i-1,j+1:Y)\quad\textrm{or}\quad
\varphi(i,j)\ne (j+1)\varphi(Y;i-1,j+1),
$$
respectively, but with DiagPar2 in place of DiagPar1.  (In the first case,
DiagPar2 is applied to $D_{X,I_k}^ip$, in the second, it is applied to
$D_{Y,I_k}^jp$.
\end{remark}

The algorithm outlined in Theorem~\ref{reduc:aug31:2010} will be called the
{\bf SVR} (Single Variable Reduction) Algorithm.

\subsection{Algorithms based on two letter words}
\label{sec:25f11}
In this section we consider algorithms for families of polynomials $\cP$
that contain two letter words that are $\vtr$-equivalent to $x_i^sx_j^t$
with $i\ne j$ and $s\ge t\ge 2$. It is convenient to separately analyze the three
mutually exclusive cases 
$$ s\ge t+2,~~~~ s=t+1,~~ ~~ \text{and} ~~~~  s=t.$$ 
Our approach is to use either Alg.1 or Alg.2 in combination with the SVR algorithm.
Recall that Alg.1  refers to the sequential application of DiagPar1, ParPosX,  PosPol and
PosY, whereas Alg.2 refers to the sequential application of DiagPar2, ParPosX,  PosPol and
PosY.  

\subsubsection{Two letter monomials $\vtr$-equivalent to $x_u^sx_v^t$ with
$s \ge t+2$}\label{sec1:29nov11}
This subsection contains two results that provide nc representations
for families containing two letter monomials with $s \ge t+2 \ge 4$; the first
is based on Alg.1, the second on Alg.2.

Let $NC_{(\ref{eq:nov3a11})}$ denote the class of all nc polynomials
$p(X,Y)$ of degree $d$ such that for some set of integers $s,t$ with
\begin{equation}
\label{eq:nov3a11}
\left\{\begin{split}
s &\ge t+2 \ge 4, \quad
\varphi(s,t) \ne 0,\quad \varphi(s,t) \ne \varphi(t,s)\quad
\textrm{and either}\\
t\varphi(s,t)& \ne (s+1)\varphi(s+1,t-1)\
\textrm{or} \
s\varphi(s,t) \ne (t+1)\varphi(s-1,t+1).
\end{split}\right.
\end{equation}

\begin{proposition}
\label{prop:nov3a11}
Let $p_1,\ldots,p_{k^2}$ be a family $\cP$ of polynomials in $2k^2$ commuting
variables. Then the SVR algorithm coupled with Alg.1 will yield an
nc representation of $p(X,Y)$ of $\cP$ with $p$ in
$NC_{(\ref{eq:nov3a11})}$ if and only if the given family $\cP$ admits
an nc representation in this set.
\end{proposition}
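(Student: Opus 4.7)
The proof follows the template of Theorems \ref{thm:nov6a11} and \ref{thm:nov6b11}, with the two-letter lemmas of Section \ref{sec:nov25b11} providing the diagonal information and the SVR algorithm bridging to the one-letter procedure Alg.1. Both directions parallel the uniqueness/completeness analysis summarized by Propositions \ref{prop1:23jan12} and \ref{prop2:23jan12}.

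For the reverse direction, suppose $\cP$ admits $p \in NC_{(\ref{eq:nov3a11})}$ witnessed by integers $s,t$ with $s \ge t+2 \ge 4$. I first restrict attention to the homogeneous component of degree $s+t$ (valid by Lemma \ref{prop1:16jan12}). Since $\varphi(s,t)\ne 0$, the hypotheses $s\ge t+2$ and $t\ge 2$ of Lemma \ref{lem:aug16a10} are satisfied, so every term $\varphi(s,t)x_i^s x_j^t$ in $\cP$ identifies $\{x_i,x_j\}$ as a dyslexic diagonal pair, and there are exactly $k$ such pairs $\{x_{i_n},x_{j_n}\}$, one for each diagonal position. The condition $\varphi(s,t)\ne\varphi(t,s)$ together with $s>t$ ensures that SE terms of pattern $(s,t)$ are disjoint from those of pattern $(t,s)$, so only $k$ (not $2k$) SE terms of shape $\varphi(s,t)x_u^s x_v^t$ appear in $\cP$; Lemma \ref{lem:aug16d10} then places the high-exponent variables $x_{i_n}$ on the diagonal of $X$ and the low-exponent variables $x_{j_n}$ on the diagonal of $Y$.

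With diagonals known and partitioned, the remaining hypothesis in \eqref{eq:nov3a11} is precisely \eqref{eq1:aug25:2010} or \eqref{eq2:aug25:2010}, so Theorem \ref{reduc:aug31:2010} guarantees that DiagPar1 applied to the single-variable reduction $D^s_{X,I_k}p$ (respectively $D^t_{Y,I_k}p$) partitions the remaining $2k^2-2k$ variables between $X$ and $Y$ and that ParPosX positions them inside one of the two matrices. PosPol then places the polynomials $p_1,\ldots,p_{k^2}$ in the $k\times k$ array, and PosY completes the positioning of the remaining matrix. Finally, NcCoef solves the linear system \eqref{eq:PbbP} for the coefficients of $p$; Proposition \ref{prop1:23jan12} guarantees a solution exists for the matrix pair produced, and Proposition \ref{prop2:23jan12} shows that success is independent of the permutation-equivalent pair chosen. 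For the forward direction I trace the pipeline backwards: successful diagonal identification via Lemmas \ref{lem:aug16a10} and \ref{lem:aug16d10} forces $\varphi(s,t)\ne 0$ and $\varphi(s,t)\ne\varphi(t,s)$, while a successful SVR$+$DiagPar1 step forces one of \eqref{eq1:aug25:2010}--\eqref{eq2:aug25:2010}; hence the output $p$ lies in $NC_{(\ref{eq:nov3a11})}$.

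\emph{Main obstacle.} The delicate step is coupling SVR to Alg.1: one must verify that the positions produced by ParPosX on the reduced single-variable problem lift correctly to positions in the original two-matrix setup, and that the partition identified in the two-letter phase remains compatible with the coefficients surviving the $s$-fold directional differentiation. This is essentially a two-letter analogue of Proposition \ref{prop1:23jan12} and is where the bulk of the verification work lies; the remaining ingredients are direct applications of lemmas already established.
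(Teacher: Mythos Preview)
Your proposal is correct and follows essentially the same approach as the paper's own proof: identify dyslexic diagonal pairs via Lemma \ref{lem:aug16a10}, partition them, then invoke Theorem \ref{reduc:aug31:2010} to reduce to the one-letter pipeline of Alg.1. If anything, you are more careful than the paper, which simply asserts that Lemma \ref{lem:aug16a10} yields \emph{partitioned} diagonal pairs; you correctly separate this into the dyslexic step (Lemma \ref{lem:aug16a10}) followed by the partitioning step (Lemma \ref{lem:aug16d10}) that uses $\varphi(s,t)\ne\varphi(t,s)$.
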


\begin{proof}
In view of the assumptions in the first line of (\ref{eq:nov3a11}),
Lemma~\ref{lem:aug16a10} may be applied to obtain the partitioned diagonal
pairs. The constraints in the second line of (\ref{eq:nov3a11}) then insure
that SVR algorithm coupled with Alg.1 serves to partition and position the
variables and to position the given set of polynomials.  More precisely, if
$t\varphi(s,t) \ne (s+1)\varphi(s+1,t-1)$ the differentiation is with
respect to $X$; if
$s\varphi(s,t) \ne \\
(t+1)\varphi(s-1,t+1)$, then the differentiation is with
respect to $Y$. Moreover, if the differentiation in the SVR algorithm is with
respect to $X$, then the ParPosX Algorithm will position the variables in $Y$
and the PosPol Algorithm will position the polynomials.
On the other hand, if the differentiation in
the SVR algorithm is with respect to $Y$, then the ParPosX Algorithm will
position the variables in $X$.
Finally, if the given family does not admit an nc representation,
then the indicated algorithms cannot produce it.
\end{proof}

Let $NC_{(\ref{eq:nov3b11})}$ denote the class of all nc polynomials
$p(X,Y)$ of degree $d$ such that for some set of integers $s,t$ with
\begin{equation}
\label{eq:nov3b11}
\left\{\begin{split}
s &\ge t+2 \ge 4, \quad
\varphi(s,t) \ne 0,\quad \varphi(s,t) \ne \varphi(t,s)\quad\textrm{and}\\
\varphi(s,t)& \ne (s+1)\varphi(s+1,t-1;X)
 \
\textrm{or} \
\varphi(s,t) \ne (s+1)\varphi(X;s+1,t-1)\\
or& \\
\varphi(s,t)& \ne (t+1)\varphi(s-1,t+1;Y)
 \
\textrm{or} \
\varphi(s,t) \ne (t+1)\varphi(Y;s-1,t+1)\\
\end{split}\right.
\end{equation}

\begin{proposition}
\label{prop:nov3b11}
Let $p_1,\ldots,p_{k^2}$ be a family $\cP$ of polynomials in $2k^2$ commuting
variables. Then the SVR algorithm coupled with Alg.2 will yield an
nc representation of $p(X,Y)$ of $\cP$ with $p$ in
$NC_{(\ref{eq:nov3b11})}$ if and only if the given family $\cP$ admits
an nc representation in this set.
\end{proposition}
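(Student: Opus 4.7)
The plan is to follow the same structure as the proof of Proposition~\ref{prop:nov3a11}, but with Alg.2 (which relies on DiagPar2) in place of Alg.1, and using Remark~\ref{rem:nov4a11} in place of Theorem~\ref{reduc:aug31:2010} to validate the SVR step.

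First, the presence of two letter monomials that are structurally equivalent to $ax_u^sx_v^t$ with $s\ge t+2\ge 4$ and $a=\varphi(s,t)\ne 0$ puts us in the setting of Lemma~\ref{lem:aug16a10}. That lemma produces exactly $k$ dyslexic diagonal pairs $\{x_{i_1},x_{j_1}\},\ldots,\{x_{i_k},x_{j_k}\}$ together with the $k$ diagonal polynomials in which they sit. Since $\varphi(s,t)\ne\varphi(t,s)$, the symmetric pair $ax_u^tx_v^s$ cannot match $ax_u^sx_v^t$, so the $2k$ structurally equivalent terms predicted by Lemma~\ref{lem:aug16d10} cannot collapse to $k$; this allows Lemma~\ref{lem:aug16d10} to partition the dyslexic pairs into diagonal entries of $X$ and diagonal entries of $Y$.

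Next, apply the SVR algorithm: differentiate $p$ directionally with respect to $X$ (or $Y$) in the direction $I_k$ a total of $s$ (or $t$) times, using Lemma~\ref{diffcor} and Remark~\ref{rem1:aug31:2010}. The resulting family has as nc representation $D_{X,I_k}^s p$ (or $D_{Y,I_k}^t p$), and by the same homogeneity argument used in the proof of Theorem~\ref{reduc:aug31:2010}, its lowest nontrivial contributions supply a one letter monomial in the remaining variable with coefficient $s!\,\varphi(s,t)$ and a two letter interaction coming from the neighbors $(s+1,t-1)$ (respectively $(s-1,t+1)$). Invoking Remark~\ref{rem:nov4a11}, the hypothesis
$$
\varphi(s,t)\ne (s+1)\varphi(s+1,t-1;X)\ \text{or}\ \varphi(s,t)\ne (s+1)\varphi(X;s+1,t-1)
$$
(resp.\ the analogous $Y$-conditions) is exactly what DiagPar2 needs in order to distinguish an off-diagonal entry of one matrix from the other within this reduced family. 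Hence DiagPar2, followed by ParPosX, PolyPos, and PosY applied to the reduced family positions the non-differentiated variables inside $Y$ (resp.\ $X$); then the original diagonal partition from step one, together with PolyPos and PosY applied in the original family, locates the remaining entries. Finally NcCoef produces the coefficients.

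The converse direction is the easy one: if our algorithms succeed, then Proposition~\ref{prop1:23jan12} adapted to the two-letter regime forces $p\in NC_{(\ref{eq:nov3b11})}$, since the inequalities in \eqref{eq:nov3b11} are precisely the nondegeneracy conditions needed at each step. The main obstacle I anticipate is bookkeeping in the SVR step: one must carefully verify that after $s$ applications of $D_{X,I_k}$, the coefficient of the relevant $Y^{t-1}X$-style term in the reduced family is indeed a nonzero multiple of $\varphi(s+1,t-1;X)$ (and similarly for the other variants), so that the DiagPar2 criterion $\varphi(s,t)\ne (s+1)\varphi(s+1,t-1;X)$ transfers cleanly to a DiagPar2-applicable inequality for the reduced family. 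This is a routine but delicate combinatorial verification, essentially parallel to the computation already carried out in the proof of Theorem~\ref{reduc:aug31:2010}.
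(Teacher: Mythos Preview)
Your proposal is correct and follows exactly the approach the paper takes: the paper's own proof is a single sentence stating that the argument is similar to that of Proposition~\ref{prop:nov3a11} but with Alg.2 in place of Alg.1, which is precisely what you have fleshed out, invoking Remark~\ref{rem:nov4a11} for the DiagPar2 version of the SVR criterion. Your more detailed write-up is sound and the ``bookkeeping obstacle'' you flag is indeed the routine verification already absorbed by Remark~\ref{rem:nov4a11}.
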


\begin{proof}
The proof is similar to the proof of Proposition \ref{prop:nov3a11},
except that Alg.2 is used in place of Alg.1.
\end{proof}

\subsubsection{Two letter monomials $\vtr$-equivalent to $x_u^sx_v^t$ with
$s = t+1$}\label{sec2:29nov11}
The result for this case is simpler than the case when $s \ge t+2 \ge 4$
because the condition
\begin{align}
s\varphi(s,t) \ne (t+1)\varphi(s-1,t+1)
\end{align}
required for Alg.1 to work after an application of the SVR algorithm  reduces
to
$$
\varphi(t+1,t) \ne \varphi(t,t+1)
$$
when $s=t+1$.  However, we always require that
$\varphi(t+1,t) \ne \varphi(t,t+1)$ so that we can successfully
partition the diagonal variables.  Therefore
our conditions to insure that we can successfully partition the dyslexic
diagonal pairs imply that Alg.1 will always be successful.

Let $NC_{(\ref{eq:nov3c11})}$ denote the class of all nc polynomials
$p(X,Y)$ of degree $d$ such that for some integer $t$ with
\begin{equation}
\label{eq:nov3c11}
\left\{\begin{split}
t &\ge 2, \quad
\varphi(t+1,t) \ne 0,\quad \varphi(t+1,t) \ne \varphi(t,t+1)
\quad\textrm{and}\\
\varphi(t+1,t) &\ne \textrm{the coefficient of $(XY)^tX$ or the coefficient
of $(YX)^tY$.}\\
\end{split}\right.
\end{equation}

\begin{proposition}
\label{prop:nov3c11}
Let $p_1,\ldots,p_{k^2}$ be a family $\cP$ of polynomials in $2k^2$ commuting
variables. Then the SVR algorithm coupled with Alg.1 will yield an
nc representation of $p(X,Y)$ of $\cP$ with $p$ in
$NC_{(\ref{eq:nov3c11})}$ if and only if the given family $\cP$ admits
an nc representation in this set.
\end{proposition}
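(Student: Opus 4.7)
The forward direction is immediate: if the composite procedure (SVR followed by Alg.1) produces an nc representation, then $\cP$ admits one in $NC_{(\ref{eq:nov3c11})}$. For the converse, suppose $\cP$ admits an nc representation $p \in NC_{(\ref{eq:nov3c11})}$. I would mimic the proofs of Propositions \ref{prop:nov3a11} and \ref{prop:nov3b11}, replacing the tools tailored to $s\ge t+2$ by ones appropriate to the marginal case $s=t+1$.

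The first task is to locate and partition the dyslexic diagonal pairs using only the two-letter data of $\cP$. Because $s=t+1$, Lemma \ref{lem:aug16a10} is unavailable, so the natural tool is Lemma \ref{lem:aug16b10}, whose hypothesis demands \emph{exactly} $k$ SE monomials of the form $\varphi(t+1,t)\,x_u^{t+1}x_v^t$. Lemma \ref{lem:aug26a10} catalogues every source of monomials $\vtr$-equivalent to $x_u^{t+1}x_v^t$: a $k$-fold diagonal contribution carrying coefficient $\varphi(t+1,t)$, a mirror diagonal contribution with coefficient $\varphi(t,t+1)$, and off-diagonal contributions whose coefficients are those of the four monomials $(XY)^tX$, $(YX)^tY$, $X^{2t+1}$, $Y^{2t+1}$. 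The hypothesis $\varphi(t+1,t)\neq\varphi(t,t+1)$ excludes the mirror diagonal family from the SE count, and the hypothesis that $\varphi(t+1,t)$ differs from the coefficient of $(XY)^tX$ or of $(YX)^tY$ excludes the contaminating off-diagonal families, leaving exactly $k$ SE monomials. Lemma \ref{lem:aug16b10} then delivers $k$ dyslexic pairs, and since $s=t+1>t$ Lemma \ref{lem:aug16d10} partitions them: variables of exponent $t+1$ become the diagonal of one matrix and those of exponent $t$ the diagonal of the other.

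With the diagonal variables of $X$ and $Y$ in hand, I would apply SVR in the $Y$-direction, i.e.\ form $D^{t}_{Y,I_k}p$. For $(i,j)=(t+1,t)$, condition \eqref{eq2:aug25:2010} of Theorem \ref{reduc:aug31:2010} reads $(t+1)\varphi(t+1,t)\neq(t+1)\varphi(t,t+1)$, which collapses to $\varphi(t+1,t)\neq\varphi(t,t+1)$ and is automatic under our hypotheses. Hence Theorem \ref{reduc:aug31:2010} guarantees that DiagPar1 followed by ParPosX, applied to the differentiated family, partitions the remaining variables and positions the entries of $X$. PosPol then positions the polynomials in $\cP$, PosY positions the entries of $Y$ (the needed premise $\varphi(s,t)\neq 0$ is supplied by $s=t+1$, $\varphi(t+1,t)\neq 0$), and NcCoef fills in the coefficients to produce the representation. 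If $\cP$ admits no representation in $NC_{(\ref{eq:nov3c11})}$, each of these steps either terminates in failure or returns an inconsistent linear system at NcCoef, so the combined procedure cannot falsely report success.

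The main obstacle lies in the opening step: one has to verify carefully that the hypotheses of \eqref{eq:nov3c11} really force the SE count down to $k$ in every case allowed by Lemma \ref{lem:aug26a10}. In particular, the "or" in the hypothesis on $(XY)^tX$ and $(YX)^tY$ must be shown sufficient by splitting on which of the two inequalities holds and tracking how the asymmetry between the $X$- and $Y$-centered off-diagonal contributions interacts with the diagonal contribution of coefficient $\varphi(t+1,t)$. Once this combinatorial bookkeeping is in place, the rest of the argument is a direct application of the machinery already developed in the paper.
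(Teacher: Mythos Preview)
Your approach mirrors the paper's proof almost exactly: invoke Lemma~\ref{lem:aug16b10} (via the enumeration in Lemma~\ref{lem:aug26a10}) to find the diagonal pairs, partition them by Lemma~\ref{lem:aug16d10}, then differentiate in the $Y$-direction and observe that condition \eqref{eq2:aug25:2010} collapses to $\varphi(t+1,t)\neq\varphi(t,t+1)$, after which Alg.1 runs through to completion.

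The one discrepancy is your reading of the clause ``$\varphi(t+1,t)\neq$ the coefficient of $(XY)^tX$ or the coefficient of $(YX)^tY$'' in \eqref{eq:nov3c11}. You parse this as a disjunction (at least one inequality holds) and then flag the resulting case-split as the main obstacle. The paper reads it as ``not equal to either,'' i.e.\ $\varphi(t+1,t)\neq f_1$ \emph{and} $\varphi(t+1,t)\neq f_2$; its proof explicitly lists both inequalities as hypotheses. With that reading, and recalling that Section~\ref{sec:nov25e11} presupposes no one-letter monomials (so $f_3=f_4=0$ in Lemma~\ref{lem:aug26a10}), the SE count is forced to exactly $k$ with no further bookkeeping, and the difficulty you anticipated disappears. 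Indeed, your disjunctive reading would not suffice: if, say, $f_2=\varphi(t+1,t)\neq f_1$, Lemma~\ref{lem:aug26a10} produces an additional $k^2-k$ off-diagonal monomials SE to $\varphi(t+1,t)x_u^{t+1}x_v^t$, and Lemma~\ref{lem:aug16b10} fails.
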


\begin{proof}
The assumption that $\varphi(t+1,t) \ne 0$, $\varphi(t+1,t) \ne f_1$,
$\varphi(t+1,t) \ne f_2$, and \\
$\varphi(t+1,t) \ne \varphi(t,t+1)$
allow Lemma~\ref{lem:aug16b10} to determine the partitioned diagonal pairs.
The SVR Algorithm
can then be employed.  The above discussion implies that if we differentiate with respect to
$Y$ in the SVR Algorithm that the DiagPar1 Algorithm will successfully partition
the remaining variables between $X$ and $Y$.  Furthermore,
the ParPosX Algorithm will position $X$ and once $X$ is
determined, Algorithm PosPol
positions the polynomials in the reduced family, which positions the
polynomials in $p(X,Y)$. Once the polynomials are
positioned, then Algorithm PosY positions $Y$ and Algorithm NcCoef
will successfully determine $p(X,Y)$.
\end{proof}

\begin{remark}
We do not write out an analogous result based on Alg.2 because
in order to partition the diagonal elements in Alg.2 we require
that $\varphi(t+1,t) \ne \varphi(t,t+1)$, which insures that
Alg.1 will be successful.
\end{remark}

\subsubsection{Two letter monomials $\vtr$-equivalent to $x_u^sx_v^t$ with
$s  = t$}\label{sec3:29nov11}
The preceding two cases dealt with families of polynomials containing
two-letter monomials $\vtr$-equivalent to $x_i^sx_j^t$ with $s>t \ge 2$.
In those cases Lemma~\ref{lem:aug16a10} or Lemma~\ref{lem:aug16b10} was
applied first to determine the dyslexic diagonal pairs.  Then
Lemma~\ref{lem:aug16d10} was applied to partition
the diagonal elements.  However, if the only two-letter monomials are
$\vtr$-equivalent to $x_i^sx_j^t$ with  $s=t$, then  Lemma~\ref{lem:aug16d10}
is not applicable.
This section is devoted to developing an algorithm
for partitioning the diagonal entries and determining an nc
representation in this particular case.  The
main result of this subsection is Theorem \ref{thm:15aug11}. It is
convenient, however, to first establish a preliminary lemma:
\bigskip

\begin{lemma}\label{lem1:21sep10}
Let $p_1,\cdots,p_{k^2}$ be a family $\cP$ of polynomials in $2k^2$
commuting variables $x_1,\cdots , x_{2k^2}$
with an nc representation $p(X,Y)$.  Let $x_i$
and $x_j$ be a dyslexic diagonal pair and let $s_1,t_1$ and $s_2,t_2$ be
pairs of positive integers such that $s_1+t_1 = s_2 + t_2$
and either $s_1 = s_2$ or $s_1+1= s_2$ and suppose that $p(X,Y)$ satisfies the
following conditions when $s+t=s_1+t_1$:
\begin{itemize}
\item[(1)] If $s>0$, then
$\varphi(s,t+1;X) \ne 0$ if and only if $\varphi(s,t+1;Y) \ne 0$.
\vspace{2mm}
\item[(2)] If $s>0$, then
$\varphi(X;s,t+1) \ne 0$ if and only if $\varphi(Y;s,t+1) \ne 0$.
\vspace{2mm}
\item[(3)] $\varphi(0,s+t+1)= \varphi(s+t+1,0) = 0$.
\end{itemize}
Suppose further that $\alpha x_i^{s_1}x_j^{t_1}x_u$ and
$\beta x_i^{s_2}x_j^{t_2}x_v$ appear in some polynomial $p_r \in \cP$
where $\alpha \ne 0$ and $\beta \ne 0$ and let
$\ell$ and $m$ be the largest integers such that
$x_i^{s_1 - \ell}x_j^{t_1+\ell}x_u$ and $x_i^{s_2-m}x_j^{t_2+m}x_v$
appear in $p_r$. Then
$$
\ell \ge m \Longrightarrow x_u \in L(x_i) \quad and \quad  x_v \in L(x_j)
$$
and
$$
\ell < m \Longrightarrow x_v \in L(x_i) \quad and \quad  x_u \in L(x_j).
$$

\end{lemma}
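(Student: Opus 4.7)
The plan is to convert the hypotheses into a careful accounting of coefficients and observe that under conditions (1)--(3) the four possible ``dyslexic'' configurations produce four different relations between $\ell$ and $m$ that unambiguously decode into placement information.

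First I would fix notation: without loss of generality assume we are in the sub-case where $x_i$ is the $(a,a)$ entry of $X$ and $x_j$ is the $(a,a)$ entry of $Y$; the opposite sub-case is entirely symmetric. Since $\alpha x_i^{s_1}x_j^{t_1}x_u$ has nonzero coefficient in $p_r$ and the contributions of $x_i,x_j$ are multiples of $E_{aa}$, the identities $E_{aa}E_{cd}=\delta_{ac}E_{ad}$ and $E_{cd}E_{aa}=\delta_{ad}E_{ca}$ force $p_r$ to sit at an off-diagonal position $(a,b)$ or $(c,a)$, and force $x_u$ (resp.\ $x_v$) to sit at that same off-diagonal position in either $X$ or $Y$. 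I will carry out the row case $p_r=p_{ab}$ in detail; the column case is identical after replacing $\varphi(\cdot,\cdot;X),\varphi(\cdot,\cdot;Y)$ by $\varphi(X;\cdot,\cdot),\varphi(Y;\cdot,\cdot)$ and using condition (2) in place of (1). Because $x_u\ne x_v$ and the only slots at position $(a,b)$ are the $X$-entry and the $Y$-entry, we must have $\{x_u,x_v\}=\{X_{ab},Y_{ab}\}$, so one lies in $L(x_i)$ and the other in $L(x_j)$.

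Next I would compute the coefficient of $x_i^{s_1-\ell}x_j^{t_1+\ell}x_u$ in $p_r$ in each placement: if $x_u\in L(x_i)=X$, then $x_u$ must appear as the last letter of an $X$-ending nc monomial, giving coefficient $\varphi(s_1-\ell+1,t_1+\ell;X)$, while if $x_u\in L(x_j)=Y$ the coefficient is $\varphi(s_1-\ell,t_1+\ell+1;Y)$. The key observation is that at shift $\ell$ for the first placement and shift $\ell-1$ for the second, the two $\varphi$'s share the argument $(s_1-\ell+1,t_1+\ell)$ and differ only in the ending letter, so condition (1) makes them simultaneously zero or nonzero. Condition (3) eliminates the only potential exception: the endpoint $\ell=s_1$ in the $L(x_j)$ placement would require $\varphi(0,s_1+t_1+1;Y)\ne 0$, which is forbidden. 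Consequently the maximum valid shift in the $L(x_i)$ placement exceeds that in the $L(x_j)$ placement by exactly $1$, say $\ell^\ast_X$ versus $\ell^\ast_X-1$.

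Applying the same shift computation to $x_v$ and combining, one gets: in case (a) ($s_1=s_2,\,t_1=t_2$) the max $m$ for $L(x_i)$ equals $\ell^\ast_X$ and for $L(x_j)$ equals $\ell^\ast_X-1$; in case (b) ($s_2=s_1+1,\,t_2=t_1-1$), after re-indexing the shift variable one finds $\ell^\ast_X+1$ and $\ell^\ast_X$ respectively. Reading off the four configurations: in case (a), $x_u\in L(x_i),x_v\in L(x_j)$ yields $\ell=m+1$ while the swap yields $\ell=m-1$; in case (b), the former yields $\ell=m$ while the swap yields $\ell=m-2$. Hence $\ell\ge m$ singles out $x_u\in L(x_i),x_v\in L(x_j)$ in every case, and $\ell<m$ forces the reverse.

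The main obstacle is not the individual coefficient computation but the bookkeeping across four parameter axes (row vs.\ column, the two sub-cases of the dyslexic assignment, case (a) vs.\ case (b), and which of $u,v$ sits in which neighborhood). Once the coefficient formulas and the shift-by-one argument are set down, checking that conditions (1), (2) pair the endings symmetrically and that (3) caps off the would-be endpoint uniformly is a matter of matching indices.
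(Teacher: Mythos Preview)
Your proposal is correct and rests on the same mechanism as the paper's proof: condition~(1) (resp.~(2)) forces the $X$-ending and $Y$-ending coefficients at a given bi-degree to vanish simultaneously, which makes the maximal shift for the $L(x_i)$ placement exceed that for the $L(x_j)$ placement by exactly one, with condition~(3) disposing of the boundary case $s=0$.

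The only real difference is packaging. The paper argues by contradiction: it assumes $\ell\ge m$ but $x_u\in L(x_j)$, reads off that $\varphi(s_1-\ell,t_1+\ell+1;Y)\ne0$, invokes (1) to get $\varphi(s_1-\ell,t_1+\ell+1;X)\ne0$, and observes that this produces a monomial $x_i^{s_1-\ell-1}x_j^{t_1+\ell+1}x_v$ in $p_r$, forcing $m\ge\ell+1$ (or $\ell+2$ in the $s_2=s_1+1$ case). You instead compute $\ell-m$ outright in each of the four configurations and obtain the values $+1,-1,0,-2$. Your route is more explicit and yields a sharper statement (the exact gap, not just its sign), at the cost of tracking all four cases simultaneously; the paper's contradiction is terser but gives only the inequality. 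Either way the substance is identical.
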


\begin{proof} There are two steps:
\bigskip

\noindent
{\bf 1.} $\ell\ge m\Longrightarrow x_u\in L(x_{i})$:
If $\ell\ge m$ and $x_u\not\in L(x_{i})$,
then $x_u$ must belong to $L(x_{j})$, i.e., either $x_u\in R(x_{j})$ or
$x_u\in C(x_{j})$. But if $x_u\in R(x_{j})$ and $x_{i}$ and $x_{j}$
are in the $ss$ position of $X$ and $Y$, respectively, then  $x_u$ is in the
$st$ position of $Y$ and $x_v$ is in the $st$ position of
$X$ (as $\alpha x_{i}^{s_1}x_{j}^{t_1}x_u$ and
$\beta x_{i}^{s_2}x_{j}^{t_2}x_v$ are in the same polynomial $p_r$).
Therefore, $\varphi(s_1-\ell,t_1+\ell+1;Y)\ne 0$. If $s_1=\ell$, then this
contradicts (3) and therefore is not a viable possibility. If $s_1>\ell$,
then $\varphi(s_1-\ell,t_1+\ell+1;Y)\ne 0$ and, (1) implies that
$\delta= \varphi(s_1 - \ell,t_1+ \ell+1;X) \ne 0$. Thus,
$\delta x_{i}^{s_1-(\ell+1)}x_{j}^{t_1+\ell+1}x_v$  belongs to $p_r$, i.e.,
\begin{equation*}
\begin{split}
s_1=s_2&\Longrightarrow
\delta x_{i}^{s_2-(\ell+1)}x_{j}^{t_2+\ell+1}x_v\quad
\textrm{belongs to $p_r$}\\
s_1=s_2-1&\Longrightarrow
\delta x_{i}^{s_2-(\ell+2)}x_{j}^{t_2+\ell+2}x_v\quad\textrm{belongs to $p_r$}
\end{split}
\end{equation*}
However, the definition of $m$ implies that $\ell+1\le m$ in the first case
and $\ell+2\le m$ in the second, both of which clearly contradict the
assumption that $\ell \ge m$. Therefore,
$x_u\not\in R(x_{j})$.

A similar argument based on (2)
serves to prove that $x_u\not\in C(x_{j})$. Therefore, $x_u\in L(x_{i})$
as claimed. This completes the proof of {\bf 1}.
\bigskip

\noindent
{\bf 2.} $m>\ell\Longrightarrow x_v\in L(x_{i})$: If $m>\ell$ and
$x_v\not\in L(x_i)$, then $x_v\in L(x_j)$. Suppose that in fact
$x_v\in R(x_j)$. Then $\varphi(s_2-m,t_2+m+1;Y)\ne 0$.
If $s_2=m$, then this contradicts (3); if $s_2>m$, then
$\varphi(s_2-m,t_2+m+1;Y)\ne 0$ and, by (1),
$\gamma = \varphi(s_2-m,t_2+m+1;X)\ne 0$ and hence that the monomial
$\gamma x_i^{s_2-m-1}x_j^{t_2+m+1}x_u$ is in this polynomial, i.e.,
\begin{equation*}
\begin{split}
s_1=s_2&\Longrightarrow
\gamma x_{i}^{s_1-(m+1)}x_{j}^{t_1+m+1}x_u\quad
\textrm{belongs to $p_r$}\\
s_1=s_2-1&\Longrightarrow
\gamma x_{i}^{s_1-m}x_{j}^{t_1+m}x_u\quad\textrm{belongs to $p_r$}
\end{split}
\end{equation*}
Therefore, the definition of $\ell$ implies that $m+1\le\ell$ in the first
case and $m\le\ell$ in the second, which clearly contradicts the
assumption that $\ell<m$.
Consequently, $x_v\in L(x_i)$ as claimed. A similar argument rules out the
case  $x_v\in C(x_j)$.
\bigskip

\end{proof}

Let $NC_{(\ref{eq:nov17a11})}$ denote the class of nc
polynomials $p(X,Y)$ of degree $d\ge 4$ such that
\begin{equation}\label{eq:nov17a11}
\left\{\begin{split}
r &\ge 2, \quad
\varphi(r,r) \ne 0,\quad \varphi(2r,0)= \varphi(0,2r) = 0
\quad\textrm{and}\\
 & \textrm{if $s>0$, $t>0$ and $s+t=2r$, then}\hspace{2mm} \varphi(s,t:X)\ne 0
\Longleftrightarrow \varphi(s,t:Y)\ne 0,\\
 & \textrm{if $s>0$, $t>0$ and $s+t=2r$, then} \hspace{2mm} \varphi(X;s,t)\ne 0
\Longleftrightarrow \varphi(Y;s,t)\ne 0.
\end{split}\right.
\end{equation}

\begin{theorem}\label{thm:15aug11}
Let $p_1,\cdots,p_{k^2}$ be a family $\cP$ of polynomials in $2k^2$
commuting variables $x_1,\cdots,x_{2k^2}$ of degree $d\ge 4$. Then Lemma
\ref{lem1:21sep10}, the SVR Algorithm and either Alg.1 or Alg.2 will yield an nc representation
$p(X,Y)$ with $p$ in the class $NC_{(\ref{eq:nov17a11})}$ if and only if the
given family admits an nc representation in this class.
\end{theorem}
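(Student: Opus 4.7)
The plan is to treat the degenerate case $s=t=r$ in four stages, combining Lemma~\ref{lem:aug16b10}, Lemma~\ref{lem1:21sep10}, the SVR algorithm (Theorem~\ref{reduc:aug31:2010}) and either Alg.1 or Alg.2.

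\emph{Stage 1: identification of dyslexic diagonal pairs.} Because $\varphi(r,r)\ne 0$ and $\varphi(2r,0)=\varphi(0,2r)=0$, Lemma~\ref{lem:aug24a10} applied with $t=r$ rules out any one letter monomials of degree $2r$ in $\cP$ and delivers exactly $k$ two letter monomials structurally equivalent to $\varphi(r,r)x_u^r x_v^r$ built from diagonal entries, one in each diagonal polynomial. Assuming generically that $\varphi(r,r)$ differs from the coefficients $e_1,e_2$ of $(XY)^r$ and $(YX)^r$, so that the off-diagonal companions in Lemma~\ref{lem:aug24a10} carry a different coefficient, Lemma~\ref{lem:aug16b10} produces the $k$ dyslexic pairs $\{x_{i_1},x_{j_1}\},\ldots,\{x_{i_k},x_{j_k}\}$ together with the diagonal polynomial in which each pair sits.

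\emph{Stage 2: partitioning the pairs between $X$ and $Y$.} This is the step at which Lemma~\ref{lem:aug16d10} is unavailable (it required $s>t$) and where Lemma~\ref{lem1:21sep10} is engineered to intervene: its three hypotheses are precisely $\varphi(s,t;X)\ne 0\Leftrightarrow\varphi(s,t;Y)\ne 0$, $\varphi(X;s,t)\ne 0\Leftrightarrow\varphi(Y;s,t)\ne 0$, and $\varphi(2r,0)=\varphi(0,2r)=0$, all of which are the symmetry conditions imposed by $NC_{(\ref{eq:nov17a11})}$. I fix an arbitrary assignment for the first pair, placing $x_{i_1}$ in $X$ and $x_{j_1}$ in $Y$; this exhausts the freedom to interchange $X$ and $Y$. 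For each remaining pair $(x_{i_s},x_{j_s})$ I locate an off-diagonal polynomial in $\cP$ containing comparison monomials $\alpha x_{i_1}^{s_1}x_{j_1}^{t_1}x_u$ and $\beta x_{i_1}^{s_2}x_{j_1}^{t_2}x_v$ with $\{x_u,x_v\}\subseteq\{x_{i_s},x_{j_s}\}$, and invoke Lemma~\ref{lem1:21sep10} to decide whether $x_{i_s}$ or $x_{j_s}$ lies in $L(x_{i_1})$ and hence in $X$. Iterating over $s=2,\ldots,k$ partitions all dyslexic pairs consistently with the initial choice.

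\emph{Stage 3: SVR reduction and completion.} With the diagonal pairs partitioned, apply the operator $T_X$ of \eqref{eq:aug31b10} $r$ times to obtain a family with nc representation $(D^r_{X,I_k}p)(X,Y)$; by Remark~\ref{rem:aug12a10} the degree $r$ piece of this reduced polynomial contains the one letter monomial $r!\,\varphi(r,r)Y^r$ with nonzero coefficient, so the reduced family lies in the regime of Alg.1 or Alg.2. Under the strict inequalities of Theorem~\ref{reduc:aug31:2010} or Remark~\ref{rem:nov4a11}, together with the coefficient conditions of Theorems~\ref{thm:nov6a11} and \ref{thm:nov6b11} (all generic), one of those two algorithms produces an nc representation of the reduced family. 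Combining this output with the diagonal data and the $L$-sets generated in Stages 1--2, ParPosX, PosPol, PosY and NcCoef then reconstruct $X$, $Y$ and an nc polynomial $p(X,Y)$ for the original $\cP$. The converse direction is immediate, since each test performed by these procedures inspects only features that every $p\in NC_{(\ref{eq:nov17a11})}$ is forced to exhibit.

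\emph{Main obstacle.} Every other stage is a direct application of an earlier lemma or algorithm; the delicate point is Stage~2. In the complete absence of one letter monomials the usual pair-partitioning lemma fails, and the $X/Y$ split of each dyslexic pair has to be extracted from subtle higher-order information encoded in the monomials $\alpha x_i^{s_1}x_j^{t_1}x_u$ and $\beta x_i^{s_2}x_j^{t_2}x_v$. This is exactly what the tailor-made hypotheses of Lemma~\ref{lem1:21sep10}, and the matching symmetry conditions built into $NC_{(\ref{eq:nov17a11})}$, are designed to supply.
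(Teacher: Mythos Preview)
Your Stage~2 contains a genuine gap. You propose to apply Lemma~\ref{lem1:21sep10} with $\{x_u,x_v\}=\{x_{i_s},x_{j_s}\}$, the members of another dyslexic diagonal pair, in order to decide which one lies in $L(x_{i_1})$. But neither of them does: $x_{i_s}$ and $x_{j_s}$ sit in the $ss$ positions of $X$ and $Y$, so for $s\ne 1$ they are not in row~1 or column~1 of either matrix and hence lie in neither $L(x_{i_1})$ nor $L(x_{j_1})$. Worse, the comparison monomials you need do not exist: a three\nobreakdash-letter term $x_{i_1}^{a}x_{j_1}^{b}x_{i_s}$ would require a path in the matrix product that uses only the $(1,1)$ entries of $X,Y$ and the $(s,s)$ entry of $X$, and since $E_{11}E_{ss}=0$ no such term ever appears in any $p_r$. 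So Lemma~\ref{lem1:21sep10} simply cannot be invoked in the way you describe, and your partitioning of the diagonal pairs never gets off the ground.

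The paper's route is different in exactly this respect. It applies Lemma~\ref{lem1:21sep10} with $x_u,x_v$ taken to be \emph{off\nobreakdash-diagonal} variables, namely the two entries sitting in position $st$ of $X$ and of $Y$, by reading the three\nobreakdash-letter monomials $x_{i_s}^{r-1}x_{j_s}^{r}x_u$, $x_{i_s}^{r}x_{j_s}^{r-1}x_v$, etc.\ in the off\nobreakdash-diagonal polynomial $p_{st}$ (see \eqref{eq:nov10c11}). This sorts the off\nobreakdash-diagonal variables into the sets $L(x_{i_s})$ and $L(x_{j_s})$ for every dyslexic pair. Only then is the $X/Y$ split of the diagonal pairs propagated, and it is done indirectly: having fixed $x_{i_1}\in X$, one recognises $x_{i_t}$ among $\{x_{i_t},x_{j_t}\}$ as the element whose $L$\nobreakdash-set intersects $L(x_{i_1})$ nontrivially (the common elements being the $1t$ and $t1$ entries of $X$). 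In other words, Lemma~\ref{lem1:21sep10} partitions the off\nobreakdash-diagonal variables first, and the diagonal partition follows from the combinatorics of the $L$\nobreakdash-sets, not the other way around. Your Stage~3 would then proceed essentially as you wrote it, but only after this corrected Stage~2; note also that the full partition obtained this way lets you skip DiagPar entirely and go straight to ParPosX on the SVR\nobreakdash-reduced family, so you need not appeal to extra ``generic'' inequalities beyond those already in $NC_{(\ref{eq:nov17a11})}$.
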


\begin{proof}
Under the given assumptions Lemma~\ref{lem:aug24a10} may be applied to
obtain the dyslexic diagonal pairs
$$
\{x_{i_1},x_{j_1}\},\ldots, \{x_{i_k},x_{j_k}\}.
$$
To ease the notation, we assume that $x_{i_s}$ and $x_{j_s}$ are in
the $ss$ position for $s=1,\ldots,k$ and that one of these pairs is
partitioned, i.e., for some fixed choice of $s$, $x_{i_s}\in X$ and
$x_{j_s}\in Y$.

The objective is to determine $L(x_{i_s})$ and $L(x_{j_s})$ for each dyslexic diagonal pair.
Then for $s$ fixed, there must exist at least $k-1$ other dyslexic diagonal variables $\{ x_{i_1}, \cdots, x_{i_{s-1}},x_{i_{s+1}}, \cdots, x_{i_k}\}$
which have the property that
\begin{align}\label{eq1:17apr12}
L(x_{i_t}) \cap L(x_{i_s}) \ne \emptyset \hspace{2mm} \text{for} \hspace{2mm} t \ne s.
\end{align}
Equation \eqref{eq1:17apr12} implies that
the variables $\{x_{i_1}, \cdots, x_{i_{s-1}},x_{i_{s+1}}, \cdots, x_{i_k}\}$
lie on the diagonal of $X$ with $x_{i_s}$ and that the union of the $L(x_{i_t})$ contains all of the
variables in $X$.  Therefore, this process
serves to partition the variables between $X$ and $Y$.   Once this is done, the
SVR Algorithm and the final steps in Alg.1 or Alg.2 beginning with Algorithm ParPosX
will determine an nc representation for $\cP$.

Let $p_{st}$ denote the polynomial in the $st$ position with $s\ne t$ in
the array corresponding to $p(X,Y)$.
Then $p_{st}$, will be of the form
\begin{equation}
\label{eq:nov10c11}
\begin{split}
p_{st}=&x_{i_s}^{r-1}x_{j_s}^r(ax_u+cx_v)+x_{i_t}^{r-1}x_{j_t}^r(bx_u+dx_v)\\
&\quad +x_{i_s}^rx_{j_s}^{r-1}(gx_u+ex_v)
+x_{i_t}^rx_{j_t}^{r-1}(hx_u+fx_v)+\cdots.
\end{split}
\end{equation}
Given that $\varphi(r,r) \ne 0$, we must have that $\varphi(r,r;X) \ne 0$ and $\varphi(r,r;Y) \ne 0$.
and that one of the following cases must
hold:
\bigskip

{\bf 1.} $a \ne 0, e\ne 0$ and $c = g = 0$,
\smallskip

{\bf 2.} $a = 0, e = 0$, and $c\ne 0, g \ne 0$,
\smallskip

{\bf 3.} $a \ne 0, e \ne 0$ and $c \ne 0, g \ne 0$.
\bigskip

\noindent
Similarly, the assumption that $\varphi(r,r) \ne 0$ implies that $\varphi(X;r,r) \ne 0$ and
$\varphi(Y;r,r) \ne 0$ implies that one the following must also hold:
\bigskip

{\bf 4.} $b \ne 0, f\ne 0$ and $d = h = 0$,
\smallskip

{\bf 5.} $b = 0, f = 0$, and $d\ne 0, h \ne 0$,
\smallskip

{\bf 6.} $b \ne 0, f \ne 0$ and $d \ne 0, h \ne 0$.
\bigskip

\noindent
If case {\bf 1} holds, we have that $a = \varphi(r,r;X)$ and $e = \varphi(r,r;Y)$, and
consequently, that $x_u \in R(x_{i_s})$ and $x_v \in R(x_{j_s})$.  If case {\bf 2} holds
we may conclude that $c = \varphi(r,r;X)$ and $g = \varphi(r,r;Y)$ and that
$x_v \in R(x_{i_s})$ and $x_u \in R(x_{j_s})$.  Finally, if case {\bf 3} holds, we must
resort to more subtle measures to partition $x_u$ and $x_v$.  Here we can apply
Lemma~\ref{lem1:21sep10} with $s_1 = s_2 =r-1$ and $t_1 = t_2 =r$ to partition
$x_u$ and $x_v$ between $R(x_{i_s})$ and $R(x_{j_s})$.  Therefore, by
ranging over $t$ in the polynomials $p_{st}$ for a fixed $s$, we will be able to entirely determine both $R(x_{i_s})$ and
$R(x_{j_s})$ using the above analysis.

We may similarly analyze cases {\bf 4} - {\bf 6} and range over $s$ in the polynomials $p_{st}$ for
a fixed $t$ to determine $C(x_{i_t})$ and
$R(x_{j_t})$.  Thus, by ranging over all polynomials in $\cP$, we will be able to determine
$L(x_{i_s})$ and $L(x_{j_s})$ for each dyslexic pair $x_{i_s}$ and $x_{j_s}$ with $1 \le s \le k$.
By determining which of these sets have a nontrivial intersection, we will obtain a partitioning of the
variables $x_1, \cdots, x_{2k^2}$ between $X$ and $Y$.

\end{proof}


\begin{remark}
\label{rem:aug13a10}
The indicated terms in $p_{st}$ in the first part of the preceding proof
can also be grouped as
\begin{eqnarray*}
p_{st}&=&x_u(ax_{i_s}^{r-1}x_{j_s}^r+bx_{i_t}^{r-1}x_{j_t}^r
+gx_{i_s}^rx_{j_s}^{r-1}+hx_{i_t}^rx_{j_t}^{r-1})\\ &{}&
+x_v(cx_{i_s}^{r-1}x_{j_s}^r+dx_{i_t}^{r-1}x_{j_t}^r+ex_{i_s}^rx_{j_s}^{r-1}+
fx_{i_t}^rx_{j_t}^{r-1})+\cdots .
\end{eqnarray*}
\end{remark}

\subsubsection{ Two letter monomials $\vtr$-equivalent to $x_u^sx_v$}

The next result presents another way of determining an nc polynomial
representation for families of polynomials containing two letter monomials of
the form $ax_u^sx_v$ with $s \ge 2$ and $a\in\RR\setminus\{0\}$.

Let $NC_{(\ref{eq:dec9a11})}$ denote the class of nc
polynomials of degree $d \ge 4$ such that

\begin{equation}
\label{eq:dec9a11}
\left\{\begin{split}
&\varphi(d-1,1) \ne \varphi(1,d-1), \quad \\
&\varphi(d-1,1) \ne \varphi(d,0) \quad \text{if}\quad  \varphi(d-1,1) \ne 0,\\
&\varphi(1,d-1) \ne \varphi(0,d) \quad \text{if} \quad \varphi(1,d-1)  \ne 0,\\
&\varphi(Y;d-1,1) \ne 0, \quad  \varphi(d-1,1;Y) \ne 0,\\
&\varphi(X;1,d-1) \ne 0, \quad \varphi(1,d-1;X) \ne 0.
\end{split}\right.
\end{equation}

\begin{proposition}\label{prop2:30nov11}
Let $p_1,\ldots,p_{k^2}$ be a family $\cP$ of polynomials in $2k^2$ commuting
variables. Then the SVR algorithm will yield an nc
representation of $p(X,Y)$ of $\cP$ with $p$ in
$NC_{(\ref{eq:dec9a11})}$ if and only if the given family $\cP$ admits
an nc representation in this set.
\end{proposition}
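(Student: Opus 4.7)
The plan is to parallel the proofs of Propositions~\ref{prop:nov3a11}--\ref{prop:nov3c11}, with two-letter monomials of the form $ax_u^{d-1}x_v$ playing the role that the lower-degree two-letter monomials played there. First I would observe that the four conditions $\varphi(Y;d-1,1)\ne 0$, $\varphi(d-1,1;Y)\ne 0$, $\varphi(X;1,d-1)\ne 0$, and $\varphi(1,d-1;X)\ne 0$ are precisely the statements that the top-degree mixed monomials $YX^{d-1}$, $X^{d-1}Y$, $XY^{d-1}$, and $Y^{d-1}X$ each enter $p(X,Y)$ with a nonzero coefficient. Consequently, $\cP$ contains many two-letter monomials of the forms $ax_u^{d-1}x_v$ and $ax_u x_v^{d-1}$.

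Since $d-1\ge 3$, Lemma~\ref{lem:aug18a9} applied with $s=d-1$ and $t=1$ forces every variable $x_u$ appearing to the power $d-1$ in such a two-letter word to lie on the diagonal of either $X$ or $Y$. To partition these dyslexic diagonal pairs between $X$ and $Y$, I would examine the diagonal polynomials $p_{ii}$: if $x_u$ is the $ii$-diagonal entry of $X$ and $x_v$ is the $ii$-diagonal entry of $Y$, then by the standard coefficient count (Lemma~\ref{lem:sep19a10} with $t=d$), the coefficient of $x_u^{d-1}x_v$ in $p_{ii}$ is $\varphi(d-1,1)$, while in the swapped assignment it is $\varphi(1,d-1)$. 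The inequality $\varphi(d-1,1)\ne\varphi(1,d-1)$ therefore resolves the ambiguity. The auxiliary inequalities $\varphi(d-1,1)\ne\varphi(d,0)$ and $\varphi(1,d-1)\ne\varphi(0,d)$ are needed when reading off the two-letter coefficients in the off-diagonal polynomials $p_{ij}$, in order to rule out contributions coming from $X^d$ or $Y^d$ that would otherwise masquerade as mixed contributions.

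Once the diagonal entries of $X$ and $Y$ have been identified and partitioned, I would invoke the SVR Algorithm of Theorem~\ref{reduc:aug31:2010}, differentiating $d-1$ times with respect to the diagonal variables of $Y$ via the operator $T_Y$ of \eqref{eq:aug31b10}. By Remark~\ref{rem1:aug31:2010}, the resulting reduced family admits the nc representation $D^{d-1}_{Y,I_k}p(X,Y)$, which is linear in $X$ with a nonzero leading coefficient by virtue of $\varphi(Y;d-1,1)\ne 0$ and $\varphi(d-1,1;Y)\ne 0$. Algorithm ParPosX, applied to this reduced family, then positions the off-diagonal entries of $X$; a dual reduction $D^{d-1}_{X,I_k}p$, using $\varphi(X;1,d-1)\ne 0$ and $\varphi(1,d-1;X)\ne 0$, positions the off-diagonal entries of $Y$. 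Algorithm PosPol arranges the polynomials in the $k\times k$ array and NcCoef recovers the coefficients of $p$. The converse is straightforward: failure of any of the inequalities in \eqref{eq:dec9a11} obstructs one of these steps, so any representation produced by SVR must already lie in $NC_{(\ref{eq:dec9a11})}$.

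The hard part is the enumeration underlying the partition: one must account for every way the two-letter word $x_u^{d-1}x_v$ can arise in a polynomial $p_{ij}$---from $X^d$, $Y^d$, and the mixed monomial families of bidegrees $(d-1,1)$ and $(1,d-1)$---and verify that the inequalities in \eqref{eq:dec9a11} are exactly tight enough to disambiguate every case. This parallels the delicate counting performed in the proof of Theorem~\ref{thm:15aug11}, and I expect the bulk of the write-up to consist of this enumeration.
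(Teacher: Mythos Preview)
Your overall plan matches the paper's: identify and partition the diagonal entries via two-letter monomials of the form $ax_u^{d-1}x_v$, then apply SVR to reduce, and finish with the positioning and coefficient algorithms. Two points of divergence are worth flagging.

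First, and most importantly, after differentiating $d-1$ times in the $Y$-direction (or $X$-direction) you obtain a family whose nc representation has \emph{total} degree one, namely
\[
D_{X,I_k}^{d-1}p(X,Y)=(d-1)!\,\varphi(d,0)\,X+(d-1)!\,\varphi(d-1,1)\,Y
\]
(and symmetrically for $D_{Y,I_k}^{d-1}p$). Algorithm ParPosX, as formulated in \S\ref{subsec1:sep3:2010}, requires a power $X^n$ with $n\ge 2$ in order to read off the sets $L_i$; it cannot be applied to a degree-one family. The paper uses this linear reduction solely to \emph{partition} the remaining $2k^2-2k$ off-diagonal variables between $X$ and $Y$ (which works because $\varphi(d,0)\ne\varphi(d-1,1)$), and then invokes ParPosX, PosPol, PosY, NcCoef afterwards. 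You should not claim to position the variables directly from the degree-one reduction.

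Second, you attribute the nonvanishing of the $X$-coefficient in $D^{d-1}_{Y,I_k}p$ to $\varphi(Y;d-1,1)\ne 0$ and $\varphi(d-1,1;Y)\ne 0$. These refer to bidegree $(d-1,1)$, i.e., terms with $d-1$ copies of $X$; such terms are annihilated by $(d-1)$-fold $Y$-differentiation. What survives are the bidegree $(1,d-1)$ and $(0,d)$ terms, so the relevant quantity is $\varphi(1,d-1)$ (and in the paper's $X$-differentiation it is $\varphi(d-1,1)$).

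Finally, the paper identifies the diagonal polynomials more economically than via Lemma~\ref{lem:aug18a9}: the hypotheses in the last two lines of \eqref{eq:dec9a11} force each of the $k^2-k$ off-diagonal polynomials to contain at least four two-letter monomials $\vtr$-equivalent to $x_u^{d-1}x_v$, whereas the $k$ diagonal polynomials contain only one or two (since $\varphi(d-1,1)\ne\varphi(1,d-1)$). This count singles out the diagonal polynomials and the dyslexic pairs in one stroke, without a separate pairing step.
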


\begin{proof}
Suppose first that $\varphi(d-1,1)\ne 0$.
The assumptions in the last two lines of (\ref{eq:dec9a11}) guarantee that
$k^2-k$ polynomials will contain four or more terms $\vtr$-equivalent to
$x_u^{d-1}x_v$ with $x_v\ne x_u$, whereas the assumption that
$\varphi(d-1,1) \ne \varphi(1,d-1)$ guarantees  that exactly $k$ polynomials
$p_{i_1}, \ldots, p_{i_k}$ will contain
either one or two monomials $\vtr$-equivalent to $x_u^{d-1}x_v$ with $x_u\ne x_v$.
This allows us to identify them as diagonal entries and to partition
them between $X$ and $Y$.  Therefore, we apply the SVR
Algorithm to $\cP$ by differentitating $d-1$ times to obtain the
family $\{T_X^{d-1}p_1, \cdots, T_X^{d-1}p_{k^2}\}$
with nc representation
$$
D_{X,I_k}^{d-1}p(X,Y) = (d-1)!\varphi(d,0)X+(d-1)!\varphi(d-1,1)Y.
$$
Since $\varphi(d,0)\ne\varphi(d-1,1)$ and $\varphi(d-1,1)\ne 0$ by
assumption, we can partition the remaining $k^2-2k$
variables between $X$ and $Y$.

The construction of an nc polynomial representation can now be completed by
invoking  the algorithms ParPosX,PosPol, PosY and Algorithm NcCoef.
The remaining case follows similarly.
\end{proof}

\subsection{Summary of Two-letter Algorithms} \label{sec4:29nov11}
We now summarize our results based on the
analysis of two-letter monomials.
The methods of this section bear on
$p(X,Y)$ of the form
\begin{align}\label{eq1:aug20:2011}
p(X,Y) = d_1(XY)^t + d_2(YX)^t + d_3(XY)^tX + d_4(YX)^tY + q(X,Y),
\end{align}
where $q(X,Y)$ is an nc polynomial containing no multiples
of the first four monomials in \eqref{eq1:aug20:2011}.
The effectiveness of the procedures is summarized by:

\begin{theorem}
\label{thm:2varMain}
Let $p_1,\ldots,p_{k^2}$ be a family $\cP$ of polynomials in $2k^2$ commuting
variables $x_1,\ldots,x_{2k^2}$ and let $\cQ$ denote the set of
nc polynomials $p(X,Y)$ of degree $d>1$ that satisfy the
properties  in at least one of the following three lists:

\begin{itemize}

\item[({\bf 1})]
	
For some $s,t \in \mathbb{N}$, $s \ge t+2 \ge 4$, $\varphi(s,t) \ne 0$, and $\varphi(s,t) \ne \varphi(t,s)$.
Additionally, assume that $p(X,Y)$ satisfies one of the following conditions:	
\begin{enumerate}
\item[(1)] $t\varphi(s,t) \ne (s+1)\varphi(s+1,t-1), $

\item[(2)] $s\varphi(s,t) \ne (t+1)\varphi(s-1,t+1) $,

\item[(3)] $\varphi(s,t) \ne \varphi(s+1,t-1;X) \quad \text{ or } \quad \varphi(s,t) \ne \varphi(X;s+1,t-1)$,

\item[(4)]$\varphi(s,t) \ne \varphi(s-1,t+1;Y) \quad \text{ or } \quad \varphi(s,t) \ne \varphi(Y;s-1,t+1).$

\end{enumerate}

\medskip

\item[({\bf 2})]

For some $t \ge 2$,  $\varphi(t+1,t) \ne 0$ and
additionally assume that $p(X,Y)$ satisfies the following properties:
\begin{itemize}
\item[(1)] $\varphi(t+1,t) \ne d_3$
\item[(2)] $\varphi(t+1,t) \ne d_4$
\item[(3)] $\varphi(t+1,t) \ne \varphi(t,t+1)$
\end{itemize}

\medskip

\item [({\bf 3})]
For some $r \ge 2$,  $\varphi(r,r) \ne 0$ and
additionally assume that
 the nc representation also satisfies the following properties:
\begin{itemize}
\item[(1)] $\varphi(0,2r) =\varphi(2r,0) = 0$
\item[(2)] $\varphi(r,r) \ne d_1$
\item[(3)] $\varphi(r,r) \ne d_2$
\item[(4)] $\varphi(s,t;X) \ne 0 \Longleftrightarrow \varphi(s,t;Y) \ne 0$ for all $s,t$ such that $s>0, t>0$, $s+t =2r$.
\item[(5)] $\varphi(X;s,t) \ne 0 \Longleftrightarrow \varphi(Y;s,t) \ne 0$ for all $s,t$ such that $s>0, t>0$, $s+t =2r$.
\end{itemize}

\end{itemize}

\noindent
Then the two letter algorithms developed in Section~\ref{sec:25f11}
determine an
nc representation $p(X,Y)$ for $\cP$ in $\cQ$  if and only if $\cP$ has a
 representation in the class $\cQ$.
\end{theorem}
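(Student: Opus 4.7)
The plan is to prove this theorem by reducing it to the three separate two-letter results already established in Section~\ref{sec:25f11}: Propositions~\ref{prop:nov3a11} and~\ref{prop:nov3b11} handle the $s \ge t+2$ regime, Proposition~\ref{prop:nov3c11} handles the $s = t+1$ regime, and Theorem~\ref{thm:15aug11} handles the $s = t$ regime. Each of the three lists of hypotheses in the statement corresponds exactly to the hypotheses of one of these results, so the whole theorem will follow from a case analysis based on which list of properties the nc polynomial $p(X,Y)$ satisfies.

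Concretely, I would proceed as follows. First, suppose $\cP$ admits an nc representation $p(X,Y) \in \cQ$ that satisfies list~(\textbf{1}); then $p \in NC_{(\ref{eq:nov3a11})}$ (via the sub-conditions (1),(2)) or $p \in NC_{(\ref{eq:nov3b11})}$ (via (3),(4)), so Propositions~\ref{prop:nov3a11} or~\ref{prop:nov3b11} applied to the SVR algorithm coupled with Alg.1 or Alg.2 constructs an nc representation of $\cP$. Next, if list~(\textbf{2}) holds, then identifying $d_3$ and $d_4$ with the coefficients of $(XY)^tX$ and $(YX)^tY$ in~(\ref{eq1:aug20:2011}) places $p$ in $NC_{(\ref{eq:nov3c11})}$, so Proposition~\ref{prop:nov3c11} delivers the representation. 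Finally, if list~(\textbf{3}) holds, then $p \in NC_{(\ref{eq:nov17a11})}$ and Theorem~\ref{thm:15aug11} (combining Lemma~\ref{lem1:21sep10}, the SVR algorithm, and Alg.1 or Alg.2) constructs the representation. Conversely, in each of the three cases the cited result also supplies the converse direction: the indicated algorithm succeeds only if a representation in the prescribed subclass exists.

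The only subtlety lies in checking that the algorithmic steps in the three cases are in fact the two-letter procedures alluded to in the statement, and that the three subclasses of $\cQ$ exhaust the ways in which a family with no one-letter monomials of degree $\ge 2$ can have its diagonal entries detected via the two-letter enumeration lemmas (Lemmas~\ref{lem:aug24a10}, \ref{lem:aug26a10}). I would prefix the case analysis with a brief remark explaining that the three regimes $s \ge t+2$, $s=t+1$, $s=t$ in the enumeration lemmas correspond precisely to the three lists, and that the enumeration lemmas together with Lemmas~\ref{lem:aug16a10}, \ref{lem:aug16b10}, \ref{lem:aug16d10} are what initiate each algorithm by locating the dyslexic diagonal pairs before SVR is invoked.

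The main obstacle, and the only real content beyond bookkeeping, is verifying in list~(\textbf{2}) and list~(\textbf{3}) that the coefficient conditions stated in terms of $d_3, d_4$ and $d_1, d_2$ (from the form~(\ref{eq1:aug20:2011})) match the conditions stated abstractly in $NC_{(\ref{eq:nov3c11})}$ and $NC_{(\ref{eq:nov17a11})}$. Since $d_1, d_2, d_3, d_4$ are defined in~(\ref{eq1:aug20:2011}) as precisely the coefficients of $(XY)^t, (YX)^t, (XY)^tX, (YX)^tY$ in $p(X,Y)$, this is a direct identification and requires no new argument. Thus the proof reduces to a clean case split with citations to the three earlier propositions, and no further calculations are necessary.
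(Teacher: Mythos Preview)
Your proposal is correct and takes essentially the same approach as the paper: the paper's own proof is a one-sentence pointer stating that conditions~(\textbf{1}), (\textbf{2}), (\textbf{3}) are exactly what was needed to make the algorithms described in \S\ref{sec1:29nov11}, \S\ref{sec2:29nov11}, and \S\ref{sec3:29nov11} effective, which is precisely the case split you carry out via Propositions~\ref{prop:nov3a11}, \ref{prop:nov3b11}, \ref{prop:nov3c11} and Theorem~\ref{thm:15aug11}. Your version is simply a more explicit unpacking of that citation, including the identification of $d_1,\ldots,d_4$ with the coefficients in~(\ref{eq1:aug20:2011}); nothing further is needed.
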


\begin{proof}
Conditions (1), (2) and (3) are exactly what was needed to make the algorithms described
 \S \ref{sec1:29nov11},\S \ref{sec2:29nov11}, and \S \ref{sec3:29nov11} effective.
\end{proof}

We can now supply a proof that our algorithms work for a collection of families of commutative
polynomials that is in direct correspondence with a generic subset
of the space of nc polynomials.

\subsubsection{Proof of Theorem~\ref{genTheorem}}
Suppose that $\mathcal{U}$ is a subspace of the space $\cW$ of nc
polynomials of degree $d \ge 4$.  Then
$\mathcal{U}$ must contain an nc monomial of one of the following forms as a
basis element:
$$
m_{\alpha,\beta}(X,Y)\quad\textrm{with $\alpha=(\alpha_1,\cdots,\alpha_n)$,
$\beta=(\beta_1,\cdots,\beta_n)$ and $|\alpha|+|\beta|=d$},
$$
where $\alpha_1,\ldots, \alpha_n$ and $\beta_1,\ldots,\beta_n$ are positive
integers, except that $\alpha_1$ and $\beta_n$ are permitted to be equal to
zero.  Additionally, $\alpha$ and $\beta$ must satisfy one of the following
conditions:
\begin{enumerate}
\item[\rm(1)] $|\alpha|=d$ and $|\beta|=0$\quad or \quad$|\alpha|=0$ and
$|\beta|=d$,
 \smallskip
\item[\rm(2)] $|\alpha|>0$, $|\beta|>0$ and either $|\alpha|>|\beta|+1,$
\quad or \quad $|\beta|\ge |\alpha|+1,$
 \smallskip
\item[\rm(3)] $|\alpha|>0$, $|\beta|>0$ and $|\alpha|=|\beta|$,
 \smallskip
\item[\rm(4)] $|\alpha|>0$, $|\beta|>0$ and either $|\alpha|=1$\quad or\quad
$|\beta|=1$.
\end{enumerate}

If case (1) occurs, and $p \in \mathcal{U}$ is a polynomial only in the single
variable $X$ or $Y$,
then the single letter algorithms will determine $p$ and the set
$$\mathcal{S}_1 =
 (NC_{(\ref{eq:nov6a11})} \cup NC_{(\ref{eq:nov10a11})}\cap \mathcal{U}
$$ will be
open and dense in $\mathcal{U}$ since the indicated constraints are
inequalities.

Similarly, in case $(2)$ the set
$$\mathcal{S}_2 =( NC_{(\ref{eq:nov3a11})}\cup NC_{(\ref{eq:nov3c11})}
\cap \mathcal{U}$$
will be dense in $\mathcal{U}$

If case $(3)$ occurs and $\mathcal{U}$  contains
either the term $X^d$ or $Y^d$ as a basis element, then case $(1)$
applies.  If $\mathcal{U}$ does not contain this term as
a basis element, then the set
$$
\mathcal{S}_3 = NC_{(\ref{eq:nov17a11})}\cap \mathcal{U}
$$
is open and dense in $\mathcal{U}$.

If case $(4)$ occurs, then the set
$$
\mathcal{S}_4 = NC_{(\ref{eq:dec9a11})} \cap \mathcal{U}
$$
is an open dense set in $\mathcal{U}$ given that the defining constraints
are inequality constraints.
\qed

\subsection{Uniqueness results for two-letter algorithms}

The family $\mathcal{Q}$  defined in Theorem~\ref{thm:2varMain}
provides us with a large collection of nc polynomials for which our
two letter algorithms will be successful.  We now investigate
the uniqueness properties of families with a representation in
$\mathcal{Q}$.

\begin{theorem}\label{thm:2varUnique}
Suppose that $\cP$ is a family of $k^2$ polynomials in $2k^2$ commuting
variables that admits
two nc representations $p(X,Y)$ and $\wtilde{p}(\wtilde{X}, \wtilde{Y})$
in the family $\mathcal{Q}$.
Then the matrices $X,Y$ and $\wtilde{X},\wtilde{Y}$ are permutation equivalent
(as defined in \eqref{eq:feb23a12}).
\end{theorem}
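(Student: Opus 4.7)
The plan is to mirror the strategy used in the proof of Theorem~\ref{cor:uniqIntro}: first establish an analogue of Theorem~\ref{thm1:14dec11} that says any two pairs $(X,Y)$ and $(\wtilde{X},\wtilde{Y})$ produced by two different runs of the two-letter procedures of Section~\ref{sec:25f11} on the family $\cP$ are permutation equivalent, and then deduce the theorem at once, since Theorem~\ref{thm:2varMain} guarantees that both $X,Y$ and $\wtilde{X},\wtilde{Y}$ are producible by those procedures.

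First I would show that the unordered set of dyslexic diagonal pairs $\{x_{i_1},x_{j_1}\},\ldots,\{x_{i_k},x_{j_k}\}$, together with the assignment of each pair to a diagonal slot, is determined solely by $\cP$ and the knowledge that $p\in\cQ$. In each of the three regimes of Theorem~\ref{thm:2varMain} this is guaranteed, respectively, by Lemma~\ref{lem:aug16a10} (for $s\ge t+2$), by Lemma~\ref{lem:aug16b10} (for $s=t+1$), and by Lemma~\ref{lem:aug24a10} (for $s=t$); in every case the diagonal slot occupied by a pair is read off from $\cP$ independently of any ordering chosen by the algorithm.

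Next I would prove that the partitioning of the $k$ pairs between $X$ and $Y$ is globally consistent: either $x_{i_s}\in X$ for every $s$ or $x_{i_s}\in Y$ for every $s$. In the first two regimes this follows from Lemma~\ref{lem:aug16d10}, since all of the $x_{i_s}$ share the higher exponent and must therefore be in the same matrix. In the $s=t$ regime the proof of Theorem~\ref{thm:15aug11}, combined with Lemma~\ref{lem1:21sep10}, shows that the sets $L(x_{i_s})$ and $L(x_{j_s})$ are extracted directly from $\cP$; once one pair is assigned, the chain of nonempty intersections $L(x_{i_s})\cap L(x_{i_t})$ forces the assignment of every other pair. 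Similarly, in the setting of Proposition~\ref{prop2:30nov11} the partitioning uses only properties of $\cP$. This reduces the partitioning ambiguity to a single global $X\leftrightarrow Y$ swap, which will later account for cases (3) and (4) of \eqref{eq:feb23a12}.

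Finally, with the diagonal entries and the partition fixed up to this global swap, I would apply the SVR reduction of Lemma~\ref{diffcor} and Theorem~\ref{reduc:aug31:2010}. The conditions imposed on $p$ by $\cQ$ are precisely what is needed so that the differentiated family admits an nc representation satisfying the hypotheses of Theorem~\ref{thm:nov25a11}, whence the final steps of Alg.1 or Alg.2 apply and Theorem~\ref{thm1:14dec11} delivers a permutation matrix $\Pi$ and at most one transposition relating the reduced matrices. The hard part will be to check that this $\Pi$ and transposition lift consistently to the full pairs $(X,Y)$ and $(\wtilde{X},\wtilde{Y})$: one must verify that $\Pi$ simultaneously reconciles the diagonal positions determined in the first step and the off-diagonal positions obtained from the reduced single-variable problem, and that the ambiguity of placing a variable $x_u$ in the $st$-position versus the $ts$-position of one matrix propagates correctly to the other matrix. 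Combining the four cases of Theorem~\ref{thm1:14dec11} at the reduced level with the global $X\leftrightarrow Y$ swap from the partitioning step then yields precisely the four alternatives (1)--(4) of \eqref{eq:feb23a12}.
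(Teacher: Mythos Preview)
Your overall strategy---reduce via SVR to the one-letter setting and then invoke the uniqueness already proved there---is exactly what the paper does. The difference is that the paper's execution is much shorter, because it sidesteps the ``hard part'' you flag.

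The key observation you are missing is that the SVR reduction does \emph{not} change the matrices: $D_{Y,I_k}^s p(X,Y)$ is an nc polynomial in the \emph{same} $X$ and $Y$, and likewise $D_{\wtilde{Y},I_k}^s \wtilde{p}(\wtilde{X},\wtilde{Y})$ is in the same $\wtilde{X},\wtilde{Y}$. Once you know (from the diagonal-pair analysis, as you outline) that $\textup{diag}\{Y\}=\textup{diag}\{\wtilde{Y}\}$, the operators $T_Y$ and $T_{\wtilde{Y}}$ coincide, so both reduced representations are representations of the \emph{same} commutative family $T_Y^s p_1,\ldots,T_Y^s p_{k^2}$. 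Since $p,\wtilde{p}\in\cQ$ forces both reduced polynomials into the class $\cW$ of Theorem~\ref{thm:nov25a11}, the paper simply applies Theorem~\ref{cor:uniqIntro} (not Theorem~\ref{thm1:14dec11}) to this reduced family and reads off the permutation equivalence of $X,Y$ with $\wtilde{X},\wtilde{Y}$ directly. There is nothing to lift: the $\Pi$ you obtain already relates the full matrices. The global $X\leftrightarrow Y$ swap you identify handles the case $\textup{diag}\{Y\}=\textup{diag}\{\wtilde{X}\}$ in the same way.

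So your plan is correct in outline, but your anticipated difficulty dissolves once you use Theorem~\ref{cor:uniqIntro} on the reduced family rather than trying to route through Theorem~\ref{thm1:14dec11} and algorithm-producibility.
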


\begin{proof} Suppose that (1) of \eqref{eq:feb23a12}) is satisfied
and let $\textup{diag}\{X\}$ denote the diagonal entries of the matrix $X$.
The assumption that both $p$ and $\wtilde{p}$ are
in $\mathcal{Q}$ implies that either

(a) $\textup{diag}\{X\}=\textup{diag}\{\wtilde{X}\}$ and
$\textup{diag}\{Y\}=\textup{diag}\{\wtilde{Y}\}$
\ \ \
or

(b) $\textup{diag}\{X\}=\textup{diag}\{\wtilde{Y}\}$ and
$\textup{diag}\{Y\}=\textup{diag}\{\wtilde{X}\}$.

In case (a), if
$n\varphi(s,n) \ne (s+1)\varphi(s+1,n-1)$, then
Remark~\ref{rem1:aug31:2010} implies that $D_{Y,I_k}^s p(X,Y)$
(resp., $D_{\wtilde{Y},I_k}^s \wtilde{p}(\wtilde{X},\wtilde{Y})$)
is an nc representation of the family $T_{Y}^sp_1, \cdots, T_{Y}^s p_{k^2}$
(resp., $T_{\wtilde{Y}}^s p_1, \cdots, T_{\wtilde{Y}}^s p_{k^2}$).  Furthermore,
given that case (a) holds, the diagonals of $Y$ and $\wtilde{Y}$ are the same, which
implies that the families $T_{Y}^sp_1, \cdots, T_{Y}^s p_{k^2}$ and
$T_{\wtilde{Y}}^s p_1, \cdots, T_{\wtilde{Y}}^s p_{k^2}$ are the same.

Therefore
$D_{Y,I_k}^s p(X,Y)$ and
$D_{\wtilde{Y},I_k}^s \wtilde{p}(\wtilde{X},\wtilde{Y})$ are
both nc representations of the family $T_{Y}p_1, \cdots, T_{Y}p_{k^2}$.
Moreover,
given that $p, \wtilde{p} \in \mathcal{Q}$, the nc polynomials
$D_{Y,I_k}^s p(X,Y)$ and
$D_{\wtilde{Y},I_k}^s \wtilde{p}(\wtilde{X},\wtilde{Y})$
are both in the set $\mathcal{W}$ that is defined in
Theorem~\ref{thm:nov25a11}.
Therefore, we may apply Theorem~\ref{cor:uniqIntro} to conclude that there
exists a permutation matrix $\Pi$ such that either
$X = \Pi^T\wtilde{X} \Pi$ and $Y = \Pi^T\wtilde{Y}\Pi$ or
$X = \Pi^T\wtilde{X}^T \Pi$ and $Y = \Pi^T\wtilde{Y}^T\Pi$.

In case (b), $X = \Pi^T\wtilde{Y} \Pi$ and $Y = \Pi^T\wtilde{X}\Pi$
or $X = \Pi^T\wtilde{Y}^T \Pi$ and $Y = \Pi^T\wtilde{X}^T\Pi$.
The other three cases in \eqref{eq:feb23a12} are handled in much the same way.
\end{proof}

\bigskip

\newpage
\centerline{NOT FOR PUBLICATION}
\tableofcontents

\end{document}